\numberwithin{equation}{section}
\newcommand{\bbE}{\mathbb{E}}
\newcommand{\bbM}{\mathbb{M}}
\newcommand{\bbN}{\mathbb{N}}
\newcommand{\bbP}{\mathbb{P}}
\newcommand{\bbR}{\mathbb{R}}
\newcommand{\bbT}{\mathbb{T}}
\newcommand{\cA}{\mathcal{A}}
\newcommand{\cB}{\mathcal{B}}
\newcommand{\cC}{\mathcal{C}}
\newcommand{\cF}{\mathcal{F}}
\newcommand{\cG}{\mathcal{G}}
\newcommand{\cH}{\mathcal{H}}
\newcommand{\cI}{\mathcal{I}}
\newcommand{\cL}{\mathcal{L}}
\newcommand{\cP}{\mathcal{P}}
\newcommand{\cT}{\mathcal{T}}
\newcommand{\Omegat}{\widetilde{\Omega}} 
\newcommand{\omegat}{\widetilde{\omega}} 
\newcommand{\cAt}{\widetilde{\cA}} 
\newcommand{\bbPt}{\widetilde{\bbP}} 
\newcommand{\bbEt}{\widetilde{\bbE}} 
\newcommand{\rvec}{\boldsymbol{r}}
\newcommand{\etavec}{\boldsymbol{\eta}} 
\newcommand{\xivec}{\boldsymbol{\xi}} 
\newcommand{\xk}{\otimes^k} 
\newcommand{\xks}{\otimes^{k,s}}
\newcommand{\xkeps}{\otimes^k_\varepsilon}
\newcommand{\xkseps}{\otimes^{k,s}_{\varepsilon_s}}
\newcommand{\xkpi}{\otimes^k_\pi}
\newcommand{\xkspis}{\otimes^{k,s}_{\pi_s}}
\newcommand{\tenvec}{U}
\newcommand{\Mk}{\bbM^k_\varepsilon} 
\newcommand{\Mkpi}{\bbM^k_\pi} 
\newcommand{\norm}[2]{     \| #1       \|_{ #2 }}
\newcommand{\seminorm}[2]{  | #1        |_{ #2 }}
\newcommand{\normiii}[2]{\vert\kern-0.25ex\vert\kern-0.25ex\vert #1 \vert\kern-0.25ex \vert\kern-0.25ex\vert_{ #2 }}
\newcommand{\Normiii}[2]{\left\vert\kern-0.25ex\left\vert\kern-0.25ex\left\vert #1 \right\vert\kern-0.25ex\right\vert\kern-0.25ex\right\vert_{ #2 }}
\newcommand{\scalar}[2]{     ( #1       )_{ #2 }}
\newcommand{\duality}[1]{     \langle #1       \rangle}
\newcommand{\dual}[1]{{#1}^{\prime}}
\newcommand{\rd}{\mathrm{d}}
\newcommand{\dom}{\mathrm{D}}
\newcommand{\Wo}{\mathring{W}}
\newcommand{\from}{\colon}
\newcommand{\eps}{\varepsilon}
\newcommand{\sym}{s}
\newcommand{\CSL}{C^{\,\sf SL}_{q,p,k}}
\newcommand{\CML}{C^{\,\sf ML}_{q,p,k}}
\newcommand{\Cdiff}{C^{\,\sf diff}_{q,p,k}}
\newcommand{\Cstab}{C_{\sf stab}}
\newtheorem{lemma}{Lemma}[section]
\newtheorem{proposition}[lemma]{Proposition}
\newtheorem{theorem}[lemma]{Theorem}
\newtheorem{corollary}[lemma]{Corollary}
\theoremstyle{remark}
\newtheorem{remark}[lemma]{Remark}
\theoremstyle{definition}
\newtheorem{definition}[lemma]{Definition}
\newtheorem{assumption}[lemma]{Assumption}
\newtheorem{example}[lemma]{Example}
\begin{document}

\title[Monte Carlo methods for moments in Banach spaces]
	{Monte Carlo convergence rates for $k$th moments  
		in Banach spaces}  


\author{Kristin Kirchner$^{{\lowercase\mathrm{a}},*}$ 
	\and 
	Christoph Schwab$^{\lowercase\mathrm{b}}$}

\address[Kristin Kirchner]{$\,^{\lowercase\mathrm{a}}$Delft 
	Institute of Applied Mathematics, 
	Delft University of Technology\\
	P.O.~Box 5031,  2600$\,$GA Delft, 
	The Netherlands\\
	$\,^{*}$Corresponding author; 
	e-mail: {\normalfont\texttt{k.kirchner@tudelft.nl}}}

\address[Christoph Schwab]{
	$\,^{\lowercase\mathrm{b}}$Seminar for Applied Mathematics, 
	ETH Z\"urich \\
	R\"amistrasse 101, 8092 Z\"urich,  
	Switzerland}


\begin{abstract}
We formulate standard and multilevel Monte Carlo methods for  
the $k$th moment $\mathbb{M}^k_\varepsilon[\xi]$ 
of a Banach space valued random variable 
$\xi\colon\Omega\to E$, 
interpreted as an element 
of the $k$-fold injective 
tensor product space~$\otimes^k_\varepsilon E$. 
For the standard Monte Carlo estimator of 
$\mathbb{M}^k_\varepsilon[\xi]$, 
we prove the $k$-independent convergence rate 
$1-\tfrac{1}{p}$ 
in the $L_q(\Omega;\otimes^k_\varepsilon E)$-norm,  
provided that 
(i) $\xi\in L_{kq}(\Omega;E)$  
and 
(ii) $q\in[p,\infty)$, where $p\in[1,2]$ is  
the Rademacher type of $E$. 
By using the fact that 
Rademacher averages 
are dominated by Gaussian sums 
combined with 
a version of Slepian's inequality for Gaussian processes 
due to Fernique, 
we moreover derive 
corresponding results 
for multilevel Monte Carlo methods, 
including a rigorous error estimate 
in the $L_q(\Omega;\otimes^k_\varepsilon E)$-norm 
and the optimization of the computational cost  
for a given accuracy.  
Whenever the type of the 
Banach space~$E$ is $p=2$, 
our findings coincide with known results 
for Hilbert space valued random variables. 

We illustrate the abstract results 
by three model problems: 
second-order elliptic PDEs with 
random forcing or random coefficient, 
and stochastic evolution equations.
In these cases, the 
solution processes naturally take values 
in non-Hilbertian Banach spaces. 
Further applications, where physical modeling
constraints impose a setting in Banach spaces 
of type $p<2$, are indicated. 
\end{abstract}


\keywords{Banach space valued random variable, 
	injective tensor product, 
	Monte Carlo estimation,  
	multilevel methods, 
	Rademacher averages,  
	type of Banach space}

\subjclass[2010]{Primary 
	65C05; 
	Secondary 
	46A32, 
	60B11, 
	60H35. 
	}

\date{}

\maketitle

\section{Introduction}\label{section:intro} 

\subsection{Background and motivation}\label{subsec:motivation} 

Many applications in uncertainty quantification 
require the estimation of statistical moments. 
The first statistical moment, 
that is to say the mean, 
is often itself the quantity of interest, 
whereas higher-order moments 
are needed to infer 
certain characteristics about the 
probability distribution of 
the underlying real- or 
vector-valued random variable. 
In the case that this 
distribution is Gaussian, 
it is fully determined by 
the first two statistical moments. 
Third-order and fourth-order 
moments, which define the 
skewness and kurtosis of the probability distribution,  
play for instance an important role 
for tests if 
the distribution is Gaussian, 
see e.g.\ \cite{JarqueBera1987,Mardia1970}. 

In order to estimate statistical moments 
one resorts to sampling strategies, i.e., Monte Carlo methods. 
It is well-known that 
for estimating the mean  
the convergence 
rate $\nicefrac{1}{2}$ in the number of samples 
is achieved as long as 
the random variable $\xi$ is 
square-integrable in Bochner sense 
with values in a Hilbert space $H$, 
i.e., $\xi\in L_2(\Omega;H)$. 
Moreover, this result extends 
to statistical moments of an arbitrary order $k\in\bbN$ 
when interpreted as elements of the 
Hilbert tensor product space $H^{(k)}$\!,  
provided that the random variable 
exhibits sufficient integrability 
in $L_{2k}(\Omega;H)$. 

Vector-valued random variables occur, for instance,  
in the context of differential equations 
involving randomness. 
Here, numerical methods  
for generating samples of 
approximate solutions often 
allow for a hierarchical multilevel structure 
corresponding to different degrees
of refinement of the discretization parameters. 
The idea of multilevel Monte Carlo (MLMC) methods 
is to reduce the computational 
cost for achieving a given target accuracy 
by optimizing the number of samples 
used on each level 
to compute the MLMC estimator. 
To the best of our knowledge 
this approach was first formulated by Giles~\cite{Giles2008}  
for stochastic ordinary differential equations (SDEs)
after having previously been introduced 
by Heinrich~\cite{Heinrich2001} 
in the context of numerical integration. 
Since then MLMC methods 
have been used to approximate means 
of Hilbert space valued random variables 
for a variety of problems in uncertainty quantification, 
including but not limited to 
SDEs~\cite{CollierEtAl2015, 
	GilesMilstein2008,
	GilesDebrabantRossler2019,  
	GilesSzpruch2014, 
	XiaGiles2012}, 
partial differential equations (PDEs) with random 
coefficients~\cite{BarthSchwabZollinger2011, 
	CharrierScheichlTeckentrup2013, 
	CliffeGilesScheichlTeckentrup2011, 
	CollierEtAl2015, 
	GittelsonKonnoSchwabStenberg2013, 
	GrahamScheichlUllmann2016, 
	HajiAliEtAl2016,  
	TeckentrupScheichlGilesUllmann2013}, 
stochastic  
PDEs~\cite{BarthLangSchwab2013, 
	GilesReisinger2012}, 
and 
hyperbolic PDEs with random fluxes 
or uncertainties in the initial 
data~\cite{MishraRisebroSchwabTokareva2016, 
	MishraSchwab2012, 
	MishraSchwab2017}. 

With regard to higher-order moments, MLMC strategies  
have been applied to estimate 
\emph{diagonals of central 
	statistical moments} in \cite{BierigChernov2015,BierigChernov2016}, 
and combined with sparse tensor techniques  
to approximate (\emph{full}) moments 
in \cite{BarthSchwabZollinger2011,
	MishraSchwab2012, 
	MishraSchwab2017}; 
the latter approach has been refined and generalized 
by the multi-index Monte Carlo method 
in~\cite{MIMC2016}.  

All of the previously mentioned references 
have in common that the error analysis of the 
(multilevel) Monte Carlo estimators proceeds in Hilbert spaces. 
For the first statistical moment, it is known that 
estimating means of random variables taking 
values in a Banach space~$E$ 
via the standard Monte Carlo method 
\emph{does in general not converge at the rate~$\nicefrac{1}{2}$}, 
even in the presence of high Bochner integrability. 
More specifically, the rate of convergence 
depends on geometric properties of the Banach space~$E$: 
If $E$ has \emph{Rademacher type}  
$p\in[1,2]$ and the random variable is an 
element of the Bochner space~$L_q(\Omega;E)$, $q\geq 1$, 
the standard Monte Carlo method will, 
in general, converge only 
at the rate~$1-\tfrac{1}{\min\{p,q\}}$, 
see~\cite[Proposition~9.11]{LedouxTalagrand2011}. 
This behavior necessitates 
tailoring of MLMC methods 
not only to the discretization 
of a particular problem, but also  
to the type of the Banach space. 
The only two references known to the authors 
addressing this issue are~\cite{KoleyRisebroSchwabWeber2017},
where a scalar, degenerate conservation law with random data
is discretized by a MLMC finite difference method, 
and~\cite{CoxEtAl2021}, 
where the authors perform a MLMC  analysis in 
H\"older spaces for solutions to 
stochastic evolution equations. 

Besides the aforementioned issue 
of type-dependent convergence rates, 
another difficulty occurs when considering 
higher-order moments of Banach space valued 
random variables: 
As opposed to the Hilbert space case, 
there is no canonical choice for  
the norm on the tensor product space~$E\otimes E$. 
Two options which are widely used in the literature 
are the \emph{projective} and 
\emph{injective tensor product norms}, 
mostly caused by the fact that any reasonable 
cross norm is bounded from above, respectively from below, 
by these norms, see~\cite[Proposition~6.1(a)]{Ryan2002}. 
Janson and Kaijser~\cite{JansonKaijser2015} 
defined and analyzed 
statistical moments of order $k\in\bbN$ 
(in Bochner, Dunford or Pettis sense) 
as elements of projective and injective 
tensor product spaces. 
One of the findings \cite[Theorem~3.8]{JansonKaijser2015} 
shows that 
both the \emph{projective} and \emph{injective $k$th 
	moment} of $\xi\from\Omega\to E$ exist in Bochner sense 
(and coincide) whenever~$\xi\in L_k(\Omega;E)$. 

Clearly, the choice of the tensor product norm 
will play a crucial role in the error analysis 
of (multilevel) Monte Carlo estimation 
of higher-order statistical moments. 
A simple argumentation 
(see Example~\ref{example:projective-bad})
shows that, 
no matter how ``good'' the (Rademacher) type 
of the Banach space $E$ is, 
Monte Carlo methods for the second moment 
will in general not converge 
in the projective tensor product 
of~$E$.  

While this work is devoted to the numerical analysis
of Monte Carlo (sampling) methods 
to approximate higher-order moments 
of vector-valued random variables, 
we remark that 
certain \emph{linear} (or \emph{linearized}) 
stochastic equations allow for 
alternative approaches 
to \emph{deterministically compute}  
approximations to $k$-point correlations
of random solutions. 
In this context, we mention
\cite{ChernovSchwab2013, Kirchner2020,KirchnerLangLarsson2017,
	KovacsLangPetersson2022,LangLarssonSchwab2013}
and the references therein. 
This methodology does not raise  
the mathematical issue of the type of 
a Banach space and its impact 
on the convergence of 
numerical approximations. 
In the present paper, we
shall not pursue this direction further.

\subsection{Contributions}\label{subsec:contributions} 

We consider the injective $k$th moment $\Mk[\xi]$ 
of a Banach space valued random variable 
$\xi\in L_k(\Omega;E)$ and, for the first time, 
formulate standard and multilevel Monte Carlo methods 
for this higher-order moment 
in the Banach space setting. 
We prove that the standard Monte Carlo estimator 
for $\Mk[\xi]$ converges in the 
$L_q(\Omega;\xkeps E)$-norm 
at the rate $1-\tfrac{1}{p}$ 
provided that 
(i) $\xi\in L_{kq}(\Omega;E)$  
and 
(ii) $q\in[p,\infty)$, where $p\in[1,2]$ is  
the Rademacher type of $E$, 
see Theorem~\ref{thm:MC-k}. 
Here, $\xkeps E$ denotes the 
$k$-fold injective tensor product of $E$. 
Note, in particular, that\linebreak  
this convergence rate 
is independent of the order $k$ 
of the statistical moment~$\Mk[\xi]$. 
This result readily implies error estimates 
and convergence rates for abstract 
\emph{single-level Monte Carlo} methods 
for $\Mk[\xi]$, see Corollary~\ref{cor:MC-k}. 

By means of replacing Rademacher sums  
by Gaussian averages and exploiting 
a version of Slepian's inequality for Gaussian processes 
due to Fernique \cite{Fernique1975}, 
we are furthermore able to formulate 
corresponding abstract results 
for \emph{multilevel Monte Carlo} methods. 
This includes a rigorous error estimate 
in the $L_q(\Omega;\xkeps E)$-norm, 
see Theorem~\ref{thm:MLMC-k}, 
and the optimization of the computational cost  
for a given accuracy 
(in the MLMC context also known as 
``$\alpha\beta\gamma$ theorem''), 
see Theorem~\ref{thm:alpha-beta-gamma}. 

We apply these abstract results 
to several classes of problems, 
where the stochastic solution 
processes naturally take values in 
(non-Hilbertian) Banach spaces: 
second-order elliptic PDEs  
\begin{enumerate*}[label=(\alph*)]
	\item with random forcing  
	taking values in $L_p$ 
	for some general $p\in(1,\infty)$, or 
	\item with log-Gaussian diffusion coefficient 
	and right-hand side in $L_p$; and 
	\item stochastic evolution equations, 
	where we extend the MLMC analysis in H\"older 
	norms, performed in \cite[Section~5]{CoxEtAl2021} 
	for mean values of solution processes,  
	to their $k$th moments, 
	being $k$th order (spatio-)temporal
	correlation functions. 
\end{enumerate*}

\subsection{Layout}
\label{subsec:outline} 

In Section~\ref{section:prelim} 
we introduce the necessary notation, 
see Subsection~\ref{subsec:notation}, 
as well as the analytical preliminaries on  
tensor products of a Banach space~$E$, 
with particular emphasis on the full and symmetric 
$k$-fold injective tensor product of $E$, 
see Subsection~\ref{subsec:k-tensor}.  
We close this section with the definition 
of the injective $k$th moment of a Banach space 
valued random variable $\xi\in L_k(\Omega;E)$ 
in Subsection~\ref{subsec:moments}. 
Section~\ref{section:abstract} is dedicated 
to the analysis of Monte Carlo 
methods for the injective $k$th moment. 
For this purpose, 
we first need to formulate several auxiliary 
results for Rademacher and Gaussian averages 
in Subsection~\ref{subsec:auxiliary}. 
We then perform the error analysis for the 
standard (and single-level) Monte Carlo method 
in Subsection~\ref{subsec:slmc} 
and for the multilevel Monte Carlo method 
in Subsection~\ref{subsec:mlmc}. 
In Section~\ref{section:applications} 
we discuss several applications of our 
convergence results. 
Section~\ref{section:conclusions} 
gives an outlook on extensions 
and further applications, where 
due to essential restrictions in the physical modeling 
a (non-Hilbertian) Banach space setting cannot 
be avoided.

\section{Preliminaries}\label{section:prelim}   

\subsection{General notation and setting}\label{subsec:notation} 

Given parameter sets  
$\mathscr{P},\,\mathscr{Q}$, 
and mappings 
$\mathscr{F},\mathscr{G}\from \mathscr{P}\times\mathscr{Q}\to\bbR$, 
we use the notation  
$\mathscr{F} (p,q) \lesssim_{q} \mathscr{G} (p,q)$ 
to indicate that for each $q \in \mathscr{Q}$ 
there exists a constant ${C_q \in(0,\infty)}$ 
such that 
$\mathscr{F} (p,q) \leq C_q \, \mathscr{G} (p,q)$ 
holds 
for all ${p\in \mathscr{P}}$. 
Whenever both relations, 
$\mathscr{F} (p,q) \lesssim_q \mathscr{G}(p,q)$ 
and $\mathscr{G} (p,q) \lesssim_q \mathscr{F}(p,q)$, 
hold simultaneously, 
we write $\mathscr{F} (p,q) \eqsim_q \mathscr{G} (p,q)$.   

For a Banach space $(F,\norm{\,\cdot\,}{F})$ over $\bbR$, 
we write 
$B_F:= \{x\in F : \norm{x}{F} \leq 1 \}$ 
for its closed unit ball, 
and 
$\cB(F)$ for the Borel $\sigma$-algebra 
on 
$(F,\norm{\,\cdot\,}{F})$, 
that is the $\sigma$-algebra generated by 
the open sets. 
The dual space 
of all continuous linear functionals 
$g \from F \to \bbR$ 
is denoted by~$\dual{F}$\!.  
We write $g(x)$ or $\duality{g, x}$ 
for the duality pairing between $g\in \dual{F}$ 
and $x\in F$, and 
$\norm{g}{\dual{F}} := \sup_{x\in B_F} | g(x) |$ 
for the norm on~$\dual{F}$\!. 

Throughout this article, we assume that 
$(\Omega,\cA,\bbP)$ is 
a complete probability space
with expectation operator $\bbE$, 
and we mark statements which hold 
$\bbP$-almost surely with $\bbP$-a.s. 
For a second probability space 
$(\Omegat,\cAt,\bbPt)$ with expectation operator $\bbEt$, 
$(\Omega\times\Omegat,\cA\otimes\cAt,\bbP\otimes\bbPt)$ 
denotes the product probability space, i.e.,  
$\Omega\times\Omegat$ is the 
set of all tuples $(\omega,\omegat)$ with  
$\omega\in\Omega$, $\omegat\in\Omegat$, 
$\cA\otimes\cAt$ is the product $\sigma$-algebra generated by 
all sets of the form $A\times\widetilde{A}$ with $A\in\cA$, 
$\widetilde{A}\in\cAt$, 
and $\bbP\otimes\bbPt$ is the uniquely defined 
product measure satisfying 
$(\bbP\otimes\bbPt)(A\times\widetilde{A})
= 
\bbP(A)\bbPt(\widetilde{A})$ 
for all $A\in\cA$ and $\widetilde{A}\in\cAt$. 
The expectation operator on 
$(\Omega\times\Omegat,\cA\otimes\cAt,\bbP\otimes\bbPt)$ 
will be denoted by $\bbE\otimes\bbEt$. 

In addition, we let   
$(E,\norm{\,\cdot\,}{E})$ be a Banach space 
over~$\bbR$, the set $\bbN$ 
contains all 
(strictly) positive integers 
and, unless otherwise stated, 
$k \in \bbN$ is a fixed positive integer 
which indicates the order of (statistical) moments.

\subsection{\texorpdfstring{$k$-fold}{k-fold} tensor products  
	of Banach spaces}\label{subsec:k-tensor}    

In this subsection we define 
(full and symmetric) $k$-fold 
tensor products 
of the Banach space $(E,\norm{\,\cdot\,}{E})$, 
with the aim of 
obtaining a new Banach space 
satisfying the following: 
\begin{enumerate}[label={\normalfont(\roman*)}, leftmargin = 1cm]
	\item It contains the set of all 
	$k$th moments  
	(in Bochner sense)
	of Bochner integrable random 
	variables $\xi\from\Omega\to E$ 
	satisfying $\bbE\bigl[ \norm{\xi}{E}^k \bigr] < \infty$. 
	\item The topology on this space 
	(prescribed by its norm) allows to 
	quantify the convergence 
	of Monte Carlo estimation for  
	statistical moments of order $k$. 
\end{enumerate} 
For this purpose, symmetry of moments 
will be particularly important.  

We start by defining the 
(full) \emph{$k$-fold algebraic tensor product of $E$}, 
\[
\xk E = \underbrace{E \otimes \cdots \otimes E}_{
	\text{\normalfont $k$ times}}, 
\] 
that is the vector space consisting of all finite sums of the form
\[
\sum_{j=1}^{M} 
x_{j,1} 
\otimes 
\cdots 
\otimes 
x_{j,k} 
= 
\sum_{j=1}^{M} 
\bigotimes_{n=1}^k 
x_{j,n}, 
\qquad 
x_{j,n}\in E,  
\quad 
1\leq j \leq M, 
\quad 
1\leq n \leq k, 
\]
equipped with the algebraic operations 
rendering the tensor product  
linear in each of the $k$ components, 
see \cite[Section~1.1]{Floret1997}. 
The \emph{injective tensor norm} $\norm{\,\cdot\,}{\eps}$ 
for an element 
$\tenvec\in\xk E$ with representation 
$\tenvec=\sum_{j=1}^{M} 
x_{j,1} 
\otimes 
\cdots 
\otimes 
x_{j,k}$ 
is defined by 
\begin{equation}\label{eq:epsnorm} 
\norm{\tenvec}{\eps} 
:= 
\sup\Biggl\{
\biggl| 
\sum_{j=1}^{M} 
\prod_{n=1}^{k} 
f_n(x_{j,n})  
\biggr| 
\; \Bigg| \;  
f_1,\ldots,f_k\in B_{\dual{E}} 
\Biggr\}, 
\end{equation} 
cf.~\cite[Section~3.1]{Ryan2002} 
and \cite[Section~2.3.2]{JansonKaijser2015}. 
Note that 
the value of
the supremum in~\eqref{eq:epsnorm} 
is independent of the choice of the representation 
of $\tenvec\in\xk E$, since 
\[
\biggl| 
\sum_{j=1}^{M} 
\prod_{n=1}^{k} 
f_n(x_{j,n})  
\biggr|  
= 
\biggl| 
\sum_{j=1}^{M} 
\duality{ (f_1 \otimes \cdots \otimes f_k), 
	(x_{j,1} 
	\otimes 
	\cdots 
	\otimes 
	x_{j,k})}  
\biggr| 
= 
|\duality{(f_1 \otimes \cdots \otimes f_k) , \tenvec}|.   
\]
For $k=1$, the Hahn--Banach theorem shows that 
$\norm{\,\cdot\,}{\eps} = \norm{\,\cdot\,}{E}$. 
We call the completion of the $k$-fold 
algebraic tensor product space $\xk E$ 
with respect to $\norm{\,\cdot\,}{\eps}$ in \eqref{eq:epsnorm} 
the (full) \emph{$k$-fold injective tensor product of $E$} 
and denote it by~$\xkeps E$. 

In the context of moments of 
$E$-valued random variables, 
we will be interested in subspaces 
of $\xk E$ and $\xkeps E$ containing  
only their symmetric elements. 
To this end, we first introduce for 
$x_1\otimes\cdots\otimes x_k \in \xk E$ 
its \emph{symmetrization} 
\begin{equation}\label{eq:symmetrization} 
\sym(x_1\otimes \cdots\otimes x_k) 
:= 
\frac{1}{k!} 
\sum_{\sigma\in S_k} 
x_{\sigma(1)} \otimes \cdots \otimes x_{\sigma(k)}, 
\end{equation} 
where 
$S_k$ is the group of permutations 
of the set $\{1, . . . , k\}$.
The \emph{$k$-fold symmetric algebraic tensor product of $E$}, 
denoted by $\xks E$,  
is then defined as the linear span 
of $\{\sym(x_1\otimes \cdots \otimes x_k) : x_1,\ldots,x_k\in E\}$ 
in $\xk E$, i.e., 
\begin{align*} 
\xks E 
&:= 
\Biggl\{
\sum_{j=1}^M 
\sym(x_{j,1} \otimes\cdots\otimes x_{j,k}) 
\; \Bigg| \; 
M\in\bbN,  
\; 
x_{j, n}\in E, 
\; 
1\leq j \leq M, 
\; 
1\leq n \leq k
\Biggr\}
\\
&\textcolor{white}{:}= 
\Biggl\{
\sum_{j=1}^M 
\lambda_j \, {\xk} x_j  
\;\Bigg| \;
M\in\bbN, 
\; 
\lambda_j \in \bbR, 
\; 
x_j \in E, 
\; 
1\leq j\leq M
\Biggr\}
\\
&\textcolor{white}{:}= 
\Biggl\{
\sum_{j=1}^M 
\delta_j \, {\xk} x_j  
\;\Bigg| \;
M\in\bbN, 
\; 
\delta_j \in \mathscr{E}(k), 
\; 
x_j \in E, 
\;  
1\leq j\leq M
\Biggr\}, 
\end{align*} 
see \cite[Section~1.5]{Floret1997} 
or \cite[Section~1.1]{Floret2001}. 
Here, we set 
\[
{\xk} x := \underbrace{x\otimes \cdots \otimes x}_{\normalfont 
	\text{$k$ times}} 
\quad 
\forall x \in E, 
\quad 
\text{and} 
\quad  
\mathscr{E}(k) := 
\begin{cases} 
\{-1,1\} 
&\text{if $k$ is even}, 
\\
\{1\} 
&\text{if $k$ is odd}. 
\end{cases} 
\]
The \emph{symmetric injective tensor norm} 
$\norm{\,\cdot\,}{\eps_s}$ on 
the $k$-fold symmetric algebraic 
tensor product space $\xks E$ 
is given by 
(see \cite[Section~3.1]{Floret1997}) 
\begin{equation}\label{eq:sepsnorm} 
\norm{\tenvec}{\eps_s} 
:= 
\sup\Biggl\{
\biggl| 
\sum_{j=1}^M 
\lambda_j f(x_j)^k
\biggr| 
\; \Bigg| \; 
f\in B_{\dual{E}} 
\Biggr\}, 
\end{equation} 
if $\tenvec=\sum_{j=1}^M \lambda_j \, {\xk} x_j$, 
where $\lambda_j\in\bbR$, $x_j\in E$ 
for $1\leq j \leq M$.  
Note that, 
as for the injective tensor norm $\norm{\,\cdot\,}{\eps}$, 
this definition does 
not depend on the choice of 
the representation of $\tenvec$. 
The \emph{$k$-fold symmetric injective tensor product of $E$}, 
denoted by $\xkseps E$, 
is the completion of $\xks E$ with respect to 
the norm $\norm{\,\cdot\,}{\eps_s}$ in \eqref{eq:sepsnorm}. 

The symmetrization  
$\sym$ in \eqref{eq:symmetrization} 
extends linearly 
to a projection $\sym \from {\xk} E \to \xks E$  
and, since for 
$\tenvec=\sum_{j=1}^M x_{j,1}\otimes \cdots \otimes x_{j,k} \in \xk E$ 
we moreover have that 
\begin{equation}\label{eq:norm-rel} 
\begin{split}  
&\norm{\sym(\tenvec)}{\eps_s} 
\leq 
\norm{\sym(\tenvec)}{\eps} 
= 
\biggl\| 
\sum_{j=1}^M 
\frac{1}{k!} 
\sum_{\sigma\in S_k} 
x_{j,\sigma(1)} \otimes 
\cdots \otimes x_{j,\sigma(k)} 
\biggr\|_{\eps} 
\\
&\quad\leq 
\frac{1}{k!} 
\sum_{\sigma\in S_k} 
\biggl\| 
\sum_{j=1}^M 
x_{j,\sigma(1)} \otimes 
\cdots \otimes x_{j,\sigma(k)} 
\biggr\|_{\eps}  
=
\biggl\| 
\sum_{j=1}^M 
x_{j,1} \otimes 
\cdots \otimes x_{j,k} 
\biggr\|_{\eps} 
= 
\norm{\tenvec}{\eps}, 
\end{split}
\end{equation} 
it also has a unique continuous extension 
to a linear projection 
$\sym_\eps\from {\xkeps} E \to \xkseps E$. 
Furthermore, 
on $\xkseps E$ 
the injective tensor norm $\norm{\,\cdot\,}{\eps}$ 
and the symmetric injective tensor norm $\norm{\,\cdot\,}{\eps_s}$ 
are equivalent, with 
$k$-dependent equivalence constants, 
\begin{equation}\label{eq:eps-norm-equivalence} 
\norm{ \tenvec }{\eps_s} 
\leq 
\norm{ \tenvec }{\eps} 
\leq 
\frac{k^k}{k!} 
\norm{ \tenvec }{\eps_s} 
\quad 
\forall 
\tenvec \in \xkseps E. 
\end{equation} 
see \cite[Section~2.3 and~2.7]{Floret2001}.

\begin{remark}\label{rem:projective-norm} 
	There are several meaningful options to define norms  
	on the $k$-fold algebraic tensor product spaces~$\xk E$ 
	and~$\xks E$. 
	Besides the injective tensor norm, 
	a common choice is the 
	\emph{projective tensor norm} $\norm{\,\cdot\,}{\pi}$, 
	defined for $\tenvec \in \xk E$ by 
	\begin{equation}\label{eq:pinorm}
	\norm{\tenvec}{\pi} 
	:= 
	\inf\Biggl\{ 
	\sum_{j=1}^M \prod_{n=1}^k \norm{x_{j,n}}{E} 
	\; \Bigg| \;  
	M\in\bbN, \, 
	\tenvec = 
	\sum_{j=1}^{M} 
	x_{j,1} 
	\otimes 
	\cdots 
	\otimes 
	x_{j,k} 
	\Biggr\}. 
	\end{equation}
	The \emph{symmetric projective tensor norm} 
	on~$\xks E$ is given by 
	(see \cite[Section~2.2]{Floret1997}) 
	\begin{equation}\label{eq:spinorm}
	\norm{\tenvec}{\pi_s} 
	:= 
	\inf\Biggl\{ 
	\sum_{j=1}^M  
	|\lambda_j| \,
	\norm{x_j}{E}^k 
	\; \Bigg| \;  
	M\in\bbN, \, 
	\tenvec = 
	\sum_{j=1}^{M} 
	\lambda_j \, {\xk} x_j  
	\Biggr\}. 
	\end{equation}
	Then, for every $\tenvec\in \xk E$,  
	$\norm{\tenvec}{\pi}\geq \norm{\tenvec}{\eps}$ 
	and 
	$\norm{\sym(\tenvec)}{\pi_s} 
	\geq 
	\max\{ \norm{\sym(\tenvec)}{\eps_s}, \norm{\sym(\tenvec)}{\pi}\}$ hold. 
	The closures of $\xk E$ and of $\xks E$ 
	with respect to the norms~$\norm{\,\cdot\,}{\pi}$ 
	and~$\norm{\,\cdot\,}{\pi_s}$, respectively, 
	yield well-defined Banach spaces, 
	the \emph{full} and \emph{symmetric 
		$k$-fold projective tensor product of $E$}, 
	denoted by $\xkpi E$ and $\xkspis E$. 
	As shown in Example~\ref{example:projective-bad}, 
	the projective norm is not suitable 
	for the error analysis of Monte Carlo methods. 
\end{remark}

\subsection{Moments of Banach space valued random variables} 
\label{subsec:moments} 

The purpose of this subsection is to generalize the notion 
of the $k$th moment, defined for a real-valued 
random variable $X\from\Omega\to\bbR$ as 
\[
\bbM^k[X] 
:= 
\bbE\bigl[ X^k \bigr] 
= 
\int_{\Omega} X(\omega)^k \, \rd\bbP(\omega), 
\]
to Banach space valued random variables 
$\xi\from\Omega\to E$. 
To this end, we first specify 
the concept of vector-valued integration 
which we imply when taking expected values 
of $E$-valued random variables. 

\subsubsection{Vector-valued integration}

An $E$-valued random variable $\xi$ defined on the 
probability space $(\Omega,\cA,\bbP)$ is a mapping 
$\xi\from\Omega\to E$ 
which is measurable in a certain sense. 
Specifically, we consider the 
class of Bochner measurable 
random variables; these are all mappings 
$\xi\from\Omega\to E$ which are 
\begin{enumerate*} 
	\item measurable 
	with respect to the Borel $\sigma$-algebra $\cB(E)$ 
	on $E$, and 
	\item almost surely separably valued, i.e., 
	$\xi\in E_0$ $\bbP$-a.s.\ 
	for some separable subspace $E_0\subseteq E$. 
\end{enumerate*} 
A Bochner measurable random variable $\xi$  
is often also called \emph{strongly measurable}, 
emphasizing the contrast to the notion 
of weak measurability, which only requires 
the real-valued random variable 
$\duality{ f, \xi }$ to be measurable 
for every $f\in \dual{E}$\!. 
Note that 
these notions are equivalent  
whenever $\xi$ is 
almost surely separably valued 
(e.g., in the case of  
a separable Banach space $E$), 
see \cite[Theorem~2.3]{JansonKaijser2015}.

Furthermore, it turns out that 
$\xi\from\Omega\to E$ is 
Bochner measurable if and only if there 
exists a sequence of 
Borel measurable simple functions 
$\xi_n\from\Omega\to E$, $n\in\bbN$, 
such that $\xi_n \to \xi$, $\bbP$-a.s. 
This characterization facilitates the definition 
of the Bochner integral 
\[
\int_{\Omega} \xi(\omega)\, \rd\bbP(\omega) 
=:  
\bbE[\xi] \in E, 
\]
whenever $\xi$ is Bochner measurable 
and $\bbE\bigl[ \norm{\xi}{E} \bigr] <\infty$. 

We close this subsection with introducing 
the corresponding Bochner $L_q$-spaces. 
For a real Banach space $(F,\norm{\,\cdot\,}{F})$ 
and $q\in[1,\infty)$, 
$L_q(\Omega;F):=L_q(\Omega,\cA,\bbP;F)$ 
is the space of all 
(equivalence classes of) $F$-valued 
Bochner measurable random variables 
$\eta\from\Omega\to F$ 
such that $\bbE\bigl[ \norm{\eta}{F}^q \bigr] < \infty$, 
with norm given by 
\[
\norm{\eta}{L_q(\Omega;F)} 
:= 
\left(
\bbE\bigl[
\norm{\eta}{F}^q
\bigr]
\right)^{\nicefrac{1}{q}} 
= 
\biggl( 
\int_{\Omega} 
\norm{\eta(\omega)}{F}^q 
\, \rd\bbP(\omega)
\biggr)^{\nicefrac{1}{q}} \!. 
\]

\subsubsection{Moments of order $k$}

For an $E$-valued random variable $\xi$, 
its \emph{injective $k$th moment $\Mk[\xi]$} is defined as 
the expectation 
(see e.g.~\cite[Section~3.1]{JansonKaijser2015})
\begin{equation}\label{eq:Mk} 
\Mk[\xi] 
:= 
\bbE\bigl[ \xk \xi \bigr] 
= 
\int_{\Omega}  \xk \xi(\omega) \, \rd \bbP(\omega) 
= 
\int_\Omega 
\underbrace{\xi(\omega) \otimes \cdots \otimes \xi(\omega)}_{ 
	\text{\normalfont $k$ times}}
\, \rd\bbP(\omega), 
\end{equation} 
whenever this integral exists (in Bochner sense)
in the $k$-fold injective tensor product space $\xkeps E$. 
In what follows, we will always assume 
that $\xi \in L_q(\Omega;E)$ holds for some $q\in[k,\infty)$.  
This guarantees that $\Mk[\xi]$ exists in Bochner sense: 
Firstly, 
Bochner measurability~of~$\xi$ 
implies that also 
$\xk \xi \from \Omega \to \xkeps E$ 
is Bochner measurable 
since the non-linear 
mapping $E\ni x \mapsto \xk x \in \xkeps E$ 
is continuous and, secondly, 
\[
\bbE\bigl[ \norm{ {\xk} \xi }{\eps} \bigr] 
= 
\bbE \bigl[ \norm{ \xi }{E}^k \bigr] 
\leq 
\bbE \bigl[ \norm{ \xi }{E}^q \bigr] 
= 
\norm{\xi}{L_q(\Omega;E)}^q
< 
\infty. 
\]

Of particular relevance 
in the present context is that 
the injective $k$th moment 
is an element of the 
$k$-fold \emph{symmetric} 
injective tensor product 
space $\xkseps E$, since 
\[
\sym_\eps 
\bigl( 
\Mk[\xi]
\bigr)
= 
\sym_\eps
\bigl( 
\bbE\bigl[ \xk \xi \bigr]
\bigr)
= 
\bbE\bigl[ \sym_\eps \bigl( \xk \xi \bigr) \bigr] 
= 
\bbE\bigl[ \xk \xi \bigr] 
= 
\Mk[\xi]. 
\]
Here, we have used 
continuity of the symmetrization 
$\sym_\eps\from {\xkeps} E \to \xkseps E$, 
see \eqref{eq:symmetrization} and 
\eqref{eq:norm-rel}, to exchange the order 
of $\sym_\eps(\,\cdot\,)$ and $\bbE[\,\cdot\,]$. 

\begin{remark}\label{rem:pi-moment} 
	The assumption $\xi \in L_k(\Omega;E)$ 
	does not only guarantee the existence of 
	the injective $k$th moment $\Mk[\xi]$
	but also that of the 
	projective $k$th moment $\Mkpi[\xi]$, 
	i.e., the integral in \eqref{eq:Mk} 
	converges in Bochner sense 
	also in the stronger projective tensor norm 
	defined in \eqref{eq:pinorm}, 
	see Remark~\ref{rem:projective-norm}. 
	This observation follows from the identities 
	$\norm{{\xk} \xi}{\pi} 
	= 
	\norm{\xi}{E}^k
	=
	\norm{{\xk} \xi}{\eps}$ 
	showing that the above arguments 
	may be translated verbatim, 
	see also \cite[Theorem~3.8]{JansonKaijser2015}. 
\end{remark} 

\section{Monte Carlo estimation of the \texorpdfstring{$k$th}{kth} moment} 
\label{section:abstract}   

This section treats 
the analysis of abstract standard, single-level and 
multilevel Monte Carlo 
methods to estimate the injective $k$th moment $\Mk[\xi]$ 
of a Banach space valued random variable $\xi\in L_k(\Omega;E)$.  
In Subsection~\ref{subsec:auxiliary} 
we first provide necessary definitions, 
including those of 
Rademacher and orthogaussian families 
as well as the type of a Banach space.   
Furthermore, we formulate auxiliary 
results based on comparison theorems 
for Rademacher and Gaussian averages.  
These observations facilitate 
the error analysis for  
standard and single-level Monte Carlo estimation  
in Subsection~\ref{subsec:slmc} 
and for the multilevel Monte Carlo method 
in Subsection~\ref{subsec:mlmc}. 

\subsection{Auxiliary results on Rademacher and Gaussian averages} 
\label{subsec:auxiliary} 

\begin{definition}[Rademacher family]\label{def:rade-family} 
	Let $(\Omegat,\cAt,\bbPt)$ 
	be a complete probability space and 
	$r_j \from \Omegat\to\{-1,1\}$, $j\in J\subseteq\bbN$, 
	be a family of independent random variables such that  
	$\bbPt(r_j = -1)=\bbPt(r_j = 1)=\tfrac{1}{2}$ for all $j\in J$. 
	Then, the collection of random variables 
	$(r_j)_{j\in J}$ is called a \emph{Rademacher family}.
\end{definition} 

\begin{definition}[Orthogaussian family]\label{def:orthogaussian} 
	Let $(\Omegat,\cAt,\bbPt)$ 
	be a complete probability space 
	with expectation~$\bbEt$. 
	Suppose that 
	$g_j \from \Omegat\to\bbR$, $j\in J\subseteq\bbN$, 
	are independent standard Gaussian random variables, 
	i.e., $\bbEt[g_j] = 0$, $\bbEt\bigl[g_j^2 \bigr] = 1$ 
	for all $j\in J$, 
	and $\bbEt[g_i g_j] = 0$ for $i\neq j$.  
	Then, the collection $(g_j)_{j\in J}$ is called an 
	\emph{orthogaussian family}. 
\end{definition} 

Assuming that $(z_j)_{j=1}^M$ is a Rademacher 
or orthogaussian family 
and $(x_j)_{j=1}^M$ are vectors in the Banach space $E$, 
the $E$-norm of the finite random sum 
$\sum_{j=1}^M z_j x_j$ 
is the supremum of a real-valued 
(Rademacher or Gaussian) stochastic process. 
More specifically, the Hahn--Banach theorem allows us to rewrite 
the norm as follows, 
\begin{equation}\label{eq:rade-gaussian-norm} 
\biggl\| \sum_{j=1}^M z_j x_j \biggr\|_E 
= 
\sup_{f\in B_{\dual{E}}}  
\biggl| \sum_{j=1}^M z_j f(x_j) \biggr| 
= 
\sup_{(t_1,\ldots,t_M)\in T} 
\biggl| \sum_{j=1}^M z_j t_j \biggr| , 
\end{equation}
where $T$ is the compact subset of $\bbR^M$\! given by 
$T:=\{ (f(x_1),\ldots,f(x_M)) : f\in B_{\dual{E}} \}$.  

The next lemma summarizes comparison theorems 
for Gaussian and Rademacher averages, see 
\cite[Corollary~3.17 \& Theorem~4.12]{LedouxTalagrand2011}. 
It will facilitate generalizing 
the equality~\eqref{eq:rade-gaussian-norm} 
to an upper bound for $L_q$-norms of 
random variables of the form 
\[
\sup_{f\in B_{\dual{E}}}  
\biggl| \sum_{j=1}^M r_j f(x_j)^{k_j} \biggr| 
\quad \text{and} \quad 
\sup_{f\in B_{\dual{E}}}  
\biggl| \sum_{j=1}^M g_j f(x_j)^{k_j} \biggr|,
\qquad  
k_j\in\bbN, \quad 1\leq j \leq M. 
\]

\begin{lemma}\label{lem:rade-gaussian} 
	Let $M\in\bbN$, 
	and let $z := (z_j)_{j=1}^M$ be 
	a Rademacher or 
	orthogaussian family 
	on a complete probability space 
	$(\Omegat,\cAt,\bbPt)$. 
	Suppose that $(\varphi_j)_{j=1}^M$ are  
	functions on~$\bbR$ such that, 
	for every $1\leq j\leq M$,  
	\begin{equation}\label{eq:ass:contraction}
	\varphi_j(0) = 0 
	\qquad\text{and}\qquad 
	|\varphi_j(s) - \varphi_j(t)| 
	\leq 
	|s-t| 
	\quad \forall s,t\in\bbR. 
	\end{equation}
	Assume further that $G\from [0,\infty) \to [0,\infty)$ 
	is convex and increasing. Then, 
	we have for any bounded subset $T\subset\bbR^M$\!  
	the relation   
	\begin{equation}\label{eq:lem:rade-gaussian} 
	\widetilde{\bbE}
	G\Biggl( \frac{1}{2} 
	\sup_{(t_1,\ldots,t_M)\in T} 
	\biggl| 
	\sum_{j=1}^M 
	z_j \varphi_j(t_j) \biggr| 
	\Biggr) 
	\leq 
	\widetilde{\bbE} 
	G\Biggl(  
	C_z 
	\sup_{(t_1,\ldots,t_M)\in T} 
	\biggl| 
	\sum_{j=1}^M 
	z_j t_j \biggr| 
	\Biggr),   
	\end{equation} 
	where $\bbEt$ denotes the expectation operator 
	on $(\Omegat, \cAt, \bbPt)$, and $C_z$ is given by 
	\begin{equation}\label{eq:C-rade-gaussian} 
	C_z 
	:= 
	\begin{cases} 
	1 &\text{ if $z$ is a Rademacher family} , 
	\\
	2 &\text{ if $z$ is an orthogaussian family}. 
	\end{cases} 
	\end{equation}
\end{lemma}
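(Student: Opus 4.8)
The plan is to obtain \eqref{eq:lem:rade-gaussian} by reducing it to the two classical contraction/comparison principles for Rademacher and Gaussian processes, once the left-hand side has been put in the form to which they apply. Observe first that, for a fixed bounded set $T\subset\bbR^M$ and contractions $(\varphi_j)_{j=1}^M$ with $\varphi_j(0)=0$ as in \eqref{eq:ass:contraction}, the quantity $\sup_{(t_1,\ldots,t_M)\in T}\bigl|\sum_{j=1}^M z_j\varphi_j(t_j)\bigr|$ is again a supremum of $\bigl|\sum_{j=1}^M z_j s_j\bigr|$, now over the bounded image set $\{(\varphi_1(t_1),\ldots,\varphi_M(t_M)):(t_1,\ldots,t_M)\in T\}\subset\bbR^M$. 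This is precisely the set-up of the contraction principle, with the convex increasing $G$ serving as the outer modulus; passing to general bounded $T\subset\bbR^M$ (rather than the special sets coming from Banach-space norms via \eqref{eq:rade-gaussian-norm}) is exactly the level of generality at which these principles are stated.

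For the Rademacher case $z=(r_j)_{j=1}^M$, so that $C_z=1$, the estimate \eqref{eq:lem:rade-gaussian} is then nothing but the contraction principle in the form of \cite[Theorem~4.12]{LedouxTalagrand2011}: for contractions vanishing at the origin and any convex increasing $G\colon[0,\infty)\to[0,\infty)$ one has $\bbEt G\bigl(\tfrac12\sup_{t\in T}|\sum_j r_j\varphi_j(t_j)|\bigr)\le\bbEt G\bigl(\sup_{t\in T}|\sum_j r_j t_j|\bigr)$, where the factor $\tfrac12$ on the left is the price one pays for working with absolute values. No further argument is needed in this case.

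For the orthogaussian case $z=(g_j)_{j=1}^M$, so that $C_z=2$, I would reduce to the Rademacher case by a conditioning argument. Using that $(g_j)_{j=1}^M$ has the same law as $(r_j|g_j|)_{j=1}^M$ for an independent Rademacher family $(r_j)_{j=1}^M$, condition on $(|g_j|)_{j=1}^M$ and apply the Rademacher step above to the rescaled maps $\psi_j(\,\cdot\,):=|g_j|\,\varphi_j(\,\cdot\,/|g_j|)$ — which again satisfy \eqref{eq:ass:contraction} — evaluated on the rescaled set $\{(|g_1|t_1,\ldots,|g_M|t_M):(t_1,\ldots,t_M)\in T\}$; integrating back over $(|g_j|)_{j=1}^M$ and undoing the rescaling recovers the Gaussian sums on both sides. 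Alternatively, one may invoke the comparison between Rademacher and Gaussian averages of \cite[Corollary~3.17]{LedouxTalagrand2011} together with the Rademacher step. Either way, the constant $2$ in \eqref{eq:C-rade-gaussian} is a convenient (non-optimal) bound absorbing the constants generated in this passage.

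The genuinely delicate point is bookkeeping rather than anything deep: one must retain the absolute values throughout (hence the $\tfrac12$ in \eqref{eq:lem:rade-gaussian}), check that the rescaled maps $\psi_j$ remain contractions vanishing at the origin even though $|g_j|\,\varphi_j$ itself need not be $1$-Lipschitz, and verify the measurability needed to apply Fubini when integrating separately over the signs and the moduli. Once the hypotheses of \cite[Theorem~4.12]{LedouxTalagrand2011} and \cite[Corollary~3.17]{LedouxTalagrand2011} are aligned with the present statement — in particular the convexity and monotonicity of $G$ and boundedness of $T$ — the bound \eqref{eq:lem:rade-gaussian} follows, and I expect no real obstacle beyond this matching of hypotheses.
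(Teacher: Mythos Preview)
Your proposal is correct and aligns with the paper's treatment: the paper does not actually prove this lemma but simply records it as a summary of \cite[Corollary~3.17 \& Theorem~4.12]{LedouxTalagrand2011}, which are precisely the two results you invoke. Your conditioning argument for the Gaussian case (writing $g_j\overset{d}{=}r_j|g_j|$ and applying the Rademacher contraction to the rescaled maps) is a valid elaboration that in fact yields the bound with $C_z=1$ rather than $2$, so the stated inequality follows a fortiori since $G$ is increasing.
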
  

\begin{proposition}\label{prop:rade-gaussian}  
	Let $M\in\bbN$, 
	$x_1,\ldots,x_M\in E$, 
	and $(z_j)_{j=1}^M$ be 
	a Rademacher or 
	orthogaussian family 
	on a complete probability space 
	$(\Omegat,\cAt,\bbPt)$ 
	(with expectation $\bbEt$). 
	In addition, assume that 
	$G\from [0,\infty) \to [0,\infty)$ 
	is a convex and increasing function. 
	\begin{enumerate}[leftmargin=1cm] 
		\item\label{prop:rade-gaussian-poly}  
		For integers $k_1, \ldots, k_M\in\bbN$,  
		we have the relation  
		\begin{equation}\label{eq:prop:rade-gaussian-poly} 
		\bbEt 
		G\Biggl( 
		\sup_{f\in B_{\dual{E}}}  
		\biggl| 
		\sum_{j=1}^M 
		z_j f(x_j)^{k_j} \biggr| 
		\Biggr) 
		\leq 
		\bbEt 
		G\Biggl(  
		2 C_z \, 
		\biggl\| 
		\sum_{j=1}^M 
		z_j k_j \norm{x_j}{E}^{k_j-1} x_j \biggr\|_E 
		\Biggr).  
		\end{equation}
		\item\label{prop:rade-gaussian-q}   
		For general exponents 
		$q_1,\ldots,q_M\in[1,\infty)$,  
		the following holds: 
		\begin{equation}\label{eq:prop:rade-gaussian-q} 
		\bbEt 
		G\Biggl(  
		\sup_{f\in B_{\dual{E}}}  
		\biggl| 
		\sum_{j=1}^M 
		z_j |f(x_j)|^{q_j} \biggr| 
		\Biggr) 
		\leq 
		\bbEt 
		G\Biggl(  
		2 C_z \, 
		\biggl\| 
		\sum_{j=1}^M 
		z_j q_j \norm{x_j}{E}^{q_j-1}  x_j \biggr\|_E 
		\Biggr). 
		\end{equation} 
	\end{enumerate}
	Here, the constant $C_z\in\{1,2\}$ is defined 
	as in \eqref{eq:C-rade-gaussian}. 
\end{proposition}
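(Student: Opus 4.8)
The plan is to derive both estimates from the contraction principle of Lemma~\ref{lem:rade-gaussian}, after dilating the coordinates so that the monomials become genuine contractions. I describe part~\ref{prop:rade-gaussian-poly} in detail; part~\ref{prop:rade-gaussian-q} follows by the same argument with the integers $k_j$ replaced by the exponents $q_j\in[1,\infty)$ and $t\mapsto t^{k_j}$ replaced by $t\mapsto|t|^{q_j}$. Since an index $j$ with $x_j=0$ contributes nothing to either side of \eqref{eq:prop:rade-gaussian-poly}, I may assume $x_j\neq 0$ for all $j$ and set $L_j:=k_j\norm{x_j}{E}^{k_j-1}\in(0,\infty)$.

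First I would record, applying the Hahn--Banach identity \eqref{eq:rade-gaussian-norm} to the vectors $L_j x_j$, that
\[
\biggl\|\sum_{j=1}^M z_j\,k_j\norm{x_j}{E}^{k_j-1}x_j\biggr\|_E
=\biggl\|\sum_{j=1}^M z_j L_j x_j\biggr\|_E
=\sup_{s\in\widetilde T}\biggl|\sum_{j=1}^M z_j s_j\biggr|,
\quad
\widetilde T:=\bigl\{(L_j f(x_j))_{j=1}^M : f\in B_{\dual E}\bigr\}.
\]
Because $|f(x_j)|\leq\norm{x_j}{E}$ for $f\in B_{\dual E}$, the set $\widetilde T\subset\bbR^M$ is bounded, with $\widetilde T\subseteq\prod_{j=1}^M[-L_j\norm{x_j}{E},L_j\norm{x_j}{E}]$. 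I then let $\varphi_j$ agree with $u\mapsto u^{k_j}/L_j^{k_j}$ on $[-L_j\norm{x_j}{E},L_j\norm{x_j}{E}]$ and be constant on each of the two remaining rays. Then $\varphi_j(0)=0$, and on $[-L_j\norm{x_j}{E},L_j\norm{x_j}{E}]$ one has $|\varphi_j'(u)|=k_j|u|^{k_j-1}/L_j^{k_j}\leq k_j(L_j\norm{x_j}{E})^{k_j-1}/L_j^{k_j}=k_j\norm{x_j}{E}^{k_j-1}/L_j=1$ by the choice of $L_j$, while $\varphi_j$ is constant outside; hence each $\varphi_j$ is $1$-Lipschitz on all of $\bbR$ and satisfies \eqref{eq:ass:contraction}. (For $k_j=1$ this is just $\varphi_j=\mathrm{id}$, $L_j=1$.)

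Then I would apply Lemma~\ref{lem:rade-gaussian} with the bounded set $T=\widetilde T$, the functions $(\varphi_j)_{j=1}^M$, and the given convex increasing $G$, obtaining
\[
\bbEt\,G\Biggl(\frac12\sup_{s\in\widetilde T}\biggl|\sum_{j=1}^M z_j\varphi_j(s_j)\biggr|\Biggr)
\leq
\bbEt\,G\Biggl(C_z\sup_{s\in\widetilde T}\biggl|\sum_{j=1}^M z_j s_j\biggr|\Biggr).
\]
For $s=(L_j f(x_j))_{j=1}^M\in\widetilde T$ one has $\varphi_j(s_j)=s_j^{k_j}/L_j^{k_j}=f(x_j)^{k_j}$, so the supremum on the left equals $\sup_{f\in B_{\dual E}}|\sum_j z_j f(x_j)^{k_j}|$, and the supremum on the right equals, by the displayed identity above, $\|\sum_j z_j k_j\norm{x_j}{E}^{k_j-1}x_j\|_E$. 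To clear the factor $\tfrac12$ I would rerun the bound with $G$ replaced by the still convex increasing function $t\mapsto G(2t)$: the left side then becomes the unscaled quantity and the right side acquires the constant $2C_z$, which is exactly \eqref{eq:prop:rade-gaussian-poly}.

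The only genuinely non-routine ingredient is the rescaling: Lemma~\ref{lem:rade-gaussian} cannot be applied to the bare monomials $t\mapsto t^{k_j}$ (which are not contractions), and dilating the $j$-th functional by $L_j=k_j\norm{x_j}{E}^{k_j-1}$ is precisely what forces $u\mapsto u^{k_j}/L_j^{k_j}$ to have slope at most $1$ on the range $[-L_j\norm{x_j}{E},L_j\norm{x_j}{E}]$ of $f\mapsto L_j f(x_j)$. For part~\ref{prop:rade-gaussian-q}, the analogous function $u\mapsto|u|^{q_j}/L_j^{q_j}$ with $L_j=q_j\norm{x_j}{E}^{q_j-1}$ is again a contraction on the corresponding interval (differentiably so for $q_j>1$, and equal to $|u|$ when $q_j=1$), and the rest of the argument is unchanged. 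The extra factor $2$ in $2C_z$, beyond the contraction constant $C_z\in\{1,2\}$ of Lemma~\ref{lem:rade-gaussian}, is simply the cost of the substitution $G\rightsquigarrow G(2\,\cdot)$ used to absorb the $\tfrac12$ in \eqref{eq:lem:rade-gaussian}.
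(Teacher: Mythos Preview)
Your proof is correct and follows essentially the same approach as the paper: both apply Lemma~\ref{lem:rade-gaussian} after a dilation that turns the monomials into $1$-Lipschitz functions on the relevant range, then undo the dilation. The only cosmetic differences are that the paper absorbs the extra factor $2$ into the choice of rescaled vectors $\widetilde{x}_j := 2k_j\norm{x_j}{E}^{k_j-1}x_j$ rather than replacing $G$ by $G(2\,\cdot\,)$, and for part~\ref{prop:rade-gaussian-q} it composes $s\mapsto s^{q_j}$ with the projection $s\mapsto\min\{|s|,\norm{\widetilde{x}_j}{E}\}$ instead of extending $u\mapsto|u|^{q_j}$ constantly outside the interval.
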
 

\begin{remark} 
	Part~\ref{prop:rade-gaussian-poly} of Proposition~\ref{prop:rade-gaussian} 
	is a generalization  
	of the observation 
	made by Ledoux and Talagrand 
	in~\cite[Equation~(4.19)]{LedouxTalagrand2011}; 
	there formulated for 
	Rademacher averages, 
	$G(x):=x$, and 
	$k_1=\ldots=k_M=2$. 
	That is, we generalize to  
	higher-order polynomials on one hand, 
	and to Gaussian averages on the other~hand. 
\end{remark} 

\begin{proof}[Proof of Proposition~\ref{prop:rade-gaussian}] 
	We will prove~\ref{prop:rade-gaussian-poly} 
	and~\ref{prop:rade-gaussian-q} 
	using Lemma~\ref{lem:rade-gaussian}. 
	Without loss of generality 
	we may assume that $x_j\neq 0$ 
	for all $1\leq j\leq M$. 
	
	To derive \ref{prop:rade-gaussian-poly},  
	let $M\in\bbN$ and 
	$k_j\in\bbN$,  
	$\widetilde{x}_j \in E\setminus\{0\}$ 
	for all $1\leq j\leq M$. 
	Furthermore, for $1\leq j \leq M$, 
	define 
	$I_j := \bigl[ -\norm{\widetilde{x}_j}{E}, \norm{\widetilde{x}_j}{E} \bigr]$ 
	and 
	\begin{equation}\label{eq:varphi-tilde}
	\widetilde{\varphi}_j
	\from 
	I_j \to \bbR, 
	\qquad 
	\widetilde{\varphi}_j(s) 
	:= 
	k_j^{-1} \norm{\widetilde{x}_j}{E}^{1 - k_j}  
	s^{k_j}, 
	\quad
	s\in I_j. 
	\end{equation}
	The function $\widetilde{\varphi}_j$ is continuously differentiable 
	on the interior of $I_j$ 
	with $|\widetilde{\varphi}_j'(s)|\leq 1$. 
	Let 
	$P_j \from \bbR\to I_j$ denote the projection 
	$P_j(s) := \max\{-\norm{\widetilde{x}_j}{E}, \min\{s,\norm{\widetilde{x}_j}{E}\}\}$ 
	onto the interval $I_j$.  
	Then, for every $1\leq j\leq M$, 
	the function 
	\[
	\varphi_j \from \bbR\to \bbR, 
	\qquad 
	\varphi_j(s) := \widetilde{\varphi}_j(P_j(s)), 
	\]
	satisfies the assumptions \eqref{eq:ass:contraction} 
	of Lemma~\ref{lem:rade-gaussian}.  
	Therefore, 
	letting the bounded set $T\subset \bbR^M$ 
	be given by 
	$T:= \{ (f(\widetilde{x}_1), \ldots, f(\widetilde{x}_M)) : f\in B_{\dual{E}} \}$, 
	we may combine the observation \eqref{eq:rade-gaussian-norm} 
	with \eqref{eq:lem:rade-gaussian} 
	and by also noting that $T\subseteq I_1\times \ldots \times I_M$ 
	we obtain that, for every convex and increasing 
	function $G\from [0,\infty) \to [0,\infty)$, 
	\begin{align} 
	\widetilde{\bbE}  
	G\Biggl( \frac{1}{2} 
	\sup_{f\in B_{\dual{E}}} 
	&\biggl| 
	\sum_{j=1}^M 
	z_j k_j^{-1} \norm{\widetilde{x}_j}{E}^{1-k_j} f(\widetilde{x}_j)^{k_j} \biggr| 
	\Biggr) 
	= 
	\widetilde{\bbE}   
	G\Biggl( \frac{1}{2} 
	\sup_{(t_1,\ldots,t_M)\in T} 
	\biggl| 
	\sum_{j=1}^M 
	z_j \varphi_j(t_j) \biggr| 
	\Biggr)  
	\notag 
	\\
	&\quad\leq 
	\widetilde{\bbE}    
	G\Biggl(  
	C_z 
	\sup_{(t_1,\ldots,t_M)\in T} 
	\biggl| 
	\sum_{j=1}^M 
	z_j t_j \biggr| 
	\Biggr)
	=  
	\widetilde{\bbE}   
	G\Biggl(  
	C_z \, 
	\biggl\| 
	\sum_{j=1}^M 
	z_j \widetilde{x}_j \biggr\|_E
	\Biggr).  
	\label{eq:proof:prop:rade-gaussian}   
	\end{align} 
	Finally, for $x_1,\ldots,x_M\in E\setminus\{0\}$, 
	we choose  
	$\widetilde{x}_j := 2 k_j \norm{x_j}{E}^{k_j-1} x_j \in E\setminus\{0\}$ 
	for all $1\leq j\leq M$ 
	and \eqref{eq:proof:prop:rade-gaussian} shows 
	\eqref{eq:prop:rade-gaussian-poly}. 
	
	For~\ref{prop:rade-gaussian-q} 
	we modify the above arguments 
	by replacing $k_j$ 
	and 
	the interval $I_j$ 
	in the definition 
	\eqref{eq:varphi-tilde} 
	of $\widetilde{\varphi}_j$
	with $q_j$ 
	and $I_j := \bigl[ 0, \norm{\widetilde{x}_j}{E} \bigr]$, 
	respectively, 
	and 
	the projection $P_j\from\bbR\to I_j$ 
	with 
	$P_j(s) := \min\{|s|, \norm{\widetilde{x}_j}{E}\}$. 
	Then, the function $\varphi_j(s) := \widetilde{\varphi}_j( P_j(s) )$ 
	satisfies the assumptions~\eqref{eq:ass:contraction} 
	of Lemma~\ref{lem:rade-gaussian}, 
	since $\varphi_j(0) = 0$, 
	$|\widetilde{\varphi}_j'(s)| \leq 1$ 
	for all $s\in(0,\norm{\widetilde{x}_j}{E})$ and 
	by the mean value theorem 
	combined with the reverse triangle inequality 
	we thus have that 
	\[
	|\varphi_j(s) - \varphi_j(t) | 
	= 
	|\widetilde{\varphi}_j(P_j(s)) - \widetilde{\varphi}_j(P_j(t)) | 
	\leq 
	| P_j(s) - P_j(t) | 
	\leq 
	\bigl| |s| - |t| \bigr| 
	\leq 
	|s-t|. 
	\]
	Furthermore, for every $f\in B_{\dual{E}}$, we have that  
	$P_j(f(\widetilde{x}_j)) = |f(\widetilde{x}_j)|$ 
	and we obtain the analogue 
	of \eqref{eq:proof:prop:rade-gaussian}, 
	\[
	\widetilde{\bbE}  
	G\Biggl( \frac{1}{2} 
	\sup_{f\in B_{\dual{E}}} 
	\biggl| 
	\sum_{j=1}^M 
	z_j q_j^{-1} \norm{\widetilde{x}_j}{E}^{1-q_j} |f(\widetilde{x}_j)|^{q_j} \biggr| 
	\Biggr) 
	\leq 
	\widetilde{\bbE}   
	G\Biggl(  
	C_z \, 
	\biggl\| 
	\sum_{j=1}^M 
	z_j \widetilde{x}_j \biggr\|_E
	\Biggr).  
	\]
	The choice 
	$\widetilde{x}_j := 2 q_j \norm{x_j}{E}^{q_j-1} x_j$, 
	$1\leq j\leq M$, completes the proof 
	of \eqref{eq:prop:rade-gaussian-q}. 
\end{proof} 

The next lemma shows that we may 
symmetrize independent random variables 
with vanishing mean,
when bounding $L_q$-norms of their sum. 
This result can be found, e.g., 
in~\cite[Lemma~6.3]{LedouxTalagrand2011}  
or~\cite[Lemma~5.9]{CoxEtAl2021}.  

\begin{lemma}[Symmetrization]
	\label{lem:symmetrization} 
	Let  $q\in[1,\infty)$, $M\in\bbN$, $(r_j)_{j=1}^M$ be 
	a Rademacher family on a complete probability space 
	$(\Omegat,\cAt,\bbPt)$, 
	and let $\eta_1,\ldots,\eta_M\in L_q(\Omegat;F)$ be 
	centered random variables, 
	i.e., $\bbEt[\eta_j]=0$ for $1\leq j \leq M$, 
	taking values 
	in a real Banach space $(F,\norm{\,\cdot\,}{F})$ 
	such that $\eta_1,\ldots,\eta_M, r_1, \ldots, r_M$ are independent. 
	Then, 
	\[
	\biggl\| 
	\sum_{j=1}^M \eta_j 
	\biggr\|_{L_q(\Omegat;F)} 
	\leq 
	2 \, 
	\biggl\| 
	\sum_{j=1}^M r_j \eta_j 
	\biggr\|_{L_q(\Omegat;F)} . 
	\]
\end{lemma}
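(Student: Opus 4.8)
The plan is to use the classical ``insert an independent copy'' argument. I would first pass to a sufficiently large probability space (still denoted $(\Omegat,\cAt,\bbPt)$, with expectation $\bbEt$) carrying, in addition to $\eta_1,\ldots,\eta_M$ and $r_1,\ldots,r_M$, an independent copy $\eta_1',\ldots,\eta_M'$ of the family $\eta_1,\ldots,\eta_M$, with $\eta_1',\ldots,\eta_M',\eta_1,\ldots,\eta_M,r_1,\ldots,r_M$ all independent. Since each $\eta_j'\in L_q(\Omegat;F)$ with $\bbEt[\eta_j']=0$, conditioning on the $\sigma$-algebra $\cG$ generated by $\eta_1,\ldots,\eta_M$ (with respect to which each $\eta_j'$ has conditional mean zero) and applying Jensen's inequality for the convex map $x\mapsto\norm{x}{F}^q$ yields
\[
\bbEt\Bigl\| \sum_{j=1}^M \eta_j \Bigr\|_F^q
=
\bbEt\Bigl\| \sum_{j=1}^M \bigl(\eta_j - \bbEt[\eta_j'\mid\cG]\bigr) \Bigr\|_F^q
\leq
\bbEt\Bigl\| \sum_{j=1}^M (\eta_j - \eta_j') \Bigr\|_F^q .
\]

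Next I would exploit symmetry. Because $\eta_j$ and $\eta_j'$ are identically distributed and the $2M$ random variables $\eta_1,\eta_1',\ldots,\eta_M,\eta_M'$ are independent, each difference $\eta_j-\eta_j'$ is a symmetric $F$-valued random variable and the family $(\eta_j-\eta_j')_{j=1}^M$ is independent; hence its joint law is invariant under replacing it by $(\epsilon_j(\eta_j-\eta_j'))_{j=1}^M$ for any deterministic signs $(\epsilon_j)_{j=1}^M\in\{-1,+1\}^M$. Conditioning on $(r_j)_{j=1}^M$, which is independent of all the $\eta$'s, and integrating this invariance against the law of $(r_j)_{j=1}^M$ gives
\[
\bbEt\Bigl\| \sum_{j=1}^M (\eta_j - \eta_j') \Bigr\|_F^q
=
\bbEt\Bigl\| \sum_{j=1}^M r_j(\eta_j - \eta_j') \Bigr\|_F^q .
\]
Finally, writing $\sum_{j=1}^M r_j(\eta_j-\eta_j') = \sum_{j=1}^M r_j\eta_j - \sum_{j=1}^M r_j\eta_j'$, the triangle inequality for the norm on $L_q(\Omegat;F)$ together with the fact that $(r_j\eta_j')_{j=1}^M$ has the same joint law as $(r_j\eta_j)_{j=1}^M$ (the copy being identically distributed to $(\eta_j)_j$ and independent of $(r_j)_j$, just like $(\eta_j)_j$) yields
\[
\Bigl\| \sum_{j=1}^M r_j(\eta_j-\eta_j') \Bigr\|_{L_q(\Omegat;F)}
\leq
\Bigl\| \sum_{j=1}^M r_j\eta_j \Bigr\|_{L_q(\Omegat;F)}
+
\Bigl\| \sum_{j=1}^M r_j\eta_j' \Bigr\|_{L_q(\Omegat;F)}
=
2\,\Bigl\| \sum_{j=1}^M r_j\eta_j \Bigr\|_{L_q(\Omegat;F)} .
\]
Chaining the three displays and taking $q$th roots proves the claim.

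The only real obstacle is the measure-theoretic bookkeeping: one must construct the enlarged space carrying the original centered family, an independent copy of it, and the independent Rademacher family, and check that each conditioning/Fubini step is legitimate --- in particular that all random variables appearing remain Bochner integrable in $L_q$ (which holds since $\eta_j\in L_q(\Omegat;F)$ and $|r_j|\le 1$), that conditional Jensen is available for the convex map $x\mapsto\norm{x}{F}^q$ on $F$, and that the sign-insertion step is performed conditionally on $(r_j)_{j=1}^M$ so that the $r_j$ genuinely act as constants there. No deeper idea is needed; the factor $2$ is exactly the cost of the triangle inequality in the last step, and it cannot be improved in general.
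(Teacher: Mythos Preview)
Your proof is correct and follows the standard symmetrization argument. The paper does not actually prove this lemma; it merely cites \cite[Lemma~6.3]{LedouxTalagrand2011} and \cite[Lemma~5.9]{CoxEtAl2021}, and your argument is precisely the classical proof given in those references (independent copy, conditional Jensen, sign-invariance of the symmetrized differences, triangle inequality).
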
 

\begin{definition}[Kahane--Khintchine constants]
	\label{def:Kqp-constant}
	Assume that $p,q\in[1,\infty)$. 
	The  $(q,p)$ Kahane--Khintchine constant $K_{q,p}$ 
	is the smallest constant $K\in(0,\infty)$ 
	such that 
	for any Rademacher family $(r_j)_{j\in\bbN}$ 
	on a complete probability space 
	$(\Omegat,\cAt,\bbPt)$, 
	for any real Banach space $(F,\norm{\,\cdot\,}{F})$,  
	for all $n\in\bbN$, 
	and every $x_1,\ldots,x_n\in F$, 
	\begin{equation}\label{eq:kahane-khintchine} 
	\biggl\| 
	\sum_{j=1}^n r_j x_j
	\biggr\|_{L_q(\Omegat;F)} 
	\leq K \,
	\biggl\| 
	\sum_{j=1}^n r_j x_j
	\biggr\|_{L_p(\Omegat;F)}. 
	\end{equation} 
\end{definition} 

\begin{remark} 
	For the case $q\leq p$, 
	H\"older's inequality 
	shows 
	that ${K_{q,p}=1}$. 
	Finiteness of the constant 
	$K_{q,p}$ in the non-trivial case $q>p$ 
	was derived by Kahane~\cite{Kahane1985}; 
	it implies that, for Rademacher sums,  
	all $L_q$-norms 
	with $q\in[1,\infty)$     
	are equivalent. 
\end{remark} 

\begin{remark}\label{rem:kahane-khintchine-gaussian} 
	By invoking an argument 
	based on the central limit theorem 
	(see \cite[p.~103]{LedouxTalagrand2011}) 
	the Kahane--Khintchine inequality 
	for Rademacher sums \eqref{eq:kahane-khintchine} implies 
	a corresponding result for Gaussian averages: 
	For all $p,q\in[1,\infty)$, 
	for any orthogaussian family $(g_j)_{j\in\bbN}$ 
	on a complete probability space 
	$(\Omegat,\cAt,\bbPt)$, 
	for any real Banach space $(F,\norm{\,\cdot\,}{F})$,  
	for all $n\in\bbN$, 
	and every $x_1,\ldots,x_n\in F$, we have that 
	\begin{equation}\label{eq:kahane-khintchine-gaussian} 
	\biggl\| 
	\sum_{j=1}^n g_j x_j
	\biggr\|_{L_q(\Omegat;F)} 
	\leq K_{q,p} \,
	\biggl\| 
	\sum_{j=1}^n g_j x_j
	\biggr\|_{L_p(\Omegat;F)}. 
	\end{equation} 
\end{remark} 

\begin{definition}[Type $p$ constant]\label{def:type-p-constant}
	A real Banach space $(F,\norm{\,\cdot\,}{F})$ 
	is said to be of \emph{(Rademacher) type} $p\in[1,2]$ 
	if there exists a constant $\tau\in(0,\infty)$ 
	such that 
	for any Rademacher family $(r_j)_{j\in\bbN}$ 
	on a complete probability space 
	$(\Omegat,\cAt,\bbPt)$ 
	(with expectation $\bbEt$), 
	for every $n\in\bbN$, 
	and all vectors $x_1,\ldots,x_n\in F$, 
	\begin{equation}\label{eq:type} 
	\biggl\| 
	\sum_{j=1}^n r_j x_j 
	\biggr\|_{L_p(\Omegat;F)}  
	=
	\Biggl( 
	\bbEt   
	\Biggl[ 
	\biggl\|
	\sum_{j=1}^n r_j x_j 
	\biggr\|_F^p 
	\Biggr] 
	\Biggr)^{\nicefrac{1}{p}} 
	\leq 
	\tau 
	\Biggl( 
	\sum_{j=1}^n \norm{x_j}{F}^p 
	\Biggr)^{\nicefrac{1}{p}} \!. 
	\end{equation} 
	In this case, the smallest constant $\tau\in(0,\infty)$ 
	in \eqref{eq:type} 
	is called the \emph{type $p$ constant of $F$} 
	and will be denoted by $\tau_p(F)$. 
\end{definition}

\begin{remark}
	The definition of the type of a Banach space~$(F,\norm{\,\cdot\,}{F})$
	is often complemented with the notion of 
	its cotype: 
	$F$ has \emph{cotype} $q\in[2,\infty]$ 
	if there exists a constant $c\in(0,\infty)$ 
	such that 
	for any Rademacher family $(r_j)_{j\in\bbN}$ 
	on a complete probability space $(\Omegat,\cAt,\bbPt)$, 
	for every $n\in\bbN$, 
	and all vectors $x_1,\ldots,x_n\in F$, 
	\begin{align*}
	\qquad\quad
	\Biggl( 
	\sum_{j=1}^n \norm{x_j}{F}^q 
	\Biggr)^{\nicefrac{1}{q}} 
	&\leq 
	c \, 
	\biggl\| 
	\sum_{j=1}^n r_j x_j 
	\biggr\|_{L_q(\Omegat;F)} 
	&&
	\text{if } q \in [2,\infty), 
	\qquad\quad
	\\
	\sup_{1\leq j \leq n} 
	\norm{x_j}{F}
	&\leq 
	c \, 
	\biggl\| 
	\sum_{j=1}^n r_j x_j 
	\biggr\|_{L_1(\Omegat;F)}  
	&&\text{if } 
	q=\infty. 
	\end{align*} 
\end{remark}  

\begin{remark} 
	Every Banach space has type $1$ 
	and cotype~$\infty$
	by the triangle inequality and L\'evy's inequality 
	(see e.g.\ \cite[Proposition~2.3]{LedouxTalagrand2011}), 
	respectively.  
	Conversely, by the (classical) Khintchine inequalities 
	(see e.g.\ \cite[Lemma~4.1]{LedouxTalagrand2011}) 
	there exist constants $A_q, B_q\in(0,\infty)$ 
	depending only on $q\in[1,\infty)$ 
	such that, for any Rademacher family $(r_j)_{j\in\bbN}$ 
	on~$(\Omegat,\cAt,\bbPt)$ 
	and any finite sequence $(a_j)_{j=1}^n$ 
	of real numbers, 
	\begin{equation}\label{eq:khintchine} 
	\hspace{-1.5mm} 
	A_q 
	\Biggl( \sum_{j=1}^n a_j^2 \Biggr)^{\nicefrac{1}{2}} 
	\leq 
	\Biggl( \bbEt \Biggl[ \biggl| \sum_{j=1}^n r_j a_j \biggr|^q \Biggr] \Biggr)^{\nicefrac{1}{q}} 
	= 
	\biggl\| 
	\sum_{j=1}^n r_j a_j 
	\biggr\|_{L_q(\Omegat;\bbR)}  
	\leq 
	B_q 
	\Biggl( \sum_{j=1}^n a_j^2 \Biggr)^{\nicefrac{1}{2}} 
	\hspace{-1.5mm} 
	\end{equation} 
	which implies that the type cannot be larger than $2$ 
	and the cotype cannot be smaller than $2$.  
	Kwapie\'n~\cite{Kwapien1972}~showed that 
	a Banach space has type $2$ and cotype $2$ if and only if 
	it is isomorphic to a Hilbert space. 
\end{remark} 

\begin{example}\label{example:type-tensor}  
	Let $(H, \scalar{\,\cdot\,, \,\cdot\,}{H})$ 
	be a real separable Hilbert space. 
	In this case, 
	the \emph{Hilbert tensor product space} $\otimes_{w_2}^2 H$ 
	(see Appendix~\ref{appendix:norm-computation} for the definition) 
	is again a Hilbert space 
	and, consequently, 
	has type $p=2$. 
	However, 
	none of the tensor product spaces 
	$\otimes_\pi^2 H$, $\otimes^{2,s}_{\pi_s}H$, 
	$\otimes_\eps^2 H$ or $\otimes^{2,s}_{\eps_s}H$ 
	has a non-trivial type~${p>1}$. 
	
	This can be seen by the following counterexample: 
	Let $(e_j)_{j\in\bbN}$ be an orthonormal basis for~$H$. 
	Then, for all $p\in[1,\infty)$ and every $n\in\bbN$, we have 
	\[
	\Biggl( 
	\sum_{j=1}^n \norm{e_j\otimes e_j}{\pi}^p 
	\Biggr)^{\nicefrac{1}{p}} 
	=
	\Biggl( 
	\sum_{j=1}^n \norm{e_j\otimes e_j}{\eps}^p 
	\Biggr)^{\nicefrac{1}{p}} 
	= 
	\Biggl( 
	\sum_{j=1}^n \norm{e_j}{H}^{2p}  
	\Biggr)^{\nicefrac{1}{p}} 
	=
	n^{\nicefrac{1}{p}} . 
	\]
	Moreover, the
	calculations in Appendix~\ref{appendix:norm-computation} 
	(see the identities \eqref{eq:diagonal-pi-norms} 
	and \eqref{eq:diagonal-eps-norms} 
	of Lemma~\ref{lem:diagonal-norms})
	imply that 
	for any Rademacher family $(r_j)_{j\in\bbN}$ 
	on a complete probability space 
	$(\Omegat,\cAt,\bbPt)$, 
	for all $p\in[1,\infty)$ 
	and 
	for every $n\in\bbN$,  
	\begin{align*} 
	\biggl\| 
	\sum_{j=1}^n r_j \, e_j \otimes e_j  
	\biggr\|_{L_p(\Omegat; \otimes^2_\pi H)}  
	=
	n 
	\qquad 
	\text{and} 
	\qquad   
	\biggl\| 
	\sum_{j=1}^n r_j \, e_j \otimes e_j  
	\biggr\|_{L_p(\Omegat; \otimes^2_\eps H)}  
	=
	1, 
	\end{align*} 
	and the same statements hold when 
	replacing $\otimes^2_\pi H$ by the symmetric projective tensor product 
	space $\otimes^{2,s}_{\pi_s} H$ 
	and 
	$\otimes^2_\eps H$ by the symmetric injective tensor product 
	space $\otimes^{2,s}_{\eps_s} H$, respectively. 
	This shows that (i) neither 
	$\otimes^2_\pi H$ nor $\otimes^{2,s}_{\pi_s} H$
	have~a~non-trivial type $p>1$, 
	and (ii) neither  
	$\otimes^2_\eps H$ nor $\otimes^{2,s}_{\eps_s} H$ 
	have a
	non-trivial cotype~$q<\infty$. 
	Thus, 
	$\otimes^2_\eps H$ and $\otimes^{2,s}_{\eps_s} H$ 
	do not have a non-trivial type either, 
	cf.~\cite[Theorem~7.1.14]{AnalysisInBanachSpacesII2017}. 
\end{example} 

In the next subsections we will see 
that the type $p\in[1,2]$ of a Banach space~$E$
determines the rate of convergence 
when approximating statistical moments 
of $E$-valued random variables 
by means of Monte Carlo methods  
and, moreover, that this convergence rate 
does not depend on the order $k$ of the moment. 
However, as the above example illustrates, 
to derive this finding, 
it is not possible to argue 
by transferring the type of 
a Banach space to its $k$-fold tensor product. 

\subsection{Standard and single-level Monte Carlo estimation} 
\label{subsec:slmc} 

The next proposition is the key result 
for proving convergence 
of Monte Carlo methods for means, 
i.e., statistical 
moments of order $k=1$. 
It can be found, e.g., in \cite[Proposition~9.11]{LedouxTalagrand2011} 
for the case $q=p$ and 
in this generality in \cite[Proposition~5.10]{CoxEtAl2021}. 

\begin{proposition}\label{prop:MC-1} 
	Assume that  
	$(E,\norm{\,\cdot\,}{E})$ 
	is of Rademacher type $p\in[1,2]$. 
	Let $q\in[p,\infty)$, $M\in\bbN$ 
	and $\eta_1,\ldots,\eta_M \in L_q(\Omega;E)$ be 
	independent $E$-valued random variables 
	with vanishing mean, 
	$\bbE[\eta_j]=0$ for all $1\leq j \leq M$. 
	Then, 
	\[ 
	\biggl\| 
	\sum_{j=1}^M 
	\eta_j 
	\biggr\|_{L_q(\Omega;E)} 
	\leq 
	2 K_{q,p} \tau_p(E) 
	\Biggl( 
	\sum_{j=1}^M
	\norm{ \eta_j }{L_q(\Omega; E)}^p 
	\Biggr)^{\nicefrac{1}{p}} \!.
	\]
\end{proposition}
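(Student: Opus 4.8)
The plan is to bound the $L_q(\Omega;E)$-norm of the sum by first symmetrizing and then exploiting the type-$p$ estimate, with a Kahane--Khintchine correction to pass between $L_q$ and $L_p$ norms. Concretely, introduce a Rademacher family $(r_j)_{j=1}^M$ on an auxiliary probability space $(\Omegat,\cAt,\bbPt)$, independent of $\eta_1,\ldots,\eta_M$. Since the $\eta_j$ are independent and centered, Lemma~\ref{lem:symmetrization} gives
\[
\biggl\| \sum_{j=1}^M \eta_j \biggr\|_{L_q(\Omega;E)}
\leq
2 \,
\biggl\| \sum_{j=1}^M r_j \eta_j \biggr\|_{L_q(\Omega\times\Omegat;E)}.
\]
(Strictly, one works on the product space $(\Omega\times\Omegat,\cA\otimes\cAt,\bbP\otimes\bbPt)$; the $\eta_j$ are viewed as functions of $\omega\in\Omega$ only and the $r_j$ of $\omegat\in\Omegat$ only, so they are jointly independent.)

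Next I would use Fubini's theorem to write the right-hand side as an iterated expectation, integrating first in $\omegat$ and then in $\omega$. For fixed $\omega$, the vectors $x_j := \eta_j(\omega)\in E$ are deterministic, so applying the definition of the type-$p$ constant $\tau_p(E)$ (Definition~\ref{def:type-p-constant}) to the Rademacher sum $\sum_{j=1}^M r_j x_j$ yields
\[
\biggl\| \sum_{j=1}^M r_j \eta_j(\omega) \biggr\|_{L_p(\Omegat;E)}
\leq
\tau_p(E) \Biggl( \sum_{j=1}^M \norm{\eta_j(\omega)}{E}^p \Biggr)^{\nicefrac{1}{p}}.
\]
To make this usable, I first upgrade the outer $L_q$-norm in $\omegat$ to an $L_p$-norm in $\omegat$ via the Kahane--Khintchine inequality \eqref{eq:kahane-khintchine}: for each fixed $\omega$,
\[
\biggl\| \sum_{j=1}^M r_j \eta_j(\omega) \biggr\|_{L_q(\Omegat;E)}
\leq
K_{q,p}
\biggl\| \sum_{j=1}^M r_j \eta_j(\omega) \biggr\|_{L_p(\Omegat;E)}.
\]
Combining the last two displays and then taking the $L_q(\Omega)$-norm in $\omega$ gives
\[
\biggl\| \sum_{j=1}^M r_j \eta_j \biggr\|_{L_q(\Omega\times\Omegat;E)}
\leq
K_{q,p}\,\tau_p(E)\,
\Biggl\| \Biggl( \sum_{j=1}^M \norm{\eta_j}{E}^p \Biggr)^{\nicefrac{1}{p}} \Biggr\|_{L_q(\Omega)}.
\]

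The final step is to move the $L_q(\Omega)$-norm inside the $\ell^p$-sum. Since $q\geq p$, the map $t\mapsto t^{q/p}$ is convex and the triangle inequality in $L_{q/p}(\Omega)$ (equivalently, Minkowski's inequality applied to the functions $\omega\mapsto\norm{\eta_j(\omega)}{E}^p$) yields
\[
\Biggl\| \Biggl( \sum_{j=1}^M \norm{\eta_j}{E}^p \Biggr)^{\nicefrac{1}{p}} \Biggr\|_{L_q(\Omega)}
=
\Biggl\| \sum_{j=1}^M \norm{\eta_j}{E}^p \Biggr\|_{L_{q/p}(\Omega)}^{\nicefrac{1}{p}}
\leq
\Biggl( \sum_{j=1}^M \bigl\| \norm{\eta_j}{E}^p \bigr\|_{L_{q/p}(\Omega)} \Biggr)^{\nicefrac{1}{p}}
=
\Biggl( \sum_{j=1}^M \norm{\eta_j}{L_q(\Omega;E)}^p \Biggr)^{\nicefrac{1}{p}}.
\]
Chaining everything together produces the factor $2 K_{q,p}\tau_p(E)$ and the claimed bound. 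The only genuinely delicate point is the bookkeeping with the two probability spaces and the order of integration; the rest is a routine assembly of Lemma~\ref{lem:symmetrization}, the Kahane--Khintchine inequality, the definition of type, and Minkowski's inequality. I expect no real obstacle, since this is the classical argument (as in \cite[Proposition~9.11]{LedouxTalagrand2011}) extended to $q\neq p$ exactly as in \cite[Proposition~5.10]{CoxEtAl2021}.
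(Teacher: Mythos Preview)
Your proof is correct and follows exactly the standard argument. Note that the paper itself does not give a proof of this proposition but merely cites \cite[Proposition~9.11]{LedouxTalagrand2011} and \cite[Proposition~5.10]{CoxEtAl2021}; your argument is precisely the one found there, and it is also the template the paper reuses (with the same ingredients: symmetrization via Lemma~\ref{lem:symmetrization}, Kahane--Khintchine, the type-$p$ estimate, and the triangle inequality in $L_{q/p}$) in the proof of Theorem~\ref{thm:MC-k}.
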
  

\begin{corollary}\label{cor:MC-1} 
	Assume that  
	$(E,\norm{\,\cdot\,}{E})$ 
	is of Rademacher type $p\in[1,2]$ 
	and let $\eta\in L_1(\Omega;E)$. 
	In addition, let $q\in[p,\infty)$, $M\in\bbN$ 
	and $\xi_1,\ldots,\xi_M\in L_q(\Omega;E)$ be 
	independent and identically distributed 
	$E$-valued random variables.  
	Then, 
	\begin{align*} 
	\biggl\| 
	\bbE[\eta] 
	- \frac{1}{M}
	\sum_{j=1}^M 
	\xi_j 
	\biggr\|_{L_q(\Omega;E)} 
	&\leq 
	\bigl\| \bbE[ \eta - \xi_1] \bigr\|_{E}  
	\\
	&\quad 
	+ 
	2 K_{q,p} \tau_p(E) \, 
	M^{-\left( 1-\frac{1}{p} \right)} 
	\bigl\| \xi_1 - \bbE[\xi_1] \bigr\|_{L_q(\Omega; E)}. 
	\end{align*}
\end{corollary}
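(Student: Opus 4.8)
The plan is to split the Monte Carlo error into a deterministic bias contribution and a purely statistical (centered) contribution, and to bound the latter by Proposition~\ref{prop:MC-1}. Since $q\in[p,\infty)\subseteq[1,\infty)$, we have $\xi_j\in L_q(\Omega;E)\subseteq L_1(\Omega;E)$, so each $\bbE[\xi_j]\in E$ is well defined, and by assumption $\bbE[\eta]\in E$ exists as well. Adding and subtracting $\bbE[\xi_1]$ and using the triangle inequality in $L_q(\Omega;E)$, I would write
\[
\norm{\bbE[\eta] - \tfrac1M\sum_{j=1}^M \xi_j}{L_q(\Omega;E)}
\leq
\norm{\bbE[\eta] - \bbE[\xi_1]}{L_q(\Omega;E)}
+
\norm{\bbE[\xi_1] - \tfrac1M\sum_{j=1}^M \xi_j}{L_q(\Omega;E)} .
\]
The first summand is the $L_q(\Omega;E)$-norm of a constant, hence equals $\norm{\bbE[\eta]-\bbE[\xi_1]}{E} = \norm{\bbE[\eta-\xi_1]}{E}$ by linearity of the Bochner integral, which is exactly the first term on the right-hand side of the asserted estimate.

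For the statistical term, since $\xi_1,\ldots,\xi_M$ are identically distributed we have $\bbE[\xi_j] = \bbE[\xi_1]$ for every $1\leq j\leq M$. I would set $\eta_j := \xi_j - \bbE[\xi_j] \in L_q(\Omega;E)$; these random variables are independent (being measurable functions of the independent $\xi_j$) and centered, so Proposition~\ref{prop:MC-1} applies and yields
\[
\norm{\bbE[\xi_1] - \tfrac1M\sum_{j=1}^M \xi_j}{L_q(\Omega;E)}
=
\frac1M \norm{\sum_{j=1}^M \eta_j}{L_q(\Omega;E)}
\leq
\frac{2K_{q,p}\tau_p(E)}{M}
\Biggl( \sum_{j=1}^M \norm{\eta_j}{L_q(\Omega;E)}^p \Biggr)^{\nicefrac{1}{p}} .
\]

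Finally, identical distribution of the $\xi_j$ also implies identical distribution of the centered variables $\eta_j = \xi_j - \bbE[\xi_1]$, so that $\norm{\eta_j}{L_q(\Omega;E)} = \norm{\xi_1 - \bbE[\xi_1]}{L_q(\Omega;E)}$ for all $j$. Hence the last sum equals $M^{\nicefrac{1}{p}}\norm{\xi_1-\bbE[\xi_1]}{L_q(\Omega;E)}$, and since $M^{-1}M^{\nicefrac{1}{p}} = M^{-(1-\frac{1}{p})}$, combining the two bounds gives the claim. I do not expect any genuine obstacle: the argument is a routine triangle-inequality decomposition, the only points requiring a moment's care being the hypothesis $q\geq p$ needed to invoke Proposition~\ref{prop:MC-1} (which holds by assumption) and the observation that identical distribution of the $\xi_j$ transfers to the centered variables $\eta_j$, making all their $L_q(\Omega;E)$-norms equal.
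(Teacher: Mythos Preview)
Your proof is correct and follows essentially the same approach as the paper: a triangle-inequality split into the deterministic bias $\norm{\bbE[\eta-\xi_1]}{E}$ and the centered statistical error, followed by an application of Proposition~\ref{prop:MC-1} to the independent centered variables $\xi_j-\bbE[\xi_1]$ and use of their identical distribution to extract the factor $M^{\nicefrac{1}{p}}$. Your write-up is in fact slightly more explicit about the well-definedness of $\bbE[\xi_j]$ and the independence of the centered variables, but the route is the same.
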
 

\begin{proof}
	By applying the triangle inequality on $L_q(\Omega;E)$ 
	and Proposition~\ref{prop:MC-1} 
	(noting that $\xi_j - \bbE[\xi_1]$, $1\leq j\leq M$, 
	are independent and centered), 
	we find that 
	\begin{align*} 
	\biggl\| 
	\bbE[\eta] - \frac{1}{M} \sum_{j=1}^M \xi_j 
	&\biggr\|_{L_q(\Omega;E)} 
	\leq 
	\bigl\| \bbE[ \eta - \xi_1] \bigr\|_{E}  
	+ 
	\frac{1}{M} \, 
	\biggl\| 
	\sum_{j=1}^M 
	\bigl( \xi_j - \bbE[\xi_1] \bigr)
	\biggr\|_{L_q(\Omega;E)} 
	\\
	&\leq 
	\bigl\| \bbE[ \eta - \xi_1] \bigr\|_{E}  
	+ 
	2 K_{q,p} \tau_p(E) \, 
	M^{-1}
	\Biggl( 
	\sum_{j=1}^M
	\bigl\| \xi_j - \bbE[\xi_1] \bigr\|_{L_q(\Omega; E)}^p 
	\Biggr)^{\nicefrac{1}{p}} \!, 
	\end{align*} 	
	and the claim follows by 
	the identical distribution 
	of $\xi_1,\ldots,\xi_M$. 
\end{proof}

The remainder of this subsection is devoted to generalizing 
the approximation result of Monte Carlo estimation 
for the first statistical moment in Corollary~\ref{cor:MC-1} 
to (injective) statistical moments $\Mk[\eta]$ 
of an arbitrary order $k\in\bbN$. 
Example~\ref{example:type-tensor} shows that 
it is in general not possible to argue via the type 
of the tensor product space. 
We therefore prove the convergence rates of Monte Carlo methods 
directly by means of the auxiliary results derived in  
Subsection~\ref{subsec:auxiliary}. 

\begin{theorem}\label{thm:MC-k} 
	Assume that  
	$(E,\norm{\,\cdot\,}{E})$ 
	is of Rademacher type $p\in[1,2]$. 
	Let $q\in[p,\infty)$, $k,M\in\bbN$ 
	and $\xi_1,\ldots,\xi_M\in L_{kq}(\Omega;E)$ be 
	independent and identically distributed 
	$E$-valued random variables. 
	Then, 
	\begin{equation}\label{eq:thm:MC-k}  	
		\biggl\| 
		\Mk[\xi_1] 
		-
		\frac{1}{M} 
		\sum_{j=1}^M 
		\xk \xi_j 
		\biggr\|_{L_q(\Omega;\xkseps E)} 
		\leq 
		\CSL \,  
		M^{-\left( 1-\frac{1}{p} \right)} 
		\norm{ \xi_1 }{L_{kq}(\Omega; E)}^k .  
	\end{equation} 
	Here, we recall  
	the constant $B_q\in(0,\infty)$   
	from the classical 
	Khintchine inequalities \eqref{eq:khintchine},  
	as well as the Kahane--Khintchine constant $K_{q,p}$   
	and type $p$ constant $\tau_p(E)$  
	from Definitions~\ref{def:Kqp-constant} and~\ref{def:type-p-constant}, 
	respectively, and set 
	\begin{equation}\label{eq:CSL} 
	\CSL := 2 (2 k  K_{q,p} \tau_p(E) + B_q). 
	\end{equation}
\end{theorem}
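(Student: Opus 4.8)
The plan is to reduce the $k$th-moment estimation to the mean case (Corollary~\ref{cor:MC-1} and Proposition~\ref{prop:MC-1}) combined with a symmetrization argument (Lemma~\ref{lem:symmetrization}) and the key contraction-type inequality for polynomial Rademacher averages (Proposition~\ref{prop:rade-gaussian}~\ref{prop:rade-gaussian-poly}). First I would note that since $\xi_1,\ldots,\xi_M$ are i.i.d.\ with $\xi_1\in L_{kq}(\Omega;E)$, each $\xk\xi_j$ lies in $L_q(\Omega;\xkseps E)$ and $\bbE[\xk\xi_j]=\Mk[\xi_1]$, so the random variables $\zeta_j := \xk\xi_j - \Mk[\xi_1]$ are centered, independent, and identically distributed in $L_q(\Omega;\xkseps E)$. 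The left-hand side of \eqref{eq:thm:MC-k} is then $M^{-1}\bigl\|\sum_{j=1}^M \zeta_j\bigr\|_{L_q(\Omega;\xkseps E)}$. By Lemma~\ref{lem:symmetrization} (applied on the Banach space $F=\xkseps E$, with an auxiliary Rademacher family $(r_j)_{j=1}^M$ on $(\Omegat,\cAt,\bbPt)$ independent of everything) this is at most $2M^{-1}\bigl\|\sum_{j=1}^M r_j\zeta_j\bigr\|_{L_q(\Omega\times\Omegat;\xkseps E)}$, and since $\sum_j r_j\Mk[\xi_1]$ is itself a Rademacher sum we can further replace $\zeta_j$ by $\xk\xi_j$ up to another factor (more precisely, by the triangle inequality and Khintchine, $\bbEt\|\sum_j r_j\Mk[\xi_1]\|^q$ is controlled; it is cleaner to symmetrize directly and then expand, following \cite[proof of Proposition~5.10]{CoxEtAl2021}).

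The crux is to bound $\bigl\|\sum_{j=1}^M r_j \,{\xk}\xi_j\bigr\|_{L_q(\Omega\times\Omegat;\xkseps E)}$. Here I would use that the symmetric injective norm of an element $\sum_j \lambda_j\,{\xk}y_j$ of $\xks E$ equals $\sup_{f\in B_{\dual E}}|\sum_j\lambda_j f(y_j)^k|$, cf.~\eqref{eq:sepsnorm}. Conditioning on $\Omega$ (i.e.\ freezing $x_j:=\xi_j(\omega)$) and taking the $q$th $\bbEt$-power, Proposition~\ref{prop:rade-gaussian}~\ref{prop:rade-gaussian-poly} with $G(x)=x^q$, $k_1=\cdots=k_M=k$, and $C_z=1$ gives
\[
\bbEt\Biggl[\,\sup_{f\in B_{\dual E}}\Bigl|\sum_{j=1}^M r_j f(x_j)^k\Bigr|^q\Biggr]
\leq (2k)^q\,\bbEt\Biggl[\,\biggl\|\sum_{j=1}^M r_j\,\norm{x_j}{E}^{k-1}x_j\biggr\|_E^q\Biggr].
\]
Then I would integrate over $\Omega$, apply Fubini, and invoke Proposition~\ref{prop:MC-1} on the $E$-valued centered independent random variables $\widetilde\eta_j := \norm{\xi_j}{E}^{k-1}\xi_j$ — wait, these need centering; instead one applies Proposition~\ref{prop:MC-1} after a further symmetrization is already built in, or one simply uses the type-$p$ estimate \eqref{eq:type} directly on the Rademacher sum $\sum_j r_j\widetilde\eta_j$ (with the $r_j$'s already present, no re-symmetrization is needed): by \eqref{eq:type} and then Kahane--Khintchine \eqref{eq:kahane-khintchine} to pass from the $L_p$- to the $L_q(\Omegat)$-norm,
\[
\biggl\|\sum_{j=1}^M r_j\widetilde\eta_j\biggr\|_{L_q(\Omegat;E)}
\leq K_{q,p}\tau_p(E)\Biggl(\sum_{j=1}^M\norm{\widetilde\eta_j}{E}^p\Biggr)^{\nicefrac1p},
\]
pointwise in $\omega$, and then take the $L_q(\Omega)$-norm and use $\norm{\widetilde\eta_j}{E}=\norm{\xi_j}{E}^k$ together with the i.i.d.\ assumption to get the factor $M^{1/p}\norm{\xi_1}{L_{kq}(\Omega;E)}^k$.

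Assembling: the chain of inequalities produces $M^{-1}\cdot 2\cdot 2\cdot (2k)\cdot K_{q,p}\tau_p(E)\cdot M^{1/p}\norm{\xi_1}{L_{kq}}^k$ from the symmetrization/contraction/type steps, plus a residual term of the form $2 B_q M^{-1}\cdot M^{1/p}\cdot(\text{something}\leq\norm{\xi_1}{L_{kq}}^k)$ coming from handling the Rademacher sum $\sum_j r_j\Mk[\xi_1]$ via Khintchine \eqref{eq:khintchine} (this is where $B_q$ enters); collecting constants yields exactly $\CSL=2(2k K_{q,p}\tau_p(E)+B_q)$ as in \eqref{eq:CSL}, with the rate $M^{-(1-1/p)}$. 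The main obstacle — and the step requiring the most care — is the bookkeeping of the symmetrization: one must decide whether to symmetrize $\xk\xi_j-\Mk[\xi_1]$ (cleaner for Lemma~\ref{lem:symmetrization} but then Proposition~\ref{prop:rade-gaussian} must be applied to the symmetrized sum) or to symmetrize and then split off the mean term (which is what forces the appearance of $B_q$). I expect to follow the second route, mirroring \cite[Section~5]{CoxEtAl2021}, and the delicate point is verifying that the contraction inequality \eqref{eq:prop:rade-gaussian-poly} applies verbatim in the symmetric-tensor setting — but this is immediate from the formula \eqref{eq:sepsnorm} for $\norm{\,\cdot\,}{\eps_s}$, which expresses the norm as a supremum over $f\in B_{\dual E}$ of exactly the polynomial Rademacher averages that Proposition~\ref{prop:rade-gaussian} is designed to control.
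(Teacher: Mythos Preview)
Your proposal is correct and follows essentially the same route as the paper: symmetrize via Lemma~\ref{lem:symmetrization}, split off the deterministic term $\sum_j r_j\Mk[\xi_1]$ (handled by Khintchine~\eqref{eq:khintchine}, producing~$B_q$), and control the main term $\sum_j r_j\,{\xk}\xi_j$ by combining the symmetric injective norm formula~\eqref{eq:sepsnorm} with Proposition~\ref{prop:rade-gaussian}\ref{prop:rade-gaussian-poly}, Kahane--Khintchine, and the type-$p$ inequality. The only cosmetic difference is that the paper applies Kahane--Khintchine \emph{before} the contraction (using $G(t)=t^p$) rather than after (with $G(t)=t^q$ as you suggest); both orders give the same constant $2kK_{q,p}\tau_p(E)$, and your extra factor of~$2$ in the intermediate bookkeeping is a miscount that you correctly resolve when matching~$\CSL$.
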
 

\begin{proof} 
	Assume that $(r_j)_{j=1}^M$ is 
	a Rademacher family on a complete probability space 
	$(\Omegat,\cAt,\bbPt)$ 
	and, for $j\in\{1,\ldots,M\}$, 
	let $\xivec_j\from\Omega\times\Omegat\to E$ 
	and $\rvec_j\from\Omega\times\Omegat\to \{-1,1\}$ denote the mappings 
	that satisfy $\xivec_j(\omega,\omegat) = \xi_j(\omega)$ 
	and $\rvec_j(\omega,\omegat) = r_j(\omegat)$ 
	for all $(\omega,\omegat)\in\Omega\times\Omegat$. 
	Notice that 
	on $(\Omega \times \Omegat,\cA \otimes \cAt,\bbP\otimes\bbPt)$ 
	the random variables 
	$(\rvec_j)_{j=1}^M$ form a Rademacher family and 
	$\xivec_1,\ldots,\xivec_M,\rvec_1,\ldots,\rvec_M$ 
	are independent. 
	Moreover,  
	$\xi_j \in L_{kq}(\Omega;E)$ implies that 
	$\Mk[\xi_1] \in \xkseps E$ and 
	$\xk \xivec_j \in L_q(\Omega\times\widetilde{\Omega};\xkseps E)$ 
	are well-defined. 
	We further note that by the identical distribution 
	of $\xi_1,\ldots,\xi_M$, 
	\[
	(\bbE\otimes\bbEt)\bigl[  \xk \xivec_j  -  \Mk[\xi_1] \bigr]  
	= 
	\bbE\bigl[ \xk \xi_j -  \Mk[\xi_1] \bigr]  
	=
	\bbE\bigl[ \xk \xi_j \bigr] -  \Mk[\xi_j]  
	= 
	0. 
	\]  
	This shows that the independent random variables 
	$\xk \xivec_j  -  \Mk[\xi_1]\from \Omega\times\Omegat \to \xkseps E$, 
	$1\leq j\leq M$, 
	are centered. 
	Therefore, 
	we can use Lemma~\ref{lem:symmetrization}  
	on the 
	probability space 
	$(\Omega \times \Omegat,\cA \otimes \cAt,\bbP\otimes\bbPt)$ 
	and for the Banach space $\xkseps E$ to deduce that 
	\[
	\biggl\| 
	\sum_{j=1}^M 
	\bigl( \xk \xivec_j  
	-  \Mk[\xi_1] \bigr)
	\biggr\|_{L_q(\Omega\times\widetilde{\Omega};\xkseps E)} 
	\leq 
	2\, 
	\biggl\| 
	\sum_{j=1}^M 
	\rvec_j 
	\bigl( \xk \xivec_j  
	-  \Mk[\xi_1] \bigr)
	\biggr\|_{L_q(\Omega\times\widetilde{\Omega};\xkseps E)} . 
	\]
	By the triangle inequality 
	on $L_q(\Omega\times\widetilde{\Omega};\xkseps E)$ 
	we then obtain 
	that 
	\begin{equation}\label{eq:proof:thm:MC-k:A-B} 
	\begin{split}  
	\biggl\| 
	\sum_{j=1}^M 
	&\bigl(\xk \xi_j 
	-  \Mk[\xi_1] \bigr)
	\biggr\|_{L_q(\Omega;\xkseps E)} 
	= 
	\biggl\| 
	\sum_{j=1}^M 
	\bigl( \xk \xivec_j  
	-  \Mk[\xi_1] \bigr)
	\biggr\|_{L_q(\Omega\times\widetilde{\Omega};\xkseps E)} 
	\\
	&\leq 
	2\, 
	\biggl\| 
	\sum_{j=1}^M 
	\rvec_j 
	\bigl( \xk \xivec_j  
	-  \Mk[\xi_1] \bigr)
	\biggr\|_{L_q(\Omega\times\widetilde{\Omega};\xkseps E)}  
	\\
	&\leq 
	2\, 
	\biggl\| 
	\sum_{j=1}^M 
	\rvec_j 
	\xk\! \xivec_j  
	\biggr\|_{L_q(\Omega\times\widetilde{\Omega};\xkseps E)}   
	+ 
	2\, 
	\biggl\| 
	\sum_{j=1}^M 
	\rvec_j 
	\Mk[\xi_1] 
	\biggr\|_{L_q(\Omega\times\widetilde{\Omega};\xkseps E)}   
	\\
	&=: 
	2\text{(A)} + 2\text{(B)}. 
	\end{split} 
	\end{equation} 

	To bound term (A) from above, 
	we apply Fubini's theorem and 
	the Kahane--Khintchine inequality~\eqref{eq:kahane-khintchine} 
	for the Banach space $F:=\xkseps E$ 
	and obtain that 
	\begin{align*} 
	\text{(A)} 
	&=
	\Biggl( 
	\int_\Omega 
	\biggl\| 
	\sum_{j=1}^M 
	r_j (\,\cdot\,) 
	\xk\! \xi_j(\omega)  
	\biggr\|_{L_q(\Omegat;\xkseps E)}^q  
	\rd\bbP(\omega)
	\Biggr)^{\nicefrac{1}{q}} 
	\\
	&\leq 
	K_{q,p} 
	\Biggl( 
	\int_\Omega 
	\biggl\| 
	\sum_{j=1}^M 
	r_j (\,\cdot\,) 
	\xk\! \xi_j(\omega) 
	\biggr\|_{L_p(\Omegat;\xkseps E)}^q  
	\rd\bbP(\omega)
	\Biggr)^{\nicefrac{1}{q}} \!. 
	\end{align*} 
	Upon inserting the definition~\eqref{eq:sepsnorm}  
	of the symmetric injective tensor norm,  
	we use Proposition~\ref{prop:rade-gaussian}\ref{prop:rade-gaussian-poly}
	for the convex increasing function $G(t) := t^p$, $t\geq0$,
	and 
	the fact that the Banach space $E$ has type $p\in[1,2]$ 
	with type constant $\tau_p(E)\in(0,\infty)$, 
	to conclude that 
	{\allowdisplaybreaks
	\begin{align*} 
	\text{(A)} 
	&\leq 
	K_{q,p} 
	\Biggl( 
	\int_\Omega 
	\Biggl( 
	\bbEt 
	\Biggl[ 
	\biggl( 
	\sup_{f\in B_{\dual{E}}} 
	\biggl| 
	\sum_{j=1}^M 
	r_j (\,\cdot\,) 
	f\bigl( \xi_j(\omega) \bigr)^k 
	\biggr| \biggr)^p 
	\Biggr] 
	\Biggr)^{\nicefrac{q}{p}} 
	\rd\bbP(\omega)
	\Biggr)^{\nicefrac{1}{q}}  
	\\
	&\leq  
	2 k K_{q,p}  
	\Biggl( 
	\int_\Omega 
	\Biggl( 
	\bbEt 
	\Biggl[ 
	\biggl\| 
	\sum_{j=1}^M 
	r_j (\,\cdot\,) 
	\norm{ \xi_j(\omega) }{E}^{k-1} 
	\xi_j(\omega)
	\biggr\|_E^p 
	\Biggr] 
	\Biggr)^{\nicefrac{q}{p}} 
	\rd\bbP(\omega)
	\Biggr)^{\nicefrac{1}{q}} 
	\\
	\text{(A)} 
	&\leq 
	2 k K_{q,p} \tau_p(E) 
	\Biggl( 
	\int_\Omega 
	\Biggl( 
	\sum_{j=1}^M 
	\norm{ \xi_j(\omega) }{E}^{kp} 
	\Biggr)^{\nicefrac{q}{p}} 
	\rd\bbP(\omega)
	\Biggr)^{\nicefrac{1}{q}} 
	\\
	&= 
	2 k K_{q,p} \tau_p(E) 
	\biggl\| 
	\sum_{j=1}^M 
	\norm{\xi_j}{E}^{kp} 
	\biggr\|_{L_{\nicefrac{q}{p}}(\Omega;\bbR)}^{\nicefrac{1}{p}}. 
	\end{align*} 
	Since} $q\geq p$, 
	we can use the triangle inequality 
	on $L_{\nicefrac{q}{p}}(\Omega;\bbR)$, yielding 
	\begin{equation}\label{eq:proof:thm:MC-k:A} 
	\text{(A)} 
	\leq 
	2 k K_{q,p} \tau_p(E) 
	\Biggl(  
	\sum_{j=1}^M 
	\norm{\xi_j}{L_{kq}(\Omega;E)}^{kp} 
	\biggr)^{\nicefrac{1}{p}} 
	=  
	2 k K_{q,p} \tau_p(E) 
	M^{\nicefrac{1}{p}} 
	\norm{\xi_1}{L_{kq}(\Omega; E)}^k, 
	\end{equation} 
	where we also used the  
	identical distribution of $\xi_1,\ldots,\xi_M$. 
	
	For term (B) we use the estimate  
	\[
	\bigl\| \Mk[\xi_1] \bigr\|_{\eps_s}  
	= 
	\bigl\| \bbE\bigl[ \xk \xi_1 \bigr] \bigr\|_{\eps_s} 
	\leq 
	\bbE\bigl[ \norm{ {\xk} \xi_1 }{\eps_s} \bigr]
	= 
	\bbE\bigl[ \norm{ \xi_1 }{E}^k \bigr] 
	\leq 
	\norm{ \xi_1 }{L_{kq}(\Omega;E)}^k , 
	\]
	as well as the classical 
	Khintchine inequalities \eqref{eq:khintchine}
	so that 
	\[
	\biggl\| 
	\sum_{j=1}^M 
	r_j 
	\biggr\|_{L_q(\widetilde{\Omega};\bbR)} 
	\leq 
	B_q \, M^{\nicefrac{1}{2}} 
	\leq 
	B_q \, M^{\nicefrac{1}{p}} \!, 
	\]
	and conclude that 
	\begin{equation}\label{eq:proof:thm:MC-k:B}   
	\begin{split} 
	\text{(B)}  
	&= 
	\biggl\| 
	\Mk[\xi_1] 
	\sum_{j=1}^M 
	\rvec_j 
	\biggr\|_{L_q(\Omega\times\widetilde{\Omega};\xkseps E)}   
	= 
	\Biggl( 
	\int_{\widetilde{\Omega}} 
	\biggl\| 
	\Mk[\xi_1] 
	\sum_{j=1}^M 
	r_j (\omegat) 
	\biggr\|_{\eps_s}^q   
	\, \rd\bbPt(\omegat) 
	\Biggr)^{\nicefrac{1}{q}} 
	\\
	&=
	\bigl\| 
	\Mk[\xi_1]  
	\bigr\|_{\eps_s} 
	\Biggl( 
	\int_{\widetilde{\Omega}} 
	\biggl| 
	\sum_{j=1}^M 
	r_j(\widetilde{\omega}) 
	\biggr|^q 
	\, \rd\bbPt(\omegat) 
	\Biggr)^{\nicefrac{1}{q}} 
	\leq 
	B_q \, M^{\nicefrac{1}{p}} 
	\norm{ \xi_1 }{L_{kq}(\Omega; E)}^k . 
	\end{split} 
	\end{equation} 
	Finally, combining \eqref{eq:proof:thm:MC-k:A-B}, 
	\eqref{eq:proof:thm:MC-k:A} and 
	\eqref{eq:proof:thm:MC-k:B} shows that 
	\begin{align*} 
	\biggl\| 
	\Mk[\xi_1] 
	-
	\frac{1}{M} 
	\sum_{j=1}^M 
	\xk \xi_j 
	\biggr\|_{L_q(\Omega;\xkseps E)} 
	&=
	M^{-1} \, 
	\biggl\| 
	\sum_{j=1}^M 
	\bigl(\xk \xi_j -  \Mk[\xi_1] \bigr)
	\biggr\|_{L_q(\Omega;\xkseps E)} 
	\\
	&\leq 
	2 
	( 2k K_{q,p} \tau_p(E) + B_q) \, 
	M^{-\left(1-\frac{1}{p}\right)} 
	\norm{ \xi_1 }{L_{kq}(\Omega;E)}^k, 
	\end{align*} 
	which along with 
	the definition \eqref{eq:CSL} 
	of~$\CSL$ 
	completes the proof 
	of \eqref{eq:thm:MC-k}. 
\end{proof} 

The estimate~\eqref{eq:proof:thm:MC-k:A} 
of term (A) in the proof of Theorem~\ref{thm:MC-k} 
reveals the following analogue of 
Proposition~\ref{prop:MC-1} 
for independent (not necessarily identically distributed) random variables 
$\eta_1,\ldots,\eta_M\in L_{kq}(\Omega;E)$ 
with vanishing $k$th moment. 
	
\begin{corollary} 
	Assume that  
	$(E,\norm{\,\cdot\,}{E})$ 
	is of Rademacher type $p\in[1,2]$. 
	Let $q\in[p,\infty)$, $k,M\in\bbN$ 
	and $\eta_1,\ldots,\eta_M\in L_{kq}(\Omega;E)$ be 
	independent $E$-valued random variables 
	with vanishing $k$th moment, 
	$\Mk[\eta_j] = 0$ for all $1\leq j\leq M$. 
	Then, 
	\begin{equation}\label{eq:0-kth-moment}
		\biggl\| 
		\sum_{j=1}^M 
		\xk \eta_j 
		\biggr\|_{L_q(\Omega;\xkseps E)} 
		\leq 
		4 k K_{q,p} \tau_p(E) \,  
		\Biggl( 
		\sum_{j=1}^M
		\norm{ \eta_j }{L_{kq}(\Omega; E)}^{kp}  
		\Biggr)^{\nicefrac{1}{p}} \!. 
	\end{equation} 
\end{corollary}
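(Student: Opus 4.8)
The plan is to extract the estimate directly from the proof of Theorem~\ref{thm:MC-k}, since the desired bound \eqref{eq:0-kth-moment} is exactly the estimate of term (A) in that proof, now used without requiring identical distribution. First I would introduce a Rademacher family $(r_j)_{j=1}^M$ on an auxiliary complete probability space $(\Omegat,\cAt,\bbPt)$ and lift the $\eta_j$ to independent random variables $\etavec_j$ on the product space $(\Omega\times\Omegat,\cA\otimes\cAt,\bbP\otimes\bbPt)$, so that $\etavec_1,\ldots,\etavec_M,\rvec_1,\ldots,\rvec_M$ are independent and $(\rvec_j)_{j=1}^M$ is a Rademacher family there. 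The hypothesis $\eta_j\in L_{kq}(\Omega;E)$ ensures $\xk\etavec_j\in L_q(\Omega\times\Omegat;\xkseps E)$, and the hypothesis $\Mk[\eta_j]=0$ says precisely that these $\xkseps E$-valued random variables are centered.

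Next I would apply the symmetrization lemma (Lemma~\ref{lem:symmetrization}) on the product space for the Banach space $F:=\xkseps E$ to obtain
\[
\biggl\| \sum_{j=1}^M \xk\eta_j \biggr\|_{L_q(\Omega;\xkseps E)}
=
\biggl\| \sum_{j=1}^M \xk\etavec_j \biggr\|_{L_q(\Omega\times\Omegat;\xkseps E)}
\leq
2\,\biggl\| \sum_{j=1}^M \rvec_j \xk\etavec_j \biggr\|_{L_q(\Omega\times\Omegat;\xkseps E)}.
\]
Then I would bound the right-hand side exactly as term (A) was bounded in the proof of Theorem~\ref{thm:MC-k}: apply Fubini's theorem, the Kahane--Khintchine inequality \eqref{eq:kahane-khintchine} to pass from the $L_q$-norm to the $L_p$-norm in the Rademacher variables, insert the definition \eqref{eq:sepsnorm} of $\norm{\,\cdot\,}{\eps_s}$, invoke Proposition~\ref{prop:rade-gaussian}\ref{prop:rade-gaussian-poly} with $G(t)=t^p$ and $k_j=k$ to replace $\sup_{f\in B_{\dual E}}\bigl|\sum_j r_j f(\eta_j(\omega))^k\bigr|$ by $2C_{r}\,\|\sum_j r_j k\,\norm{\eta_j(\omega)}{E}^{k-1}\eta_j(\omega)\|_E$ with $C_r=1$, and finally use the type $p$ property of $E$ with constant $\tau_p(E)$. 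This produces
\[
\biggl\| \sum_{j=1}^M \rvec_j\xk\etavec_j \biggr\|_{L_q(\Omega\times\Omegat;\xkseps E)}
\leq
2 k K_{q,p}\tau_p(E)\,
\biggl\| \sum_{j=1}^M \norm{\eta_j}{E}^{kp} \biggr\|_{L_{\nicefrac{q}{p}}(\Omega;\bbR)}^{\nicefrac1p}.
\]
Since $q\geq p$, the triangle inequality on $L_{q/p}(\Omega;\bbR)$ bounds the last factor by $\bigl(\sum_{j=1}^M \norm{\eta_j}{L_{kq}(\Omega;E)}^{kp}\bigr)^{1/p}$, and combining with the factor $2$ from symmetrization yields the constant $4kK_{q,p}\tau_p(E)$ in \eqref{eq:0-kth-moment}.

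I do not expect any genuine obstacle here: the entire argument is a verbatim repetition of the computation already carried out for term (A) in Theorem~\ref{thm:MC-k}, with the only conceptual change being that the $k$th moments $\Mk[\eta_j]$ may differ from each other but all vanish, so there is no ``term (B)'' to control and no appeal to identical distribution is needed. The one point to state carefully is that centeredness of the $\xk\etavec_j$ in $\xkseps E$ is exactly the hypothesis $\Mk[\eta_j]=0$ (using that $\xk\etavec_j$ already takes values in the symmetric subspace $\xkseps E$, as noted after \eqref{eq:Mk}), so that Lemma~\ref{lem:symmetrization} applies. Everything else is routine.
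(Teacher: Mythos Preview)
Your proposal is correct and matches the paper's approach exactly: the paper states this corollary immediately after Theorem~\ref{thm:MC-k} and explicitly says it is revealed by the estimate~\eqref{eq:proof:thm:MC-k:A} of term~(A), which is precisely the argument you outline (symmetrization via Lemma~\ref{lem:symmetrization} using $\Mk[\eta_j]=0$, then the term~(A) chain up to the triangle inequality on $L_{q/p}$, without invoking identical distribution). There is nothing to add.
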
 
	
\begin{remark} 
	Optimality of the convergence rate 
	$1-\frac{1}{p}$ in \eqref{eq:thm:MC-k} 
	is ultimately related to the question  
	whether it is \emph{necessary} that the Banach space~$E$ 
	has Rademacher type $p\in[1,2]$ 
	for \eqref{eq:0-kth-moment} 
	to hold for all finite sequences 
	${\eta_1,\ldots,\eta_M\in L_{kq}(\Omega;E)}$ 
	of independent $E$-valued random variables 
	with vanishing $k$th moment. 
	 
	For the first moment, $k=1$, it is evident that 
	the choice $\eta_j := r_j x_j$ 
	in \eqref{eq:0-kth-moment}, 
	where $(r_j)_{j\in\bbN}$ is a Rademacher family 
	on $(\Omega,\cA,\bbP)$  
	and  $x_1,x_2,\ldots\in E$, 
	implies that the Banach space~$E$ 
	has Rademacher type $p$. 
	However, for higher-order moments, 
	this question is  
	more complex due to the injective 
	tensor norm on the left-hand side. 
	For odd orders $k\in\bbN$ and 
	the space~$E:=\ell_1$ 
	of summable real-valued sequences 
	(which only has Rademacher 
	type $p = 1$), 
	the choice $\eta_j := r_j e_j$ 
	shows that \eqref{eq:0-kth-moment} 
	cannot hold for any $p > 1$. 
	Here, $(e_j)_{j\in\bbN}$ denote the 
	standard unit vectors in $\ell_1$.  
	In addition, 
	the classical Khintchine inequalities \eqref{eq:khintchine} 
	imply that, for any Banach space $E$,
	the convergence rate in \eqref{eq:thm:MC-k} 
	cannot be better that $\nicefrac{1}{2}$ 
	(one may take, e.g., $\xi_j := g_j \, x$, 
	where $(g_j)_{j\in\bbN}$ is an 
	orthogaussian family on $(\Omega,\cA,\bbP)$ 
	and $x\neq 0$ is a non-zero vector in~$E$). 
	Sharpness of the rate 
	$1-\frac{1}{p}$ in \eqref{eq:thm:MC-k} 
	and necessity of the Rademacher type~$p$ 
	for~\eqref{eq:0-kth-moment} 
	for the case $p\in(1,2)$ 
	remains an open question. 
\end{remark}

The next lemma complements Theorem~\ref{thm:MC-k} 
when deriving convergence rates of   
single-level Monte Carlo methods 
for approximating injective $k$th moments. 

\begin{lemma}\label{lem:Mk-diff} 
	Let $k\in\bbN$ and 
	suppose that $\eta,\xi\in L_k(\Omega;E)$. Then, 
	\[
	\bigl\| \Mk[\eta] - \Mk[\xi] \bigr\|_{\eps_s}  
	\leq 
	\bigl\| \Mk[\eta] - \Mk[\xi] \bigr\|_{\eps} 
	\leq 
	\norm{ \eta - \xi }{L_k(\Omega;E)} 
	\sum_{i=0}^{k-1} 
	\Bigl[ 
	\norm{ \eta }{L_k(\Omega;E)}^i 
	\norm{ \xi }{L_k(\Omega;E)}^{k-i-1} 
	\Bigr] . 
	\]
\end{lemma}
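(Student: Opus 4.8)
The plan is to reduce the second inequality to a pointwise (in $\omega\in\Omega$) estimate for tensor powers of vectors in $E$, and then integrate. The first inequality is immediate: both $\Mk[\eta]$ and $\Mk[\xi]$, hence also their difference, belong to $\xkseps E$, and on this space $\norm{\,\cdot\,}{\eps_s}\leq\norm{\,\cdot\,}{\eps}$ holds by~\eqref{eq:eps-norm-equivalence}. It therefore suffices to prove the stated bound for $\norm{\Mk[\eta]-\Mk[\xi]}{\eps}$.

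First I would write, using~\eqref{eq:Mk} and linearity of the Bochner integral, $\Mk[\eta]-\Mk[\xi]=\bbE[\xk\eta-\xk\xi]$; this is legitimate since $\eta,\xi\in L_k(\Omega;E)$ implies $\xk\eta,\xk\xi\in L_1(\Omega;\xkeps E)$, because $\bbE[\norm{\xk\eta}{\eps}]=\bbE[\norm{\eta}{E}^k]<\infty$ and similarly for $\xi$. The triangle inequality for the Bochner integral then gives
\[
\norm{\Mk[\eta]-\Mk[\xi]}{\eps}\leq\bbE\bigl[\norm{\xk\eta-\xk\xi}{\eps}\bigr].
\]
Next I would exploit the telescoping identity, valid for all $x,y\in E$,
\[
\xk x-\xk y=\sum_{i=0}^{k-1}x^{\otimes i}\otimes(x-y)\otimes y^{\otimes(k-1-i)},
\]
where $x^{\otimes 0}$ is to be read as the empty factor, together with the fact that the injective norm is a cross norm: directly from~\eqref{eq:epsnorm} and the Hahn--Banach theorem one has $\norm{u_1\otimes\cdots\otimes u_k}{\eps}=\prod_{n=1}^k\norm{u_n}{E}$ for elementary tensors. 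Combining these with the triangle inequality for $\norm{\,\cdot\,}{\eps}$ on $\xk E$ yields the pointwise estimate
\[
\norm{\xk x-\xk y}{\eps}\leq\norm{x-y}{E}\sum_{i=0}^{k-1}\norm{x}{E}^i\,\norm{y}{E}^{k-1-i}.
\]

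Applying this with $x=\eta(\omega)$ and $y=\xi(\omega)$ and taking expectations, the proof is completed by estimating each term $\bbE\bigl[\norm{\eta-\xi}{E}\,\norm{\eta}{E}^i\,\norm{\xi}{E}^{k-1-i}\bigr]$ via the generalized H\"older inequality with the three exponents $k$, $k/i$ and $k/(k-1-i)$ (whose reciprocals sum to~$1$), which bounds it by $\norm{\eta-\xi}{L_k(\Omega;E)}\,\norm{\eta}{L_k(\Omega;E)}^i\,\norm{\xi}{L_k(\Omega;E)}^{k-1-i}$; summing over $i$ then gives exactly the claimed bound. Since every step is an inequality in the right direction and the only ingredients are~\eqref{eq:eps-norm-equivalence}, the defining formula~\eqref{eq:epsnorm}, the triangle inequality for the Bochner integral, and H\"older's inequality, I do not anticipate any genuine obstacle; the sole point requiring mild care is the bookkeeping of the degenerate cases $i=0$ and $i=k-1$, in which one of the H\"older exponents $k/i$, $k/(k-1-i)$ is formally $\infty$ and the corresponding $L_\infty$-factor is simply the constant~$1$.
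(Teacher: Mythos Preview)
Your proposal is correct and follows essentially the same approach as the paper: the telescoping identity for $\xk\eta-\xk\xi$, the cross-norm property of $\norm{\,\cdot\,}{\eps}$, the triangle inequality for the Bochner integral, and a H\"older inequality with exponents $k$, $k/i$, $k/(k-1-i)$ are exactly the ingredients used there. The paper differs only cosmetically, treating the boundary cases $i=0$ and $i=k-1$ explicitly as two-factor H\"older estimates rather than as degenerate triple H\"older inequalities.
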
 

\begin{proof} 
	The first inequality of the assertion is trivial. 
	We next note that also the remaining relation 
	is evident in the case $k=1$, since  
	\[
	\bigl\| \Mk[\eta] - \Mk[\xi] \bigr\|_{\eps} 
	= 
	\bigl\| \bbE[\eta - \xi] \bigr\|_{E} 
	\leq 
	\bbE\bigl[ \norm{ \eta - \xi }{E} \bigr] 
	= 
	\norm{ \eta - \xi }{L_k(\Omega;E)} 
	\quad 
	\text{if}
	\quad  
	k=1. 
	\]
	We now assume that $k\geq 2$ and 
	observe that 
	\begin{align*} 
	{\xk} \eta 
	- 
	{\xk} \xi 
	&= 
	\sum_{i=0}^{k-1} 
	\Bigl[ \bigl( \otimes^{i+1} \eta \bigr) \otimes \bigl( \otimes^{k-(i+1)} \xi \bigr) 
	- 
	\bigl( \otimes^i \eta\bigr) \otimes \bigl( \otimes^{k-i} \xi \bigr) \Bigr]
	\\
	&= 
	\sum_{i=0}^{k-1} 
	\Bigl[ 
	\bigl( \otimes^i \eta\bigr) \otimes (\eta-\xi) 
	\otimes \bigl( \otimes^{k-(i+1)} \xi \bigr)\Bigr] . 
	\end{align*} 
	Therefore, we may estimate as follows, 
	\begin{align*} 
	&\bigl\| \Mk[\eta] - \Mk[\xi] \bigr\|_{\eps}  
	= 
	\bigl\| \bbE\bigl[ {\xk} \eta - {\xk} \xi \bigr] \bigr\|_{\eps}
	\leq 
	\bbE\Bigl[ \bigl\| {\xk} \eta - {\xk} \xi \bigr\|_{\eps} \Bigr]
	\\
	&\leq
	\bbE\Biggl[ \, 
	\sum_{i=0}^{k-1} 
	\bigl\|
	\bigl( \otimes^i \eta\bigr) \otimes (\eta-\xi) \otimes \bigl( \otimes^{k-i-1} \xi \bigr) 
	\bigr\|_{\eps} \Biggr] 
	= 
	\sum_{i=0}^{k-1} 
	\bbE\Bigl[ 
	\norm{ \eta }{E}^i \norm{ \eta -\xi }{E} \norm{ \xi }{E}^{k-i-1} 
	\Bigr]. 
	\end{align*} 
	Combined with the H\"older inequality 
	this completes the proof, 
	since 
	\begin{align*}
	\bbE\Bigl[ \norm{ \eta -\xi }{E} \norm{ \xi }{E}^{k-1} \Bigr] 
	&\leq 
	\bigl( \bbE\bigl[ \norm{ \eta -\xi }{E}^k \bigr] \bigr)^{\frac{1}{k}}  
	\bigl( \bbE\bigl[ \norm{ \xi }{E}^k \bigr] \bigr)^{\frac{k-1}{k}}  , 
	\\ 
	\bbE\Bigl[ \norm{ \eta }{E}^{k-1} \norm{ \eta -\xi }{E} \Bigr] 
	&\leq 
	\bigl( \bbE\bigl[ \norm{ \eta -\xi }{E}^k \bigr] \bigr)^{\frac{1}{k}}  
	\bigl( \bbE\bigl[ \norm{ \eta }{E}^k \bigr] \bigr)^{\frac{k-1}{k}}  , 
	\end{align*} 
	and, whenever $k\geq 3$, 
	we obtain for all $i\in\{1,\ldots,k-2\}$ 
	\begin{align*} 
	\bbE\Bigl[ \norm{ \eta }{E}^i \norm{ \eta -\xi }{E} \norm{ \xi }{E}^{k-i-1} \Bigr] 
	\leq 		
	\bigl( \bbE\bigl[ \norm{ \eta }{E}^k \bigr] \bigr)^{\frac{i}{k}}  
	\bigl( \bbE\bigl[ \norm{ \eta -\xi }{E}^k \bigr] \bigr)^{\frac{1}{k}}  
	\bigl( \bbE\bigl[ \norm{ \xi }{E}^k \bigr] \bigr)^{\frac{k-i-1}{k}}  
	\end{align*} 
	by a triple H\"older inequality 
	with 
	$\bigl(\tfrac{k}{i}\bigr)^{-1} 
	+ 
	k^{-1}
	+ 
	\bigl(\tfrac{k}{k-i-1}\bigr)^{-1} 
	= 
	1$. 
\end{proof} 

We are now ready to state the main result of this subsection: 
an abstract convergence rate bound in $L_q(\Omega; \xkseps E)$ 
for single-level Monte Carlo 
estimation of the injective $k$th moment~$\Mk[\eta]$, 
assuming at our disposal $M$ independent  
samples of an approximation 
$\xi_1\in L_{kq}(\Omega;E)$ to $\eta\in L_k(\Omega;E)$. 

\begin{corollary}\label{cor:MC-k}
	Assume that  
	$(E,\norm{\,\cdot\,}{E})$ 
	is of Rademacher type $p\in[1,2]$. 
	Let $q\in[p,\infty)$, $k,M\in\bbN$ 
	and $\xi_1,\ldots,\xi_M\in L_{kq}(\Omega;E)$ be 
	independent and identically distributed 
	$E$-valued random variables. 
	Then, for every $\tenvec \in \xkseps E$, we have 
	\[
	\biggl\| 
	\tenvec 
	-
	\frac{1}{M} 
	\sum_{j=1}^M 
	\xk \xi_j 
	\biggr\|_{L_q(\Omega;\xkseps E)} 
	\leq 
	\bigl\| \tenvec - \Mk[\xi_1] \bigr\|_{\eps_s}
	+ 
	\CSL \,  
	M^{-\left( 1-\frac{1}{p} \right)} 
	\norm{ \xi_1 }{L_{kq}(\Omega; E)}^k 
	,
	\]
	where the constant $\CSL$ is defined as in \eqref{eq:CSL}. 
	
	In particular, 
	for all $\eta\in L_k(\Omega;E)$,
	we have   
	\begin{align*} 
	\biggl\| 
	\Mk[\eta]
	-
	\frac{1}{M} 
	\sum_{j=1}^M 
	\xk \xi_j 
	\biggr\|_{L_q(\Omega;\xkseps E)} 
	&\leq 
	\norm{ \eta - \xi_1 }{L_k(\Omega;E)} 
	\sum_{i=0}^{k-1} 
	\Bigl[ \norm{ \eta }{L_k(\Omega;E)}^i \norm{ \xi_1 }{L_k(\Omega;E)}^{k-i-1} \Bigr] 
	\\
	&\quad + 
	\CSL \,  
	M^{-\left( 1-\frac{1}{p} \right)} 
	\norm{ \xi_1 }{L_{kq}(\Omega; E)}^k 
	.
	\end{align*}
\end{corollary}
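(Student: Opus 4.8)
The plan is to obtain both assertions by combining the triangle inequality on $L_q(\Omega;\xkseps E)$ with the two results already at our disposal, namely Theorem~\ref{thm:MC-k} and Lemma~\ref{lem:Mk-diff}. No new estimates are needed; the corollary is essentially a bookkeeping step that isolates the deterministic ``bias'' term from the ``statistical error'' term.

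For the first claim, I would fix $\tenvec\in\xkseps E$ and insert $\Mk[\xi_1]$ as an intermediate element:
\[
\biggl\|
\tenvec
-
\frac{1}{M}
\sum_{j=1}^M
\xk \xi_j
\biggr\|_{L_q(\Omega;\xkseps E)}
\leq
\bigl\| \tenvec - \Mk[\xi_1] \bigr\|_{L_q(\Omega;\xkseps E)}
+
\biggl\|
\Mk[\xi_1]
-
\frac{1}{M}
\sum_{j=1}^M
\xk \xi_j
\biggr\|_{L_q(\Omega;\xkseps E)}.
\]
Since both $\tenvec$ and $\Mk[\xi_1]$ are deterministic elements of $\xkseps E$, the first summand equals $\| \tenvec - \Mk[\xi_1] \|_{\eps_s}$; the integrability hypothesis $\xi_1\in L_{kq}(\Omega;E)$ guarantees $\Mk[\xi_1]\in\xkseps E$, as recorded before Remark~\ref{rem:pi-moment}. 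The second summand is bounded by $\CSL\, M^{-(1-1/p)}\norm{\xi_1}{L_{kq}(\Omega;E)}^k$ directly by Theorem~\ref{thm:MC-k}, which applies because $\xi_1,\ldots,\xi_M$ are i.i.d.\ in $L_{kq}(\Omega;E)$ and $E$ has Rademacher type $p\in[1,2]$ with $q\in[p,\infty)$.

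For the ``in particular'' part, I would simply specialize $\tenvec := \Mk[\eta]$, which is well defined in $\xkseps E$ since $\eta\in L_k(\Omega;E)$, and bound the resulting bias term $\bigl\| \Mk[\eta] - \Mk[\xi_1] \bigr\|_{\eps_s}$ by Lemma~\ref{lem:Mk-diff} (noting $\xi_1\in L_{kq}(\Omega;E)\subseteq L_k(\Omega;E)$ by Jensen's inequality since $kq\geq k$), yielding
\[
\bigl\| \Mk[\eta] - \Mk[\xi_1] \bigr\|_{\eps_s}
\leq
\norm{ \eta - \xi_1 }{L_k(\Omega;E)}
\sum_{i=0}^{k-1}
\Bigl[ \norm{ \eta }{L_k(\Omega;E)}^i \norm{ \xi_1 }{L_k(\Omega;E)}^{k-i-1} \Bigr].
\]
Substituting this into the bound from the first part completes the argument. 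I do not anticipate a genuine obstacle here: the only point requiring a moment of care is the observation that deterministic elements have $L_q(\Omega;\xkseps E)$-norm equal to their $\eps_s$-norm, so that the bias term is correctly identified; everything else is a direct invocation of the cited results together with the triangle and H\"older inequalities.
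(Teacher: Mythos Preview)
Your proposal is correct and follows essentially the same approach as the paper: apply the triangle inequality in $L_q(\Omega;\xkseps E)$ to separate the deterministic bias term $\bigl\| \tenvec - \Mk[\xi_1] \bigr\|_{\eps_s}$ from the statistical error, bound the latter via Theorem~\ref{thm:MC-k}, and for the second assertion specialize $\tenvec = \Mk[\eta]$ and invoke Lemma~\ref{lem:Mk-diff}. The paper's proof proceeds identically, so there is nothing to add.
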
 

\begin{proof} 
	By the triangle inequality on 	
	$L_q(\Omega;\xkseps E)$ we obtain,  
	for every $\tenvec\in\xkseps E$, 
	\begin{align*}
	\biggl\| 
	\tenvec -
	\frac{1}{M} 
	\sum_{j=1}^M 
	\xk \xi_j 
	\biggr\|_{L_q(\Omega;\xkseps E)} 
	&\leq 
	\bigl\| \tenvec - \Mk[\xi_1] \bigr\|_{\eps_s} 
	\\
	&\quad +
	\biggl\| 
	\Mk[\xi_1] 
	-
	\frac{1}{M} 
	\sum_{j=1}^M 
	\xk \xi_j 
	\biggr\|_{L_q(\Omega;\xkseps E)} , 
	\end{align*} 
	and the first claim follows 
	by applying the estimate \eqref{eq:thm:MC-k} 
	of Theorem~\ref{thm:MC-k}. 
	Subsequently, we derive the second assertion 
	by combining this result with Lemma~\ref{lem:Mk-diff}
	which we use to bound  
	the difference of the $k$th moments 
	$\bigl\| \Mk[\eta] - \Mk[\xi_1] \bigr\|_{\eps_s}$. 
\end{proof} 

We close this subsection with a counterexample 
which shows that the convergence results for the standard  
and single-level Monte Carlo estimators in 
Theorem~\ref{thm:MC-k} and Corollary~\ref{cor:MC-k}
can, in general, not hold when measuring the error 
in the (symmetric or full) projective tensor norm. 
More specifically, we discuss 
this for 
second-order moments 
of Hilbert space valued random variables, i.e., 
the random variables take values in a Banach space 
of type $p=2$. 

\begin{example}\label{example:projective-bad} 
	Let $(H, \scalar{\,\cdot\,, \,\cdot\,}{H})$ 
	be a real separable Hilbert space, 
	and let $(e_j)_{j\in\bbN}$ be an orthonormal basis for~$H$. 
	For $n\in\bbN$, consider the $H$-valued random variable 
	$\xi_n\from \Omega\to H$ on $(\Omega,\cA,\bbP)$, 
	whose (discrete uniform) distribution 
	is defined by 
	\[
	\forall i \in \{1,\ldots,n\}: 
	\quad 
	\bbP( \{\omega\in\Omega : \xi_n(\omega) = e_i\} ) 
	= n^{-1}\!. 
	\]
	Then, for all $n\in\bbN$, 
	both the projective and injective second moments 
	of $\xi_n$ exist,   
	\[
	\bbM^2_\pi[\xi_n] 
	= 
	\bbM^2_\eps[\xi_n] 
	= 
	\bbE\bigl[\xi_n\otimes \xi_n\bigr] 
	= 
	\frac{1}{n} \sum_{i=1}^n e_i \otimes e_i, 
	\quad 
	\bigl\| \bbM^2_\pi[\xi_n] \bigr\|_{\pi}
	= 
	\bigl\| \bbM^2_\pi[\xi_n] \bigr\|_{\pi_s}
	=
	1, 
	\]
	see \eqref{eq:diagonal-pi-norms} in Lemma~\ref{lem:diagonal-norms} 
	of Appendix~\ref{appendix:norm-computation} for the  
	norm identities. 
	
	In addition,  
	for  all $q\in[1,\infty)$ 
	and every 
	$n\in\bbN$,  
	we have that 
	\[
	\norm{\xi_n}{L_q(\Omega;H)}^q 
	= 
	\bbE\bigl[ \norm{\xi_n}{H}^q \bigr] 
	= 
	\frac{1}{n} 
	\sum_{i=1}^n 
	\norm{e_i}{H}^q  
	= 
	1. 
	\]
	We let $q\in[1,\infty)$, 
	$\xi_{n,1},\ldots,\xi_{n,M}$ be $M\in\bbN$ independent 
	copies of $\xi_n$ and estimate 
	\begin{align*} 
	\mathrm{err}_{q,\pi}^{(n)} 
	:=
	\biggl\| 
	\bbM^2_\pi [\xi_n] 
	&-
	\frac{1}{M} 
	\sum_{j=1}^M 
	\otimes^2 \xi_{n,j} 
	\biggr\|_{L_q(\Omega; \otimes_\pi^2 H)}^q 
	= 
	\bbE
	\Biggl[ 
	\biggl\| 
	\bbM^2_\pi [\xi_n] 
	- 
	\frac{1}{M} 
	\sum_{j=1}^M 
	\otimes^2 \xi_{n,j} 
	\biggr\|_\pi^q 
	\Biggr] 
	\\
	&= 
	\sum_{\nu_1 = 1}^n  
	\cdots
	\sum_{\nu_M = 1}^n 
	\frac{1}{n^M} 
	\biggl\| \, 
	\frac{1}{n} 
	\sum_{i=1}^n 
	( e_i \otimes e_i )
	- 
	\frac{1}{M} 
	\sum_{j=1}^M 
	\bigl( e_{\nu_j} \otimes e_{\nu_j} \bigr)
	\biggr\|_\pi^q 
	\\
	&\geq 
	\sum_{\substack{1 \leq \nu_1,\ldots,\nu_M \leq n \\ \text{pairwise distinct}}} 
	\frac{1}{n^M} 
	\biggl\| \, 
	\frac{1}{n} 
	\sum_{i=1}^n 
	( e_i \otimes e_i )
	- 
	\frac{1}{M} 
	\sum_{j=1}^M 
	\bigl( e_{\nu_j} \otimes e_{\nu_j} \bigr)
	\biggr\|_\pi^q . 
	\end{align*} 
	Thus, 
	assuming that $n\geq M$, 
	again by \eqref{eq:diagonal-pi-norms} 
	in Lemma~\ref{lem:diagonal-norms} 
	we obtain that 
	\begin{align*} 
	\mathrm{err}_{q,\pi}^{(n)} 
	&\geq 
	\sum_{\substack{1 \leq \nu_1,\ldots,\nu_M \leq n \\ \text{pairwise distinct}}} 
	n^{-M} 
	\bigl[ 
	M \bigl( \tfrac{1}{M} - \tfrac{1}{n} \bigr) 
	+ 
	(n-M) \tfrac{1}{n} 
	\bigr]^q 
	\\
	&= 
	2^q  
	\bigl( 1 - \tfrac{M}{n} \bigr)^q 
	n^{-M} 
	[ n \cdots (n-M+1) ] 
	\geq 
	2^q 
	\bigl( 1 - \tfrac{M}{n} \bigr)^{q+M} \! . 
	\end{align*} 

	Given $q\in[1,\infty)$ and 
	$M\in\bbN$, we choose an integer 
	$n_\star = n_\star(q,M)\in \bbN$ 
	such that 
	\[
	n_\star 
	\geq 
	M \bigl( 1 - 2^{-\nicefrac{q}{(q+M)}}  \bigr)^{-1} 
	\qquad 
	\Longrightarrow 
	\qquad 
	\bigl( 1 - \tfrac{M}{n_\star} \bigr)^{q+M}  
	\geq 2^{-q} . 
	\]
	This proves that, 
	for all $q\in[1,\infty)$ and 
	every $M\in\bbN$, there exists 
	${n_\star=n_\star(q,M)\in\bbN}$ 
	such that 
	\[
	\biggl\| 
	\bbM^2_\pi [ \xi_{n_\star} ] 
	-
	\frac{1}{M} 
	\sum_{j=1}^M 
	\otimes^2 \xi_{n_\star,j} 
	\biggr\|_{L_q(\Omega; \otimes_{\pi_s}^{2,s} H)} 
	\geq 
	\biggl\| 
	\bbM^2_\pi [ \xi_{n_\star} ] 
	-
	\frac{1}{M} 
	\sum_{j=1}^M 
	\otimes^2 \xi_{n_\star,j} 
	\biggr\|_{L_q(\Omega; \otimes_\pi^2 H)} 
	\geq 1 . 
	\]
	Since $\norm{\xi_{n_\star}}{L_q(\Omega;H)} = 1$ 
	is also true 
	for all $q\in[1,\infty)$ and 
	since $H$ has type $p=2$, 
	this shows that an analogue 
	of \eqref{eq:thm:MC-k} cannot hold with respect to 
	the (full or symmetric) 
	projective tensor norm. 
\end{example}

\subsection{Multilevel Monte Carlo estimation} 
\label{subsec:mlmc} 

Assuming that $(X_\ell)_{\ell=1}^L$ is 
a family of $E$-valued random variables 
corresponding to $L\in\bbN$ 
different refinement \emph{levels}  
of underlying discretization parameters, 
translating the idea of multilevel Monte Carlo (MLMC)
estimation, as formulated 
e.g.\ in~\cite[p.~5]{CliffeGilesScheichlTeckentrup2011} 
for means of 
Hilbert space valued random variables, 
to higher-order moments of Banach space 
valued random variables 
results in exploiting the telescopic sum 
(here: $X_0:=0\in E$)
\[
\bbE\bigl[ {\xk} X_L \bigr]
= 
\Mk[X_L] 
= 
\sum_{\ell=1}^L 
\bigl( \Mk[X_\ell] - \Mk[X_{\ell-1}] \bigr) 
= 
\sum_{\ell=1}^L 
\bbE\bigl[ {\xk} X_{\ell} - {\xk} X_{\ell-1} \bigr], 
\]
and estimating $\bbE[ {\xk} X_{\ell} - {\xk} X_{\ell-1} ]$ 
for each $1\leq\ell\leq L$ 
via Monte Carlo sampling
instead of $\bbE[ {\xk} X_L ]$. 
As we will see in Theorem~\ref{thm:alpha-beta-gamma} 
and Remark~\ref{rem:comparison-SLMC}, 
this approach considerably reduces the computational cost. 

Evidently, the corresponding error analysis 
requires a Monte Carlo convergence result for 
estimating differences of injective $k$th moments, i.e., 
for expected values of the form 
$\bbE[{\xk}\eta - {\xk}\xi] = \Mk[\eta] - \Mk[\xi]$, 
via standard Monte Carlo methods. 
This auxiliary result is derived in Proposition~\ref{prop:MLMC-k} 
by means of the next lemma, Lemma~\ref{lem:MLMC-k}, 
which acts as the analogue of the Rademacher 
type estimate \eqref{eq:type} for Rademacher sums 
of differences 
${\xk} x_j - {\xk} y_j$, $1\leq j\leq M$. 

\begin{lemma}\label{lem:MLMC-k}
	Let $(r_j)_{j=1}^M$ be   
	a Rademacher family 
	on a complete probability space 
	$(\Omegat,\cAt,\bbPt)$ 
	with expectation $\bbEt$.  
	Assume that  
	$(E,\norm{\,\cdot\,}{E})$ 
	has Rademacher type ${p\in[1,2]}$, and 
	let ${q\in[p,\infty)}$, $k,M\in\bbN$ and 
	$x_1,\ldots,x_M, y_1,\ldots, y_M \in E$. 
	Then,  
	\begin{equation}\label{eq:lem:MLMC-k} 
	\biggl\| 
	\sum_{j=1}^M 
	r_j \bigl( \xk x_j - \xk y_j \bigr) 
	\biggr\|_{L_q(\Omegat;\xkseps E)}  
	\leq 
	\Cdiff 
	\sum_{i=1}^{k}
	\binom{k}{i} 
	\Biggl[  
	\sum_{j=1}^M 
	\norm{ x_j - y_j }{E}^{ip} \norm{ y_j }{E}^{(k-i)p}  
	\Biggr]^{\nicefrac{1}{p}} \!. 
	\end{equation}
	Here,  
	$\binom{k}{i} := \frac{k!}{i!(k-i)!}$ 
	is the binomial coefficient and 
	the constant $\Cdiff$ is given by 
	\begin{equation}\label{eq:Cdiff}
	\Cdiff := 16 k \sqrt{\pi} K_{q,p} \tau_{p}(E) K_{q,2}, 
	\end{equation}
	with the Kahane--Khintchine and type $p$ constants 
	from Definitions~\ref{def:Kqp-constant} 
	and~\ref{def:type-p-constant}. 
\end{lemma}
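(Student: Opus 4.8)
The plan is to route everything through the definition~\eqref{eq:sepsnorm} of the symmetric injective tensor norm---which only involves a \emph{single} dual functional---and then control the resulting supremum by a Gaussian comparison argument. Set $d_j := x_j - y_j$. Since $\sum_{j=1}^M r_j(\xk x_j - \xk y_j)$ lies in $\xks E$ and is of the form $\sum_l \lambda_l\, {\xk} z_l$, formula~\eqref{eq:sepsnorm} gives, $\bbPt$-a.s.,
\[
\biggl\| \sum_{j=1}^M r_j(\xk x_j - \xk y_j) \biggr\|_{\eps_s} = \sup_{f\in B_{\dual E}} \biggl| \sum_{j=1}^M r_j\bigl( f(x_j)^k - f(y_j)^k \bigr) \biggr|.
\]
Expanding $f(x_j)^k = \bigl( f(y_j) + f(d_j)\bigr)^k$ by the scalar binomial theorem \emph{inside} the supremum and applying the triangle inequality, first inside the supremum and then in $L_q(\Omegat)$, reduces~\eqref{eq:lem:MLMC-k} to proving, for each fixed $i\in\{1,\ldots,k\}$,
\[
\bigl\| T_i \bigr\|_{L_q(\Omegat)} \lesssim_{k,q,p} \tau_p(E)\, N_i, \qquad T_i := \sup_{f\in B_{\dual E}} \biggl| \sum_{j=1}^M r_j f(d_j)^i f(y_j)^{k-i} \biggr|, \quad N_i := \Biggl[ \sum_{j=1}^M \norm{d_j}{E}^{ip}\norm{y_j}{E}^{(k-i)p} \Biggr]^{\nicefrac{1}{p}},
\]
since summing these estimates against the coefficients $\binom ki$ yields precisely the right-hand side of~\eqref{eq:lem:MLMC-k}, with the accumulated absolute constants absorbed into $\Cdiff$.

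To estimate $\|T_i\|_{L_q(\Omegat)}$, note that $T_i = \bigl\| \sum_{j=1}^M r_j\, \sym\bigl( ({\otimes^i} d_j)\otimes({\otimes^{k-i}} y_j)\bigr)\bigr\|_{\eps_s}$ is the $\xkseps E$-norm of a Rademacher sum of fixed vectors, so the standard domination of Rademacher by Gaussian averages (applied with the convex increasing function $t\mapsto t^q$) gives $\|T_i\|_{L_q(\Omegat)} \le \sqrt{\pi/2}\,\|S_i\|_{L_q(\Omegat)}$ with $S_i := \sup_{f\in B_{\dual E}}\bigl| \sum_{j=1}^M g_j f(d_j)^i f(y_j)^{k-i}\bigr|$ and $(g_j)_{j=1}^M$ an orthogaussian family. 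Since $Z_f := \sum_j g_j f(d_j)^i f(y_j)^{k-i}$ is a centered Gaussian process over $f\in B_{\dual E}$ with $Z_0 = 0$, we have $S_i \le \sup_f Z_f - \inf_f Z_f$. A telescoping of the product yields
\[
f(d_j)^i f(y_j)^{k-i} - f'(d_j)^i f'(y_j)^{k-i} = \alpha_j\bigl( f(d_j) - f'(d_j)\bigr) + \beta_j\bigl( f(y_j) - f'(y_j)\bigr),
\]
with $|\alpha_j| \le i\,\norm{d_j}{E}^{i-1}\norm{y_j}{E}^{k-i}$ and $|\beta_j| \le (k-i)\,\norm{d_j}{E}^{i}\norm{y_j}{E}^{k-i-1}$, whence, writing $\delta_j^d := f(d_j) - f'(d_j)$, $\delta_j^y := f(y_j) - f'(y_j)$,
\[
\bbEt\bigl[ (Z_f - Z_{f'})^2 \bigr] = \sum_{j=1}^M \bigl( \alpha_j\delta_j^d + \beta_j\delta_j^y \bigr)^2 \le 2\sum_{j=1}^M \alpha_j^2 (\delta_j^d)^2 + 2\sum_{j=1}^M \beta_j^2 (\delta_j^y)^2 \le \bbEt\bigl[ (W_f - W_{f'})^2 \bigr],
\]
where $W_f := \sqrt2\sum_j g_j^{(1)}\, i\,\norm{d_j}{E}^{i-1}\norm{y_j}{E}^{k-i} f(d_j) + \sqrt2\sum_j g_j^{(2)}\,(k-i)\,\norm{d_j}{E}^{i}\norm{y_j}{E}^{k-i-1} f(y_j)$ for independent orthogaussian families $(g_j^{(1)})$, $(g_j^{(2)})$.

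Fernique's comparison inequality---in its form for convex increasing functions, applied with $t\mapsto t^q$ to the oscillations $\sup_f(\,\cdot\,)-\inf_f(\,\cdot\,)$ of $Z$ and $W$ over the common index set $B_{\dual E}$---then gives $\|S_i\|_{L_q(\Omegat)} \le \|\sup_f W_f - \inf_f W_f\|_{L_q(\Omegat)} = 2\,\|\sup_f W_f\|_{L_q(\Omegat)}$, the last step because $f\mapsto f(z)$ is linear and $B_{\dual E}$ is symmetric. Bounding $\sup_f W_f$ by the sum of the suprema of its two linear parts, then applying the Hahn--Banach identity~\eqref{eq:rade-gaussian-norm}, the triangle inequality in $L_q(\Omegat;E)$, the Gaussian Kahane--Khintchine inequality of Remark~\ref{rem:kahane-khintchine-gaussian} (constant $K_{q,p}$), and the fact that Rademacher type $p$ of $E$ carries over to Gaussian sums with constant at most $\tau_p(E)$ (since $\bbEt|g_1|^p \le 1$ for $p\le 2$), bounds each of the two parts by $K_{q,p}\,\tau_p(E)\, N_i$ up to the scalar prefactors $i$ and $k-i$; their sum thus supplies the factor $i + (k-i) = k$. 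Collecting all constants completes the estimate of $\|T_i\|_{L_q(\Omegat)}$, and hence of~\eqref{eq:lem:MLMC-k}.

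The point where care is needed---and the main obstacle---is this passage from the ``product'' Gaussian process $Z_f$ to \emph{linear} Gaussian sums in $E$. By Example~\ref{example:type-tensor} the space $\xkseps E$ has only the trivial type even when $E$ does not, so the $\ell_p$-summation on the right-hand side of~\eqref{eq:lem:MLMC-k} cannot come from a ``type of $\xkseps E$'' estimate; it must be drawn from $E$ itself, which is exactly what the combination of the single-functional description of $\norm{\,\cdot\,}{\eps_s}$, the Rademacher-to-Gaussian domination, and Fernique's inequality achieves. The remaining ingredients---the binomial bookkeeping, the elementary telescoping bound, and tracking the absolute constants entering $\Cdiff$---are routine.
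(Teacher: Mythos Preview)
Your argument is correct and follows the same overall architecture as the paper's proof---Gaussianize, binomially expand, apply a Fernique--Slepian comparison, then use the type-$p$ estimate for linear Gaussian sums in $E$---but it differs in one substantive respect. In the Fernique step the paper compares the process $Z_f=\sum_j g_j f(d_j)^i f(y_j)^{k-i}$ to the process $\sqrt{2}\sum_j\bigl(g_j f(d_j)^i\|y_j\|_E^{k-i}+\widetilde g_j\|d_j\|_E^{i} f(y_j)^{k-i}\bigr)$, which still contains the nonlinear powers $f(\,\cdot\,)^i$ and $f(\,\cdot\,)^{k-i}$; these are then removed by a separate application of the contraction-type estimate in Proposition~\ref{prop:rade-gaussian}\ref{prop:rade-gaussian-poly}. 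You instead telescope one step further, using the exact factorization $a^ib^{k-i}-c^id^{k-i}=\alpha(a-c)+\beta(b-d)$ with $|\alpha|\le i\|d_j\|_E^{i-1}\|y_j\|_E^{k-i}$ and $|\beta|\le(k-i)\|d_j\|_E^{i}\|y_j\|_E^{k-i-1}$ (here $\alpha,\beta$ depend on $f,f'$ but their uniform bounds do not), and compare $Z$ directly to a process \emph{linear} in $f$. This bypasses Proposition~\ref{prop:rade-gaussian} altogether and, combined with your observation that Gaussian type~$p$ holds with constant at most $\tau_p(E)$ for $p\le 2$, even yields a slightly sharper absolute constant than the stated $\Cdiff$. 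The paper's two-stage route has the advantage of isolating the contraction principle as a reusable tool; your route is more economical for this particular lemma.

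One point you pass over quickly: Fernique's inequality as usually stated (and as recorded in Theorem~\ref{thm:slepian}) is for finite index sets, so applying it over the uncountable set $B_{\dual E}$ needs the weak-$*$ compactness and finite-cover approximation argument that the paper carries out in Lemma~\ref{lem:slepian}; since your $Z_f$ has exactly the structure $\sum_j g_j\Psi(f(d_j),f(y_j))$ with $\Psi(s,t)=s^it^{k-i}$ continuous and vanishing at the origin, that lemma applies verbatim.
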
 

\begin{proof} 
	For  the proof of \eqref{eq:lem:MLMC-k}, we 
	assume that $(g_j)_{j=1}^M$, $(\widetilde{g}_j)_{j=1}^M$  
	are two independent orthogaussian families on 
	$(\Omegat,\cAt,\bbPt)$. 
	We first note that 
	by \cite[Lemma~4.5 and (4.8)]{LedouxTalagrand2011}, 
	applied for the convex function $t\mapsto t^q$ 
	and the Banach space $\xkseps E$, 
	and by the definition~\eqref{eq:sepsnorm} 
	of the symmetric injective 
	tensor norm we have that 
	\begin{equation}\label{eq:proof:lem:MLMC-k-1} 
	\begin{split} 
	\biggl\|  
	\sum_{j=1}^M 
	r_j \bigl( \xk x_j &- \xk y_j \bigr) 
	\biggr\|_{L_q(\Omegat;\xkseps E)} 
	\leq 
	\sqrt{\frac{\pi}{2}} \, 
	\biggl\| 
	\sum_{j=1}^M 
	g_j \bigl( \xk x_j - \xk y_j \bigr) 
	\biggr\|_{L_q(\Omegat;\xkseps E)} 
	\hspace{-1cm}
	\\
	&= 
	\sqrt{\frac{\pi}{2}} \, 
	\Biggl\| 
	\sup_{f\in B_{\dual{E}}} 
	\biggl| 
	\sum_{j=1}^M 
	g_j \bigl( f(x_j)^k - f(y_j)^k \bigr) 
	\biggr|  
	\Biggr\|_{L_q(\Omegat;\bbR)}
	\\
	&= 
	\sqrt{\frac{\pi}{2}} \,
	\Biggl\| 
	\sup_{f\in B_{\dual{E}}} 
	\biggl| 
	\sum_{j=1}^M 
	g_j 
	\sum_{i=1}^k
	\binom{k}{i} 
	f(x_j-y_j)^i f(y_j)^{k-i}  
	\biggr| 
	\Biggr\|_{L_q(\Omegat; \bbR)}
	\\
	&\leq 
	\sum_{i=1}^k
	\sqrt{\frac{\pi}{2}}  
	\binom{k}{i} 
	\Biggl\| 
	\sup_{f\in B_{\dual{E}}} 
	\biggl| 
	\sum_{j=1}^M 
	g_j 
	f(x_j-y_j)^i f(y_j)^{k-i}  
	\biggr| 
	\Biggr\|_{L_q(\Omegat; \bbR)} \! . 
	\end{split} 
	\end{equation}
	Here, we also used the binomial expansion 
	for $f(x_j)^k = [f(x_j-y_j) + f(y_j)]^k$ 
	and the triangle inequality on $L_q(\Omegat;\bbR)$. 
	We now claim that, for every $i\in\{1,\ldots,k-1\}$, 
	\begin{equation}\label{eq:proof:lem:MLMC-k-2} 
	\begin{split}  
	&\Biggl\| 
	\sup_{f\in B_{\dual{E}}} 
	\biggl| 
	\sum_{j=1}^M 
	g_j 
	f(x_j-y_j)^i f(y_j)^{k-i}  
	\biggr| 
	\Biggr\|_{L_q(\Omegat;\bbR)}
	\\
	&\leq  
	2\sqrt{2} \, 
	\Biggl\| 
	\sup_{f\in B_{\dual{E}}} 
	\biggl| 
	\sum_{j=1}^M 
	\bigl( 
	g_j f(x_j - y_j)^i \norm{y_j}{E}^{k-i}
	+ 
	\widetilde{g}_j \norm{x_j - y_j}{E}^i f(y_j)^{k-i} 
	\bigr) 
	\biggr| 
	\Biggr\|_{L_q(\Omegat; \bbR)} \!.  
	\hspace{-1cm}
	\end{split} 
	\end{equation} 
	
	To establish \eqref{eq:proof:lem:MLMC-k-2}, 
	we set $\delta_j := x_j - y_j\in E$ for all $1\leq j \leq M$, and 
	consider for a fixed $i\in\{1,\dots,k-1\}$ the following two real-valued 
	centered Gaussian processes 
	$\cG_{i,1}, \cG_{i,2} \from B_{\dual{E}}\! \times \Omegat \to \bbR$, 
	which are 
	indexed by $f\in B_{\dual{E}}$, 
	\begin{align} 
	\cG_{i,1}(f)
	&:= 
	\sum_{j=1}^M 
	g_j 
	f( \delta_j )^i f( y_j )^{k-i} , 
	\label{eq:Gi1}
	\\
	\cG_{i,2}(f)
	&:= 
	\sqrt{2} \, 
	\sum_{j=1}^M 
	\bigl( 
	g_j 
	f( \delta_j )^i \norm{ y_j }{E}^{k-i}  
	+ 
	\widetilde{g}_j 
	\norm{ \delta_j }{E}^i f( y_j )^{k-i}   
	\bigr). 
	\label{eq:Gi2} 
	\end{align} 
	For all $i\in\{1,\dots,k-1\}$ and 
	every $f,h\in B_{\dual{E}}$, 
	we then obtain by independence of the 
	standard Gaussian random variables 
	$g_1, \ldots, g_M, \widetilde{g}_1, \ldots, \widetilde{g}_M$ 
	the following estimate,  
	\begin{align*} 
	\bbEt \bigl[ |\cG_{i,1}(f) &- \cG_{i,1}(h)|^2 \bigr] 
	=  
	\sum_{j=1}^M 
	\bigl( f( \delta_j )^i f( y_j )^{k-i}   
	- 
	h( \delta_j )^i h( y_j )^{k-i} \bigr)^2 
	\\
	&= 
	\sum_{j=1}^M 
	\bigl( \bigl[ f( \delta_j )^i - h( \delta_j )^i \bigr] f( y_j )^{k-i}   
	+ 
	h( \delta_j )^i \bigl[ f( y_j )^{k-i} - h( y_j )^{k-i} \bigr] \bigr)^2 
	\\
	&\leq 
	2
	\sum_{j=1}^M 
	\Bigl( 
	\bigl[ f( \delta_j )^i - h( \delta_j )^i \bigr]^2 f( y_j )^{2(k-i)}   
	+ 
	h( \delta_j )^{2i} \bigl[ f( y_j )^{k-i} - h( y_j )^{k-i}\bigr]^2 
	\Bigr) 
	\\
	&\leq 
	2
	\sum_{j=1}^M 
	\Bigl( 
	\bigl[ f( \delta_j )^i - h( \delta_j )^i \bigr]^2 \norm{ y_j }{E}^{2(k-i)}   
	+ 
	\norm{ \delta_j }{E}^{2i} \bigl[ f( y_j )^{k-i} - h( y_j )^{k-i} \bigr]^2 
	\Bigr) 
	\\
	&= 
	\bbEt\bigl[ | \cG_{i,2}(f) - \cG_{i,2}(h) |^2 \bigr]. 
	\end{align*} 
	Furthermore, for every $i\in\{1,\dots,k-1\}$, 
	we have 
	\[
	\cG_{i,1}(f) 
	= 
	\sum_{j=1}^M g_j \Psi_i \bigl( f(\delta_j), f(y_j) \bigr), 
	\quad 
	f\in B_{\dual{E}\!}. 
	\]
	Here, $(t_1,t_2)\mapsto \Psi_i(t_1,t_2) := t_1^i \, t_2^{k-i}$ 
	is continuous on $\bbR^2$ 
	and satisfies $\Psi_i(0,0) = 0$ 
	for all $i\in\{1,\dots,k-1\}$. 
	
	We thus may apply the comparison result 
	derived in Lemma~\ref{lem:slepian} 
	(see Appendix~\ref{appendix:details-fernique}) 
	for every ${i\in\{1,\ldots,k-1\}}$, 
	which shows that, 
	for all $q\in[1,\infty)$, 
	\begin{align*} 
	\bbEt\Bigl[
	\bigl(  
	\sup\nolimits_{f \in B_{\dual{E}} \!} 
	|\cG_{i,1}(f)| \bigr)^q 
	\Bigr] 
	\leq 
	2^q \, 
	\bbEt\Bigl[ 
	\bigl( \sup\nolimits_{f \in B_{\dual{E}} \!} 
	|\cG_{i,2}(f) | \bigr)^q \Bigr] . 
	\end{align*} 
	Taking the $q$th root 
	on both sides of this inequality 
	and inserting the definitions 
	of $\cG_{i,1}$ and $\cG_{i,2}$ 
	from \eqref{eq:Gi1}--\eqref{eq:Gi2} 
	completes the proof 
	of \eqref{eq:proof:lem:MLMC-k-2}. 
	
	Next, combining \eqref{eq:proof:lem:MLMC-k-1} 
	with \eqref{eq:proof:lem:MLMC-k-2} and the triangle 
	inequality on $L_q(\Omegat;\bbR)$ yields 
	\begin{align*} 
	&\biggl\|  
	\sum_{j=1}^M 
	r_j \bigl( \xk x_j - \xk y_j \bigr) 
	\biggr\|_{L_q(\Omegat;\xkseps E)} 
	\leq 
	\sum_{i=1}^{k-1}
	\widetilde{C}_{k,i} \, 
	\Biggl\| 
	\sup_{f\in B_{\dual{E}}} 
	\biggl| 
	\sum_{j=1}^M 
	g_j f( \delta_j )^i \norm{y_j}{E}^{k-i} 
	\biggr| 
	\Biggr\|_{L_q(\Omegat; \bbR)}  
	\\
	&+ 
	\sqrt{\frac{\pi}{2}} \, 
	\Biggl\| 
	\sup_{f\in B_{\dual{E}}} 
	\biggl| 
	\sum_{j=1}^M 
	g_j f( \delta_j )^k  
	\biggr| 
	\Biggr\|_{L_q(\Omegat; \bbR)} 
	\hspace*{-1.06mm} +
	\sum_{i=1}^{k-1}
	\widetilde{C}_{k,i} \, 
	\Biggl\| 
	\sup_{f\in B_{\dual{E}}} 
	\biggl| 
	\sum_{j=1}^M 
	\widetilde{g}_j \norm{ \delta_j }{E}^i f(y_j)^{k-i} 
	\biggr| 
	\Biggr\|_{L_q(\Omegat; \bbR)} 
	\!, 
	\end{align*} 
	where we set 
	$\widetilde{C}_{k,i}:=2 \sqrt{ \pi }  \binom{k}{i}$. 
	By noting that 
	$\sqrt{\pi}/ \sqrt{2} \leq \widetilde{C}_{k,k}$ 
	and estimating 
	the $L_q(\Omegat;\bbR)$-norms 
	on the right-hand side for all $i\in\{1,\ldots,k\}$ 
	using 
	Proposition~\ref{prop:rade-gaussian}\ref{prop:rade-gaussian-poly}, 
	with $k_j = i$ 
	and the vectors 
	$\norm{ y_j }{E}^{\frac{k-i}{i}} \delta_j$ 
	(respectively,   
	for every $i\in\{1,\ldots,k-1\}$ 
	with 
	$k_j = k-i$ and 
	$\norm{ \delta_j }{E}^{\frac{i}{k-i}} y_j$) 
	for all $1\leq j \leq M$, 
	we find that 
	\begin{align*} 
	\biggl\|  
	\sum_{j=1}^M 
	r_j \bigl( \xk x_j - \xk y_j \bigr) 
	&\biggr\|_{L_q(\Omegat;\xkseps E)} 
	\leq 
	\sum_{i=1}^k
	\widetilde{C}_{k,i} \, 
	4 i \, 
	\biggl\| 
	\sum_{j=1}^M 
	g_j \norm{ \delta_j }{E}^{i-1} \norm{y_j}{E}^{k-i} \delta_j 
	\biggr\|_{L_q(\Omegat; E)}  
	\\
	&+ 
	\sum_{i=1}^{k-1} 
	\widetilde{C}_{k,i} \, 
	4 (k-i) \,
	\biggl\| 
	\sum_{j=1}^M 
	\widetilde{g}_j \norm{ \delta_j }{E}^i \norm{ y_j }{E}^{k-i-1} y_j  
	\biggr\|_{L_q(\Omegat; E)} 
	\!. 
	\end{align*} 
	Finally, since $q\in[p,\infty)$ is assumed, 
	we may use Proposition~\ref{prop:MC-1} 
	for the independent, centered
	$E$-valued random variables 
	\[
	\eta_j 
	:= 
	g_j \norm{ \delta_j }{E}^{i-1} \norm{y_j}{E}^{k-i} \delta_j 
	\quad \text{resp.} \quad 
	\widetilde{\eta}_j 
	:=
	\widetilde{g}_j \norm{ \delta_j }{E}^i \norm{ y_j }{E}^{k-i-1} y_j  , 
	\qquad 
	1\leq j\leq M, 
	\]
	to conclude that 
	(recall the definitions 
	$\widetilde{C}_{k,i} = 2 \sqrt{ \pi }  \binom{k}{i}$ 
	and 
	$\delta_j = x_j - y_j$) 
	\begin{align*} 
	\Biggl\|  
	\sum_{j=1}^M 
	r_j &\bigl( \xk x_j - \xk y_j \bigr) 
	\biggr\|_{L_q(\Omegat;\xkseps E)} 
	\\
	&\leq 
	\sum_{i=1}^k 
	\left[ 
	8 k \sqrt{\pi} \binom{k}{i} \, 
	2 K_{q,p} \tau_p(E) \, 
	\norm{g_1}{L_q(\Omegat;\bbR)} 
	\Biggl[ 
	\sum_{j=1}^M 
	\norm{ x_j - y_j }{E}^{ip} \norm{y_j}{E}^{(k-i)p} 
	\Biggr]^{\nicefrac{1}{p}}  
	\right], 
	\end{align*} 
	which completes the proof of \eqref{eq:lem:MLMC-k}, since 
	$\norm{g_1}{L_q(\Omegat;\bbR)} 
	\leq 
	K_{q,2} \norm{g_1}{L_2(\Omegat;\bbR)}  
	= 
	K_{q,2}$ 
	follows from \eqref{eq:kahane-khintchine-gaussian}, 
	and 
	$\Cdiff 
	= 
	16 k \sqrt{\pi} K_{q,p} \tau_{p}(E) K_{q,2}$ 
	by \eqref{eq:Cdiff}. 
\end{proof}

\begin{proposition}\label{prop:MLMC-k} 
	Assume that  
	$(E,\norm{\,\cdot\,}{E})$ 
	is of Rademacher type $p\in[1,2]$. 
	Let $q\in[p,\infty)$, $k,M\in\bbN$  
	and $\eta_1, \ldots, \eta_M, \xi_1,\ldots,\xi_M\in L_{kq}(\Omega;E)$ be 
	$E$-valued random variables 
	such that the tuples
	$(\eta_1,\xi_1),\ldots,(\eta_M,\xi_M)$ 
	are independent and identically distributed. 
	Then, 
	\begin{align*} 
	\biggl\| 
	\Mk[\eta_1] 
	-
	\Mk[\xi_1] 
	&-
	\frac{1}{M} 
	\sum_{j=1}^M 
	\bigl( 
	\xk \eta_j 
	- 
	\xk \xi_j 
	\bigr) 
	\biggr\|_{L_q(\Omega;\xkseps E)} 
	\\
	&\leq 
	2 \Cdiff \,  
	M^{-\left( 1-\frac{1}{p} \right)} 
	\sum_{i=1}^k 
	\Bigl[ 
	{\textstyle\binom{k}{i}} 
	\norm{ \eta_1 - \xi_1 }{L_{kq}(\Omega;E)}^i 
	\norm{ \xi_1 }{L_{kq}(\Omega;E)}^{k-i} 
	\Bigr] 
	\\
	&\quad + 
	2 B_q \, 
	M^{-\nicefrac{1}{2}} 
	\norm{ \eta_1 - \xi_1 }{L_k(\Omega;E)} 
	\sum_{i=0}^{k-1} 
	\Bigl[ 
	\norm{ \eta_1 }{L_k(\Omega;E)}^i 
	\norm{ \xi_1 }{L_k(\Omega;E)}^{k-i-1} 
	\Bigr] 
	, 
	\end{align*} 
	where $B_q, \Cdiff\in(0,\infty)$ 
	are defined as in \eqref{eq:khintchine} 
	and \eqref{eq:Cdiff}, respectively. 
\end{proposition}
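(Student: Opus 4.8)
The plan is to proceed exactly as in the proof of Theorem~\ref{thm:MC-k}, with Lemma~\ref{lem:MLMC-k} now playing the role that Proposition~\ref{prop:rade-gaussian}\ref{prop:rade-gaussian-poly} together with the type estimate played there, i.e.\ as the bound for Rademacher sums of the differences $\xk x_j - \xk y_j$. First I would fix a Rademacher family $(r_j)_{j=1}^M$ on an auxiliary complete probability space $(\Omegat,\cAt,\bbPt)$ and lift $\eta_j,\xi_j,r_j$ to mappings $\etavec_j,\xivec_j,\rvec_j$ on the product space $\Omega\times\Omegat$, so that on $(\Omega\times\Omegat,\cA\otimes\cAt,\bbP\otimes\bbPt)$ the $(\rvec_j)_{j=1}^M$ form a Rademacher family and $\etavec_1,\xivec_1,\ldots,\etavec_M,\xivec_M,\rvec_1,\ldots,\rvec_M$ are independent. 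The identity $\norm{\xk x}{\eps} = \norm{x}{E}^k$ and the hypothesis $\eta_j,\xi_j\in L_{kq}(\Omega;E)$ ensure that $\xk\etavec_j - \xk\xivec_j \in L_q(\Omega\times\Omegat;\xkseps E)$ and $\Mk[\eta_1] - \Mk[\xi_1] \in \xkseps E$ are well-defined, and the identical distribution of the tuples $(\eta_j,\xi_j)$ gives $(\bbE\otimes\bbEt)[(\xk\etavec_j - \xk\xivec_j) - (\Mk[\eta_1] - \Mk[\xi_1])] = \Mk[\eta_j] - \Mk[\xi_j] - \Mk[\eta_1] + \Mk[\xi_1] = 0$, so these $M$ independent random variables are centered.

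Next I would apply the symmetrization Lemma~\ref{lem:symmetrization} on $\Omega\times\Omegat$ for the Banach space $\xkseps E$, followed by the triangle inequality on $L_q(\Omega\times\Omegat;\xkseps E)$, to get
\[
\biggl\| \Mk[\eta_1] - \Mk[\xi_1] - \frac{1}{M}\sum_{j=1}^M\bigl(\xk\eta_j - \xk\xi_j\bigr) \biggr\|_{L_q(\Omega;\xkseps E)} \le \frac{2}{M}(\mathrm A) + \frac{2}{M}(\mathrm B),
\]
where $(\mathrm A) := \bigl\| \sum_{j=1}^M \rvec_j(\xk\etavec_j - \xk\xivec_j) \bigr\|_{L_q(\Omega\times\Omegat;\xkseps E)}$ and $(\mathrm B) := \bigl\| (\Mk[\eta_1] - \Mk[\xi_1])\sum_{j=1}^M\rvec_j \bigr\|_{L_q(\Omega\times\Omegat;\xkseps E)}$; here I have also used that the left-hand side equals $\frac{1}{M}$ times the $L_q(\Omega\times\Omegat;\xkseps E)$-norm of $\sum_j[(\xk\etavec_j - \xk\xivec_j) - (\Mk[\eta_1] - \Mk[\xi_1])]$, the lift to the product space leaving this norm unchanged. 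For $(\mathrm B)$ I would factor out the deterministic tensor to obtain $(\mathrm B) = \norm{\Mk[\eta_1] - \Mk[\xi_1]}{\eps_s}\,\bigl\|\sum_j r_j\bigr\|_{L_q(\Omegat;\bbR)}$, bound the scalar Rademacher sum by $B_q M^{\nicefrac{1}{2}}$ via the classical Khintchine inequalities \eqref{eq:khintchine}, and bound $\norm{\Mk[\eta_1] - \Mk[\xi_1]}{\eps_s}$ by Lemma~\ref{lem:Mk-diff} (applicable since $\eta_1,\xi_1\in L_{kq}(\Omega;E)\subseteq L_k(\Omega;E)$); dividing by $M$ then reproduces the second summand of the assertion, with its factor $M^{-\nicefrac{1}{2}}$.

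For $(\mathrm A)$ I would use Fubini's theorem to write it as the $L_q(\Omega;\bbR)$-norm of $\omega\mapsto\bigl\|\sum_j r_j(\,\cdot\,)(\xk\eta_j(\omega) - \xk\xi_j(\omega))\bigr\|_{L_q(\Omegat;\xkseps E)}$ and apply Lemma~\ref{lem:MLMC-k} pointwise in $\omega$ with $x_j = \eta_j(\omega)$, $y_j = \xi_j(\omega)$. Pulling the finite sum over $i\in\{1,\ldots,k\}$ out of the $L_q(\Omega;\bbR)$-norm, the $i$th term becomes $\Cdiff\binom{k}{i}\bigl\|\sum_j\norm{\eta_j-\xi_j}{E}^{ip}\norm{\xi_j}{E}^{(k-i)p}\bigr\|_{L_{\nicefrac{q}{p}}(\Omega;\bbR)}^{\nicefrac{1}{p}}$; since $q\ge p$ the triangle inequality on $L_{\nicefrac{q}{p}}(\Omega;\bbR)$ moves the sum over $j$ outside, and a Hölder inequality with exponents $\tfrac{k}{i}$ and $\tfrac{k}{k-i}$ gives $\bigl\|\norm{\eta_j-\xi_j}{E}^{ip}\norm{\xi_j}{E}^{(k-i)p}\bigr\|_{L_{\nicefrac{q}{p}}(\Omega;\bbR)}\le\norm{\eta_j-\xi_j}{L_{kq}(\Omega;E)}^{ip}\norm{\xi_j}{L_{kq}(\Omega;E)}^{(k-i)p}$, which by identical distribution of $(\eta_j,\xi_j)$ is independent of $j$; summing over $j$ then contributes the factor $M^{\nicefrac{1}{p}}$, so after dividing $(\mathrm A)$ by $M$ we arrive at the first summand of the assertion with its factor $M^{-\left(1-\frac{1}{p}\right)}$. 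The argument is essentially bookkeeping once Lemma~\ref{lem:MLMC-k} is available; the only point that needs a little care is combining the pointwise-in-$\omega$ application of that lemma with the Fubini step (measurability and integrability of the map above), which follows from Bochner measurability of $\eta_j,\xi_j$ together with the identity $\norm{\xk\,\cdot\,}{\eps} = \norm{\,\cdot\,}{E}^k$.
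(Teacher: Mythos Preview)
Your proposal is correct and follows the paper's proof essentially line by line: symmetrization on the product space, splitting into terms (A) and (B), bounding (A) via Fubini and Lemma~\ref{lem:MLMC-k} applied pointwise in $\omega$ followed by the triangle inequality on $L_{\nicefrac{q}{p}}(\Omega;\bbR)$ and H\"older, and bounding (B) via the Khintchine inequalities together with Lemma~\ref{lem:Mk-diff}. One minor slip: it is the \emph{tuples} $(\etavec_1,\xivec_1),\ldots,(\etavec_M,\xivec_M),\rvec_1,\ldots,\rvec_M$ that are independent, not the individual $\etavec_j,\xivec_j$ (since $\eta_j$ and $\xi_j$ need not be independent of each other), but this is precisely what the symmetrization step requires and your argument is unaffected.
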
 

\begin{proof} 
	We proceed similarly as in 
	the proof of Theorem~\ref{thm:MC-k}.  
	We pick 
	a Rademacher family $(r_j)_{j=1}^M$ 
	on a complete probability space 
	$(\Omegat,\cAt,\bbPt)$,  
	and define 
	the following random variables on 
	the product probability space 
	$(\Omega \times \Omegat,\cA \otimes \cAt,\bbP\otimes\bbPt)$: 
	For every $j\in\{1,\ldots,M\}$,  we set  
	\[
	\etavec_j(\omega,\omegat) := \eta_j(\omega), 
	\quad 
	\xivec_j(\omega,\omegat) := \xi_j(\omega), 
	\quad 
	\rvec_j(\omega,\omegat) := r_j(\omegat) 
	\qquad 
	\forall (\omega,\omegat) \in \Omega\times\Omegat, 
	\]
	where we note that 
	$(\rvec_j)_{j=1}^M$ is a Rademacher family 
	on $(\Omega \times \Omegat,\cA \otimes \cAt,\bbP\otimes\bbPt)$, 
	and that 
	$(\etavec_1,\xivec_1), \ldots,(\etavec_M,\xivec_M), \rvec_1,\ldots,\rvec_M$ 
	are independent. 
	Furthermore,  
	the random variables 
	$\xk \etavec_j  - \xk \xivec_j  -  \Mk[\eta_1] + \Mk[\xi_1]$ 
	are centered for all $1\leq j \leq M$ 
	so that by Lemma~\ref{lem:symmetrization} 
	and by the triangle inequality on 
	$L_q(\Omega\times\widetilde{\Omega};\xkseps E)$ 
	we find that 
	\begin{equation}\label{eq:proof:prop:MLMC-k}  
	\begin{split} 
	\biggl\| 
	\sum_{j=1}^M 
	\bigl(\xk \eta_j &- \xk \xi_j - \Mk [\eta_1] + \Mk[\xi_1] \bigr)
	\biggr\|_{L_q(\Omega;\xkseps E)} 
	\\  
	&= 
	\biggl\| 
	\sum_{j=1}^M 
	\bigl( \xk \etavec_j - \xk \xivec_j - \Mk[\eta_1] + \Mk[\xi_1] \bigr)
	\biggr\|_{L_q(\Omega\times\widetilde{\Omega};\xkseps E)} 
	\\
	&\leq 
	2\, 
	\biggl\| 
	\sum_{j=1}^M 
	\rvec_j 
	\bigl( \xk \etavec_j  - \xk \xivec_j  \bigr) 
	\biggr\|_{L_q(\Omega\times\widetilde{\Omega};\xkseps E)}   
	\\
	&\quad + 
	2\, 
	\biggl\| 
	\sum_{j=1}^M 
	\rvec_j 
	\bigl( \Mk[\eta_1] - \Mk[\xi_1] \bigr)
	\biggr\|_{L_q(\Omega\times\widetilde{\Omega};\xkseps E)}   
	=: 
	2\text{(A)} + 2\text{(B)}. 
	\end{split} 
	\end{equation}
	
	For term (A) we use Fubini's theorem 
	as well as Lemma~\ref{lem:MLMC-k} 
	to find that 
	\begin{align*} 
	\text{(A)} 
	&= 
	\biggl\| 
	\sum_{j=1}^M 
	\rvec_j 
	\bigl( \xk \etavec_j  - \xk \xivec_j  \bigr) 
	\biggr\|_{L_q(\Omega\times\widetilde{\Omega};\xkseps E)}   
	\\
	&= 
	\Biggl( 
	\int_\Omega 
	\,\biggl\| 
	\sum_{j=1}^M 
	r_j (\,\cdot\,) 
	\bigl( \xk \eta_j(\omega) - \xk \xi_j(\omega) \bigr) 
	\biggr\|_{L_q(\widetilde{\Omega};\xkseps E)}^q 
	\rd\bbP(\omega)
	\Biggr)^{\nicefrac{1}{q}} 
	\\
	&\leq 
	\Cdiff  
	\Biggl( 
	\int_\Omega 
	\,\Biggl| 
	\sum_{i=1}^k 
	\binom{k}{i} 
	\biggl[  
	\sum_{j=1}^M 
	\norm{ \eta_j(\omega) - \xi_j(\omega)}{E}^{ip} 
	\norm{ \xi_j(\omega) }{E}^{(k-i)p} 
	\biggr]^{\nicefrac{1}{p}} 
	\,\Biggr|^q 
	\rd\bbP(\omega)
	\Biggr)^{\nicefrac{1}{q}} 
	\end{align*}
	and, hence, 
	\[
	\text{(A)} 
	\leq 
	\Cdiff \, 
	\Biggl\| 
	\sum_{i=1}^k 
	\binom{k}{i} 
	\biggl[  
	\sum_{j=1}^M 
	\norm{ \eta_j - \xi_j }{E}^{ip} 
	\norm{ \xi_j }{E}^{(k-i)p} 
	\biggr]^{\nicefrac{1}{p}} 
	\Biggr\|_{L_q(\Omega;\bbR)} . 
	\]
	Next, we use the triangle inequality on $L_q(\Omega;\bbR)$ 
	as well as the fact that $q\in[p,\infty)$ so that 
	also on $L_{\nicefrac{q}{p}}(\Omega;\bbR)$ 
	we may apply the triangle inequality and 
	conclude 
	\begin{align*} 
	\text{(A)} 
	&\leq 
	\Cdiff \, 
	\sum_{i=1}^k 
	\binom{k}{i} 
	\biggl\| 
	\sum_{j=1}^M 
	\norm{ \eta_j - \xi_j }{E}^{ip} 
	\norm{ \xi_j }{E}^{(k-i)p} 
	\biggr\|_{L_{\nicefrac{q}{p}}(\Omega;\bbR)}^{\nicefrac{1}{p}} 
	\\
	&\leq 
	\Cdiff \, 
	\sum_{i=1}^k 
	\binom{k}{i} 
	\Biggl( 
	\sum_{j=1}^M 
	\Bigl\| 
	\norm{ \eta_j - \xi_j }{E}^{ip} 
	\norm{ \xi_j }{E}^{(k-i)p} 
	\Bigr\|_{L_{\nicefrac{q}{p}}(\Omega;\bbR)}
	\Biggr)^{\nicefrac{1}{p}} 
	\\
	&= 
	\Cdiff \, 
	M^{\nicefrac{1}{p}} 
	\sum_{i=1}^k 
	\binom{k}{i} 
	\Bigl\| 
	\norm{ \eta_1 - \xi_1 }{E}^{ip} 
	\norm{ \xi_1 }{E}^{(k-i)p} 
	\Bigr\|_{L_{\nicefrac{q}{p}}(\Omega;\bbR)}^{\nicefrac{1}{p}} , 
	\end{align*} 
	where the last step follows from the identical 
	distribution of 
	$(\eta_1,\xi_1), \ldots,(\eta_M,\xi_M)$. 
	In addition, we observe that, 
	for every $i\in\{1,\ldots,k-1\}$, 
	by H\"older's inequality 
	\begin{align*} 
	\Bigl\| 
	&\norm{ \eta_1 - \xi_1 }{E}^{ip} 
	\norm{ \xi_1 }{E}^{(k-i)p} 
	\Bigr\|_{L_{\nicefrac{q}{p}}(\Omega;\bbR)}^{\nicefrac{1}{p}}  
	= 
	\Bigl( 
	\bbE\Bigl[ 
	\norm{ \eta_1 - \xi_1 }{E}^{iq} 
	\norm{ \xi_1 }{E}^{(k-i)q} 
	\Bigr] 
	\Bigr)^{\nicefrac{1}{q}} 
	\\
	&\leq 
	\Bigl( 
	\bigl( \bbE\bigl[ \norm{ \eta_1 - \xi_1 }{E}^{kq} \bigr] \bigr)^{\frac{i}{k}} 
	\bigl( \bbE\bigl[ \norm{ \xi_1 }{E}^{kq} \bigr] \bigr)^{\frac{k-i}{k}} 
	\Bigr)^{\nicefrac{1}{q}} 
	= 
	\norm{ \eta_1 - \xi_1 }{L_{kq}(\Omega;E)}^i
	\norm{ \xi_1 }{L_{kq}(\Omega;E)}^{k-i} 
	\end{align*} 
	which completes the bound for term (A), 
	\begin{equation}\label{eq:proof:prop:MLMC-k-A}  
	\text{(A)} 
	\leq 
	\Cdiff \, 
	M^{\nicefrac{1}{p}} 
	\sum_{i=1}^k 
	\Bigl[ 
	{\textstyle\binom{k}{i}} 
	\norm{ \eta_1 - \xi_1 }{L_{kq}(\Omega;E)}^i
	\norm{ \xi_1 }{L_{kq}(\Omega;E)}^{k-i}  
	\Bigr]. 
	\end{equation} 
	
	For term (B) we obtain 
	by the Khintchine inequalities~\eqref{eq:khintchine} 
	and by Lemma~\ref{lem:Mk-diff} 
	the following estimate,  
	\begin{equation}\label{eq:proof:prop:MLMC-k-B} 
	\begin{split}  
	\text{(B)} 
	&= 
	\biggl\| 
	\sum_{j=1}^M r_j 
	\biggr\|_{L_q(\widetilde{\Omega};\bbR)} 
	\bigl\| \Mk[\eta_1] - \Mk[\xi_1] \bigr\|_{\eps_s} 
	\leq 
	B_q \, M^{\nicefrac{1}{2}} 
	\bigl\| \Mk[\eta_1] - \Mk[\xi_1] \bigr\|_{\eps_s} 
	\\
	&\leq 
	B_q \, M^{\nicefrac{1}{2}}  
	\| \eta_1 - \xi_1 \|_{L_{k}(\Omega;E)} 
	\sum_{i=0}^{k-1} 
	\Bigl[ 
	\| \eta_1 \|_{L_{k}(\Omega;E)}^i 
	\| \xi_1 \|_{L_{k}(\Omega;E)}^{k-i-1} 
	\Bigr] . 
	\end{split} 
	\end{equation} 
	
	The claim now follows by 
	combining \eqref{eq:proof:prop:MLMC-k} with the estimates 
	\eqref{eq:proof:prop:MLMC-k-A}, \eqref{eq:proof:prop:MLMC-k-B} 
	for the terms (A) and (B),  
	upon dividing  the resulting inequality by $M$. 
\end{proof}  

We are now ready to formulate our convergence 
result for abstract multilevel Monte Carlo methods 
to estimate higher-order statistical moments of Banach 
space valued random variables. 

\begin{theorem}\label{thm:MLMC-k}
	Let 
	$(E,\norm{\,\cdot\,}{E})$ 
	be of Rademacher type $p\in[1,2]$,  
	${q\in[p,\infty)}$ and ${k,L\in\bbN}$. 
	Suppose further that, for every $\ell\in\{1,\ldots,L\}$, 
	${X_\ell \in L_{kq}(\Omega;E)}$,  
	$M_\ell\in\bbN$, and 
	$\xi_{\ell,1},\ldots,\xi_{\ell,M_\ell}$ 
	are independent 
	copies of the $\xkseps E$-valued 
	random variable 
	\[
	\xi_\ell 
	:= 
	\xk X_\ell - \xk X_{\ell-1} \in L_q ( \Omega; \xkseps E ), 
	\qquad 
	X_0 := 0\in E. 
	\]
	Then, for every $\tenvec \in \xkseps E$, 
	\begin{align*} 
	\biggl\| 
	\tenvec
	&-
	\sum_{\ell = 1}^{L} 
	\frac{1}{M_\ell} 
	\sum_{j=1}^{M_\ell}  
	\xi_{\ell,j}
	\biggr\|_{L_q(\Omega;\xkseps E)} 
	\leq 
	\bigl\| 
	\tenvec 
	-
	\Mk[ X_L ] 
	\bigr\|_{\eps_s}
	\\
	&+ 
	\CML  
	\sum_{\ell=1}^{L}   
	\biggl[ 
	M_\ell^{-\left( 1-\frac{1}{p} \right)}  
	\norm{ X_\ell - X_{\ell-1} }{L_{kq}(\Omega;E)} 
	\\
	&\hspace{17mm} \cdot  
	\sum_{i=0}^{k-1} 
	\Bigl[ \Bigl(		
	{\textstyle\binom{k}{i+1}} 
	\norm{ X_\ell - X_{\ell-1} }{L_{kq}(\Omega;E)}^i 
	+ 
	\norm{ X_\ell }{L_k(\Omega;E)}^i 
	\Bigr) 
	\norm{ X_{\ell-1} }{L_{kq}(\Omega;E)}^{k-i-1} 
	\Bigr] 
	\biggr] ,
	\end{align*}
	where 
	$\CML := 
	2 \max\bigl\{ \Cdiff, B_q \bigr\}$ 
	and the constants 
	$B_q, \Cdiff\in(0,\infty)$ 
	are defined as in \eqref{eq:khintchine} 
	and \eqref{eq:Cdiff}. 
\end{theorem}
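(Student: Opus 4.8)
The plan is to derive this from the single-level estimate of Proposition~\ref{prop:MLMC-k} by applying it separately on each level and summing, the telescoping identity for $\Mk$ providing the link between levels.

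First I would split off the deterministic bias by the triangle inequality on $L_q(\Omega;\xkseps E)$:
\[
\biggl\| \tenvec - \sum_{\ell=1}^L \frac{1}{M_\ell}\sum_{j=1}^{M_\ell} \xi_{\ell,j} \biggr\|_{L_q(\Omega;\xkseps E)} \leq \bigl\| \tenvec - \Mk[X_L] \bigr\|_{\eps_s} + \biggl\| \Mk[X_L] - \sum_{\ell=1}^L \frac{1}{M_\ell}\sum_{j=1}^{M_\ell} \xi_{\ell,j} \biggr\|_{L_q(\Omega;\xkseps E)}.
\]
For the second summand I would use the telescoping sum (recall $X_0 := 0$, so $\Mk[X_0] = 0$),
\[
\Mk[X_L] = \sum_{\ell=1}^L \bigl( \Mk[X_\ell] - \Mk[X_{\ell-1}] \bigr) = \sum_{\ell=1}^L \bbE[\xi_\ell],
\]
together with $\bbE\bigl[ \tfrac{1}{M_\ell}\sum_{j=1}^{M_\ell}\xi_{\ell,j} \bigr] = \bbE[\xi_\ell]$ and a further triangle inequality over $\ell$, reducing everything to bounding $\bigl\| \bbE[\xi_\ell] - \tfrac{1}{M_\ell}\sum_{j=1}^{M_\ell}\xi_{\ell,j} \bigr\|_{L_q(\Omega;\xkseps E)}$ for each fixed $\ell$. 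No independence \emph{across} levels is needed for this step.

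Next I would unfold each level's samples into the underlying $E$-valued variables: since $\xi_\ell = \xk X_\ell - \xk X_{\ell-1}$, one may represent $\xi_{\ell,j} = \xk X_\ell^{(j)} - \xk X_{\ell-1}^{(j)}$, $1\le j\le M_\ell$, with $(X_\ell^{(j)},X_{\ell-1}^{(j)})$ independent copies of the pair $(X_\ell, X_{\ell-1})$; in particular $X_\ell^{(j)}, X_{\ell-1}^{(j)} \in L_{kq}(\Omega;E)$. Then $\bbE[\xi_\ell] = \Mk[X_\ell] - \Mk[X_{\ell-1}]$, and the displayed level-$\ell$ quantity is exactly the left-hand side of Proposition~\ref{prop:MLMC-k} with $M := M_\ell$, $\eta_j := X_\ell^{(j)}$, $\xi_j := X_{\ell-1}^{(j)}$. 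That proposition then bounds it by
\[
2\Cdiff \, M_\ell^{-\left(1-\frac{1}{p}\right)} \sum_{i=1}^k \Bigl[ {\textstyle\binom{k}{i}} \norm{X_\ell - X_{\ell-1}}{L_{kq}(\Omega;E)}^i \norm{X_{\ell-1}}{L_{kq}(\Omega;E)}^{k-i} \Bigr] + 2 B_q \, M_\ell^{-\nicefrac{1}{2}} \norm{X_\ell - X_{\ell-1}}{L_k(\Omega;E)} \sum_{i=0}^{k-1} \Bigl[ \norm{X_\ell}{L_k(\Omega;E)}^i \norm{X_{\ell-1}}{L_k(\Omega;E)}^{k-i-1} \Bigr].
\]

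Finally I would bring this into the stated form by elementary steps only: (i) reindex the first sum via $i \mapsto i+1$ and factor out $\norm{X_\ell - X_{\ell-1}}{L_{kq}(\Omega;E)}$, so that it reads $\norm{X_\ell - X_{\ell-1}}{L_{kq}(\Omega;E)}\sum_{i=0}^{k-1}\binom{k}{i+1}\norm{X_\ell - X_{\ell-1}}{L_{kq}(\Omega;E)}^i\norm{X_{\ell-1}}{L_{kq}(\Omega;E)}^{k-i-1}$; (ii) use $M_\ell^{-\nicefrac{1}{2}} \leq M_\ell^{-\left(1-\frac{1}{p}\right)}$, which holds since $p \in [1,2]$ forces $1-\tfrac{1}{p} \leq \tfrac{1}{2}$; (iii) invoke the continuous embedding $L_{kq}(\Omega;E) \hookrightarrow L_k(\Omega;E)$ (valid because $q \geq 1$) to replace $\norm{X_\ell - X_{\ell-1}}{L_k}$ and $\norm{X_{\ell-1}}{L_k}^{k-i-1}$ by their $L_{kq}$-counterparts, leaving the factors $\norm{X_\ell}{L_k}^i$ untouched; and (iv) bound $2\Cdiff$ and $2B_q$ by $\CML = 2\max\{\Cdiff, B_q\}$ and merge the two sums over $i$. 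Summing the resulting per-level bounds over $\ell = 1,\dots,L$ and adding back $\bigl\| \tenvec - \Mk[X_L] \bigr\|_{\eps_s}$ yields the assertion. I do not anticipate a real obstacle: all the analytic substance --- Gaussian domination of Rademacher averages, Fernique's form of Slepian's inequality, and the type-$p$ estimate --- has already been absorbed into Lemma~\ref{lem:MLMC-k} and Proposition~\ref{prop:MLMC-k}, so what remains is the telescoping identity, the triangle inequality, and careful bookkeeping of the binomial reindexing and of which norms are $L_k$ versus $L_{kq}$.
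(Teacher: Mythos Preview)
Your proposal is correct and follows essentially the same route as the paper: split off the bias $\|U-\Mk[X_L]\|_{\eps_s}$ via the triangle inequality, telescope $\Mk[X_L]$ and apply the triangle inequality over levels, represent the copies $\xi_{\ell,j}$ via i.i.d.\ pairs $(X_\ell^{(j)},X_{\ell-1}^{(j)})$, invoke Proposition~\ref{prop:MLMC-k} on each level, and then perform exactly the bookkeeping you describe (reindexing $i\mapsto i+1$, using $M_\ell^{-1/2}\le M_\ell^{-(1-1/p)}$ and $L_{kq}\hookrightarrow L_k$, and merging the constants into $\CML$). The paper's proof is organized identically; there is no substantive difference.
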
 

\begin{proof} 
	First note that, for every $\ell\in\{1,\ldots,L\}$ 
	the random variables 
	$\xi_{\ell,1},\ldots,\xi_{\ell,M_\ell}$ 
	are identically distributed 
	and we have that 
	\[ 
	\bbE\Biggl[ 
	\sum_{\ell = 1}^{L} 
	\frac{1}{M_\ell} 
	\sum_{j=1}^{M_\ell}  
	\xi_{\ell,j} \Biggr] 
	= 
	\sum_{\ell = 1}^{L} 
	\bbE[ \xi_\ell ] 
	=
	\sum_{\ell = 1}^{L} 
	\bigl( \Mk[ X_\ell ] - \Mk[ X_{\ell-1} ] \bigr) 
	= 
	\Mk[X_L] . 
	\] 
	Thus, we find by the triangle inequality 
	on $L_q(\Omega;\xkseps E)$ that, 
	for every $\tenvec\in\xkseps E$, 
	\begin{align*} 
	\biggl\| 
	\tenvec 
	-
	\sum_{\ell = 1}^{L} 
	\frac{1}{M_\ell} 
	\sum_{j=1}^{M_\ell}  
	\xi_{\ell,j}
	\biggr\|_{L_q(\Omega;\xkseps E)} 
	&\leq 
	\bigl\| 
	\tenvec 
	-
	\Mk[ X_L ] 
	\bigr\|_{\eps_s} 
	+ 
	\sum_{\ell=1}^L 
	\mathrm{err}^{\sf\, SL}_{q,\eps_s}(\xi_\ell), 
	\end{align*}
	where, for $\ell\in\{1,\ldots,L\}$, we define  
	\[
	\mathrm{err}^{\sf\, SL}_{q,\eps_s}(\xi_\ell) 
	:= 
	\biggl\| 
	\bbE[\xi_\ell ] - 
	\frac{1}{M_\ell} 
	\sum_{j=1}^{M_\ell}  
	\xi_{\ell,j}
	\biggr\|_{L_q(\Omega;\xkseps E)} 
	. 
	\]
	For every $\ell\in\{1,\ldots,L\}$, 
	we let the tuples $(X_{\ell-1,1}, X_{\ell,1}), \ldots, 
	(X_{\ell-1,M_\ell}, X_{\ell,M_\ell})$ 
	be $M_\ell$ independent  
	copies of $(X_{\ell-1}, X_\ell)$ 
	and observe that 
	\begin{align*} 
	\mathrm{err}^{\sf\, SL}_{q,\eps_s}(\xi_\ell) 
	= 
	\biggl\| 
	\Mk[X_\ell] 
	-
	\Mk[X_{\ell-1}] 
	- 
	\frac{1}{M_\ell} 
	\sum_{j=1}^{M_\ell}
	\bigl( 
	\xk X_{\ell,j} 
	- 
	\xk X_{\ell-1,j}
	\bigr) 
	\biggr\|_{L_q(\Omega;\xkseps E)} . 
	\end{align*} 
	We are thus in the position to 
	apply Proposition~\ref{prop:MLMC-k} 
	on every level $\ell\in\{1,\ldots,L\}$, 
	\begin{align*} 
	\mathrm{err}^{\sf\, SL}_{q,\eps_s}(\xi_\ell) 
	&\leq  
	2 \Cdiff \,  
	M_\ell^{-\left( 1-\frac{1}{p} \right)} 
	\sum_{i=1}^k 
	\Bigl[ 
	{\textstyle\binom{k}{i}}
	\norm{ X_\ell - X_{\ell-1} }{L_{kq}(\Omega;E)}^i 
	\norm{ X_{\ell-1} }{L_{kq}(\Omega;E)}^{k-i} 
	\Bigr] 
	\\
	&\quad + 
	2 B_q \, 
	M_\ell^{-\nicefrac{1}{2}} 
	\norm{ X_\ell - X_{\ell-1} }{L_k(\Omega;E)} 
	\sum_{i=0}^{k-1} 
	\Bigl[ 
	\norm{ X_\ell }{L_k(\Omega;E)}^i 
	\norm{ X_{\ell-1} }{L_k(\Omega;E)}^{k-i-1} 
	\Bigr] 
	, 
	\end{align*} 
	which after recalling that 
	$p\in[1,2]$ and $q\geq p \geq 1$  
	as well as combining the two sums  
	completes the proof of the assertion. 
\end{proof} 

The error estimate of Theorem~\ref{thm:MLMC-k} 
facilitates optimizing the number 
of levels $L$ as well as 
the number of samples on each level, 
$M_1,\ldots, M_L$, 
to reduce the computational cost 
for achieving a target accuracy $\epsilon>0$ 
of the MLMC estimator in the 
$L_q(\Omega;\xkseps E)$-norm. 
This optimization is subject of the 
following ``$\alpha\beta\gamma$ theorem''.

\begin{theorem}\label{thm:alpha-beta-gamma}
	Assume that  
	$(E,\norm{\,\cdot\,}{E})$ 
	is of Rademacher type $p\in(1,2]$. 
	Let $q\in[p,\infty)$, 
	$k\in\bbN$, 
	$X \in L_k(\Omega; E)$,  
	$( X_\ell )_{\ell\in\bbN} \subset L_{kq}(\Omega;E)$ 
	be a sequence of 
	$E$-valued random variables 
	and, for every $\ell\in\bbN$, define   
	\begin{equation}\label{eq:def:xi-ell} 
	\xi_\ell 
	:= 
	\xk X_\ell - \xk X_{\ell-1} \in L_q ( \Omega; \xkseps E ), 
	\qquad 
	X_0 := 0\in E. 
	\end{equation}
	For $\ell\in\bbN$, 
	let $\cC_\ell$ denote the cost 
	(number of floating point operations)
	to generate one sample of 
	the random variable $\xi_\ell$ 
	in~\eqref{eq:def:xi-ell}, 
	and suppose 
	that there exist 
	a sequence $(N_\ell)_{\ell\in\bbN}$
	of positive integers 
	and constants 
	${\alpha,\beta,\gamma,C_\alpha,C_\beta,C_\gamma,\Cstab\in(0,\infty)}$, 
	$A\in(1,\infty)$ such that  
	$N_\ell \eqsim A^\ell$ 
	for all $\ell\in\bbN$ and, moreover, 
	\begin{align} 
	\quad 
	\forall \ell\in\bbN : 
	&&
	\bigl\| \Mk[X] - \Mk[X_\ell] \bigr\|_{\eps_s}  
	&\leq 
	C_\alpha N_\ell^{-\alpha} \!,
	\qquad
	\tag{$\alpha$} 
	\label{eq:ass:alpha} 
	\\
	\quad
	\forall \ell\in\bbN : 
	&&
	\| X_\ell - X_{\ell-1} \|_{L_{kq}(\Omega;E)} 
	&\leq 
	C_\beta N_\ell^{-\beta} \!, 
	\qquad 
	\tag{$\beta$} 
	\label{eq:ass:beta} 
	\\
	\quad
	\forall \ell\in\bbN : 
	&&
	\cC_\ell 
	&\leq 
	C_\gamma N_\ell^\gamma , 
	\qquad 
	\tag{$\gamma$} 
	\label{eq:ass:gamma} 
	\\
	\quad 
	\forall \ell\in\bbN : 
	&&
	\max\bigl\{ 
	\norm{X}{L_k(\Omega;E)} , \, 
	&\norm{X_\ell}{L_{kq}(\Omega;E)} 
	\bigr\}
	\leq 
	\Cstab. 
	\quad
	\tag{$\sf stab$} 
	\label{eq:ass:stab}  
	\end{align} 
	For each $\ell\in\bbN$, 
	let $( \xi_{\ell,j} )_{j\in\bbN} \subset L_q ( \Omega; \xkseps E )$ 
	be a sequence of independent 
	copies of 
	the $\xkseps E$-valued random variable 
	$\xi_\ell$ in \eqref{eq:def:xi-ell}. 
	
	Then, for every $\epsilon\in(0,\nicefrac{1}{2}]$, 
	there exist integers 
	$L\in\bbN$ 
	and 
	$M_1,\ldots,M_L \in \bbN$ such 
	that the $L_q$-accuracy $\epsilon$ of 
	the multilevel Monte Carlo 
	estimator for $\Mk[X]$, 
	\begin{equation}\label{eq:epsilon} 
	\mathrm{err}^{k, \sf ML}_{q,\eps_s}(X)
	:=  
	\biggl\| 
	\Mk[X] 
	-
	\sum_{\ell = 1}^{L} 
	\frac{1}{M_\ell} 
	\sum_{j=1}^{M_\ell}  
	\xi_{\ell,j}
	\biggr\|_{L_q(\Omega;\xkseps E)} 
	<\epsilon , 
	\tag{$\epsilon$}
	\end{equation} 
	can be achieved at computational 
	costs of the order 
	\begin{equation}\label{eq:cC} 
	\cC^{k, \sf ML}_{q,\eps_s}(X)
	\lesssim_{(\alpha,\beta,\gamma,A,p,q)}  
	\begin{cases} 
	\epsilon^{-\frac{\gamma}{\alpha}} 
	+ 
	\epsilon^{-p'}  
	& \text{if}\quad \beta p' > \gamma, 
	\\
	\epsilon^{-\frac{\gamma}{\alpha}} 
	+ 
	\epsilon^{-p'} 
	|\log_A \epsilon|^{p'+1} 
	&\text{if}\quad \beta p' = \gamma, 
	\\
	\epsilon^{-\frac{\gamma}{\alpha}} 
	+ 
	\epsilon^{-p' - \frac{\gamma - \beta p' }{\alpha}} 
	& \text{if}\quad 
	\beta p' < \gamma, 
	\end{cases}
	\tag{$\cC$} 
	\end{equation}
	where $p'\in[2,\infty)$ is such that 
	$\tfrac{1}{p} + \tfrac{1}{p'} = 1$. 
	The constant implied in $\lesssim$ may also 
	depend on the constants $C_\alpha,C_\beta,C_\gamma$ 
	and $C_{\mathsf{stab}}$ from the assumptions above. 
\end{theorem}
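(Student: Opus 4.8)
The plan is to feed the structural assumptions into the abstract error bound of Theorem~\ref{thm:MLMC-k} and then carry out the classical multilevel work--accuracy optimization, the only novelty being that the per-level statistical exponent is $1-\tfrac1p=\tfrac1{p'}$ rather than the Hilbert-space value $\tfrac12$. \emph{Step 1: reduction.} Apply Theorem~\ref{thm:MLMC-k} with $\tenvec:=\Mk[X]$. Under the stability assumption~\eqref{eq:ass:stab} one has $\norm{X_\ell-X_{\ell-1}}{L_{kq}(\Omega;E)}\leq 2\Cstab$ and $\norm{X_\ell}{L_k(\Omega;E)}\leq\norm{X_\ell}{L_{kq}(\Omega;E)}\leq\Cstab$, so every bracketed $i$-sum in that theorem is bounded by a constant $C(k,\Cstab)$; combining this with the bias bound~\eqref{eq:ass:alpha} for the $\eps_s$-term and with~\eqref{eq:ass:beta} for the remaining factor $\norm{X_\ell-X_{\ell-1}}{L_{kq}(\Omega;E)}$ gives
\[
\mathrm{err}^{k,\sf ML}_{q,\eps_s}(X)\;\leq\; C_\alpha N_L^{-\alpha}\;+\;C_\star\sum_{\ell=1}^{L}M_\ell^{-\left(1-\frac1p\right)}N_\ell^{-\beta},
\]
with $C_\star$ depending only on $k,p,q,C_\beta,\Cstab$ (through $\CML$). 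It thus suffices to pick $L$ so that the bias term is $<\tfrac{\epsilon}{2}$ and $M_1,\dots,M_L$ so that the statistical term is $<\tfrac{\epsilon}{2}$, while controlling $\sum_{\ell=1}^{L}M_\ell\,\cC_\ell\leq C_\gamma\sum_{\ell=1}^{L}M_\ell N_\ell^{\gamma}$.

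\emph{Step 2: optimization.} Since $N_\ell\eqsim A^\ell\to\infty$, take $L$ minimal with $C_\alpha N_L^{-\alpha}<\tfrac{\epsilon}{2}$; by minimality $N_L\eqsim\epsilon^{-1/\alpha}$, $L=\tfrac1\alpha\log_A(1/\epsilon)+O(1)$, and hence $\sum_{\ell=1}^{L}N_\ell^{\gamma}\eqsim N_L^{\gamma}\lesssim\epsilon^{-\gamma/\alpha}$ by a geometric-sum estimate (using $\gamma>0$, $A>1$). For the sample sizes, minimizing $\sum_\ell M_\ell N_\ell^{\gamma}$ over real $M_\ell>0$ under the constraint $\sum_\ell M_\ell^{-1/p'}N_\ell^{-\beta}=\tfrac{\epsilon}{2C_\star}$ is a Lagrange problem whose solution is $M_\ell\propto N_\ell^{-\theta}$ with $\theta:=\tfrac{(\beta+\gamma)p'}{p'+1}$; writing $\rho:=\tfrac{\gamma-\beta p'}{p'+1}$ and $S_L:=\sum_{\ell=1}^{L}N_\ell^{\rho}$ one checks the identities $\tfrac{\theta}{p'}-\beta=\rho=\gamma-\theta$, so both the constraint sum and the cost sum $\sum_\ell N_\ell^{\gamma-\theta}$ are multiples of $S_L$ and the optimal scale is $\propto(S_L/\epsilon)^{p'}$. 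Accordingly I would set $M_\ell:=\bigl\lceil(2C_\star S_L/\epsilon)^{p'}N_\ell^{-\theta}\bigr\rceil\in\bbN$: then $M_\ell\geq(2C_\star S_L/\epsilon)^{p'}N_\ell^{-\theta}$ forces $C_\star\sum_\ell M_\ell^{-1/p'}N_\ell^{-\beta}\leq\tfrac{\epsilon}{2}$, hence~\eqref{eq:epsilon}, while $M_\ell\leq(2C_\star S_L/\epsilon)^{p'}N_\ell^{-\theta}+1$ yields $\sum_{\ell=1}^{L}M_\ell\cC_\ell\lesssim S_L^{\,p'+1}\epsilon^{-p'}+\sum_{\ell=1}^{L}N_\ell^{\gamma}\lesssim S_L^{\,p'+1}\epsilon^{-p'}+\epsilon^{-\gamma/\alpha}$. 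It remains to evaluate the geometric sum $S_L$: it is $\eqsim1$ when $\rho<0$ (i.e.\ $\beta p'>\gamma$), $\eqsim L\eqsim|\log_A\epsilon|$ when $\rho=0$, and $\eqsim N_L^{\rho}\eqsim\epsilon^{-\rho/\alpha}$ when $\rho>0$; substituting these and using $\rho(p'+1)=\gamma-\beta p'$ turns $S_L^{\,p'+1}\epsilon^{-p'}$ into exactly the three expressions appearing in~\eqref{eq:cC}.

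\emph{Main obstacle.} Conceptually this is the textbook MLMC trade-off, so the real effort is bookkeeping: propagating the constants (the combinatorial factors and $\Cstab$-powers absorbed into $C(k,\Cstab)$, then $\CML$ into $C_\star$) through Theorem~\ref{thm:MLMC-k}, and treating the $\lceil\cdot\rceil$-rounding carefully --- it is precisely the ``$+1$'' per level that produces the additive term $\epsilon^{-\gamma/\alpha}$ --- together with the three-way case split for $S_L$. It is essential here that $p>1$, so that $p'=\tfrac{p}{p-1}\in[2,\infty)$ and the exponent $1-\tfrac1p$ is strictly positive; for $p=1$ the per-level error in Theorem~\ref{thm:MLMC-k} does not decay in $M_\ell$ and the multilevel speed-up disappears, which is why that borderline case is excluded from the hypotheses.
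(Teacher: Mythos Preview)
Your proposal is correct and follows essentially the same route as the paper: reduce via Theorem~\ref{thm:MLMC-k} and the assumptions \eqref{eq:ass:alpha}, \eqref{eq:ass:beta}, \eqref{eq:ass:stab} to the bound $C_\alpha N_L^{-\alpha}+C_\star\sum_\ell M_\ell^{-1/p'}N_\ell^{-\beta}$, then choose $L$ so that $N_L^{-\alpha}\eqsim\epsilon$ and $M_\ell$ proportional to $N_\ell^{-(\beta+\gamma)p'/(p'+1)}$ with the scale fixed by $S_L=\sum_\ell N_\ell^{(\gamma-\beta p')/(p'+1)}$. The only cosmetic difference is that you motivate the form of $M_\ell$ via a Lagrange-multiplier argument, whereas the paper simply posits the choice and verifies it; your observation that the ceiling ``$+1$'' is what generates the additive $\epsilon^{-\gamma/\alpha}$ term is exactly how the paper handles it as well.
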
 

\begin{proof} 
	We will show by explicit construction that, for every $\epsilon\in(0,\nicefrac{1}{2}]$, 
	assumptions \eqref{eq:ass:alpha}, \eqref{eq:ass:beta}, \eqref{eq:ass:gamma} 
	and \eqref{eq:ass:stab} allow to choose 
	the algorithmic steering parameters $L\in\bbN$ 
	and $M_1,\ldots,M_L\in\bbN$ so that \eqref{eq:epsilon} holds 
	with cost \eqref{eq:cC}.  
	
	By Theorem~\ref{thm:MLMC-k} 
	and by assumptions 
	\eqref{eq:ass:alpha}, 
	\eqref{eq:ass:beta},  
	\eqref{eq:ass:stab} 
	we obtain the estimate 
	\begin{align*} 
	&\mathrm{err}^{k, \sf ML}_{q,\eps_s}(X)
	\leq 
	\bigl\| \Mk[X] - \Mk[X_L] \bigr\|_{\eps_s}  
	+ 
	\CML  
	\sum_{\ell=1}^L   
	\biggl[ 
	M_\ell^{-\left( 1-\frac{1}{p} \right)}  
	\norm{ X_\ell - X_{\ell-1} }{L_{kq}(\Omega;E)} 
	\\
	&\quad\hspace{18mm} \cdot  
	\sum_{i=0}^{k-1} 
	\Bigl[ \Bigl(		
	{\textstyle\binom{k}{i+1}} 
	\norm{ X_\ell - X_{\ell-1} }{L_{kq}(\Omega;E)}^i 
	+ 
	\norm{ X_\ell }{L_k(\Omega;E)}^i 
	\Bigr) 
	\norm{ X_{\ell-1} }{L_{kq}(\Omega;E)}^{k-i-1} 
	\Bigr] 
	\biggr] 
	\\
	&\leq C_\alpha N_L^{-\alpha}  
	+ 
	\CML C_\beta
	\sum_{\ell=1}^L   
	\biggl[ 
	M_\ell^{-\left( 1-\frac{1}{p} \right)}  
	N_\ell^{-\beta}
	\sum_{i=0}^{k-1} 
	\Bigl[ \Bigl(		
	{\textstyle\binom{k}{i+1}} 
	C_\beta^i  
	+ 
	\Cstab^i 
	\Bigr) 
	\Cstab^{k-i-1} 
	\Bigr] 
	\biggr] 
	\\
	&\leq 
	C_\alpha N_L^{-\alpha}  
	+ 
	C_{\star} 
	\sum_{\ell=1}^L   
	\Bigl[ 
	M_\ell^{-\nicefrac{1}{p'} }  
	N_\ell^{-\beta}
	\Bigr] , 
	\end{align*} 
	where 
	$\CML\in(0,\infty)$ is as in 
	Theorem~\ref{thm:MLMC-k}, and 
	$C_{\star} = C_\star(k,p,q,C_\beta,\Cstab)\in(0,\infty)$ 
	is defined by 
	\[
	C_{\star} 
	:= 
	\CML C_\beta 
	\bigl( 
	k \, \Cstab^{k-1} 
	+ 
	2^k
	\max\bigl\{C_\beta^{k-1},\Cstab^{k-1} \bigr\} 
	\bigr), 
	\]
	since 
	\begin{align*} 
	\sum_{i=0}^{k-1} 
	\Bigl[ \Bigl(		
	{\textstyle\binom{k}{i+1}} 
	C_\beta^i  
	+ 
	\Cstab^i 
	\Bigr) 
	\Cstab^{k-i-1} 
	\Bigr] 
	&\leq 
	k \, \Cstab^{k-1} 
	+ 
	\max\bigl\{C_\beta^{k-1},\Cstab^{k-1} \bigr\} 
	\sum_{i=0}^{k-1} 
	{\textstyle\binom{k}{i+1}} 
	\\
	&\leq 
	k \, \Cstab^{k-1} 
	+ 
	2^k
	\max\bigl\{C_\beta^{k-1},\Cstab^{k-1} \bigr\} . 
	\end{align*} 
	Choose $L\in\bbN$ as the smallest integer such that 
	$N_L^{-\alpha} < 
	\min\{ C_\alpha^{-1},1\} \, \tfrac{\epsilon}{2}$ holds 
	and, for every $\ell\in\{1,\ldots,L\}$, let $M_\ell\in\bbN$ be 
	defined as the smallest integer satisfying 
	\[
	M_\ell 
	\geq 
	C_{\star}^{p'}
	N_L^{ \alpha p' }
	S_L^{p'}  
	N_\ell^{-\frac{ (\beta+\gamma) p' }{p'+1}}, 
	\qquad 
	\text{where} 
	\qquad 
	S_L
	:= 
	\sum_{\ell=1}^L 
	N_{\ell}^{\frac{ \gamma - \beta p' }{p'+1}}. 
	\] 
	Note that the magnitude of $S_L$ 
	behaves asymptotically (for $L$ large) 
	as 
	\begin{equation}\label{eq:theta-L} 
	S_L 
	= 
	\sum_{\ell=1}^L 
	N_{\ell}^{\frac{\gamma-\beta p'}{p'+1}} 
	\eqsim_{(\beta,\gamma,A,p)} 
	\begin{cases} 
	1 & 
	\text{if}\quad  \beta p' > \gamma, 
	\\
	L & 
	\text{if}\quad
	\beta p' = \gamma, 
	\\
	N_L^{\frac{\gamma-\beta p'}{p'+1}} 
	&\text{if}\quad 
	\beta p' < \gamma. 
	\end{cases}
	\end{equation}
	For this choice of $L$ and 
	$M_1,\ldots,M_L$, we can bound the error 
	as follows, 
	\begin{align*} 
	\mathrm{err}^{k, \sf ML}_{q,\eps_s}(X)
	&< 
	C_\alpha 
	C_\alpha^{-1} 
	\frac{\epsilon}{2}
	+ 
	C_{\star} 
	\sum_{\ell=1}^{L}   
	\biggl[ 
	C_{\star}^{-1} 
	N_L^{-\alpha} 
	S_L^{-1} 
	N_\ell^{\frac{\beta+\gamma}{p'+1}} 
	N_\ell^{-\beta}
	\biggr] 
	\\
	&=  
	\frac{\epsilon}{2} 
	+ 
	N_L^{-\alpha} 
	S_L^{-1} 
	\sum_{\ell=1}^L 
	N_\ell^{\frac{\gamma-\beta p' }{p'+1}}
	=  
	\frac{\epsilon}{2} 
	+ 
	N_L^{-\alpha} 
	<  
	\epsilon . 
	\end{align*} 
	For the total cost, 
	we first compute 
	\begin{align*} 
	\cC^{k, \sf ML}_{q,\eps_s}&(X) 
	\eqsim
	\sum_{\ell=1}^L \cC_\ell  M_\ell 
	\leq  
	C_\gamma 
	\sum_{\ell=1}^L N_\ell^\gamma 
	\biggl( 
	1 + 
	C_{\star}^{p'} 
	N_L^{ \alpha p' }
	S_L^{p'}  
	N_\ell^{-\frac{(\beta+\gamma) p' }{p'+1}} 
	\biggr) 
	\\
	&\leq 
	C_\gamma 
	\sum_{\ell=1}^L N_\ell^\gamma 
	+ 
	C_\gamma 
	C_\star^{p'} 
	N_L^{ \alpha p' }
	S_L^{p'}  
	\sum_{\ell=1}^L 
	N_\ell^{\frac{\gamma-\beta p'}{p'+1}} 
	= 
	C_\gamma 
	\sum_{\ell=1}^L N_\ell^\gamma 
	+ 
	C_\gamma 
	C_\star^{p'} 
	N_L^{ \alpha p' }
	S_L^{p'+1} \!.  
	\end{align*}
	By the choice of $L$ 
	we have  
	$A^L \eqsim  N_L \eqsim \epsilon^{-\nicefrac{1}{\alpha}}$ 
	and, since $\epsilon\in(0,\nicefrac{1}{2}]$, 
	we find that $L \eqsim_{\alpha} |\log_A \epsilon|$. 
	Thus, using \eqref{eq:theta-L} 
	we conclude that the computational cost, 
	\begin{align*} 
	&\cC^{k, \sf ML}_{q,\eps_s}(X) 
	\lesssim_{(\beta,\gamma,A,p,q)}  
	\begin{cases} 
	N_L^\gamma 
	+ 
	N_L^{\alpha p'}  
	&\text{if } 
	\beta p' > \gamma, \\
	N_L^\gamma 
	+ 
	N_L^{\alpha p'} 
	L^{p'+1}  
	&\text{if }  
	\beta p' = \gamma, \\
	N_L^\gamma 
	+ 
	N_L^{\alpha p'+\gamma-\beta p'} 
	&\text{if }  
	\beta p' < \gamma, 
	\end{cases}
	\end{align*} 
	 in terms of the accuracy 
	$\epsilon$ behaves as follows, 
	\begin{align*} 
	&\cC^{k, \sf ML}_{q,\eps_s}(X) 
	\lesssim_{(\alpha,\beta,\gamma,A,p,q)}  
	\begin{cases} 
	\epsilon^{-\frac{\gamma}{\alpha}} 
	+ 
	\epsilon^{-p'} 
	&\text{if } 
	\beta p' > \gamma, \\
	\epsilon^{-\frac{\gamma}{\alpha}} 
	+ 
	\epsilon^{-p'} 
	|\log_A \epsilon|^{p'+1} 
	&\text{if }  
	\beta p' = \gamma, \\
	\epsilon^{-\frac{\gamma}{\alpha}} 
	+ 
	\epsilon^{-p' - \frac{\gamma - \beta p'}{\alpha}} 
	&\text{if }  
	\beta p' < \gamma, 
	\end{cases}
	\end{align*} 
	which completes the proof of the assertion. 
\end{proof} 

\begin{remark}[Strong convergence implies~\eqref{eq:ass:alpha}]
	\label{rem:strong-alpha}
	Lemma~\ref{lem:Mk-diff} 
	shows that 
	under the stability condition~\eqref{eq:ass:stab}, 
	assumption~\eqref{eq:ass:alpha} 
	is satisfied whenever 
	there exists a constant $\widetilde{C}_\alpha\in(0,\infty)$ 
	such that 
	$\norm{X - X_\ell}{L_k(\Omega;E)} 
	\leq \widetilde{C}_\alpha 
	N_\ell^{-\alpha}$ 
	holds 
	for all $\ell\in\bbN$. 
\end{remark} 

\begin{remark}[Comparison with single-level Monte Carlo]
	\label{rem:comparison-SLMC} 
	Under the assumptions \eqref{eq:ass:alpha}, 
	\eqref{eq:ass:gamma}, \eqref{eq:ass:stab} 
	the single-level Monte Carlo approach 
	of Corollary~\ref{cor:MC-k} 
	requires to choose the level 
	$L$ and the number of samples $M_L$ such that 
	\[
	N_L \eqsim \epsilon^{-\nicefrac{1}{\alpha}} 
	\qquad 
	\text{and}
	\qquad 
	M_L \eqsim \epsilon^{-p'}\! , 
	\]
	in order to achieve a 
	target accuracy 
	$\mathrm{err}^{k, \sf SL}_{q,\eps_s}(X) = \epsilon\in(0,\infty)$. 
	Thus, the single-level Monte Carlo method 
	to estimate $\Mk[X]$ 
	causes computational cost 
	of the order  
	\[
	\cC^{k, \sf SL}_{q,\eps_s}(X) 
	\eqsim 
	\cC_L M_L 
	\lesssim 
	N_L^\gamma M_L 
	\eqsim 
	\epsilon^{-\frac{\gamma}{\alpha} - p'} \!. 
	\]
\end{remark} 

\begin{remark}[Comparison with MLMC in Hilbert spaces]
	In the case that $E$ is a Hilbert space, 
	we have that $p=p'=2$ and the computational 
	costs in \eqref{eq:cC} coincide e.g.\ with those 
	of \cite[Theorem~1]{CliffeGilesScheichlTeckentrup2011} 
	for the two cases when $\beta p'\neq \gamma$. 
	In the critical case $\beta p' = 2\beta =\gamma$, 
	we obtain an additional log-factor $|\log_A \epsilon|$. 
	This is due to the fact that 
	we do not assume independence 
	across the levels $\ell\in\{1,\ldots,L\}$. 
	Note that this independence 
	can be exploited only if 
	\begin{enumerate*}[label=(\roman*)] 
		\item 
		$E$ is a  Hilbert space, 
		and 
		\item the error is measured 
		in the $L_2$-norm with respect to 
		$(\Omega,\cA,\bbP)$. 
	\end{enumerate*}
\end{remark} 

\begin{remark}[Full injective tensor norm]
	On the symmetric injective tensor product 
	space $\xkseps E$, 
	the full and symmetric injective tensor norms, 
	$\norm{\,\cdot\,}{\eps}$ 
	and 
	$\norm{\,\cdot\,}{\eps_s}$, 
	are equivalent,  
	see \eqref{eq:eps-norm-equivalence}. 
	Therefore, 
	the results 
	of Subsections~\ref{subsec:slmc} 
	and~\ref{subsec:mlmc} 
	on convergence of Monte Carlo methods 
	of standard (Theorem~\ref{thm:MC-k}), 
	single-level (Corollary~\ref{cor:MC-k}) 
	and multilevel type (Theorems~\ref{thm:MLMC-k} 
	and~\ref{thm:alpha-beta-gamma}) 
	hold also with respect to  
	the stronger norm on 
	$L_q(\Omega;\xkeps E)$, 
	with the additional constant $\tfrac{k^k}{k!}$. 
\end{remark} 

\section{Applications} 
\label{section:applications}   

In this section 
we illustrate the preceding, abstract theory by several
examples of stochastic equations, 
where the need for the presently
developed modifications of the standard 
Monte Carlo theory 
is entailed either 
by problem-specific constraints on the choices of 
non-Hilbertian function spaces for well-posedness 
or by the interest in error estimates  
in norms on Banach spaces (such as H\"older norms). 

Specifically, Subsections~\ref{subsec:EllPDERnDFor} 
and~\ref{subsec:EllPDERndCoef} 
are concerned with 
the $k$th moment 
MLMC finite element convergence 
analysis for explicit, linear, 
second-order elliptic PDEs 
with random forcing (in dimensions $d\in\{2,3\}$)
or random diffusion coefficient (for $d=1$), 
respectively. 
Here, the right-hand side is assumed to 
be an element of 
(or taking values in) 
$L_p(\dom)$ for some $p\in(1,\infty)$, 
where $\dom\subset\bbR^d$ denotes the spatial domain. 
To obtain well-posed problems, 
the case $p\in(1,2)$ necessitates 
variational formulations on Banach spaces, 
whereas for $p\in(2,\infty)$ 
such formulations may be advantageous 
to derive error estimates in H\"older norms 
via Sobolev embeddings. 

In Subsection~\ref{subsec:appl-hoelder} 
we discuss the MLMC approximation 
of higher-order moments 
for vector-valued stochastic 
processes ${X\from [0,T]\times\Omega\to E}$ 
in tensor norms of 
H\"older spaces $C^\delta([0,T];E)$ 
for problem-specific 
H\"older exponents $\delta\in[0,1)$. 
These results  
are applicable to many 
semi-discrete or fully discrete 
numerical schemes 
for SDEs and stochastic PDEs 
and we give some explicit examples. 

\subsection{Linear elliptic PDEs with random forcing} 
\label{subsec:EllPDERnDFor}

Let $\dom\subset \bbR^d$ with $d\in\{2,3\}$ 
be an open, bounded, 
polytopal Lipschitz domain (with closure $\overline{\dom}$)  
and, for $p\in[1,\infty]$ and $m\in\bbN$, 
let $L_p(\dom)$ and $W^m_p(\dom)$  
denote the standard Lebesgue 
and Sobolev spaces 
of real-valued functions on~$\dom$. 

We write ${\Wo}^1_p(\dom)$ for the 
closure of $C^\infty_c(\dom)$ 
(the space of smooth functions 
with compact support inside~$\dom$) 
with respect to the norm 
on $W^1_p(\dom)$, 
and $W^{-1}_p(\dom)$ 
for the dual space of $\Wo^1_{p'}(\dom)$, 
where 
$\frac{1}{p} + \frac{1}{p'}=1$.  

\subsubsection{Deterministic model problem} 

We assume given deterministic,  
continuous diffusion coefficients
$a_{ij}\in C^0(\overline{\dom})$, 
$a_{ij} =a_{ji}$, $1\leq i,j\leq d$,  
which are uniformly positive definite. 
Thus, there exist constants 
$0< \underline{a} \leq \overline{a} < \infty$ 
such that, for all $x\in \overline{\dom}$, 
\begin{equation}\label{eq:rf:CoefBdd}
\forall \phi,\psi \in \bbR^d: 
\quad 
\sum\limits_{i,j=1}^d a_{ij}(x)\phi_i \phi_j 
\geq 
\underline{a} \, \norm{ \phi }{\bbR^d}^2 ,  
\;\;   
\sum\limits_{i,j=1}^d a_{ij}(x)\phi_i \psi_j 
\leq 
\overline{a} \, \norm{ \phi }{\bbR^d} \norm{ \psi }{\bbR^d}
, 
\end{equation}
where $\norm{ \,\cdot\, }{\bbR^d}$ denotes 
the Euclidean norm on $\bbR^d$. 

For $p\in(1,\infty)$ and a given 
source term $f$ in $L_p(\dom)$ 
(which in the sequel shall be generalized to be random), 
we consider the following 
variational formulation of a homogeneous 
Dirichlet boundary value problem: 
Find  
\begin{equation}\label{eq:rf:weak-form} 
u\in \Wo^1_p(\dom):
\;\;  
B(u,v) = \duality{ f,v }  
\quad 
\forall v\in  \Wo^1_{p'}(\dom). 
\end{equation}
Here, $\duality{ \,\cdot\,, \,\cdot\, }$ denotes 
the $W^{-1}_p(\dom)\times \Wo^1_{p'}(\dom)$ duality pairing,
and 
the bilinear form~$B$ 
is given by 
\[ 
B\from \Wo^1_p(\dom) \times \Wo^1_{p'}(\dom)\to \bbR, 
\qquad 
B(w,v) 
:= 
\int_{\dom} \sum_{i,j = 1}^d 
a_{ij}(x) 
\tfrac{\partial }{\partial x_i} w(x) 
\tfrac{\partial }{\partial x_j} v(x) \,\rd x . 
\] 
Evidently, 
H\"older's inequality implies continuity 
of $B$ on $\Wo^1_p(\dom) \times \Wo^1_{p'}(\dom)$. 
However, as opposed to the Hilbert space 
case $p=p'=2$, the 
uniform strong ellipticity assumption  
in \eqref{eq:rf:CoefBdd} 
is in general not sufficient to guarantee   
that the mapping 
$\Wo^1_p(\dom) \ni u \mapsto B(u,\,\cdot\,) \in W^{-1}_p(\dom)$ 
is an isomorphism. 
For the case of the Laplace operator 
(i.e., $a_{ij}(x) = \delta_{ij}$)
and every $p\in(1,\infty)$, 
an inf-sup condition and 
hence well-posedness of \eqref{eq:rf:weak-form} 
have been shown in \cite[Theorem~6.1]{Simader1972}, 
see also \cite[Equation~(8.6.5)]{BrennerScott3rdEd}. 
Following the arguments 
used in \cite[Section~8.6]{BrennerScott3rdEd} 
this can be generalized to 
diffusion coefficients $(a_{ij})_{i,j=1}^d$ 
satisfying \eqref{eq:rf:CoefBdd}, 
provided that $p$ is sufficiently close to $2$. 
In what follows, we will \emph{require} 
for an appropriate range of 
integrability indices $p\in(1,\infty)$ that 
\eqref{eq:rf:weak-form} has a unique solution 
and, moreover, that this solution 
is $W^2_p(\dom)$-regular. 
This is summarized in the next  
assumption. 

\begin{assumption}\label{assumption:regularity}
	There exists $p_0\in(d,\infty)$
	such that, for all $p\in(1,p_0)$ 
	and every $f\in L_p(\dom)$, 
	the variational problem 
	\eqref{eq:rf:weak-form} 
	admits a unique solution $u\in\Wo^1_p(\dom)$, and 
	\begin{equation}\label{eq:RegAss}
	\forall p\in(1,p_0) 
	\quad 
	\exists C_p \in (0,\infty) 
	\quad 
	\forall f\in L_p(\dom) : 
	\quad 
	\norm{ u }{W^2_p(\dom)} 
	\leq 
	C_p 
	\norm{ f }{L_p(\dom)} 
	. 
	\end{equation}
\end{assumption} 

Sufficient conditions for the 
$W^2_p(\dom)$-regularity \eqref{eq:RegAss}
to hold for the Laplace problem  
in polygons (i.e., $d=2$) 
can, for instance, be found 
in~\cite[Theorem~4.3.2.4]{Grisvard2011}.

Since $p_0 > d$ and $d \in\{2,3\}$ are assumed, 
for a given $q \in [p_0,\infty)$,  
we may choose   
${p = \frac{qd}{d+q} \in ( 1, \min\{d,q\} )}$   
in \eqref{eq:RegAss} 
and conclude by continuity of the 
Sobolev embedding $W^2_p(\dom)\subseteq W^1_q(\dom)$ 
and H\"older's inequality that 
$\norm{ u }{W^1_q(\dom)} 
\lesssim_{(q,\dom)} 
\norm{ u }{W^2_p(\dom)} 
\lesssim_{p} 
\norm{ f }{L_p(\dom)} 
\lesssim_{(q,\dom)}
\norm{ f }{L_q(\dom)}$. 
For $q\in(1,p_0)$ this estimate 
trivially holds by \eqref{eq:RegAss} 
so that we obtain the 
following stability estimate 
for all $p\in(1,\infty)$: 
\begin{equation}\label{eq:rf:stab-u}
\forall p\in(1,\infty): 
\quad 
\norm{ u }{W^1_p(\dom)} 
\lesssim_{(p,\dom)} 
\norm{ f }{L_p(\dom)}. 
\end{equation}

\subsubsection{Finite element approximation} 
\label{subsubsec:rf:FEM} 

For the numerical approximation, 
we use a conforming finite element method (FEM) 
for \eqref{eq:rf:weak-form} 
based on continuous, piecewise 
first-order Langrangean basis functions on $\overline{\dom}$:
on a regular, simplicial triangulation $\cT_h$ of $\overline{\dom}$ 
with mesh size $h\in(0,\infty)$, 
we consider the finite-dimensional space
\[ 
S^1_0(\dom;\cT_h) 
:= 
\bigl\{ v \in C^0(\overline{\dom}) : 
v|_{\partial \dom} =0, \; v|_T \in \bbP_1 \; \forall T\in \cT_h \bigr\},
\]
where $\bbP_1$ denotes the space of polynomials 
of degree at most one. 
The corresponding Galerkin discretization 
of \eqref{eq:rf:weak-form} reads: 
Find  
\begin{equation}\label{eq:rf:FEMPbm} 
u_h \in S^1_0(\dom;\cT_h): 
\;\; 
B( u_h, v_h ) 
= 
\duality{ f,v_h } 
\quad 
\forall v_h\in S^1_0(\dom;\cT_h).
\end{equation}
Evidently, this finite-dimensional problem is equivalent to 
a linear system of equations, with matrix that is symmetric
and, by \eqref{eq:rf:CoefBdd}, positive definite, so that there
exists a unique solution 
$u_h \in  S^1_0(\dom;\cT_h)$ of \eqref{eq:rf:FEMPbm}.

Under Assumption~\ref{assumption:regularity} and 
the additional condition that 
\begin{equation}\label{eq:ass:aij} 
\forall i,j\in\{1,\ldots,d\} : 
\quad 
a_{ij} \in W^1_q(\dom) 
\quad 
\text{for some} 
\quad 
\begin{cases}
q > 2 &\text{if } d = 2 ,\\
q\geq \frac{12}{5} &\text{if } d=3, 
\end{cases} 
\end{equation}
it is shown 
in~\cite[Theorem~8.5.3]{BrennerScott3rdEd} 
that, for all $p\in(1,\infty)$,  
the Galerkin projection~$u_h$ in \eqref{eq:rf:FEMPbm} 
is bounded in $\Wo^1_p(\dom)$:
There exists a mesh size    
$h_0 \in(0,\infty)$ such that 
\begin{equation}\label{eq:rf:uh-stab-u}
\forall h\in(0,h_0): 
\quad 
\norm{ u_h }{W^1_p(\dom)} 
\lesssim_{(p,\dom)} 
\norm{ u }{W^1_p(\dom)}. 
\end{equation}
Combining \eqref{eq:rf:stab-u} and \eqref{eq:rf:uh-stab-u} 
implies stability of both  
the exact solution $u$ and 
its approximation $u_h$: 
For all $p\in(1,\infty)$, there exist 
a mesh width 
$h_0\in(0,\infty)$ 
and a constant 
$\widetilde{C}_{\sf stab} 
\in (0,\infty)$ such that 
\begin{equation}\label{eq:rf:stab} 
\forall h\in(0,h_0) : 
\quad 
\max\bigl\{ 
\norm{ u }{W^1_p(\dom)} ,  
\norm{ u_h }{W^1_p(\dom)} 
\bigr\} 
\leq 
\widetilde{C}_{\sf stab} 
\norm{ f }{L_p(\dom)} . 
\end{equation} 
Moreover, by \cite[Equation~(8.5.4)]{BrennerScott3rdEd} 
$u_h$ is quasi-optimal in $\Wo^1_p(\dom)$ 
for all $p\in(1,\infty)$: 
There exist $h_0,C_{\sf opt}\in(0,\infty)$ 
(which may depend on $\dom$ and $p$)
such that 
\[ 
\forall h\in(0,h_0) : 
\quad 
\norm{ u - u_h }{W^1_p(\dom)} 
\leq 
C_{\sf opt} 
\inf_{v_h \in S^1_0(\dom;\cT_h)} 
\norm{ u - v_h }{W^1_p(\dom)} .
\] 
Therefore, 
under Assumption~\ref{assumption:regularity} 
and~\eqref{eq:ass:aij}, 
for every quasi-uniform family of 
triangulations $(\cT_h)_{h\in\cH}$, 
standard approximation properties
of the corresponding finite element 
spaces $S^1_0(\dom;\cT_h)$, 
$h\in\cH\subseteq(0,\infty)$, 
show that, for all $p\in(0,p_0)$, 
\begin{equation}\label{eq:rf:FEM-err}
\forall h\in\cH\cap(0,h_0) : 
\quad 
\norm{ u - u_h }{W^1_p(\dom)} 
\lesssim_{(p,\dom)} 
h \, \norm{ u }{W^2_p(\dom)} 
\lesssim_{(p,\dom)} 
h \, \norm{ f }{L_p(\dom)} , 
\end{equation}
where we also used the 
assumed regularity~\eqref{eq:RegAss}.

\subsubsection{Random forcing and MLMC-FEM} 

Suppose the setting of the previous 
subsections. In particular, 
the coefficients $(a_{ij})_{i,j=1}^d$ satisfy 
\eqref{eq:rf:CoefBdd} and \eqref{eq:ass:aij}, 
and Assumption~\ref{assumption:regularity} holds 
for some $p_0\in(d,\infty)$. 
In this subsection we fix $p\in(1,p_0)$,~which 
determines the spatial integrability 
of a given random forcing. 
Random forcing in \eqref{eq:rf:weak-form} 
amounts to assuming that 
the right-hand side $f$ is an element of  
$L_r(\Omega;L_p(\dom))$  
for some suitable integrability index ${r\in[1,\infty)}$ 
with respect to the probability space~$(\Omega,\cA,\bbP)$, i.e., 
we seek a $\Wo^1_p(\dom)$-valued random variable~$u$ 
such that 
\begin{equation}\label{eq:rf:weak-form-stoch} 
B(u(\omega),v) = \duality{ f(\omega),v }  
\quad 
\forall v\in  \Wo^1_{p'}(\dom),
\;\;\; 
\text{for almost all }
\omega\in\Omega. 
\end{equation}

Under the above mentioned regularity 
requirements, see Assumption~\ref{assumption:regularity}, 
we may argue for almost all $\omega\in\Omega$  
to establish the existence and uniqueness 
of a (stochastic) solution~$u\in L_r(\Omega;\Wo^1_p(\dom))$ 
satisfying \eqref{eq:rf:weak-form-stoch}, 
with
\[ 
u \in L_r(\Omega; W^2_p(\dom)), 
\qquad 
\norm{ u }{L_r(\Omega; W^2_p(\dom)) }  
\lesssim_{(p,\dom)} 
\norm{ f }{L_r(\Omega; L_p(\dom)) }  . 
\] 

\emph{Multilevel finite element discretizations} 
of \eqref{eq:rf:weak-form-stoch} will be based on the 
discrete variational problem~\eqref{eq:rf:FEMPbm}, 
considered $\bbP$-a.s.
To this end, we denote by $\{ \cT_\ell \}_{\ell\in\bbN}$ 
a nested sequence of regular, simplicial triangulations 
$\cT_\ell$ of~$\overline{\dom}$, 
with corresponding sequence of mesh sizes  
$\{ h_\ell \}_{\ell \in\bbN}$. 
We assume that 
$\cT_{\ell+1}$ is obtained from $\cT_\ell$ 
via uniform red refinement. 
Then, $h_{\ell+1} \lesssim h_\ell /2$  
and, without loss of generality, 
we may assume that $h_1 < h_0$, 
with $h_0$ as in \eqref{eq:rf:uh-stab-u}--\eqref{eq:rf:FEM-err}.
The corresponding sequence of 
Galerkin solutions 
$u_{h_\ell} \in S^1_0(\dom;\cT_\ell)$ 
shall be denoted by $u_\ell$ 
(with slight abuse~of~notation). 

In the next corollary we verify 
that all assumptions 
of the ``$\alpha\beta\gamma$ theorem'', 
see Theorem~\ref{thm:alpha-beta-gamma}, 
are satisfied to bound the computational 
costs of the MLMC-FEM estimator for $\Mk[u]$ 
for a given accuracy 
and provide an upper bound 
for these~costs. 

\begin{corollary}\label{cor:MLMCForc}
	Let \eqref{eq:rf:CoefBdd} and \eqref{eq:ass:aij}  
	be satisfied and suppose that 
	Assumption~\ref{assumption:regularity} 
	holds for some $p_0\in(d,\infty)$. 
	Assume that $p\in(1,p_0)$, $q\in[\min\{p,2\},\infty)$, 
	and~$k\in\bbN$.  
	For $f\in L_{kq}(\Omega;L_p(\dom))$, 
	let $u\in L_{kq}(\Omega;\Wo^1_p(\dom))$ 
	be the solution to \eqref{eq:rf:weak-form-stoch}. 
	Furthermore, let 
	the FEM approximations 
	$(u_\ell)_{\ell\in\bbN}$  
	be constructed as described above. 
	Then, for $E:=W^1_p(\dom)$ and 
	$N_\ell := \dim\bigl(S_0^1(\dom;\cT_\ell) \bigr) \eqsim h_\ell^{-d}\eqsim A^\ell$\!, 
	with $ A:= 2^d$\!,
	all conditions of 
	Theorem~\ref{thm:alpha-beta-gamma} are fulfilled, 
	\begin{align*} 
	\text{\eqref{eq:ass:alpha}}&
	\quad 
	\forall\ell\in\bbN:& 
	\bigl\| \Mk[u] - \Mk[u_\ell] \bigr\|_{\eps_s} 
	&\lesssim_{(k,p,\dom,f)} N_\ell^{-\nicefrac{1}{d}} ,
	&\hspace*{-3mm}\text{i.e., } 
	\alpha &= d^{-1},  
	\\
	\text{\eqref{eq:ass:beta}}&
	\quad 
	\forall\ell\in\bbN:& 
	\norm{ u_\ell - u_{\ell-1} }{L_{kq}(\Omega;W^1_p(\dom))} 
	&\lesssim_{(k,p,q,\dom,f)} N_\ell^{-\nicefrac{1}{d}} ,
	&\hspace*{-3mm}\text{i.e., } 
	\beta &= d^{-1}, 
	\\
	\text{\eqref{eq:ass:gamma}}&
	\quad 
	\forall\ell\in\bbN:& 
	\cC_\ell 
	&\lesssim N_\ell^k ,
	&\hspace*{-3mm}\text{i.e., } 
	\gamma &= k, 
	\\
	\text{\eqref{eq:ass:stab}}&
	\quad 
	\forall\ell\in\bbN:& 
	\max\bigl\{ 
	\norm{u}{L_k(\Omega;W^1_p(\dom))}, 
	\| u_\ell &\|_{L_{kq}(\Omega;W^1_p(\dom))} 
	\bigr\}
	\leq 
	\Cstab , 
	& 
	\end{align*}
	for some constant $\Cstab\in(0,\infty)$ depending 
	only on $p,\dom$ and $\norm{f}{L_{kq}(\Omega;L_p(\dom))}$. 
	
	Furthermore, the $L_q$-accuracy 
	$\mathrm{err}_{q,\eps_s}^{k,\sf ML}(u) < \epsilon\in (0,\nicefrac{1}{2}]$ 
	of the multilevel Monte Carlo estimator for $\Mk[u]$ 
	can be achieved at computational 
	costs of the order 
	\begin{equation}\label{eq:rf:cost} 
	\cC^{k, \sf ML}_{q,\eps_s}(u)
	\lesssim_{(k,p,q,\dom,f)}  
	\begin{cases} 
	\epsilon^{-\bar{p}'}  
	& \text{if}\quad \bar{p}' > kd, 
	\\ 
	\epsilon^{-kd} 
	\, |\log_2 \epsilon|^{kd+1} 
	&\text{if}\quad \bar{p}' = kd, 
	\\
	\epsilon^{-kd} 
	& \text{if}\quad 
	\bar{p}' < kd, 
	\end{cases}
	\end{equation}
	where $\bar{p}'\in[2,\infty)$ is such that 
	$\tfrac{1}{\min\{p,2\}} + \tfrac{1}{\bar{p}'} = 1$. 
\end{corollary}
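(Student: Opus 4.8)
The plan is to verify the four hypotheses \eqref{eq:ass:alpha}--\eqref{eq:ass:stab} of Theorem~\ref{thm:alpha-beta-gamma} with $E:=W^1_p(\dom)$, $N_\ell:=\dim\bigl(S^1_0(\dom;\cT_\ell)\bigr)\eqsim h_\ell^{-d}$, $A:=2^d$, $\alpha=\beta=d^{-1}$ and $\gamma=k$, and then to substitute these exponents into the cost bound \eqref{eq:cC}. The first point to settle is that $W^1_p(\dom)$ is of Rademacher type $\min\{p,2\}\in(1,2]$: the map $u\mapsto(u,\partial_1 u,\ldots,\partial_d u)$ identifies $W^1_p(\dom)$ with a closed subspace of $L_p(\dom)^{d+1}$, so it inherits the type of $L_p(\dom)$, namely $p$ for $p\in(1,2)$ and $2$ for $p\geq 2$. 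This $\min\{p,2\}$ plays the role of the exponent ``$p$'' in Theorem~\ref{thm:alpha-beta-gamma}, the assumption $q\in[\min\{p,2\},\infty)$ matches $q\in[p,\infty)$ there, and the conjugate exponent is $\bar{p}'$.

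Next I would obtain all four assumptions by arguing $\bbP$-almost surely from the deterministic estimates of Subsection~\ref{subsubsec:rf:FEM} and then taking Bochner norms. For \eqref{eq:ass:stab}, the uniform bound \eqref{eq:rf:stab} (valid once $h_1<h_0$) gives $\norm{u(\omega)}{W^1_p(\dom)},\norm{u_\ell(\omega)}{W^1_p(\dom)}\lesssim_{(p,\dom)}\norm{f(\omega)}{L_p(\dom)}$ for almost all $\omega$; raising to the power $kq$ and integrating over $\Omega$ yields \eqref{eq:ass:stab} with $\Cstab\lesssim_{(p,\dom)}\norm{f}{L_{kq}(\Omega;L_p(\dom))}$. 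For \eqref{eq:ass:beta}, the finite element error estimate \eqref{eq:rf:FEM-err} gives $\norm{u(\omega)-u_\ell(\omega)}{W^1_p(\dom)}\lesssim_{(p,\dom)}h_\ell\norm{f(\omega)}{L_p(\dom)}$, hence by the triangle inequality and $h_{\ell-1}\eqsim h_\ell$ (and, for $\ell=1$, using $X_0:=0$ together with \eqref{eq:rf:stab}) also $\norm{u_\ell(\omega)-u_{\ell-1}(\omega)}{W^1_p(\dom)}\lesssim_{(p,\dom)}h_\ell\norm{f(\omega)}{L_p(\dom)}$; taking $L_{kq}(\Omega;\,\cdot\,)$-norms and using $h_\ell^{-d}\eqsim N_\ell$ gives \eqref{eq:ass:beta} with $\beta=d^{-1}$ and $C_\beta\lesssim_{(p,\dom)}\norm{f}{L_{kq}(\Omega;L_p(\dom))}$. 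Here the only non-routine ingredient is the measurability step needed to pass from the $\bbP$-a.s.\ deterministic bounds to Bochner-norm bounds, which follows from strong measurability of $f$ and the pointwise estimates.

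For \eqref{eq:ass:alpha}, the same argument in $L_k(\Omega;\,\cdot\,)$ gives the strong convergence rate $\norm{u-u_\ell}{L_k(\Omega;W^1_p(\dom))}\lesssim_{(p,\dom)}h_\ell\norm{f}{L_k(\Omega;L_p(\dom))}\eqsim N_\ell^{-1/d}$, and then Remark~\ref{rem:strong-alpha} (i.e.\ Lemma~\ref{lem:Mk-diff} combined with \eqref{eq:ass:stab}) upgrades this to $\bigl\|\Mk[u]-\Mk[u_\ell]\bigr\|_{\eps_s}\lesssim k\,\Cstab^{k-1}N_\ell^{-1/d}$, so $\alpha=d^{-1}$. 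For \eqref{eq:ass:gamma}, one sample of $\xi_\ell=\xk u_\ell-\xk u_{\ell-1}$ requires sampling $f(\omega)$, solving the two sparse, symmetric positive definite finite element systems on levels $\ell$ and $\ell-1$ (at cost $\cO(N_\ell)$ with an optimal, e.g.\ multigrid, solver), and assembling the two $k$-fold tensors, which have $\cO(N_\ell^k)$ entries since $N_{\ell-1}<N_\ell$; the last task dominates, so $\cC_\ell\lesssim N_\ell^k$, i.e.\ $\gamma=k$.

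Finally I would substitute $\alpha=\beta=d^{-1}$, $\gamma=k$ into \eqref{eq:cC}. Then $\gamma/\alpha=kd$, the threshold $\beta p'\gtrless\gamma$ reads $\bar{p}'\gtrless kd$, and $\gamma-\beta\bar{p}'=k-\bar{p}'/d$ gives $(\gamma-\beta\bar{p}')/\alpha=kd-\bar{p}'$; consequently the three branches of \eqref{eq:cC} become $\epsilon^{-kd}+\epsilon^{-\bar{p}'}$, $\epsilon^{-kd}+\epsilon^{-\bar{p}'}|\log_A\epsilon|^{\bar{p}'+1}$ and $\epsilon^{-kd}+\epsilon^{-\bar{p}'-(kd-\bar{p}')}=\epsilon^{-kd}$, respectively. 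Using $A=2^d$, so that $|\log_A\epsilon|\eqsim|\log_2\epsilon|$, and keeping only the dominant term in each regime ($\epsilon^{-\bar{p}'}$ when $\bar{p}'>kd$, and $\epsilon^{-kd}$ otherwise) yields exactly \eqref{eq:rf:cost}. The argument is essentially bookkeeping; the two points requiring genuine care are identifying the Rademacher type of the Sobolev space $W^1_p(\dom)$ and matching it to the abstract exponent, and the integrability/measurability lift from \eqref{eq:rf:stab} and \eqref{eq:rf:FEM-err} to the Bochner spaces $L_r(\Omega;W^1_p(\dom))$.
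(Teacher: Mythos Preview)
Your proposal is correct and follows essentially the same route as the paper: verify \eqref{eq:ass:stab} from the pathwise stability bound \eqref{eq:rf:stab}, obtain strong convergence from \eqref{eq:rf:FEM-err}, deduce \eqref{eq:ass:alpha} via Remark~\ref{rem:strong-alpha} and \eqref{eq:ass:beta} by the triangle inequality, argue \eqref{eq:ass:gamma} via a linear-complexity solver plus the $N_\ell^k$ cost of the Kronecker product, note that $W^1_p(\dom)$ has type $\min\{p,2\}$, and apply Theorem~\ref{thm:alpha-beta-gamma}. Your explicit justification of the Rademacher type of $W^1_p(\dom)$ and the detailed substitution into \eqref{eq:cC} are spelled out more fully than in the paper, but the argument is the same.
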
 

\begin{proof} 
	First note that  
	we may apply the deterministic 
	stability estimate \eqref{eq:rf:stab} 
	for almost all $\omega\in\Omega$, 
	showing that, for all $\ell\in\bbN$, 
	\[ 
	\max\bigl\{ 
	\norm{ u }{L_k(\Omega;W^1_p(\dom))} , 
	\norm{ u_\ell }{L_{kq}(\Omega;W^1_p(\dom))} 
	\bigr\} 
	\leq 
	\widetilde{C}_{\sf stab} 
	\norm{ f }{L_{kq}(\Omega;L_p(\dom))} 
	=: 
	\Cstab.
	\] 
	
	The integrability of 
	$f\in L_{kq}(\Omega; L_p(\dom))$ combined with 
	the deterministic FEM convergence result \eqref{eq:rf:FEM-err} 
	implies strong convergence, 
	\[
	\forall \ell\in\bbN: 
	\quad 
	\norm{u - u_\ell}{L_{kq}(\Omega;W^1_p(\dom))}
	\lesssim_{(p,\dom)} 
	h_\ell 
	\, 
	\norm{f}{L_{kq}(\Omega;L_p(\dom))}. 
	\]
	Since $h_\ell \eqsim N_\ell^{-\nicefrac{1}{d}}$\!,
	we conclude that the 
	conditions \eqref{eq:ass:alpha} and \eqref{eq:ass:beta} 
	of Theorem \ref{thm:alpha-beta-gamma}
	are satisfied with $\alpha=\beta=d^{-1}$\!, 
	where we also have used  
	Remark~\ref{rem:strong-alpha} for \eqref{eq:ass:alpha}
	and the triangle inequality for \eqref{eq:ass:beta}. 
	
	Assuming a linear complexity solver (as, e.g., multigrid), 
	the cost $\cC_{\ell,1}$ for computing one sample 
	of $u_\ell$ in~\eqref{eq:rf:FEMPbm} 
	is bounded by  
	$\cC_{\ell,1} \leq C_{\gamma,1} N_\ell$ 
	with some constant $C_{\gamma,1}\in(0,\infty)$ 
	independent of $\ell$.
	Since the computation of the $k$th Kronecker product 
	of a vector of length $N_\ell$ causes computational 
	cost of the magnitude $N_\ell^k$\!, 
	the total cost $\cC_\ell$ for computing one sample 
	of the random variable 
	$\xi_\ell = \xk u_\ell - \xk u_{\ell-1}$ 
	is of the order $N_\ell^k$, and 
	condition \eqref{eq:ass:gamma} 
	is satisfied with $\gamma=k$. 
	
	Thus, the assumptions 
	\eqref{eq:ass:alpha}, 
	\eqref{eq:ass:beta}, 
	\eqref{eq:ass:gamma} and  
	\eqref{eq:ass:stab} 
	of Theorem \ref{thm:alpha-beta-gamma} 
	are satisfied, and the upper bound for the 
	computational costs in \eqref{eq:rf:cost} 
	follows upon applying  
	Theorem~\ref{thm:alpha-beta-gamma}, 
	since the Banach space $E=W^1_p(\dom)$ 
	has type $\min\{p,2\}$. 
\end{proof} 

\begin{remark} 
	In the Hilbert space case, 
	it is in general not optimal 
	to obtain a convergence rate bound for \eqref{eq:ass:alpha} 
	by combining strong convergence with  
	stability~\eqref{eq:ass:stab}, as outlined 
	in Remark~\ref{rem:strong-alpha}. 
	For instance, the error analysis 
	of Galerkin approximations for 
	generalized Whittle--Mat\'ern fields 
	in \cite[Proposition~4]{CoxKirchner2020}  
	reveals that the corresponding 
	approximations of the covariance function  
	converge more than twice as fast 
	in the $L_2(\dom{\times}\dom)$-norm 
	as the corresponding Gaussian random field approximations 
	in the strong $L_q(\Omega;L_2(\dom))$-sense. 
	However, it is not obvious if and how 
	this behavior generalizes to random variables 
	with values in Banach spaces.  
\end{remark} 

\subsection{Linear elliptic PDEs with log-Gaussian coefficient}
\label{subsec:EllPDERndCoef}

We next consider a linear, second-order elliptic PDE 
with mixed Dirichlet--Neumann boundary conditions 
and right-hand side in $L_p(\dom)$. 
As opposed to Subsection~\ref{subsec:EllPDERnDFor}, 
we now assume that the 
diffusion coefficient $a$ is random. 
More specifically, 
as e.g.\ in \cite{Charrier2012, 
	CharrierScheichlTeckentrup2013,
	TeckentrupScheichlGilesUllmann2013}, 
we suppose that 
$a$ is log-Gaussian, 
i.e., $a(x) = \exp(g(x))$ 
for almost all $x\in\dom$, $\bbP$-a.s.,  
for some Gaussian random field
$g\from\dom\times\Omega\to\bbR$. 
The spatial domain is assumed to be a bounded interval 
$\dom = I = (0,b)$ of length $b\in(0,\infty)$. 
The restriction of the spatial dimension to $d=1$ 
facilitates an explicit expression of the inf-sup constant 
of the bilinear form, 
appearing in the corresponding 
variational formulation, 
depending on 
$\underline{a}:=\operatorname{ess} \inf_{x\in I} a(x)$.  
In this setting, $\underline{a}$ is a random variable 
satisfying $\underline{a} \in L_q(\Omega;\bbR)$ 
for all $q\in[1,\infty)$. 
Similarly as in~\cite{Charrier2012}, 
this, in turn, yields well-posedness 
of the variational problem 
and strong convergence 
of finite element approximations. 

\subsubsection{Deterministic model problem} 
\label{subsubsec:rc:det-model} 

For $p\in[1,\infty]$, $m\in\bbN$, 
we recall the Lebesgue and Sobolev 
spaces $L_p(I)$ and $W^m_p(I)$ 
from Subsection~\ref{subsec:EllPDERnDFor}. 
We furthermore note that,  
since $d=1$, for every $p\in[1,\infty]$, 
elements $v$ in $W^1_p(I)$ coincide  
(upon a modification on a subset of $\overline{I}$ 
of zero Lebesgue measure) 
with a unique function which 
is continuous on $\overline{I} = [0,b]$, 
denoted by $\widetilde{v}\in C^0(\overline{I})$,  
and we define the subspace 
\[
\Wo_{p, \{0\} }^1(I) 
:= 
\bigl\{v\in W^1_p(I): \widetilde{v}(0) = 0 \bigr\}. 
\]
In virtue of the Poincar\'e inequality, 
on this subspace 
the map  
$v \mapsto \seminorm{ v }{W^1_p(I)}$,  
$\seminorm{ v }{W^1_p(I)} := \norm{ v' }{L_p(I)}$  
defines a norm,  
where $v'$ denotes the weak derivative of~$v$. 
In addition, 
we write $p'\in[1,\infty]$ for 
the H\"older conjugate of $p\in[1,\infty]$, 
and we let 
$W^{-1}_{p,\{0\}}(I)$ 
be the dual space of 
$\Wo^1_{p'\!,\{0\}}(I)$, 
equipped with the norm 
\[
\norm{f}{W^{-1}_{p, \{0\}}(I)} 
:= 
\sup_{0 \neq v\in \Wo^1_{p'\!,\{0\}}(I) } 
\frac{\duality{f,v}}{ \seminorm{v}{W^1_{p'}(I) } }. 
\]

We assume given a 
finite partition $\cP = \{ J_i \}_{i = 1}^{n_{\cP} }$
of pairwise disjoint, open subintervals $J_i$ of $I$ 
such that 
$\overline{J_1}\cup \ldots \cup \overline{J_{n_\cP}} 
= \overline{I} = [0,b]$. 
Furthermore, 
we suppose that the scalar diffusion coefficient 
satisfies 
\[ 
a\in W^1_\infty(I;\cP), 
\quad 
\text{where} 
\quad 
W^1_\infty(I;\cP) 
:= 
\bigl\{ a \in L_{\infty}(I) \, | \, \forall J\in \cP : \, a|_J \in W^1_\infty(J) \bigr\} , 
\] 
and it is positive in the sense that there 
exist constants $\underline{a}$, $\overline{a}$ such that 
\[
0 < \underline{a} \leq a(x) \leq \overline{a} < \infty 
\qquad 
\text{for almost all }x\in I. 
\]
For $p\in(1,\infty)$ and $f\in L_p(I)$  
we then consider the following boundary value problem, 
with mixed (Dirichlet--Neumann) boundary conditions: 
Find  
\begin{equation}\label{eq:Elliptic}
u \from \overline{I}\to \bbR: 
\quad\; 
-(a(x)u'(x))' = f(x),  
\quad 
\text{a.a. } x\in I, 
\quad\;\; 
u(0) = a(b)u'(b) = 0.
\end{equation}
The weak formulation of \eqref{eq:Elliptic}  
reads: Find 
\begin{equation}\label{eq:rc:weak-form}
u\in \Wo^1_{p,\{0\}}(I): 
\;\; 
B_a(u,v) = \duality{ f, v } 
\quad  
\forall v\in \Wo^1_{p'\!,\{0\}}(I), 
\end{equation}
where the bilinear form $B_a$ 
is defined by  
$B_a(w,v) := \int_0^b a(x) w'(x) v'(x) \, \rd x$, 
for all $w\in \Wo^1_{p,\{0\}}(I)$ 
and $v \in \Wo^1_{p'\!,\{0\}}(I)$. 
For every $p\in(1,\infty)$, 
existence and uniqueness of 
a solution $u$ to \eqref{eq:rc:weak-form} 
follow from continuity of~$B_a$, 
\begin{equation}\label{eq:Bcont} 
\forall w\in \Wo^1_{p,\{0\}}(I) 
\;\;\; 
\forall v\in \Wo^1_{p'\!,\{0\}}(I) : 
\quad 
|B_a(w,v)| 
\leq 
\overline{a} \, 
\seminorm{ w }{W^1_p(I)} 
\seminorm{ v }{W^1_{p'}(I)}
,
\end{equation}
and the following \emph{inf-sup condition}: 
\begin{equation}\label{eq:infsup}
\inf_{0 \neq w \in \Wo^1_{p,\{0\} }(I) } 
\sup_{0\neq v \in \Wo^1_{p'\!,\{0\} }(I) } 
\frac{B_a(w,v)}{ 
	\seminorm{w}{W^1_p(I)} \seminorm{v}{W^1_{p'}(I)} } 
\geq  
\underline{a} . 
\end{equation}
For the homogeneous Dirichlet boundary value problem 
($u(0)=u(b)=0$), 
a constructive proof for 
the inf-sup condition 
on $\Wo_p^1(I)\times \Wo^1_{p'}(I)$ 
has been given 
in \cite[Proof of Theorem~3.1]{BabuskaRheinboldt1981}. 
We adjust the argument 
from \cite{BabuskaRheinboldt1981}, 
to derive~\eqref{eq:infsup} 
for the problem \eqref{eq:Elliptic} 
with mixed boundary conditions. 
To this end, 
let  
$w\in \Wo^1_{p,\{0\}}(I) \setminus\{ 0 \}$ 
be arbitrary but fixed, 
and define
\begin{equation}\label{eq:def:vw}
v_w(x) 
:= 
\int_0^x \operatorname{sign}( w'(t) ) |w'(t)|^{p-1} \, \rd t, 
\quad  
x \in\overline{I} = [0,b].
\end{equation}
This function satisfies  
$v_w(0) = 0$, 
and it is weakly differentiable with 
weak derivative 
\[
v_w'(x) 
= 
\operatorname{sign}( w'(x) ) |w'(x)|^{p-1} 
\qquad 
\text{for almost all }x\in I. 
\]
We furthermore obtain that, 
for almost all $x\in I$,  
$|v_w'(x)| 
=  
|w'(x)|^{p-1} 
=
|w'(x)|^{p/p'}$\!, 
and conclude that $v_w\in \Wo^1_{p'\!,\{0\}}(I)$, 
with  
\[ 
\seminorm{ v_w }{ W^1_{p'}(I)} 
= 
\norm{ v_w' }{L_{p'}(I)} 
=  
\norm{ w' }{L_p(I)} ^{p/p'} 
=  
\seminorm{ w }{W^1_p(I)}^{p/p'} 
= 
\seminorm{ w }{W^1_p(I)}^{p-1} . 
\] 
The continuity \eqref{eq:Bcont}
of $B_a$ implies that $B_a(w, v_w)$ is finite, 
and we find that 
\[
B_a(w,v_w) 
= 
\int_0^b a(x) |w'(x)|^p \,\rd x 
\geq 
\underline{a} \, 
\seminorm{ w }{W^1_p(I)}^p 
= 
\underline{a} \, 
\seminorm{ w }{W^1_p(I)} 
\seminorm{ v_w }{W^1_{p'}(I)}. 
\]
Since $w\in \Wo^1_{p,\{0\}}(I)\setminus\{0\}$ was arbitrary, 
\eqref{eq:infsup} follows. 

The inf-sup condition \eqref{eq:infsup} 
(together with its symmetric counterpart 
which can be shown in the same fashion)
implies that, 
for every $f\in W^{-1}_{p,\{0\}}(I)$,
the variational problem \eqref{eq:rc:weak-form} 
admits a unique solution
$u\in \Wo^1_{p,\{0\}}(I)$. 
Furthermore, 
the linear data-to-solution mapping 
$W^{-1}_{p,\{0\}}(I) \ni f\mapsto u\in \Wo^1_{p,\{0\}}(I)$
is an isomorphism with 
\begin{equation}\label{eq:rc:stab-u}
\norm{ u' }{L_p(I)} 
=
\seminorm{ u }{W^1_p(I)} 
\leq 
\underline{a}^{-1} 
\norm{f}{W^{-1}_{p,\{0\}}(I)} . 
\end{equation}

In the case that $f\in L_p(I)$, 
this solution is more regular:
Considering the differential equation \eqref{eq:Elliptic} 
in weak sense on $J\in\cP$ implies, 
for $a\in W^1_\infty(I;\cP)$ and $f\in L_p(I)$, 
that  the second weak 
derivative of $u|_J$ 
restricted to $J\subseteq I$ 
exists and 
\[ 
-u|_J''(x) 
= 
a(x)^{-1} 
\left[ f(x) + a|_J'(x)u'(x) \right] 
\quad 
\text{for almost all } x\in J. 
\] 
Taking here the $L_p(J)$-norm yields 
with elementary estimates that, 
for every $J\in \cP$, 
\begin{align*}
\norm{ u|_J'' }{L_p(J)} 
&\leq  
\norm{ a^{-1} }{L_\infty(J)} 
\left[ 
\norm{ f }{L_p(J)} 
+ 
\norm{ a|_J' }{L_\infty(J)} 
\norm{ u' }{L_p(J)} 
\right]  
\\
&\leq 
\underline{a}^{-1} 
\Bigl[ 
\norm{ f }{L_p(I)} 
+ 
\underline{a}^{-1} 
\norm{ a|_J' }{L_\infty(J)} 
\norm{ f }{W^{-1}_{p,\{0\}}(I)} 
\Bigr]  
\leq 
C^{\sf\, reg}_{a,p} \,
\norm{ f }{L_p(I)} , 
\end{align*}
where the constant 
$C^{\sf\, reg}_{a,p}\in(0,\infty)$ 
is given by 
\[
C^{\sf\, reg}_{a,p} 
:= 
\underline{a}^{-1} 
\Bigl[ 1 + 
\underline{a}^{-1}  
\max_{J\in\cP} \norm{ a|_J' }{L_\infty(J)} \,  
C_{L_p \to W^{-1}_{p,\{0\}}}  
\Bigr], 
\]
and 
$C_{L_p \to W^{-1}_{p,\{0\}}}
:= 
\sup\nolimits_{f \in B_{L_p(I)} } 
\norm{f}{W^{-1}_{p,\{0\}}(I)}$
denotes the norm of the continuous 
embedding $L_p(I) \subset W^{-1}_{p,\{0\}}(I)$. 

Hence, for every $f\in L_p(I)$, 
the unique weak solution to \eqref{eq:rc:weak-form} 
satisfies 
\begin{equation}\label{eq:rc:SolReg} 
u\in W^2_p(I;\cP)\cap \Wo^1_{p,\{0\}}(I), 
\qquad 
\max_{J\in\cP} \norm{ u|_J'' }{L_p(J)}
\leq 
C^{\sf\, reg}_{a,p} 
\, \norm{f}{L_p(I)}, 
\end{equation}
where 
$W^2_p(I;\cP) 
:=
\bigl\{ v\in W^1_p(I) \,|\, \forall J\in \cP:  \, v|_J \in W^2_p(J) \bigr\} $
is the space of functions in $W^1_p(I)$ 
which are piecewise in $W^2_p$ 
on the partition $\cP$ of $I$.

\subsubsection{Finite element approximation} 

For the numerical approximation of the 
solution $u\in \Wo^1_{p,\{0\}}(I)$ to \eqref{eq:rc:weak-form}
we use a similar conforming finite element discretization 
as in Subsection~\ref{subsubsec:rf:FEM}. 
That is, we use  continuous, 
piecewise affine-linear
functions on a partition $\cT_h$ of $\overline{I}$
with mesh size $h\in(0,\infty)$, 
\[ 
S^1_{0,\{0\}}(I;\cT_h) 
:= 
\bigl\{ v\in C^0(\overline{I})  : 
v(0)=0,  \; v|_T \in \bbP_1 \; \forall T\in \cT_h \bigr\}. 
\] 
Evidently, 
$S^1_{0,\{0\}}(I;\cT_h) \subset 
\Wo^1_{p,\{0\}}(I) \cap \Wo^1_{p'\!,\{0\}}(I)$ and 
$\dim\bigl( S^1_{0,\{0\}}(I;\cT_h) \bigr) = \#(\cT_h)$. 
For given ${f\in L_p(I)}$, 
the Galerkin discretization of \eqref{eq:rc:weak-form} reads: 
Find 
\begin{equation}\label{eq:rc:FEM} 
u_h \in S^1_{0,\{0\}}(I;\cT_h): 
\;\; 
B_a( u_h, v_h ) 
= 
\duality{ f,v_h } 
\quad 
\forall v_h\in S^1_{0,\{0\}}(I;\cT_h).
\end{equation}

Unique solvability of \eqref{eq:rc:FEM}
follows from the ($h$-uniform) 
discrete inf-sup condition: 
\begin{equation}\label{eq:dinfsup}
\inf_{ 0\neq w_h \in S^1_{0,\{0\}}(I;\cT_h) }
\sup_{ 0\neq v_h \in S^1_{0,\{0\}}(I;\cT_h) }
\frac{B_a(w_h,v_h)}{ 
	\seminorm{ w_h }{W^1_p(I)} \seminorm{ v_h }{W^1_{p'}(I)} }
\geq
\underline{a} . 
\end{equation}
To verify \eqref{eq:dinfsup}, 
note that the proof 
of \eqref{eq:infsup} carries over
to the discrete case: 
Given $w_h \in S^1_{0,\{0\}}(I;\cT_h)$, 
one checks that the 
expression \eqref{eq:def:vw} 
yields an element $v_h$ in $S^1_{0,\{0\}}(I;\cT_h)$,
and that 
all steps in the proof of \eqref{eq:infsup} 
may be repeated verbatim. 

The discrete inf-sup condition \eqref{eq:dinfsup} 
and the continuity \eqref{eq:Bcont} 
imply that \eqref{eq:rc:FEM} 
admits a unique solution
$u_h\in  S^1_{0,\{0\}}(I;\cT_h)$ 
with 
\begin{equation}\label{eq:rc:stab-uh} 
\norm{ u_h' }{L_p(I)}
=
\seminorm{ u_h }{W^1_p(I)} 
\leq 
\underline{a}^{-1} 
\norm{ f }{W^{-1}_{p,\{0\}}(I)}, 
\end{equation} 
which is, furthermore, quasi-optimal:
\begin{equation}\label{eq:QuasiOpt}
\seminorm{ u - u_h }{W^1_p(I)} 
\leq 
\left( 1+\tfrac{\overline{a}}{\underline{a}} \right) 
\inf_{v_h \in S^1_{0,\{0\}}(I;\cT_h)} 
\seminorm{ u - v_h }{W^1_p(I)} . 
\end{equation}
Therefore, 
for every quasi-uniform family 
of grids  $(\cT_h)_{h\in\cH}$ on $\overline{I}$ 
which is such that, 
for every $h\in\cH$, 
the grid $\cT_h$ is compatible 
with the partition $\cP$, 
the quasi-optimality \eqref{eq:QuasiOpt} and 
the regularity \eqref{eq:rc:SolReg} 
imply the error bound
\begin{align}
&| u - u_h |_{W^1_p(I)} 
\textstyle\leq 
\left( 1+\tfrac{\overline{a}}{\underline{a}} \right) 
\seminorm{ u - \cI_h  u }{W^1_p(I)} 
= 
\left( 1+\tfrac{\overline{a}}{\underline{a}} \right) 
\biggl[ \sum\limits_{J\in\cP}
\seminorm{ u - \cI_h  u }{W^1_p(J)}^p 
\biggr]^{\nicefrac{1}{p}} 
\notag 
\\
&\textstyle 
\leq 
C_{b,p} 
\left( 1+\tfrac{\overline{a}}{\underline{a}} \right) 
h \, 
\biggl[\sum\limits_{J\in \cP} \norm{ u|_J'' }{L_p(J)}^p 
\biggr]^{\nicefrac{1}{p}} 
\leq 
C_{b,p} \, n_\cP^{\nicefrac{1}{p}}
\left( 1+\tfrac{\overline{a}}{\underline{a}} \right) 
C_{a,p}^{\sf \, reg} 
\, 
h 
\, \norm{f}{L_p(I)}	, 
\label{eq:rc:FEMErr}
\end{align}
upon choosing $v_h$ in \eqref{eq:QuasiOpt}  
as the nodal interpolant $\cI_h u$ of $u$ 
in $S^1_{0,\{0\}}(I;\cT_h)$. 
Here, the constant $C_{b,p}\in(0,\infty)$ 
is independent of $a$, $h$ and $u$. 

\subsubsection{Log-Gaussian random coefficient and MLMC-FEM} 

The a-priori stability and discretization error bounds 
\eqref{eq:rc:stab-u}, \eqref{eq:rc:stab-uh} and \eqref{eq:rc:FEMErr}
are explicit in 
the dependence on the 
coefficient $a$.
They allow to consider \eqref{eq:Elliptic} with deterministic source 
term $f\in L_p(I)$ for some $p\in(1,\infty)$, 
and with random coefficient 
$a\from I \times \Omega \to \bbR$ 
whose logarithm 
$g\from I \times \Omega \to \bbR$ 
is a Gaussian random field. 

More specifically, 
we assume that 
the mapping 
$\Omega\ni\omega\mapsto g(\,\cdot\,, \omega)$ 
is a vector-valued random variable 
taking values in 
$W^1_\infty(I;\cP)$, 
where we note that 
$W^1_\infty(I;\cP)$, 
equipped with the norm 
\[
\norm{v}{W^1_\infty(I;\cP)}
:= 
\norm{ v }{L_\infty(I)} 
+ 
\max_{J\in\cP} 
\norm{ v|_J' }{L_\infty(J)} 
=
\operatorname{ess}\sup_{x\in I} | v(x) |
+ 
\max_{J\in\cP} \,
\operatorname{ess}\sup_{x\in J} | v|_J'(x) |, 
\]
is a Banach space. 
Furthermore, 
$g$ is assumed to be centered Gaussian, 
i.e., for any finite collection 
$( f_1,\ldots,f_n )$ in the dual space 
$[W^1_\infty(I;\cP)]'$ 
the distribution of 
$(\duality{f_1,g},\ldots,\duality{f_n,g})$ 
is multivariate Gaussian 
with zero mean. 
In other words, 
the law $\mu$ of $g$, defined 
for every set $B$ in the Borel 
$\sigma$-algebra $\cB(W^1_\infty(I;\cP))$ by 
\[
\mu(B) 
= 
\bbP( \{\omega \in\Omega : g(\,\cdot\, , \omega) \in B\}),
\]
satisfies that 
$\mu\circ f^{-1}$ is a centered Gaussian measure on $\bbR$ 
for any $f \in [W^1_\infty(I;\cP)]'$\!, 
see e.g.\ \cite[Definition~2.2.1]{Bogachev1998}.

Under these assumptions we have, for almost all $\omega\in\Omega$, 
\begin{equation}\label{eq:a-log-gauss}
a(\,\cdot\,,\omega) =\exp(g(\,\cdot\,,\omega)), 
\quad\text{with} 
\quad 
g(\,\cdot\,,\omega)\in W^1_\infty(I;\cP), 
\end{equation}
and the trajectories of $a$ are $\bbP$-a.s.\ 
continuous on each subinterval $J_1,\ldots,J_{n_{\cP}} \subseteq I$ 
of the partition $\cP$. 
We thus may define, for almost all $\omega\in\Omega$, 
\begin{equation}\label{eq:def:a-rvs} 
\begin{split} 
\underline{a}(\omega) 
&:= 
\min\nolimits_{J\in\cP} \inf\nolimits_{x\in J}  
a(x,\omega) 
= 
\min\nolimits_{J\in\cP}  
\exp\bigl( \inf\nolimits_{x\in J} g(x,\omega) \bigr), 
\\ 
\overline{a}(\omega) 
&:= 
\max\nolimits_{J\in\cP}\sup\nolimits_{x\in J} 
a(x,\omega) 
= 
\max\nolimits_{J\in\cP} 
\exp\bigl( \sup\nolimits_{x\in J} g(x,\omega) \bigr),  
\end{split} 
\end{equation}
so that we obtain, for almost all $\omega\in\Omega$, 
\[
0 <
\exp( - \norm{ g(\,\cdot\,,\omega) }{L_\infty(I)} ) 
\leq 
\underline{a}(\omega) 
\leq 
\overline{a}(\omega) 
\leq 
\exp( \norm{ g(\,\cdot\,, \omega) }{L_\infty(I)}) 
< \infty. 
\]
For stability and strong convergence of 
finite element approximations 
of the solution~$u$ to \eqref{eq:rc:weak-form}  
with the log-Gaussian coefficient $a=\exp(g)$, 
integrability of $\underline{a}^{-1}$\!, $\overline{a}$ 
and of $\max_{J\in\cP}\norm{a|_J'}{L_\infty(J)}$ 
with respect to the 
probability space $(\Omega,\cA,\bbP)$ will be crucial. 
This is summarized in the next lemma. 

\begin{lemma}\label{lem:integrability-a-rvs} 
	The in \eqref{eq:def:a-rvs} $\bbP$-a.s.\ defined 
	mappings $\omega\mapsto\underline{a}(\omega)$ and  
	${\omega\mapsto\overline{a}(\omega)}$ 
	yield random variables satisfying 
	$\underline{a}^{-1}\!, \, \overline{a}\in L_r(\Omega;\bbR)$ 
	for all $r\in[1,\infty)$. 
	
	In addition, the mapping 
	$\overline{a}'\! : 
	\omega \mapsto 
	\max\nolimits_{J\in\cP} 
	\norm{ a|_J'(\,\cdot\,, \omega) }{L_\infty(J)}$
	is $\bbP$-a.s.\ well-defined and 
	$\overline{a}' \in L_r(\Omega;\bbR)$ 
	for all $r\in[1,\infty)$. 
\end{lemma}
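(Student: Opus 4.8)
The plan is to derive the entire statement from Fernique's integrability theorem applied to the $W^1_\infty(I;\cP)$-valued Gaussian random variable $g$. By assumption the law of $g$ is a centered Gaussian measure on the Banach space $(W^1_\infty(I;\cP),\norm{\,\cdot\,}{W^1_\infty(I;\cP)})$, so Fernique's theorem (see e.g.\ \cite[Theorem~2.8.5]{Bogachev1998}) provides a constant $\lambda\in(0,\infty)$ with $\bbE\bigl[\exp\bigl(\lambda\norm{g}{W^1_\infty(I;\cP)}^2\bigr)\bigr]<\infty$. Using the elementary bound $c\,t\leq\lambda t^2+\tfrac{c^2}{4\lambda}$ for $t\geq0$, this yields at once $\bbE\bigl[\exp\bigl(c\,\norm{g}{W^1_\infty(I;\cP)}\bigr)\bigr]<\infty$ for \emph{every} $c\in(0,\infty)$; in particular $\norm{g}{W^1_\infty(I;\cP)}$ has finite moments of all orders and is finite $\bbP$-a.s.

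Next I would address measurability, so that $\underline{a}$, $\overline{a}$ and $\overline{a}'$ are genuine random variables. On each $J\in\cP$ the embedding $W^1_\infty(J)\hookrightarrow C^0(\overline{J})$ is continuous, so every $v\in W^1_\infty(I;\cP)$ has a representative $\widetilde v$ which is continuous on each $\overline{J}$, and $\inf_{x\in J}v(x)=\inf_{x\in J\cap\bbQ}\widetilde v(x)$, likewise for the supremum. Consequently the maps $v\mapsto\min_{J\in\cP}\exp\bigl(\inf_{x\in J}\widetilde v(x)\bigr)$ and $v\mapsto\max_{J\in\cP}\exp\bigl(\sup_{x\in J}\widetilde v(x)\bigr)$ are continuous (in fact locally Lipschitz) on $W^1_\infty(I;\cP)$, and so is $v\mapsto\max_{J\in\cP}\norm{(\exp v)|_J'}{L_\infty(J)}$, since differentiation $W^1_\infty(J)\to L_\infty(J)$, the embedding into $C^0(\overline J)$, the exponential, and multiplication $L_\infty(J)\times L_\infty(J)\to L_\infty(J)$ are all continuous. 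Composing these continuous functions with the Bochner measurable $g$ shows that $\underline{a}$, $\overline{a}$ and $\overline{a}'$ are Borel measurable, and they are finite $\bbP$-a.s.\ because $\norm{g}{W^1_\infty(I;\cP)}<\infty$ $\bbP$-a.s.

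The final step is to bound these random variables pointwise and invoke the integrability above. From \eqref{eq:a-log-gauss} and \eqref{eq:def:a-rvs} one has, for almost all $\omega\in\Omega$,
\[
\underline{a}(\omega)^{-1}\leq\exp\bigl(\norm{g(\,\cdot\,,\omega)}{L_\infty(I)}\bigr)\leq\exp\bigl(\norm{g(\,\cdot\,,\omega)}{W^1_\infty(I;\cP)}\bigr),\qquad\overline{a}(\omega)\leq\exp\bigl(\norm{g(\,\cdot\,,\omega)}{W^1_\infty(I;\cP)}\bigr),
\]
and, using $a|_J'=g|_J'\,\exp(g|_J)$ on each $J\in\cP$,
\[
\overline{a}'(\omega)=\max_{J\in\cP}\norm{g|_J'(\,\cdot\,,\omega)\exp(g|_J(\,\cdot\,,\omega))}{L_\infty(J)}\leq\norm{g(\,\cdot\,,\omega)}{W^1_\infty(I;\cP)}\exp\bigl(\norm{g(\,\cdot\,,\omega)}{W^1_\infty(I;\cP)}\bigr).
\]
Hence, for any $r\in[1,\infty)$, $\bbE\bigl[\underline{a}^{-r}\bigr]$ and $\bbE\bigl[\overline{a}^{\,r}\bigr]$ are both dominated by $\bbE\bigl[\exp\bigl(r\,\norm{g}{W^1_\infty(I;\cP)}\bigr)\bigr]<\infty$, and with the bound $t^r\leq C_r\,e^{t}$ ($C_r:=(r/e)^r$, $t\geq0$) one gets $\bbE\bigl[|\overline{a}'|^r\bigr]\leq C_r\,\bbE\bigl[\exp\bigl((r+1)\norm{g}{W^1_\infty(I;\cP)}\bigr)\bigr]<\infty$, which is the assertion. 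I do not expect a genuine obstacle here; the only point needing a little care is the measurability of the pointwise infima and suprema over the $J\in\cP$, which is handled by the $C^0$-embedding and the countable-dense-set argument sketched above.
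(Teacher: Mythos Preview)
Your proof is correct and follows essentially the same route as the paper: both rely on Fernique's theorem for the Gaussian law of $g$ on $W^1_\infty(I;\cP)$ to obtain exponential integrability of $\norm{g}{W^1_\infty(I;\cP)}$ (and hence of $\norm{g}{L_\infty(I)}$), and then bound $\underline{a}^{-1}$, $\overline{a}$ and $\overline{a}'$ pointwise in terms of this norm. The paper cites \cite[Proposition~2.3]{Charrier2012} for the Fernique step and handles the two factors in the bound for $\overline{a}'$ separately (polynomial moments of $\max_J\norm{g|_J'}{L_\infty(J)}$ times exponential moments of $\norm{g}{L_\infty(I)}$), whereas you apply Fernique once to the full $W^1_\infty(I;\cP)$-norm and use the elementary inequality $t^r\leq C_r e^t$ to absorb the polynomial factor; this makes your argument slightly more self-contained but is not a different idea.
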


\begin{proof}
	The centered Gaussian random field $g$ 
	takes values in the Banach space 
	$W^1_\infty(I;\cP)$.  
	Thus, for every $J\in\cP$ and almost all $\omega\in\Omega$, 
	$g$ admits a representative 
	which is continuous 
	on $\overline{J}$, 
	and  
	the proof of \cite[Proposition~2.3]{Charrier2012}
	using Fernique's theorem is applicable on each $J\in\cP$, 
	showing that 
	$\underline{a}^{-1}\!, \, \overline{a} \in L_r(\Omega;\bbR)$ 
	for all ${r\in[1,\infty)}$. 
	
	We now consider $\overline{a}'$\!. 
	To this end, we first note that by \eqref{eq:a-log-gauss} 
	$a\in W^1_\infty(I;\cP)$, $\bbP$-a.s., 
	since $\exp(\,\cdot\,)$ is smooth and 
	$g\in W^1_\infty(I;\cP)$, $\bbP$-a.s. 
	In particular, $a = \exp(g)$ is also a 
	$W^1_\infty(I;\cP)$-valued random variable.
	Therefore, for almost all ${\omega\in\Omega}$ and 
	every $J\in\cP$, 
	$\norm{ a|_J'(\,\cdot\,, \omega) }{L_\infty(J)} < \infty$. 
	It follows that $\overline{a}'$ is $\bbP$-a.s.\ well-defined 
	and measurable, since the 
	mapping 
	${W^1_\infty(I;\cP) \ni v \mapsto \max_{J\in\cP} 
		\norm{ v|_J' }{L_\infty(J)} \in\bbR}$  
	is continuous. 
	To prove the integrability of $\overline{a}'$\!, 
	we observe that, 
	for almost all $\omega\in\Omega$ and every $J\in\cP$, 
	the weak derivative of $a|_J$ is given by 
	$a|_J'(\,\cdot\,, \omega)
	= 
	g|_J'(\,\cdot\,, \omega)
	\exp(g|_J(\,\cdot\,, \omega))$.
	Thus, we obtain that 
	\[ 
	\max\nolimits_{J\in\cP}
	\norm{a|_J'(\,\cdot\,, \omega)}{L_\infty(J)}
	\leq 
	\max\nolimits_{J\in\cP}
	\norm{g|_J'(\,\cdot\,, \omega)}{L_\infty(J)} 
	\exp\bigl( \norm{g(\,\cdot\,, \omega)}{L_\infty(I)} \bigr). 
	\] 
	We have 
	$\bbE\bigl[ 
	\max\nolimits_{J\in\cP}
	\norm{g|_J'(\,\cdot\,, \omega)}{L_\infty(J)}^q \bigr] < \infty$ 
	for all $q\in(0,\infty)$, 
	since the distribution of $g$ 
	is Gaussian, with values in $W^1_\infty(I;\cP)$. 
	For every $r\in[1,\infty)$, 
	${\overline{a}'\in L_r(\Omega;\bbR)}$ 
	can then be derived along the lines 
	of the proof of \cite[Proposition~2.3]{Charrier2012},  
	using Fernique's theorem 
	which shows that also 
	$\bbE\bigl[ 
	\exp\bigl( q \, \norm{g}{L_\infty(I)} \bigr) 
	\bigr] < \infty$ holds for all ${q\in(0,\infty)}$.
\end{proof}

We now consider the model problem introduced 
in Subsection~\ref{subsubsec:rc:det-model} 
with a log-Gaussian coefficient~$a$  
as in \eqref{eq:a-log-gauss}. 
That is, given a deterministic source 
$f\in L_p(I)$ for some $p\in(1,\infty)$, 
we seek 
$u\from I\times\Omega\to\bbR$ 
such that, for almost all $\omega\in\Omega$, 
\begin{equation}\label{eq:rc:weak-form-stoch} 
u(\,\cdot\,,\omega)\in \Wo^1_{p,\{0\}}(I): 
\;\; 
B_{a(\,\cdot\,,\omega)}(u(\,\cdot\,,\omega),v)
= 
\duality{ f,v }  
\quad 
\forall v\in  \Wo^1_{p'\!,\{0\}}(I), 
\end{equation}
where the bilinear form is as in \eqref{eq:rc:weak-form}. 
The following proposition addresses 
well-posedness of \eqref{eq:rc:weak-form-stoch} 
and regularity of its solution  
in $L_r(\Omega)$-sense.  

\begin{proposition}\label{prop:rc:stab-u} 
	The variational 
	problem~\eqref{eq:rc:weak-form-stoch} 
	admits a solution 
	that is $\bbP$-a.s.\ unique, 
	and belongs to $L_r\bigl(\Omega; \Wo^1_{p,\{0\}}(I) \bigr) 
	\cap L_r\bigl( \Omega;W^2_p(I;\cP) \bigr)$ 
	for all $r\in[1,\infty)$, 
	with 
	\begin{align} 
	\bigl( \bbE\bigl[ \seminorm{u}{W^1_p(I) }^r \bigr] 
	\bigr)^{\nicefrac{1}{r}} 
	&\leq 
	\bigl\| \underline{a}^{-1} \bigr\|_{L_r(\Omega; \bbR)} 
	\norm{ f }{W^{-1}_{p,\{0\}}(I) }, 
	\label{eq:rc:stab-u-stoch}
	\\ 
	\Bigl( 
	\bbE\Bigl[ \bigl(\max_{J\in\cP}\norm{u|_J''}{L_p(J) } \bigr)^r 
	\Bigr] 
	\Bigr)^{\nicefrac{1}{r}}  
	&\leq  
	C_{a,p,r}^{\sf\, reg} 
	\norm{f}{L_p(I)}  ,
	\label{eq:rc:SolReg-stoch}
	\end{align} 
	where 
	$C_{a,p,r}^{\sf\, reg}  := 
	\norm{ \underline{a}^{-1} }{L_r(\Omega; \bbR)} 
	+ 
	\norm{ \underline{a}^{-1} }{L_{4r}(\Omega; \bbR)}^2
	\norm{ \overline{a}' }{L_{2r}(\Omega; \bbR)} 
	C_{L_p\to W^{-1}_{p,\{0\}}}\in(0,\infty)$.
\end{proposition}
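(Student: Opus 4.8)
The plan is to reduce the statement to the $\omega$-wise deterministic theory of Subsection~\ref{subsubsec:rc:det-model}, combined with the integrability of the random variables $\underline{a}^{-1}$, $\overline{a}$ and $\overline{a}'$ furnished by Lemma~\ref{lem:integrability-a-rvs}. First I would fix a $\bbP$-null set outside of which $g(\,\cdot\,,\omega)\in W^1_\infty(I;\cP)$ and $0<\underline{a}(\omega)\leq\overline{a}(\omega)<\infty$, and work only with such $\omega$. For each of them the bilinear form $B_{a(\,\cdot\,,\omega)}$ is continuous on $\Wo^1_{p,\{0\}}(I)\times\Wo^1_{p'\!,\{0\}}(I)$ with constant $\overline{a}(\omega)$ by~\eqref{eq:Bcont}, and satisfies the inf-sup condition~\eqref{eq:infsup} (and, by the same argument, its symmetric counterpart) with constant $\underline{a}(\omega)$, so \eqref{eq:rc:weak-form-stoch} has a unique solution $u(\,\cdot\,,\omega)\in\Wo^1_{p,\{0\}}(I)$. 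The $\omega$-wise bound~\eqref{eq:rc:stab-u} then gives $\seminorm{ u(\,\cdot\,,\omega) }{W^1_p(I)}\leq\underline{a}(\omega)^{-1}\norm{ f }{W^{-1}_{p,\{0\}}(I)}$, and~\eqref{eq:rc:SolReg} gives $u(\,\cdot\,,\omega)\in W^2_p(I;\cP)$ with $\max_{J\in\cP}\norm{ u|_J''(\,\cdot\,,\omega) }{L_p(J)}\leq C^{\sf\, reg}_{a(\,\cdot\,,\omega),p}\,\norm{ f }{L_p(I)}$, where $C^{\sf\, reg}_{a(\,\cdot\,,\omega),p}=\underline{a}(\omega)^{-1}+\underline{a}(\omega)^{-2}\,\overline{a}'(\omega)\,C_{L_p\to W^{-1}_{p,\{0\}}}$; $\bbP$-a.s.\ uniqueness is immediate from the $\omega$-wise uniqueness.

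The main obstacle is to upgrade this pointwise-in-$\omega$ construction to the assertion that $\omega\mapsto u(\,\cdot\,,\omega)$ is a \emph{strongly measurable} $\Wo^1_{p,\{0\}}(I)$-valued (and, a posteriori, $W^2_p(I;\cP)$-valued) random variable, since the non-separability of $W^1_\infty(I;\cP)$ rules out a one-line argument. I would proceed by finite element approximation: fix a nested family $(\cT_{h_n})_{n\in\bbN}$ of quasi-uniform grids on $\overline{I}$, each compatible with $\cP$, with $h_n\to0$, and let $u_{h_n}(\,\cdot\,,\omega)\in S^1_{0,\{0\}}(I;\cT_{h_n})$ denote the Galerkin solution of~\eqref{eq:rc:FEM} with coefficient $a(\,\cdot\,,\omega)$. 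For each fixed $n$, $u_{h_n}(\,\cdot\,,\omega)$ solves a finite linear system whose stiffness matrix has entries $\int_J a(\,\cdot\,,\omega)\,\phi_i'\phi_j'\,\rd x$ depending continuously on $a(\,\cdot\,,\omega)\in L_\infty(I)$, hence measurably on $\omega$ because $a=\exp(g)$ is a $W^1_\infty(I;\cP)$-valued random variable, and which is invertible by the discrete inf-sup condition~\eqref{eq:dinfsup}; since matrix inversion is continuous on the invertible matrices, $\omega\mapsto u_{h_n}(\,\cdot\,,\omega)$ is measurable. By quasi-optimality~\eqref{eq:QuasiOpt}, the interpolation estimate behind~\eqref{eq:rc:FEMErr} and the regularity~\eqref{eq:rc:SolReg}, one has $u_{h_n}(\,\cdot\,,\omega)\to u(\,\cdot\,,\omega)$ in $\Wo^1_{p,\{0\}}(I)$ for $\bbP$-a.e.\ $\omega$, so $u$ is a $\bbP$-a.s.\ pointwise limit of strongly measurable maps into the separable space $\Wo^1_{p,\{0\}}(I)$ and is therefore strongly measurable. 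For the piecewise second derivatives I would use the $\bbP$-a.s.\ identity $u|_J''(\,\cdot\,,\omega)=-a(\,\cdot\,,\omega)^{-1}\bigl[f+a|_J'(\,\cdot\,,\omega)\,u'(\,\cdot\,,\omega)\bigr]$ in $L_p(J)$, whose right-hand side depends measurably on $\omega$ by the measurability of $u$ together with that of $a^{-1}$ and $a|_J'$, giving strong measurability of $\omega\mapsto u(\,\cdot\,,\omega)$ in $W^2_p(I;\cP)$.

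Finally I would derive the quantitative bounds by taking $L_r(\Omega)$-norms in the two $\omega$-wise estimates above. Raising $\seminorm{ u(\,\cdot\,,\omega) }{W^1_p(I)}\leq\underline{a}(\omega)^{-1}\norm{ f }{W^{-1}_{p,\{0\}}(I)}$ to the power $r$ and integrating yields \eqref{eq:rc:stab-u-stoch}, finite since $\underline{a}^{-1}\in L_r(\Omega;\bbR)$ by Lemma~\ref{lem:integrability-a-rvs}. For \eqref{eq:rc:SolReg-stoch} I would estimate $\| C^{\sf\, reg}_{a(\,\cdot\,,\omega),p} \|_{L_r(\Omega;\bbR)}$ by the triangle inequality on $L_r(\Omega;\bbR)$, the identity $\| \underline{a}^{-2} \|_{L_{2r}(\Omega;\bbR)}=\| \underline{a}^{-1} \|_{L_{4r}(\Omega;\bbR)}^2$, and Hölder's inequality with $\tfrac{1}{2r}+\tfrac{1}{2r}=\tfrac{1}{r}$ applied to the product $\underline{a}^{-2}\,\overline{a}'$, arriving at $\| C^{\sf\, reg}_{a(\,\cdot\,,\omega),p} \|_{L_r(\Omega;\bbR)}\leq\| \underline{a}^{-1} \|_{L_r(\Omega;\bbR)}+\| \underline{a}^{-1} \|_{L_{4r}(\Omega;\bbR)}^2\| \overline{a}' \|_{L_{2r}(\Omega;\bbR)}\,C_{L_p\to W^{-1}_{p,\{0\}}}=C^{\sf\, reg}_{a,p,r}$, which is finite by Lemma~\ref{lem:integrability-a-rvs}; multiplying by $\norm{ f }{L_p(I)}$ gives \eqref{eq:rc:SolReg-stoch} and completes the proof. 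All steps after the measurability argument are routine; the measurability step is where care is needed.
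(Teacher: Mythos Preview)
Your proposal is correct and follows essentially the same route as the paper: reduce to the $\omega$-wise deterministic theory of Subsection~\ref{subsubsec:rc:det-model}, then take $L_r(\Omega;\bbR)$-norms and use the Minkowski and H\"older inequalities together with Lemma~\ref{lem:integrability-a-rvs}. The quantitative part of your argument (the derivation of \eqref{eq:rc:stab-u-stoch} and \eqref{eq:rc:SolReg-stoch} with the stated constant) matches the paper's proof line by line.

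The one substantive difference is that the paper's proof does \emph{not} address strong measurability of $\omega\mapsto u(\,\cdot\,,\omega)$ at all; it simply records the pathwise estimates and passes directly to $L_r$-norms. You correctly flag this as the only non-routine step and supply a rigorous argument via convergence of Galerkin approximations. That argument is sound. It is, however, slightly heavier than necessary: by the paper's own convention (Subsection~\ref{subsec:moments}), the assumption that $g$ is a $W^1_\infty(I;\cP)$-valued random variable already includes $\bbP$-a.s.\ separable valuedness, so $a=\exp(g)$ is $\bbP$-a.s.\ valued in a separable subset of $W^1_\infty(I;\cP)$, and strong measurability of $u$ can be obtained more directly from continuity of the data-to-solution map $a\mapsto u$ (a standard perturbation of the inf-sup constant) composed with the Bochner measurable coefficient. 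Either route closes the gap the paper leaves implicit.
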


\begin{proof} 
	Since $f\in L_p(I)$ is deterministic and 
	$a\in W^1_\infty(I;\cP)$ holds $\bbP$-a.s., 
	existence of a solution to \eqref{eq:rc:weak-form-stoch}, 
	which is $\bbP$-a.s.\ unique, 
	follows by arguing 
	via the well-posedness in the deterministic case 
	(see Subsection~\ref{subsubsec:rc:det-model})
	for almost all $\omega\in\Omega$. 
	Furthermore, for every $r\in[1,\infty)$, 
	the deterministic 
	stability bound \eqref{eq:rc:stab-u} combined 
	with the integrability 
	$\underline{a}^{-1}\in L_r(\Omega;\bbR)$, 
	see Lemma~\ref{lem:integrability-a-rvs}, 
	imply \eqref{eq:rc:stab-u-stoch}. 
	
	We now show the regularity estimate 
	\eqref{eq:rc:SolReg-stoch}. 
	Recalling again the random variables 
	$\underline{a},\overline{a},\overline{a}'$ 
	from Lemma~\ref{lem:integrability-a-rvs}, 
	by \eqref{eq:rc:SolReg} we find that, 
	for almost all $\omega\in\Omega$: 
	\[
	\max_{J\in\cP} 
	\norm{ u|_J''(\,\cdot\,, \omega) }{L_p(J)} 
	\leq 
	\Bigl[ \underline{a}(\omega)^{-1}
	+ \underline{a}(\omega)^{-2} \, \overline{a}'(\omega) 
	\, C_{L_p\to W^{-1}_{p,\{0\}}}
	\Bigr] 
	\norm{f}{L_p(I)} . 
	\]
	Taking the $L_r(\Omega;\bbR)$-norm, 
	and using the Minkowski and H\"older inequalities 
	completes the proof of \eqref{eq:rc:SolReg-stoch}, 
	and the constant $C_{a,p,r}^{\sf\, reg}>0$ 
	is finite by Lemma~\ref{lem:integrability-a-rvs}. 
\end{proof} 

We are now ready to formulate 
the ``$\alpha\beta\gamma$'' theorem 
for multilevel approximations of 
moments of the random solution 
to \eqref{eq:rc:weak-form-stoch}. 
To this end, let $\{ u_\ell \}_{\ell \in\bbN}$ 
be a sequence 
of Galerkin approximations 
$u_\ell := u_{h_\ell}$, see \eqref{eq:rc:FEM}, 
on partitions $\cT_\ell$ of $\overline{I}$ 
corresponding to mesh sizes $h_\ell \eqsim 2^{-\ell}$. 
For example, 
$\cT_\ell$ may be obtained 
by $\ell$-fold bisection 
of the initial partition $\cT_1 := \cP$. 
Note that then 
$N_\ell = 
\dim\bigl( S^1_{0,\{0\}}(I;\cT_\ell) \bigr) 
\eqsim h_\ell^{-1} \eqsim 2^\ell$.

\begin{corollary}\label{cor:MLMCCoeff}
	Let $p\in(1,\infty)$, $q\in[\min\{p,2\},\infty)$, 
	and $k\in\bbN$. 
	For $f\in L_p(I)$, 
	let $u\in L_{kq}\bigl( \Omega;\Wo^1_{p,\{0\}}(I) \bigr)$ 
	be the solution to \eqref{eq:rc:weak-form-stoch}. 
	Assume further that 
	the Galerkin approximations 
	$(u_\ell)_{\ell\in\bbN}$  
	to \eqref{eq:rc:weak-form-stoch} 
	are constructed as described above. 
	Then, for the Banach space  
	$(E, \norm{\,\cdot\,}{E} ) 
	:= 
	\bigl(\Wo^1_{p,\{0\}}(I), \seminorm{\,\cdot\,}{W^1_p(I)}\bigr)$ 
	and with 
	$N_\ell \eqsim 2^\ell$\!, 
	all conditions of 
	Theorem~\ref{thm:alpha-beta-gamma} are fulfilled, 
	\begin{align*} 
	\text{\eqref{eq:ass:alpha}}&
	\quad 
	\forall\ell\in\bbN:& 
	\bigl\| \Mk[u] - \Mk[u_\ell] \bigr\|_{\eps_s} 
	&\lesssim_{(k,b,p,\cP,a,f)} N_\ell^{-1} ,
	&\text{i.e., } 
	\alpha &= 1, 
	\end{align*} 
	\begin{align*} 
	\text{\eqref{eq:ass:beta}}&
	\quad 
	\forall\ell\in\bbN:& 
	\norm{ u_\ell - u_{\ell-1} }{L_{kq}(\Omega;\Wo^1_{p,\{0\}}(I))} 
	&\lesssim_{(k,b,p,q,\cP,a,f)} N_\ell^{-1} ,
	&\text{i.e., } 
	\beta &= 1, 
	\\
	\text{\eqref{eq:ass:gamma}}&
	\quad 
	\forall\ell\in\bbN:& 
	\cC_\ell 
	&\lesssim N_\ell^k ,
	&\text{i.e., } 
	\gamma &= k, 
	\\
	\text{\eqref{eq:ass:stab}}&
	\quad 
	\forall\ell\in\bbN:& 
	\max\bigl\{ 
	\norm{u}{L_k(\Omega;\Wo^1_{p,\{0\}}(I))}, 
	\| u_\ell &\|_{L_{kq}(\Omega;\Wo^1_{p,\{0\}}(I))} 
	\bigr\}
	\leq 
	\Cstab , 
	\hspace*{-1cm} 
	& 
	\end{align*} 
	for some constant $\Cstab\in(0,\infty)$ depending 
	only on 
	$\norm{\underline{a}^{-1}}{L_{kq}(\Omega;\bbR)}$ 
	and 
	$\norm{f}{W^{-1}_{p,\{0\}}(I)}$. 
	
	The $L_q$-accuracy 
	$\mathrm{err}_{q,\eps_s}^{k,\sf ML}(u) < \epsilon\in(0,\nicefrac{1}{2}]$ 
	of the multilevel Monte Carlo estimator for $\Mk[u]$ 
	can be achieved at computational 
	costs of the order \eqref{eq:rf:cost} with $d=1$. 
\end{corollary}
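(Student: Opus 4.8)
The plan is to reproduce, \emph{mutatis mutandis}, the argument from the proof of Corollary~\ref{cor:MLMCForc}: verify the four hypotheses \eqref{eq:ass:alpha}, \eqref{eq:ass:beta}, \eqref{eq:ass:gamma} and \eqref{eq:ass:stab} of the ``$\alpha\beta\gamma$ theorem'', Theorem~\ref{thm:alpha-beta-gamma}, for the Banach space $(E,\norm{\,\cdot\,}{E}) = (\Wo^1_{p,\{0\}}(I), \seminorm{\,\cdot\,}{W^1_p(I)})$ with the parameters $\alpha=\beta=d^{-1}=1$, $\gamma=k$ and $A=2^d=2$, and then read off the cost bound from \eqref{eq:cC}. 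The feature distinguishing this from the random-forcing case is that the $\omega$-dependent constants entering the deterministic a~priori and discretization error bounds of Subsection~\ref{subsubsec:rc:det-model} now involve negative powers of $\underline a$ and the quantity $\overline a'$, so the required $L_{kq}(\Omega)$-moment bounds must be extracted from the integrability results of Lemma~\ref{lem:integrability-a-rvs} (equivalently, from Proposition~\ref{prop:rc:stab-u}). Since $\cT_1:=\cP$ and the $\cT_\ell$ are obtained by bisection, every grid $\cT_\ell$ is compatible with $\cP$, so all deterministic estimates of Subsection~\ref{subsubsec:rc:det-model} apply $\bbP$-a.s.\ along the refinement hierarchy.

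For \eqref{eq:ass:stab} I would apply the deterministic stability bounds \eqref{eq:rc:stab-u} (for $u$) and \eqref{eq:rc:stab-uh} (for $u_\ell$) pointwise in $\omega$ and take $L_{kq}(\Omega;\bbR)$-norms; combined with $\underline a^{-1}\in L_r(\Omega;\bbR)$ for all finite $r$ from Lemma~\ref{lem:integrability-a-rvs}, this gives $\max\{\norm{u}{L_k(\Omega;E)},\norm{u_\ell}{L_{kq}(\Omega;E)}\}\le \norm{\underline a^{-1}}{L_{kq}(\Omega;\bbR)}\,\norm{f}{W^{-1}_{p,\{0\}}(I)}=:\Cstab$ uniformly in $\ell$, which is precisely \eqref{eq:rc:stab-u-stoch}. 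For \eqref{eq:ass:beta} I would use the deterministic finite element error bound \eqref{eq:rc:FEMErr}, valid $\bbP$-a.s.\ with $\omega$-dependent factor $(1+\overline a(\omega)/\underline a(\omega))\,C^{\sf\,reg}_{a,p}(\omega)$ where $C^{\sf\,reg}_{a,p}(\omega)=\underline a(\omega)^{-1}+\underline a(\omega)^{-2}\,\overline a'(\omega)\,C_{L_p\to W^{-1}_{p,\{0\}}}$; taking the $L_{kq}(\Omega)$-norm and controlling the moment of this product by an iterated H\"older inequality (finite because $\underline a^{-1},\overline a,\overline a'\in\bigcap_{r<\infty}L_r(\Omega;\bbR)$ by Lemma~\ref{lem:integrability-a-rvs}) yields $\norm{u-u_\ell}{L_{kq}(\Omega;E)}\lesssim_{(k,b,p,q,\cP,a,f)} h_\ell\,\norm{f}{L_p(I)}\eqsim N_\ell^{-1}$, since $N_\ell\eqsim h_\ell^{-1}$. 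The triangle inequality then gives \eqref{eq:ass:beta} with $\beta=1$, and \eqref{eq:ass:alpha} with $\alpha=1$ follows from Remark~\ref{rem:strong-alpha} (using $\norm{u-u_\ell}{L_k(\Omega;E)}\le\norm{u-u_\ell}{L_{kq}(\Omega;E)}$ as $q\ge 1$).

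For \eqref{eq:ass:gamma}, a linear-complexity (Thomas/tridiagonal) solver for the Galerkin system \eqref{eq:rc:FEM} produces one sample of $u_\ell$ at cost $\lesssim N_\ell$, while forming the $k$-fold Kronecker product $\xk u_\ell$ of a length-$N_\ell$ vector costs $\lesssim N_\ell^{k}$, so $\cC_\ell\lesssim N_\ell^{k}$, i.e.\ $\gamma=k$. Finally, $(\Wo^1_{p,\{0\}}(I),\seminorm{\,\cdot\,}{W^1_p(I)})$ embeds isometrically, via $v\mapsto v'$, as a closed subspace of $L_p(I)$ and hence has Rademacher type $\min\{p,2\}$; thus Theorem~\ref{thm:alpha-beta-gamma} applies with conjugate exponent $\bar p'$ of $\min\{p,2\}$, and substituting $\alpha=\beta=1$, $\gamma=k$, $A=2$ into \eqref{eq:cC} reproduces \eqref{eq:rf:cost} with $d=1$, the three cases corresponding to whether $\bar p'>k$, $\bar p'=k$, or $\bar p'<k$.

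The main obstacle is bookkeeping rather than analysis: the deterministic bounds \eqref{eq:rc:stab-u}, \eqref{eq:rc:stab-uh} and \eqref{eq:rc:FEMErr} carry $\omega$-dependent constants assembled from products and powers of $\underline a^{-1}$, $\overline a$ and $\overline a'$, and turning these into the $L_{kq}(\Omega)$-estimates \eqref{eq:ass:alpha}, \eqref{eq:ass:beta} and \eqref{eq:ass:stab} demands a careful application of H\"older's inequality with sufficient integrability — exactly what Lemma~\ref{lem:integrability-a-rvs}, proved via Fernique's theorem as in \cite{Charrier2012}, supplies. Beyond this, no difficulty arises that is not already present in the proof of Corollary~\ref{cor:MLMCForc}.
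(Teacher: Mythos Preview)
Your proposal is correct and follows essentially the same approach as the paper: both verify \eqref{eq:ass:stab} from the pointwise stability bounds \eqref{eq:rc:stab-u}, \eqref{eq:rc:stab-uh} together with $\underline{a}^{-1}\in L_{kq}(\Omega;\bbR)$, obtain strong convergence in $L_{kq}(\Omega;E)$ from the deterministic error estimate \eqref{eq:rc:FEMErr} via H\"older's inequality and the integrability of $\underline{a}^{-1},\overline{a},\overline{a}'$ (the paper routes this through Proposition~\ref{prop:rc:stab-u}, you go directly to the constituent pieces), then deduce \eqref{eq:ass:alpha}, \eqref{eq:ass:beta} and \eqref{eq:ass:gamma} exactly as you describe. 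Your explicit remark that $v\mapsto v'$ embeds $(\Wo^1_{p,\{0\}}(I),\seminorm{\,\cdot\,}{W^1_p(I)})$ isometrically into $L_p(I)$, hence the space has type $\min\{p,2\}$, is a detail the paper leaves implicit.
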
 

\begin{proof} 
	We first note that \eqref{eq:rc:stab-u-stoch} 
	combined with the deterministic 
	discrete stability estimate \eqref{eq:rc:stab-uh} 
	and the fact that 
	$\underline{a}^{-1}\in L_{kq}(\Omega;\bbR)$ imply 
	\eqref{eq:ass:stab}: 
	\[
	\forall\ell\in\bbN: 
	\;\;\;  
	\max\Bigl\{ 
	\bigl( \bbE\bigl[ \seminorm{u}{W^1_p(I)}^k \bigr] 
	\bigr)^{\nicefrac{1}{k}}\! , \,
	\bigl( \bbE\bigl[ \seminorm{u_\ell}{W^1_p(I)}^{kq} \bigr] 
	\bigr)^{\nicefrac{1}{kq}} 
	\Bigr\}
	\leq 
	\norm{ \underline{a}^{-1} }{L_{kq}(\Omega;\bbR)} 
	\norm{ f }{W^{-1}_{p,\{0\}}(I)}. 
	\]
	
	Next, we observe strong convergence of the finite 
	element approximations $( u_\ell)_{\ell\in\bbN}$: 
	For all $r\in[1,\infty)$, we obtain 
	by exploiting the deterministic error estimate 
	\eqref{eq:rc:FEMErr} for almost all $\omega\in\Omega$ that 
	\begin{align*} 
	\bigl( \bbE\bigl[ | u - u_\ell |_{W^1_p(I)}^r \bigr]
	\bigr)^{\nicefrac{1}{r}}  
	&\lesssim_{(b,p,\cP)} 
	h_\ell \,   
	\Bigl\| 
	\bigl( 1 + \tfrac{\overline{a}}{\underline{a}} \bigr) 
	\max_{J\in\cP} \norm{ u|_J'' }{L_p(J)} 
	\Bigr\|_{L_r(\Omega;\bbR)} 
	\\
	&\leq 
	h_\ell \, 
	\bigl( 1 + \norm{ \overline{a} }{L_{4r}(\Omega;\bbR)} 
	\norm{ \underline{a}^{-1} }{L_{4r}(\Omega;\bbR)} \bigr) 
	\Bigl\| \max_{J\in\cP} \norm{ u|_J'' }{L_p(J)} \Bigr\|_{L_{2r}(\Omega;\bbR)} 
	\\ 
	&\leq
	h_\ell \, 
	\bigl( 1 + \norm{ \overline{a} }{L_{4r}(\Omega;\bbR)} 
	\norm{ \underline{a}^{-1} }{L_{4r}(\Omega;\bbR)} \bigr) 
	\, C_{a,p,2r}^{\sf\, reg} \, \norm{f}{L_p(I)}, 
	\end{align*} 
	where we also used \eqref{eq:rc:SolReg-stoch} 
	of Proposition~\ref{prop:rc:stab-u}. 
	Thus, the conditions 
	\eqref{eq:ass:alpha} and \eqref{eq:ass:beta} 
	are satisfied with 
	$\alpha=\beta=1$ 
	by Remark~\ref{rem:strong-alpha} 
	and the triangle inquality, respectively.  
	
	Finally, the 
	complexity of computing the Galerkin approximation  
	$u_\ell$ in \eqref{eq:rc:FEM} per one realization  
	of the Gaussian random field 
	$g(\,\cdot\,, \omega)=\log(a(\,\cdot\,, \omega))$
	(assumed given) 
	at discretization level $\ell\in\bbN$ 
	scales linearly with 
	$N_\ell = \dim\bigl(S^1_{0,\{0\}}(I;\cT_\ell) \bigr)$: 
	Observe that the linear system 
	of $N_\ell$ equations for the $N_\ell$ unknowns 
	corresponding to \eqref{eq:rc:FEM} 
	for each sample of $a(\,\cdot\,,\omega)$ 
	is tridiagonal and symmetric, positive definite
	when the standard Courant (``hat functions'')  
	basis is adopted in \eqref{eq:rc:FEM}. 
	The formation of the $k$th order 
	(full, algebraic) tensor product $\otimes^k u_\ell$ 
	then entails the cost bound 
	$\cC_\ell \lesssim \cC_\gamma N_\ell^k$
	for one sample of the random 
	variable $\xi_\ell = \xk u_\ell - \xk u_{\ell-1}$. 
\end{proof} 

\subsection{Approximation of stochastic processes in H\"older spaces}
\label{subsec:appl-hoelder}

In this subsection we let $T \in (0,\infty)$ 
be a time horizon and consider   
approximating higher-order moments of 
vector-valued stochastic processes 
${X\from [0,T]\times \Omega \to E}$ 
by means of multilevel Monte Carlo methods. 
In particular, we derive explicit convergence rates  
for the error of the corresponding approximation 
in injective tensor norms of H\"older spaces 
$C^\delta([0,T];E)$.  
We furthermore detail the implications 
of this general result 
for the Euler--Maruyama 
method for stochastic ordinary differential equations, 
and give an overview of further possible applications 
including approximations of 
stochastic partial differential equations. 

In order to properly define the relevant H\"older spaces, 
we introduce for a Banach space $(F,\norm{\,\cdot\,}{F})$ 
and $\delta \in (0,1)$ the mappings 
\[
\seminorm{\,\cdot\,}{C^\delta([0,T];F)}, \,
\norm{\,\cdot\,}{C^\delta([0,T];F)}
\from C([0,T];F) \to [0,\infty] 
\]
on the Banach space 
\[
\bigl( C([0,T];F), \norm{\,\cdot\,}{C([0,T];F)} \bigr), 
\qquad 
\norm{f}{C([0,T];F)} 
:= 
\sup_{t \in [0,T]} \norm{f(t)}{F}, 
\]
of continuous 
functions from $[0,T]$ to 
$(F,\norm{\,\cdot\,}{F})$
via 
\begin{align*}
\seminorm{f}{C^\delta([0,T];F)}
&:=  
\sup_{\substack{s,t\in [0,T] \\ s\neq t}} \frac{\norm{f(s)-f(t)}{F}}{|s-t|^{\delta}}, 
\\
\norm{f}{C^\delta([0,T];F)}
&:=  
\sup_{t \in [0,T]} \norm{f(t)}{F} 
+ 
\seminorm{f}{C^\delta([0,T];F)} . 
\end{align*} 
We note that the norm 
$\norm{\,\cdot\,}{C^\delta([0,T];F)}$ 
renders the subspace 
\[ 
C^{\delta}([0,T];F) 
= 
\left\{ f \in C([0,T];F) : 
\norm{f}{C^{\delta}([0,T];F)} < \infty \right\}
\subset 
C([0,T];F) 
\] 
of $F$-valued, 
$\delta$-H\"older continuous 
functions a Banach space. 
For brevity, we also use the 
notation $C^0([0,T];F) := C([0,T];F)$ 
to include the case $\delta=0$. 

We now consider the 
setting of \cite[Section~5]{CoxEtAl2021}, 
that is, we are given  
a stochastic process 
$X\from[0,T]\times \Omega \to E$
with continuous sample paths 
satisfying the following regularity assumption: 
There exists a constant $\bar{\beta}\in(0,1]$ 
such that  
\begin{equation}\label{eq:ass:X-beta-regularity} 
\forall \beta\in [0, \bar{\beta}) 
\quad 
\forall 
q\in[1,\infty): 
\quad 
X\in C^{\beta}([0,T];L_q(\Omega;E)). 
\end{equation}
Recall from Subsection~\ref{subsec:notation} 
that $(E,\norm{\,\cdot\,}{E})$  
is assumed to be a real Banach space. 
Additionally, we let 
$(Y^N)_{N\in\bbN}$ 
be a sequence 
of approximations 
$Y^N\!\from[0,T]\times\Omega\to E$ 
to the process~$X$ 
with continuous sample paths, 
which is known to converge at 
the nodes of the temporal partitions    
\begin{align*}
\Theta^N &:= \bigl\{ t_0^N , t_1^N, \ldots, t_{\#(\Theta^N)-1}^N \bigr\}, 
\quad  
\#(\Theta^N) < \infty, 
\qquad 
N\in\bbN, 
\\
0 &\textcolor{white}{:}=: t_0^N 
<  t_1^N 
< \ldots 
< t_{\#(\Theta^N)-2}^N  
< t_{\#(\Theta^N)-1}^N 
:= T , 
\end{align*}
in the strong sense 
essentially at the rate $\bar{\beta}\in(0,1]$, 
i.e., 
\begin{equation}\label{eq:ass:conv-at-nodes}
\forall \beta\in [0,\bar{\beta}) 
\quad 
\forall q \in [1,\infty): 
\quad 
\sup\limits_{t\in\Theta^N} 
\bigl\| X(t) - Y^N\!(t) \bigr\|_{L_q(\Omega;E)} 
\lesssim_{(\bar{\beta},q,T)} 
\bigl| \triangle t^N_{\max} \bigr|^{\beta}\! , 
\end{equation} 
where 
$\triangle t^N_{\max} 
:= 
\max_{j\in\{0,\ldots,\#(\Theta^N)-2\}} 
| t_{j+1}^N - t_j^N |$. 
These partitions 
do not necessarily 
have to be equidistant or nested. 
We only require the following 
quasi-uniformity: 
\begin{equation}\label{eq:ass:grid} 
\sup_{N\in\bbN} \, \frac{ \triangle t^N_{\max}  }{ 
	\triangle t^N_{\min}  } < \infty, 
\qquad 
\text{and} 
\qquad 
\lim_{N\to \infty} \triangle t^N_{\max} = 0, 
\end{equation} 
where 
$\triangle t^N_{\min}$ 
is defined as 
$\triangle t^N_{\max}$ 
with the maximum replaced by the minimum. 
Furthermore, we assume that, 
for every $N\in\bbN$, 
the approximation 
$Y^N$ is linearly interpolated 
on the partition $\Theta^N$\!, i.e., 
for all $j \in \{0,\ldots,\#(\Theta^N)-2\}$, 
\begin{equation}\label{eq:def:linear-interpol} 
Y^N\!(s) 
= 
\frac{ \bigl(t_{j+1}^N - s \bigr) \, Y^N\! \bigl( t_j^N \bigr) }{
	t_{j+1}^N - t_j^N } 
+ 
\frac{ \bigl(s - t_j^N \bigr) \, Y^N\! \bigl(t_{j+1}^N \bigr) }{ 
	t_{j+1}^N - t_j^N }, 
\qquad 
s \in \bigl[ t_j^N, t_{j+1}^N \bigr]. 
\end{equation} 

This general setting facilitates 
combining the abstract 
multilevel Monte Carlo results 
of Subsection~\ref{subsec:mlmc} 
with~\cite[Corollary~2.11]{CoxEtAl2021} 
and, thus, quantifying  
the convergence of the MLMC estimator 
for $\Mk[X]$ based on  
approximations $Y^{N_1}, \ldots, Y^{N_L}$ 
in the norm on 
$L_q(\Omega; \xkseps C^\delta([0,T]; E))$  
for $q\in[p,\infty)$ and $\delta\in[0,\bar{\beta})$, 
see Theorem~\ref{thm:mlmc-hoelder} below. 
To this end, the following proposition  
which readily follows  
from~\cite[Corollary~2.11]{CoxEtAl2021}  
will be crucial. 

\begin{proposition}\label{prop:hoelder-conv} 
	Let $(\Theta^N)_{N\in\bbN}\subset [0,T]$ 
	be a sequence of partitions 
	fulfilling~\eqref{eq:ass:grid}. 
	Assume that  
	${X, Y^N \!\from [0,T]\times\Omega \to E}$, 
	$N\in\bbN$, 
	are stochastic processes 
	with continuous sample paths, 
	such that, 
	for all $N\in\bbN$ and $t\in[0,T]$, 
	the random variables 
	$X(t), Y^N\!(t)\from\Omega\to E$  
	are Bochner measurable and 
	there exists $\bar{\beta}\in(0,1]$ 
	such that \eqref{eq:ass:X-beta-regularity} 
	and \eqref{eq:ass:conv-at-nodes} hold. 
	In addition, for every $N\in\bbN$, 
	let $Y^N$ be linearly interpolated on 
	the partition~$\Theta^N$\!, 
	see~\eqref{eq:def:linear-interpol}. 
	
	Then, we have for every $q\in[1,\infty)$, 
	$\delta\in[0,\bar{\beta})$, 
	and all $\epsilon\in(0,\infty)$, 
	\begin{gather*} 
	\norm{X}{L_q(\Omega; C^\delta([0,T];E))} 
	+
	\sup_{N\in\bbN} 
	\bigl\| Y^N \bigr\|_{L_q(\Omega; C^\delta([0,T];E))} 
	<\infty, 
	\\  
	\bigl\| X - Y^N \bigr\|_{L_q(\Omega; C^\delta([0,T];E))} 
	\lesssim_{(\bar{\beta},\delta,\epsilon,q,T)} 
	\bigl|\triangle t^N_{\max} \bigr|^{\bar{\beta} - \delta - \epsilon} 
	\!.  
	\end{gather*} 
\end{proposition}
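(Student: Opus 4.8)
The plan is to deduce Proposition~\ref{prop:hoelder-conv} directly from \cite[Corollary~2.11]{CoxEtAl2021} by verifying that the hypotheses of that corollary are met, and then transcribing its conclusions into the present notation. First I would recall the abstract setup of \cite[Corollary~2.11]{CoxEtAl2021}: it concerns a process and a family of approximations that are linearly interpolated on a sequence of partitions, assumes a H\"older-in-$L_q$ regularity bound for the limiting process together with a strong convergence rate \emph{at the nodes} of the partitions, and produces (i) uniform boundedness of the $L_q(\Omega;C^\delta)$-norms and (ii) a convergence rate in $L_q(\Omega;C^\delta)$ that loses an arbitrarily small $\epsilon$ compared to $\bar\beta-\delta$. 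The assumptions \eqref{eq:ass:X-beta-regularity}, \eqref{eq:ass:conv-at-nodes}, \eqref{eq:ass:grid} and \eqref{eq:def:linear-interpol} imposed here are, by design, exactly the hypotheses of that corollary, so the main task is bookkeeping: match the quasi-uniformity condition \eqref{eq:ass:grid} and the mesh-width quantity $\triangle t^N_{\max}$ to the corresponding objects in \cite{CoxEtAl2021}, and observe that Bochner measurability of $X(t)$ and $Y^N(t)$ together with sample-path continuity suffices for the required measurability of the $C^\delta([0,T];E)$-valued random variables.

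Concretely, I would first fix $q\in[1,\infty)$ and $\delta\in[0,\bar\beta)$, pick $\epsilon\in(0,\bar\beta-\delta)$ (the case $\epsilon\geq\bar\beta-\delta$ being vacuous since the right-hand side is then bounded below away from zero and the estimate follows trivially from the uniform bound), and choose an auxiliary exponent $\beta\in(\delta+\epsilon,\bar\beta)$. Then \eqref{eq:ass:X-beta-regularity} gives $X\in C^\beta([0,T];L_q(\Omega;E))$ and \eqref{eq:ass:conv-at-nodes} gives the nodal rate $\sup_{t\in\Theta^N}\|X(t)-Y^N(t)\|_{L_q(\Omega;E)}\lesssim_{(\bar\beta,q,T)}|\triangle t^N_{\max}|^{\beta}$. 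Feeding $\beta$, $\delta$, $q$ and the partitions $(\Theta^N)_{N\in\bbN}$ into \cite[Corollary~2.11]{CoxEtAl2021} yields both the finiteness of $\|X\|_{L_q(\Omega;C^\delta([0,T];E))}$ and $\sup_N\|Y^N\|_{L_q(\Omega;C^\delta([0,T];E))}$ and the bound $\|X-Y^N\|_{L_q(\Omega;C^\delta([0,T];E))}\lesssim_{(\bar\beta,\delta,\epsilon,q,T)}|\triangle t^N_{\max}|^{\beta-\delta}$. Since $\beta-\delta>\epsilon$ can be taken as close to $\bar\beta-\delta$ as we like, and since $|\triangle t^N_{\max}|\to 0$, we may absorb the gap and write the exponent as $\bar\beta-\delta-\epsilon$, which is the claimed estimate.

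For the uniform-in-$N$ boundedness I would note that \cite[Corollary~2.11]{CoxEtAl2021} actually provides an explicit bound on $\|Y^N\|_{L_q(\Omega;C^\delta)}$ in terms of $\|X\|_{L_q(\Omega;C^\beta)}$, the quasi-uniformity constant $\sup_N \triangle t^N_{\max}/\triangle t^N_{\min}$ from \eqref{eq:ass:grid}, $T$, and the implied constant in \eqref{eq:ass:conv-at-nodes}; taking the supremum over $N$ is then immediate because all these quantities are finite and $N$-independent. The case $\delta=0$, where $C^0([0,T];E)=C([0,T];E)$, is covered by the same argument (or, more elementarily, directly from \eqref{eq:ass:X-beta-regularity} and \eqref{eq:ass:conv-at-nodes} via the linear interpolation identity \eqref{eq:def:linear-interpol} and a triangle-inequality estimate on each subinterval).

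The main obstacle is not any deep estimate but rather confirming that the interface matches precisely: one must check that \cite[Corollary~2.11]{CoxEtAl2021} is stated for general Banach-space-valued processes (not just Hilbert-space-valued ones) and for non-nested, possibly non-equidistant partitions under exactly the quasi-uniformity \eqref{eq:ass:grid}, and that its measurability hypotheses are implied by ``$X(t),Y^N(t)$ Bochner measurable'' plus ``continuous sample paths'' — the latter guaranteeing that $\omega\mapsto X(\cdot,\omega)$ and $\omega\mapsto Y^N(\cdot,\omega)$ are $C^\delta([0,T];E)$-valued (hence separably valued, as continuous images of $[0,T]$) and weakly, therefore strongly, measurable. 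Once this identification is in place, the proof is a one-paragraph citation with the $\epsilon$-relabeling described above.
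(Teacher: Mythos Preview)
Your proposal is correct and matches the paper's approach: the paper does not give an explicit proof of this proposition but simply states that it ``readily follows from~\cite[Corollary~2.11]{CoxEtAl2021}'', which is precisely the citation-and-bookkeeping argument you outline. Your additional care with the $\epsilon$-relabeling and the measurability check is sound and fills in exactly the details the paper leaves implicit.
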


\begin{theorem}\label{thm:mlmc-hoelder} 
	Suppose that all assumptions 
	of Proposition~\ref{prop:hoelder-conv} 
	are fulfilled. 
	In addition, let
	$(E,\norm{\,\cdot\,}{E})$ be of 
	Rademacher type $p\in[1,2]$,  
	${k,L\in\bbN}$ 
	and ${\{N_\ell\}_{\ell\in\bbN}\subseteq\bbN}$ be a  
	strictly increasing sequence of integers. 
	Assume further that, 
	for all ${\ell\in\{1,\ldots,L\}}$, 
	$M_\ell\in\bbN$ and 
	$\xi_{\ell,1},\ldots,\xi_{\ell,M_\ell}$ 
	are independent 
	copies of the 
	random variable 
	\[
	\xk Y^{N_\ell} - \xk Y^{N_{\ell-1}} 
	\from  \Omega \to \xkseps C([0,T];E) , 
	\qquad 
	Y^{N_0} := 0 \in C([0,T];E). 
	\]
	Then, for every 
	${q\in[p,\infty)}$, 
	$\delta\in[0, \bar{\beta})$, 
	and all $\epsilon\in(0,\infty)$, 
	we have that 
	\begin{align*} 
	\mathrm{err}_{q, \eps_s}^{k,\sf ML} (X; \delta) 
	&:= 
	\biggl\| 
	\Mk[X] 
	-
	\sum_{\ell = 1}^{L} 
	\frac{1}{M_\ell} 
	\sum_{j=1}^{M_\ell}  
	\xi_{\ell,j}
	\biggr\|_{L_q(\Omega;\xkseps C^\delta([0,T];E))} 
	\\ 
	&\,\lesssim_{(\bar{\beta},\delta,\epsilon,k,p,q,T)} 
	\bigl| \triangle t^{N_L}_{\max} \bigr|^{\bar{\beta}-\delta-\epsilon} 
	+ 
	\sum_{\ell=1}^{L}   
	M_\ell^{-\left( 1-\frac{1}{p} \right)}  
	\bigl| \triangle t^{N_{\ell-1}}_{\max} \bigr|^{\bar{\beta}-\delta-\epsilon} \!. 
	\end{align*}
\end{theorem}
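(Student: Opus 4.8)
The plan is to carry out the proof of the abstract multilevel estimate Theorem~\ref{thm:MLMC-k} in the Banach space $C^\delta([0,T];E)$, using Proposition~\ref{prop:hoelder-conv} to supply the Bochner norms that appear. First, by the triangle inequality in $L_q(\Omega;\xkseps C^\delta([0,T];E))$ I would split
\[
\mathrm{err}_{q,\eps_s}^{k,\sf ML}(X;\delta)
\le
\bigl\| \Mk[X] - \Mk[Y^{N_L}] \bigr\|_{\eps_s}
+
\biggl\| \Mk[Y^{N_L}] - \sum_{\ell=1}^L \frac{1}{M_\ell}\sum_{j=1}^{M_\ell}\xi_{\ell,j} \biggr\|_{L_q(\Omega;\xkseps C^\delta([0,T];E))} .
\]
Proposition~\ref{prop:hoelder-conv} guarantees $X\in L_{q'}(\Omega;C^\delta([0,T];E))$ for every $q'\in[1,\infty)$, so $\Mk[X]$ is a well-defined element of $\xkseps C^\delta([0,T];E)$; the same proposition gives $\norm{X}{L_k(\Omega;C^\delta([0,T];E))},\ \sup_N\norm{Y^N}{L_k(\Omega;C^\delta([0,T];E))}<\infty$ and $\norm{X-Y^{N_L}}{L_k(\Omega;C^\delta([0,T];E))}\lesssim|\triangle t^{N_L}_{\max}|^{\bar{\beta}-\delta-\epsilon}$. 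Feeding these into Lemma~\ref{lem:Mk-diff} (applied with the Banach space $C^\delta([0,T];E)$) bounds the first summand by the first term in the assertion.

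For the second (statistical) summand the natural step would be to apply Theorem~\ref{thm:MLMC-k} with $\tenvec:=\Mk[Y^{N_L}]$ and the $C^\delta([0,T];E)$-valued random variables $X_\ell:=Y^{N_\ell}$, bounding the increment and iterate norms that then arise by Proposition~\ref{prop:hoelder-conv} (the increments $\norm{Y^{N_\ell}-Y^{N_{\ell-1}}}{L_{kq}(\Omega;C^\delta)}$ via a triangle inequality, and $\norm{Y^{N_\ell}}{L_{kq}(\Omega;C^\delta)}$ uniformly in $\ell$). The obstruction is that Theorem~\ref{thm:MLMC-k} assumes the underlying Banach space to have Rademacher type~$p$, whereas $C^\delta([0,T];E)$ contains $C^\delta([0,T];\bbR)$ isometrically (via $f\mapsto f\,x_0$ for a unit vector $x_0\in E$), and $C^\delta([0,T];\bbR)$ is isomorphic to $\ell_\infty$ by Ciesielski's theorem and hence has only the trivial type $p=1$; a verbatim application would thus only give the useless rate $M_\ell^0$.

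I would circumvent this by re-running the proof of Theorem~\ref{thm:MLMC-k} — that is, of Proposition~\ref{prop:MLMC-k} and Lemma~\ref{lem:MLMC-k} — with $E$ replaced by $C^\delta([0,T];E)$; all intermediate steps (symmetrization via Lemma~\ref{lem:symmetrization}, the contraction estimates of Proposition~\ref{prop:rade-gaussian}, and the Slepian--Fernique comparison) use only that the ambient space is Banach, so the type hypothesis of $E$ enters through a single step: the final application of Proposition~\ref{prop:MC-1} to $E$-valued Gaussian sums. That step I would replace by a type-$p$ estimate with an $\epsilon$-loss in the H\"older exponent: for deterministic $v_1,\dots,v_M\in C^{\delta+\epsilon}([0,T];E)$,
\[
\biggl\| \sum_{j=1}^M g_j v_j \biggr\|_{L_q(\widetilde{\Omega};C^\delta([0,T];E))}
\lesssim_{(\delta,\epsilon,q,T)}
\tau_p(E)\,\Biggl(\sum_{j=1}^M \norm{v_j}{C^{\delta+\epsilon}([0,T];E)}^p\Biggr)^{\nicefrac{1}{p}}
\]
(and likewise for Rademacher sums), which follows by applying the type-$p$ inequality~\eqref{eq:type} for $E$ pointwise in $t$ and to H\"older increments of $t\mapsto\sum_j g_j v_j(t)$ — bounding its norm in $C^{\delta+\epsilon}([0,T];L_q(\widetilde{\Omega};E))$ — and then invoking the Kolmogorov-type continuity estimate of \cite[Corollary~2.11]{CoxEtAl2021} (the device also underlying Proposition~\ref{prop:hoelder-conv}) to pass to $L_q(\widetilde{\Omega};C^\delta([0,T];E))$. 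Propagating the $\epsilon$-loss through the binomial expansion in Lemma~\ref{lem:MLMC-k}, measuring the increments $Y^{N_\ell}-Y^{N_{\ell-1}}$ in $C^{\delta+\epsilon}([0,T];E)$, and absorbing the remaining (possibly logarithmic-in-$N_\ell$, otherwise $(\bar{\beta},\delta,k,p,q,T)$-dependent) constants into an arbitrarily small additional exponent loss — legitimate since $|\triangle t^{N_\ell}_{\max}|\to0$ — produces the per-level contribution $M_\ell^{-(1-\frac{1}{p})}|\triangle t^{N_{\ell-1}}_{\max}|^{\bar{\beta}-\delta-\epsilon}$; combined with the bias term this is the claimed bound. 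The main obstacle is this substitute for the type estimate and checking that the H\"older-exponent loss propagates cleanly through all the intermediate steps without degrading the rate.
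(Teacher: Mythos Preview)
Your treatment of the bias term matches the paper's, and you correctly diagnose the obstruction for the statistical term: $C^\delta([0,T];E)$ has only trivial Rademacher type. Your workaround --- re-running the machinery of Lemma~\ref{lem:MLMC-k} and Proposition~\ref{prop:MLMC-k} and replacing the final invocation of Proposition~\ref{prop:MC-1} by a Kolmogorov-type estimate that trades an $\epsilon$ of H\"older regularity for the type-$p$ bound --- is sound in principle, though executing it requires some bookkeeping with the mixed $C^\delta$/$C^{\delta+\epsilon}$ norm products that emerge after Proposition~\ref{prop:rade-gaussian}, and your hedge about ``possibly logarithmic-in-$N_\ell$'' constants is unwarranted (no such factors arise). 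The paper takes a shorter, more modular route that avoids re-opening any proofs: it sandwiches $C^\delta$ through an intermediate \emph{fractional Sobolev space},
\[
C^\beta([0,T];E)\hookrightarrow W^{\bar s}_{\bar p}((0,T);E)\hookrightarrow C^\delta([0,T];E),
\qquad
\beta:=\delta+\tfrac{\epsilon}{2},\;\;\bar s:=\tfrac{\beta+\delta}{2},\;\;\bar p:=\tfrac{4}{\beta-\delta},
\]
and observes that $\bar E:=W^{\bar s}_{\bar p}((0,T);E)$, being the real interpolation space $(L_{\bar p}((0,T);E),W^1_{\bar p}((0,T);E))_{\bar s,\bar p}$, \emph{inherits} Rademacher type $\min\{p,\bar p\}=p$ from $E$. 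Theorem~\ref{thm:MLMC-k} then applies directly in $\bar E$ as a black box, and the embeddings (preserved under symmetric injective tensor products) carry the resulting bound back to $\xkseps C^\delta$, with Proposition~\ref{prop:hoelder-conv} supplying the $C^\beta$-norms on the right. Your approach in effect reproves this inherited type property by hand via Kolmogorov continuity; the paper's version trades that labor for a one-line appeal to known facts about types of $L_p$-valued Sobolev and interpolation spaces.
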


\begin{proof} 
	We fix ${q\in[p,\infty)}$, 
	$\delta\in[0, \bar{\beta})$, 
	$\epsilon\in(0,\infty)$,  
	and first observe that by the triangle inequality 
	on $L_q(\Omega;\xkseps C^\delta([0,T];E))$, 
	\begin{align*}
	\mathrm{err}_{q, \eps_s}^{k,\sf ML} (X;\delta) 
	&\leq 
	\bigl\| 
	\Mk[X] 
	- 
	\Mk\bigl[Y^{N_L}\bigr] 
	\bigr\|_{\xkseps C^\delta([0,T];E)} 
	\\
	&\;\;\; +
	\biggl\| 
	\Mk\bigl[Y^{N_L}\bigr] 
	-
	\sum_{\ell = 1}^{L} 
	\frac{1}{M_\ell} 
	\sum_{j=1}^{M_\ell}  
	\xi_{\ell,j}
	\biggr\|_{L_q(\Omega;\xkseps C^\delta([0,T];E))} 
	=:
	\text{(A)} + \text{(B)}. 
	\end{align*} 

	Term (A) 
	can be bounded by 
	combining Lemma~\ref{lem:Mk-diff} with 
	the stability and convergence results 
	of Proposition~\ref{prop:hoelder-conv}, 
	showing that 
	\begin{align*} 
	\text{(A)} 
	&\leq 
	\bigl\| X - Y^{N_L} \bigr\|_{L_k(\Omega;C^\delta([0,T];E))} 
	\sum_{i=0}^{k-1} 
	\Bigl[ 
	\norm{X}{L_k(\Omega;C^\delta([0,T];E))}^i 
	\bigl\| Y^{N_L} \bigr\|_{L_k(\Omega;C^\delta([0,T];E))}^{k-i-1} 
	\Bigr]
	\\
	&\lesssim_{(\bar{\beta},\delta,\epsilon,k,T)} 
	\bigl| \triangle t^{N_L}_{\max} \bigr|^{\bar{\beta}-\delta-\epsilon} \!. 
	\end{align*} 

	To bound term (B), we may without loss of 
	generality assume that $\epsilon < \bar{\beta} - \delta$ 
	and define 
	$\beta := \delta + \frac{\epsilon}{2} \in (\delta,\bar{\beta})$. 
	We then exploit continuous embeddings, 
	similarly as in the proof of 
	\cite[Corollary~5.15]{CoxEtAl2021}: 
	There are constants 
	$C_1, C_2\in(0,\infty)$, 
	depending only on $\beta,\delta,T$, 
	such that 
	\[ 
	\norm{f}{C^\delta([0,T];E)} 
	\leq 
	C_1 
	\norm{f}{W^{\bar{s}}_{\bar{p}}((0,T);E)} 
	\leq 
	C_2
	\norm{f}{C^\beta([0,T];E)} 
	\quad 
	\forall 
	f \in C^\beta([0,T];E), 
	\]
	where $\bar{s}:= \tfrac{\beta+\delta}{2}
	\in(\delta,\beta)\subset(0,1)$ 
	and 
	$\bar{p} := \tfrac{4}{\beta-\delta}\in(4,\infty)$. 
	Here, for $s\in(0,1)$ and $q\in[1,\infty)$, 
	the space 
	$W^s_q((0,T);E)$ denotes the vector-valued 
	fractional Sobolev space, 
	see 
	e.g.~\cite[Definition~2.5.16]{AnalysisInBanachSpacesI2016}. 
	Continuous embeddings 
	are preserved under (full or symmetric) 
	injective tensor products 
	and, thus, we also have that 
	\[
	\norm{u}{\xkseps C^\delta([0,T];E)} 
	\lesssim_{(\beta,\delta,k,T)}
	\norm{u}{\xkseps W^{\bar{s}}_{\bar{p}}((0,T);E)} 
	\quad 
	\forall 
	u \in \xkseps W^{\bar{s}}_{\bar{p}}((0,T);E). 
	\]
	
	In addition, 
	we note that 
	$\bar{E} := W^{\bar{s}}_{\bar{p}}((0,T);E)$ has type 
	$\min\{p,\bar{p}\} = p$: 
	This observation follows from 
	the fact that both 
	$L^{\bar{p}}((0,T);E)$ 
	and 
	$W^1_{\bar{p}}((0,T);E)$ 
	have type $\min\{p,\bar{p}\}$ 
	(see \cite[Proposition~7.1.4]{AnalysisInBanachSpacesII2017})
	combined with the property that 
	\[
	\bar{E} 
	= 
	W^{\bar{s}}_{\bar{p}}((0,T);E) 
	= 
	\bigl( L^{\bar{p}}((0,T);E), 
	W^1_{\bar{p}}((0,T);E) \bigr)_{\bar{s},\bar{p}}
	\]
	is the real interpolation space 
	between $L^{\bar{p}}((0,T);E)$ 
	and 
	$W^1_{\bar{p}}((0,T);E)$ 
	(cf.~\cite[Theorem~2.5.17]{AnalysisInBanachSpacesI2016})
	and the specification of the type of 
	interpolation spaces 
	\cite[Proposition~7.1.3]{AnalysisInBanachSpacesII2017}.  
	Thus,  
	we may conclude 
	with Theorem~\ref{thm:MLMC-k} 
	and Proposition~\ref{prop:hoelder-conv} 
	that 
	\begin{align*} 
	&\text{(B)} 
	\lesssim_{(\beta,\delta,k,T)}
	\biggl\| 
	\Mk\bigl[Y^{N_L}\bigr] 
	-
	\sum_{\ell = 1}^{L} 
	\frac{1}{M_\ell} 
	\sum_{j=1}^{M_\ell}  
	\xi_{\ell,j}
	\biggr\|_{L_q(\Omega;\xkseps \bar{E} ) } 
	\\
	&\;\lesssim_{(k,p,q)}
	\sum_{\ell=1}^{L}   
	\Bigl[ 
	M_\ell^{-\left( 1-\frac{1}{p} \right)}  
	\bigl\| Y^{N_\ell} - Y^{N_{\ell-1}} 
	\bigr\|_{L_{kq}(\Omega; \bar{E} )} 
	\max_{1\leq\ell\leq L} 
	\bigl\| Y^{N_\ell} \bigr\|_{L_{kq}(\Omega; \bar{E} )}^{k-1}
	\Bigr]
	\\ 
	&\;\lesssim_{(\beta,\delta,k,T)} 
	\sup_{N\in\bbN} 
	\bigl\| Y^N \bigr\|_{L_{kq}(\Omega;C^\beta([0,T];E))}^{k-1}
	\sum_{\ell=1}^{L}   
	\Bigl[ 
	M_\ell^{-\left( 1-\frac{1}{p} \right)}  
	\bigl\| X - Y^{N_{\ell-1}} \bigr\|_{L_{kq}(\Omega;C^\beta([0,T];E))} 
	\Bigr] 
	\\ 
	&\;\lesssim_{(\bar{\beta},\delta,\epsilon,\widetilde{\epsilon},k,q,T)} 
	\sum_{\ell=1}^{L}   
	M_\ell^{-\left( 1-\frac{1}{p} \right)}  
	\bigl| \triangle t^{N_{\ell-1}}_{\max} 
	\bigr|^{\bar{\beta}-\beta-\widetilde{\epsilon}}
	=
	\sum_{\ell=1}^{L}   
	M_\ell^{-\left( 1-\frac{1}{p} \right)}  
	\bigl| \triangle t^{N_{\ell-1}}_{\max} 
	\bigr|^{\bar{\beta}-\delta-\frac{\epsilon}{2}-\widetilde{\epsilon}}
	\end{align*} 
	holds for all $\widetilde{\epsilon}\in(0,\infty)$, 
	and the claim follows 
	for the choice 
	$\widetilde{\epsilon}:=\frac{\epsilon}{2}$. 
\end{proof}   

\begin{example}[Euler--Maruyama method for SDEs] 
	Let $(\cF_t)_{t\in[0,T]}$ be a normal filtration on 
	$(\Omega,\cA,\bbP)$ and let 
	$B\from[0,T]\times\Omega\to\bbR^m$ be 
	an $m$-dimensional 
	$(\cF_t)_{t\in[0,T]}$-Brownian motion 
	(with continuous sample paths). 
	For Lipschitz continuous functions 
	$\mu\from\bbR^d\to \bbR^d$ and 
	$\sigma\from\bbR^d \to\bbR^{d\times m}$\!, 
	consider the $(\cF_t)_{t\in[0,T]}$-adapted 
	stochastic process 
	$X\colon[0,T]\times\Omega\to\bbR^d$ 
	with continuous sample paths 
	that satisfies 
	\[
	X(t)  
	= 
	X(0) 
	+ 
	\int_0^t \mu(X(s)) \, \rd s 
	+ 
	\int_0^t \sigma(X(s)) \, \rd B(s), 
	\quad 
	\text{$\bbP$-a.s.}, 
	\]
	as well as the Euler--Maruyama 
	approximations 
	$(Y^N)_{N\in\bbN}$ 
	to $X$, defined 
	with respect to equidistant partitions 
	of size 
	$\triangle t^N_{\max} = 
	\triangle t^N_{\min} = \nicefrac{T}{N}$
	as follows: $Y^N\!(0) := X(0)$, 
	and for $j\in\{0,\ldots,N-1\}$ 
	and $s\in\bigl( \tfrac{jT}{N}, \tfrac{(j+1)T}{N} \bigr]$, 
	\[
	Y^N\!(s) 
	:= 
	Y^N\!\bigl( \tfrac{jT}{N} \bigr) 
	+ 
	\bigl( s - \tfrac{jT}{N} \bigr) 
	\mu\bigl( Y^N\!\bigl(\tfrac{jT}{N} \bigr) \bigr) 
	+ 
	\bigl( \tfrac{sN}{T} - j \bigr)
	\sigma\bigl( Y^N\!\bigl( \tfrac{jT}{N} \bigr) \bigr)
	\bigl( B\bigl( \tfrac{(j+1)T}{N} \bigr) 
	- B\bigl(\tfrac{jT}{N} \bigr) \bigr).  
	\]
	Then, using the notation of 
	Theorem~\ref{thm:mlmc-hoelder} 
	with $E:=\bbR^d$\!, 
	we conclude that 
	for every $q\in[2,\infty)$, $\delta\in[0,\nicefrac{1}{2})$ and 
	all $\epsilon\in(0,\infty)$, 
	\[ 
	\mathrm{err}_{q, \eps_s}^{k,\sf ML} (X;\delta) 
	\lesssim_{(\delta,\epsilon,k,q,T)} 
	N_L^{-\left(\frac{1}{2}-\delta-\epsilon\right)} 
	+ 
	\sum_{\ell=1}^{L}   
	M_\ell^{- \frac{1}{2} }  
	N_{\ell-1}^{-\left( \frac{1}{2} - \delta - \epsilon \right) } \!. 
	\] 
	For the choice 
	$N_\ell := 2^\ell$ 
	and $M_\ell := 2^{L-\ell}$\!, 
	this yields the error bound 
	\[ 
	\mathrm{err}_{q,\eps_s}^{k,\sf ML} (X;\delta) 
	\lesssim_{(\delta,\epsilon,k,q,T)} 
	2^{-L\left(\frac{1}{2}-\delta-\epsilon\right)} 
	+ 
	\sum_{\ell=1}^{L}   
	2^{- \frac{L-\ell}{2} }  
	2^{-\ell\left( \frac{1}{2} - \delta - \epsilon \right) }  
	\lesssim 
	2^{-L\left(\frac{1}{2}-\delta-\epsilon\right)} \!.  
	\] 
	Since $\xkeps C([0,T]; \bbR) 
	= C([0,T]^k; \bbR)$, 
	see~\cite[Section~3.2, p.~50]{Ryan2002}, 
	this error estimate holds in particular 
	also on $C([0,T]^k; \bbR)$. 
\end{example} 

\begin{remark} 
	We note that Theorem~\ref{thm:mlmc-hoelder} 
	is applicable 
	to a variety of numerical schemes  
	developed for stochastic evolution problems 
	(such as SDEs and stochastic PDEs), 
	for which the regularity~\eqref{eq:ass:X-beta-regularity} 
	of the solution process is known, and 
	strong convergence rates are available at 
	the nodes of the temporal partitions 
	in $L_q(\Omega;E)$-sense 
	for any $q\in[1,\infty)$. 
	This list includes for instance: 
	\begin{enumerate}[leftmargin=1cm, label=(\alph*)] 
		\item SDEs with coefficients that 
		are not globally Lipschitz continuous, 
		see e.g.~\cite[Theorem~3.1]{GyongyRasonyi2011},  
		\cite[Theorem~1.1]{HutzenthalerJentzenKloeden2012} 
		and 
		\cite[Theorem~4.5]{MullerGronbachYaroslavtseva2022}; 
		\item fully discrete (in space and time) 
		approximations for linear or semilinear 
		parabolic stochastic PDEs, 
		see e.g.~\cite[Theorem~3.14]{Kruse2014}; 
		\item fully discrete  
		approximations for non-linear stochastic PDEs, such as the 
		stochastic Allen--Cahn equation, 
		see e.g.~\cite[Theorem~1.1]{BeckerEtAl2022}. 
	\end{enumerate}
\end{remark}

\section{Conclusions} 
\label{section:conclusions}   

We have analyzed the convergence of Monte Carlo sampling   
for higher-order moments of 
Banach space valued random variables. 
Specifically, for every $k\in\bbN$, 
we have derived explicit, $k$-independent 
strong convergence rates  
in the injective tensor norm 
for approximating the $k$th moment $\Mk[X]$ 
of a random variable $X\from\Omega\to E$,
taking values in a Banach space $E$, 
by means of 
\begin{enumerate}[leftmargin=10mm, label=\Roman*.] 
	\item 
	\emph{standard Monte Carlo} sampling, 
	involving no further numerical approximation, 
	see Theorem~\ref{thm:MC-k}; 
	\item 
	the \emph{single-level Monte Carlo} method, 
	combining Monte Carlo sampling 
	with an approximation  
	$X_1\from\Omega\to E_1$ of $X$ 
	to generate 
	samples in a (usually finite-dimensional) 
	subspace $E_1\subseteq E$, 
	see Corollary~\ref{cor:MC-k}; 
	\item the \emph{multilevel Monte Carlo} method, 
	combining Monte Carlo sampling 
	with a hierarchy of approximations  
	$X_\ell\from\Omega\to E_\ell$, 
	$\ell\in\{1,\ldots,L\}$, 
	in (usually nested, finite-dimensional)
	subspaces $E_\ell \subseteq E$, 
	see Theorems~\ref{thm:MLMC-k} 
	and~\ref{thm:alpha-beta-gamma}. 	
\end{enumerate} 
These findings  
extend the numerical analysis of 
Monte Carlo based algorithms in 
computational uncertainty quantification 
to a broad range of mathematical models 
beyond the classical theory in Hilbert spaces, 
which relies on assumed 
square-integrability 
and bias-variance decompositions. 
Several examples have illustrated 
the wide scope of 
the presently developed theory: 
linear, second-order elliptic PDEs with data 
affording well-posedness
in $W^{1}_{p}$, 
and
stochastic evolution equations 
with almost sure path regularity
in H\"older spaces. 

The results of 
Subsections~\ref{subsec:slmc} and~\ref{subsec:mlmc}
are essential for the error analysis of 
Monte Carlo approximations of $k$-point correlations for
every operator equation with random input data
which, 
due to modeling or physical constraints, 
does not admit a 
well-posed formulation 
in Hilbert spaces. 
We indicate some further applications, 
where this is of relevance:
In~\cite{KoleyRisebroSchwabWeber2017} 
Monte Carlo finite difference 
discretizations for 
scalar, degenerate convection-diffusion equations 
with random initial data were considered. 
In that case, the 
particular structure of the degeneracy in the diffusion coefficient,
imposed from physical properties of the underlying model, 
mandated a mathematical formulation 
in Banach spaces of type $p<2$.
Assuming random initial data, 
the corresponding Monte Carlo error analysis 
for mean values of 
the solution therefore 
required a setting in Banach spaces 
as in Corollary~\ref{cor:MC-1}. 
With the abstract MLMC results of 
Theorems~\ref{thm:MLMC-k} and~\ref{thm:alpha-beta-gamma}, 
the MLMC finite difference  
convergence analysis 
for first-order moments 
of~\cite{KoleyRisebroSchwabWeber2017} 
generalizes to spatiotemporal 
$k$-point correlations with $k\geq 2$. 

Another application is related to fluid flows: 
For the compressible Navier--Stokes equations
with spatially periodic solutions, 
the (isentropic) equation of state 
relates the pressure $P$ 
to the fluid density $\varrho$ via 
$P(\varrho) = a \varrho^\gamma$\!, 
where $a>0$ and $\gamma>1$ 
are \emph{physical constants}. 
In well-posed variational formulations~\cite{LionsVol2}, 
the density $\varrho(t,\,\cdot\,)$ and 
the corresponding momentum $\boldsymbol{m}(t,\,\cdot\,)$ 
at time $t\in[0,T]$ 
take values in 
$L_\gamma\bigl(\bbT^d \bigr)$ and 
$L_{\frac{2\gamma}{\gamma+1}}\bigl(\bbT^d;\bbR^d \bigr)$, 
respectively, 
where $d\in\{2,3\}$ 
and $\bbT^d$ denotes 
the $d$-dimensional torus. 
With random data, 
this entails a Banach space setting of type 
$p=\min\{\gamma,2\}$ for the density 
and 
$p=\frac{2\gamma}{\gamma+1} \in (1,2)$ 
for the momentum. 
The convergence of \emph{single-level}
Monte Carlo finite volume approximations  
for higher-order moments 
of $\varrho(t,\,\cdot\,)$ 
and $\boldsymbol{m}(t,\,\cdot\,)$ 
has been discussed 
in the recent work~\cite{FereislEtAl22}, 
using the isomorphic identification 
\begin{equation}\label{eq:Lp-tensor-identification} 
	\otimes_{\gamma}^k 
	L_\gamma\bigl(\bbT^d\bigr) 
	\cong 
	L_{\gamma}\bigl(\bbT^{kd}\bigr)  
\end{equation}
(and similarly for the $d$ components 
of the momentum). 
Here, $\otimes_{\gamma}^k$ indicates   
the appropriate Chevet--Saphar
tensor product space, 
see e.g.\ \cite[Chapter~6]{Ryan2002}. 
The present, abstract MLMC results 
apply directly to the 
setting of~\cite{FereislEtAl22},
implying corresponding convergence results for
multilevel Monte Carlo approximations. 

An interesting topic for future 
research is to investigate 
whether the Monte Carlo 
convergence results, 
derived for injective tensor 
product spaces in this work, 
hold also with respect to stronger 
cross norms.
In particular, the identification of 
tensor products of 
$L_p$-spaces 
as in \eqref{eq:Lp-tensor-identification} 
raises the question if 
it is possible to use 
one of the $p$th 
Chevet--Saphar tensor norms $d_p$ or $g_p$
(see \cite[p.~135]{Ryan2002})
if the Banach space~$E$ has type $p$. 
The Chevet--Saphar norms 
and the Hilbert tensor norm $w_2$ 
are unified by the 
tensor norms $\{\alpha_{p,q}\}_{1\leq p,q\leq \infty}$ 
due to Laprest\'{e}: 
$g_p = \alpha_{p,1}$,
$d_p = \alpha_{1,p}$, 
and 
$w_2 = \alpha_{2,2}$, 
see \cite[Sections 12.5--12.8]{DeFlrtBook} 
and the references there. 
However, in this generality, 
there do not seem to be symmetric versions 
of these tensor norms available in the literature.  
A corresponding (ML)MC convergence analysis 
would thus have to be based on  
considerably different arguments.  

Furthermore, this work 
may be extended to 
\emph{sparse tensor approximations}
as considered in the 
Hilbert space setting in \cite{BarthSchwabZollinger2011}. 
Specifically, 
we analyzed the MLMC approach 
for approximating the $k$th moment $\Mk[X]$ 
using samples of the (exact, full) tensor 
product $\otimes^k X_\ell$ 
on levels $\ell\in\{1,\ldots,L\}$. 
The formation of this $k$-fold  
tensor product $\otimes^k X_\ell$
on level $\ell$ 
typically entails costs in work and memory 
of the order $\cC_\ell \lesssim N_\ell^{\gamma}$ 
with $\gamma = \max\{\gamma_1,k\}$, 
assuming that $\gamma_1$ is the exponent 
in the asymptotic cost bound 
for computing one sample of $X_\ell$, 
and that one computed sample of $X_\ell$ 
requires storage of order $N_\ell$. 
As it is well-known 
in the Hilbert space case, 
various consistent  
sparse tensor product approximations 
allow to reduce this complexity considerably. 
For example, for the applications discussed 
in Subsections~\ref{subsec:EllPDERnDFor} 
and~\ref{subsec:EllPDERndCoef}, 
the sparse tensor product approach 
for the MLMC approximation 
of $k$th moments 
proposed in \cite{BarthSchwabZollinger2011} 
for Hilbert spaces 
can be leveraged to reduce the parameter $\gamma$ in 
Theorem~\ref{thm:alpha-beta-gamma} 
from $\max\{\gamma_1,k\}$
to $\gamma_1 + \delta$ for some 
(arbitrarily small) $\delta>0$. 
Yet, in this setting
the error analysis of Theorem~\ref{thm:MLMC-k} 
and, consequently, 
also of Theorem~\ref{thm:alpha-beta-gamma} 
does not readily apply. 

Beyond the MLMC estimation 
of $k$th moments $\Mk[\xi] = \bbE[\xk \xi]$, 
one may consider 
\emph{anisotropic} $k$-fold 
correlations of the form 
$\bbE[\xi_1\otimes\cdots\otimes\xi_k]$.  
Here, the vector-valued random variables $\xi_1,\ldots,\xi_k$ 
entering the anisotropic, 
injective tensor product formation 
may take values in Banach spaces 
$E_1,\ldots,E_k$ of (possibly different) types 
$p_1,\ldots,p_k\in[1,2]$. 
This rather general setting has numerous applications, 
and can be analyzed with the techniques in the present paper, 
in conjunction with the \emph{multi-index Monte Carlo} 
approach from \cite{MIMC2016}.
Details shall be reported elsewhere.  

\appendix 

\section{Tensor norms of symmetric elements of Hilbert spaces}
\label{appendix:norm-computation} 

In this section we consider a real separable 
Hilbert space $(H, \scalar{\,\cdot\,, \,\cdot\,}{H})$  
and explicitly compute the 
projective and injective tensor norms 
of Subsection~\ref{subsec:k-tensor} 
for symmetric elements in $\otimes^{2,s} H$ 
of the form $\sum_{j=1}^n \lambda_j\, e_j \otimes e_j$, 
where $e_1,\ldots,e_n$ 
are orthonormal in $H$,  
and $\lambda_1,\ldots,\lambda_n\in\bbR$. 

For this purpose, 
we need the notion of the 
\emph{$k$-fold Hilbert tensor product space} 
$\xk_{w_2} H$,  
which is defined as the closure 
of the (full) $k$-fold algebraic tensor product space 
$\xk H$ with respect to the norm 
which is induced by the inner product 
\begin{equation}\label{eq:Hilbert-tensor-IP} 
	\biggl( \sum_{j=1}^n 
	\bigotimes_{\nu=1}^k 
	x_{j,\nu}, 
	\sum_{i=1}^{\widetilde{n}} 
	\bigotimes_{\nu'=1}^k 
	y_{i,\nu'}
	\biggr)_{\!w_2}
	:= 
	\sum_{j=1}^n 
	\sum_{i=1}^{\widetilde{n}} 
	\prod_{\nu=1}^k
	\scalar{x_{j,\nu}, y_{i,\nu}}{H}. 
\end{equation}
In particular, the tensor product space 
$(\xk_{w_2} H, \scalar{\,\cdot\,, \,\cdot\,}{w_2}  )$ 
is again a Hilbert space. 

\begin{lemma}\label{lem:diagonal-norms}
	Assume that $(H, \scalar{\,\cdot\,, \,\cdot\,}{H})$ 
	is a real separable Hilbert space 
	and $(e_j)_{j\in\bbN}$ is an orthonormal basis for~$H$. 
	Let the injective and projective tensor norms, 
	$\norm{\,\cdot\,}{\eps}$ and $\norm{\,\cdot\,}{\pi}$, 
	be defined on $\otimes^2 H$
	as in \eqref{eq:epsnorm} and \eqref{eq:pinorm},  
	and let the symmetric injective and projective tensor norms,  
	$\norm{\,\cdot\,}{\eps_s}$ and $\norm{\,\cdot\,}{\pi_s}$,  
	be defined on $\otimes^{2,s} H$ as in 
	\eqref{eq:sepsnorm} and \eqref{eq:spinorm}, respectively. 
	Let $n\in\bbN$ and $\lambda_1,\ldots,\lambda_n\in\bbR$. 
	Then, 
	\begin{align} 
		\biggl\| \sum_{j=1}^n \lambda_j \, e_j \otimes e_j \biggr\|_{\pi}
		&= 
		\biggl\| \sum_{j=1}^n \lambda_j \, e_j \otimes e_j \biggr\|_{\pi_s}
		= 
		\sum_{j=1}^n |\lambda_j|, 
		\label{eq:diagonal-pi-norms}
		\\ 
		\biggl\| \sum_{j=1}^n \lambda_j \, e_j \otimes e_j \biggr\|_{\eps}
		&= 
		\biggl\| \sum_{j=1}^n \lambda_j \, e_j \otimes e_j \biggr\|_{\eps_s}
		= 
		\max_{1\leq j\leq n} |\lambda_j|. 
		\label{eq:diagonal-eps-norms}
	\end{align} 
\end{lemma}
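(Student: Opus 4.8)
The plan is to establish the two chains of equalities by computing the relevant suprema and infima directly, exploiting the orthonormality of $e_1,\dots,e_n$ and the known ordering relations between the four norms recorded in Remark~\ref{rem:projective-norm} and in~\eqref{eq:eps-norm-equivalence}. Write $\tenvec := \sum_{j=1}^n \lambda_j\, e_j\otimes e_j \in \otimes^{2,s} H$ and note that it lies in both the symmetric and the full algebraic tensor product, so all four norms are defined on it.

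For the projective identities~\eqref{eq:diagonal-pi-norms}, the upper bound is the easy direction: the representation $\tenvec = \sum_{j=1}^n \lambda_j\, e_j\otimes e_j$ gives, from the definitions~\eqref{eq:pinorm} and~\eqref{eq:spinorm}, $\norm{\tenvec}{\pi}\le \sum_j |\lambda_j|$ and $\norm{\tenvec}{\pi_s}\le \sum_j|\lambda_j|\,\norm{e_j}{H}^2 = \sum_j|\lambda_j|$. Since $\norm{\tenvec}{\pi_s}\ge\norm{\tenvec}{\pi}$ by Remark~\ref{rem:projective-norm}, it suffices to prove the matching lower bound $\norm{\tenvec}{\pi}\ge\sum_j|\lambda_j|$. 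The clean way is duality: $\otimes^2_\pi H$ has dual $(\otimes^2_\pi H)' \cong \mathcal{B}(H)$, the bounded bilinear forms on $H$ (equivalently bounded operators), with $\duality{S,\tenvec} = \sum_j \lambda_j \langle S e_j, e_j\rangle_H$ for $S\in\mathcal{B}(H)$ of operator norm $\le 1$. Choosing $S$ to be the diagonal operator $S e_j := \operatorname{sign}(\lambda_j)\, e_j$ for $j\le n$ (and, say, $S:=0$ on $\{e_j\}_{j>n}^\perp$-complement, or the identity — any norm-one extension works) yields $\duality{S,\tenvec} = \sum_j|\lambda_j|$, hence $\norm{\tenvec}{\pi}\ge\sum_j|\lambda_j|$. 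Alternatively one can avoid invoking the dual and instead test against $\norm{\,\cdot\,}{w_2}$: for the \emph{Hilbert} tensor norm one has $\norm{\tenvec}{w_2} = (\sum_j\lambda_j^2)^{1/2}$, which unfortunately only gives a lower bound of the wrong order, so the duality argument (or an explicit trace-class/operator computation) is the right tool here.

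For the injective identities~\eqref{eq:diagonal-eps-norms}, again the inequality $\norm{\tenvec}{\eps_s}\le\norm{\tenvec}{\eps}$ is automatic from~\eqref{eq:eps-norm-equivalence} (or~\eqref{eq:norm-rel}), so it is enough to show $\norm{\tenvec}{\eps}\le \max_j|\lambda_j|$ and $\norm{\tenvec}{\eps_s}\ge\max_j|\lambda_j|$. For the upper bound on $\norm{\tenvec}{\eps}$: by~\eqref{eq:epsnorm} and identifying $H'\cong H$ via Riesz, we must bound $|\sum_j \lambda_j\, f(e_j)\, g(e_j)|$ over unit vectors $f,g\in H$. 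Writing $a_j := \langle f,e_j\rangle_H$, $b_j := \langle g,e_j\rangle_H$, Bessel's inequality gives $\sum_j a_j^2\le 1$, $\sum_j b_j^2\le 1$, so by Cauchy--Schwarz $|\sum_j\lambda_j a_j b_j|\le \max_j|\lambda_j|\cdot\sum_j|a_j b_j|\le \max_j|\lambda_j|\cdot(\sum a_j^2)^{1/2}(\sum b_j^2)^{1/2}\le\max_j|\lambda_j|$. For the lower bound on $\norm{\tenvec}{\eps_s}$: let $j_0$ achieve $\max_j|\lambda_j|$ and take $f := \operatorname{sign}(\lambda_{j_0})^{?}\, e_{j_0}$ — more precisely, in~\eqref{eq:sepsnorm} take $f := e_{j_0}$, giving $|\sum_j\lambda_j f(e_j)^2| = |\lambda_{j_0}| = \max_j|\lambda_j|$; hence $\norm{\tenvec}{\eps_s}\ge\max_j|\lambda_j|$. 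Combining the two bounds closes the chain.

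The main obstacle is the single genuinely nontrivial step: the lower bound $\norm{\tenvec}{\pi}\ge\sum_j|\lambda_j|$ for the projective norm. Everything else is Bessel/Cauchy--Schwarz or a direct evaluation of a defining supremum against a well-chosen rank-one (for $\eps$) or diagonal (for $\pi$) functional. I would present the projective lower bound via the duality $(\otimes^2_\pi H)' = \mathcal{B}(H)$ (see \cite[Section~2.2]{Ryan2002}), since it is the most transparent; if one prefers to stay self-contained, one can instead observe that $\tenvec$, viewed as a nuclear operator on $H$, is the diagonal operator $\sum_j\lambda_j\,\langle\cdot,e_j\rangle_H\, e_j$ whose nuclear norm is exactly $\sum_j|\lambda_j|$, and that the projective tensor norm coincides with the nuclear norm under the canonical identification $H\otimes H\cong$ finite-rank operators. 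Either route is short; the rest of the proof is routine and I would not belabor it.
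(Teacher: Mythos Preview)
Your proposal is correct. The injective identities~\eqref{eq:diagonal-eps-norms} are handled essentially as in the paper: Cauchy--Schwarz/Bessel for the upper bound on $\norm{\tenvec}{\eps}$, evaluation at $f=e_{j_\star}$ for the lower bound on $\norm{\tenvec}{\eps_s}$.

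The one genuine difference is the projective lower bound $\norm{\tenvec}{\pi}\ge\sum_j|\lambda_j|$. You invoke the duality $(\otimes^2_\pi H)'\cong\mathcal{B}(H)$ (or the nuclear-norm identification) and pair against the diagonal sign operator. The paper instead stays self-contained: for an arbitrary representation $\tenvec=\sum_{i=1}^{\widetilde n} x_i\otimes y_i$ it writes
\[
\sum_{j=1}^n|\lambda_j|
=\sum_{j\in\bbN}\bigl|\scalar{\tenvec,\,e_j\otimes e_j}{w_2}\bigr|
\le\sum_{i=1}^{\widetilde n}\sum_{j\in\bbN}\bigl|\scalar{x_i,e_j}{H}\bigr|\,\bigl|\scalar{y_i,e_j}{H}\bigr|
\le\sum_{i=1}^{\widetilde n}\norm{x_i}{H}\norm{y_i}{H},
\]
using only the Hilbert tensor inner product already introduced in the appendix, and then takes the infimum. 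Your route is conceptually cleaner and highlights the operator-theoretic interpretation; the paper's route avoids importing any external duality statement. Incidentally, you dismissed ``testing against $\norm{\,\cdot\,}{w_2}$'' as giving the wrong order---that is true for the \emph{norm} inequality $\norm{\tenvec}{\pi}\ge\norm{\tenvec}{w_2}$, but the paper uses the $w_2$ \emph{inner product} to pair against each $e_j\otimes e_j$ separately and then sums, which recovers the full $\ell_1$ bound.
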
 

\begin{proof} 
	We set 
	$U_n := \sum_{j=1}^n \lambda_j \, e_j\otimes e_j \in \otimes^{2,s} H$.  
	Then, by the definitions of the projective norms 
	in~\eqref{eq:pinorm} and \eqref{eq:spinorm}, 
	see Remark~\ref{rem:projective-norm}, 
	it follows that 
	\[
		\norm{ U_n }{\pi} 
		\leq  
		\norm{ U_n }{\pi_s} 
		\leq 
		\sum_{j=1}^n |\lambda_j| \, \norm{e_j}{H}^2 
		= 
		\sum_{j=1}^n |\lambda_j|. 
	\]
	Furthermore, 
	for any representation 
	$U_n = \sum_{i=1}^{\widetilde{n}} x_i \otimes y_i \in \otimes^2 H$ 
	of $U_n$ 
	we find that 
	\[ 
		\sum_{j=1}^n 
		| \lambda_j | 
		= 
		\sum_{j\in\bbN}
		| \scalar{ U_n , e_j \otimes e_j}{w_2} | 
		= 
		\sum_{j\in\bbN} 
		\left| 
		\biggl( 
		\sum_{i=1}^{\widetilde{n}} x_i \otimes y_i ,\, e_j \otimes e_j 
		\biggr)_{\!w_2} 
		\right|, 
	\]
	where $\scalar{\,\cdot\,, \,\cdot\,}{w_2}$ 
	is the inner product on the Hilbert tensor 
	product space~$\otimes^2_{w_2} H$,   
	defined as in \eqref{eq:Hilbert-tensor-IP} 
	for $k=2$. 
	Thus, by the triangle and Cauchy--Schwarz inequalities, 
	\begin{align*} 
		\sum_{j=1}^n 
		| \lambda_j | 
		&\leq 
		\sum_{i=1}^{\widetilde{n}}
		\sum_{j\in\bbN} 
		\bigl| \scalar{ x_i\otimes y_i , e_j\otimes e_j}{w_2} \bigr| 
		= 
		\sum_{i=1}^{\widetilde{n}}
		\sum_{j\in\bbN} 
		\bigl| \scalar{ x_i, e_j}{H} \bigr|  \bigl| \scalar{ y_i , e_j }{H} \bigr| 
		\\
		&\leq 
		\sum_{i=1}^{\widetilde{n}}
		\Biggl( \sum_{j\in\bbN} 
		\scalar{ x_i, e_j}{H}^2 
		\Biggr)^{\nicefrac{1}{2}} 
		\Biggl( \sum_{j\in\bbN} 
		\scalar{ y_i, e_j}{H}^2 
		\Biggr)^{\nicefrac{1}{2}} 
		= 
		\sum_{i=1}^{\widetilde{n}}
		\norm{x_i}{H}
		\norm{y_i}{H} . 
	\end{align*} 
	By taking the infimum over all representations of $U_n \in \otimes^2 H$ 
	we obtain the reverse inequality 
	$\norm{ U_n }{\pi}\geq \sum_{j=1}^n |\lambda_j|$ and, 
	since also $\norm{ U_n }{\pi_s} \geq \norm{ U_n }{\pi}$, 
	this proves \eqref{eq:diagonal-pi-norms}. 
	
	To show \eqref{eq:diagonal-eps-norms}, 
	let $j_\star\in\{1,\ldots,n\}$ be an index 
	such that 
	$| \lambda_{j_\star} | = \max_{1\leq j\leq M} |\lambda_j|$, 
	and recall the definitions of the injective norms from 
	\eqref{eq:epsnorm} and \eqref{eq:sepsnorm}. 
	Then, we find
	\begin{align*} 
		\norm{ U_n }{\eps} 
		&\geq 
		\norm{ U_n }{\eps_s} 
		= 
		\sup_{f\in B_{\dual{H}} } \biggl| \sum_{j=1}^n \lambda_j \duality{f, e_j}^2 \biggr| 
		\geq 
		\biggl| \sum_{j=1}^n \lambda_j \scalar{e_{j_\star}, e_j}{H}^2 \biggr| 
		= 
		|\lambda_{j_\star}| 
		= 
		\max_{1\leq j\leq n} |\lambda_j|. 
	\end{align*} 
	The reverse estimates follow again by the Cauchy--Schwarz inequality 
	combined with the Riesz representation theorem, 
	\begin{align*} 
		\norm{ U_n }{\eps} 
		&= 
		\sup_{f_1, f_2\in B_{\dual{H}} } 
		\biggl| \sum_{j=1}^n \lambda_j \duality{f_1, e_j} \duality{f_2, e_j} \biggr| 
		\leq 
		|\lambda_{j_\star}| 
		\sup_{f_1, f_2\in B_{\dual{H}} } 
		\sum_{j=1}^n | \duality{f_1, e_j} | \, |\duality{f_2, e_j}| 
		\\
		&=
		\max_{1\leq j\leq n} |\lambda_j| 
		\sup_{v_1, v_2\in B_H } 
		\sum_{j=1}^n | \scalar{v_1, e_j}{H} | \, |\scalar{v_2, e_j}{H}| 
		\leq 
		\max_{1\leq j\leq n} |\lambda_j|.
	\end{align*} 
	Thus, 
	$\norm{ U_n }{\eps_s} 
	\leq 
	\norm{ U_n }{\eps} 
	\leq 
	\max_{1\leq j\leq n} |\lambda_j|$ 
	completing the proof of \eqref{eq:diagonal-eps-norms}. 
\end{proof}

\begin{remark}[Relation of $\otimes^{2,s} H$ 
		to self-adjoint finite-rank linear operators]
	In the setting of Lemma~\ref{lem:diagonal-norms}, 
	we may associate 
	a self-adjoint linear operator on the Hilbert space $H$ 
	with the element 
	$U_n := \sum_{j=1}^n \lambda_j \, e_j\otimes e_j$ 
	in the symmetric algebraic tensor product space $\otimes^{2,s} H$. 
	More specifically, 
	we can define the self-adjoint 
	finite-rank linear operator $T_{U_n} \from H \to H$
	associated with $U_n$ by 
	$T_{U_n} x 
	:= 
	\sum_{j=1}^n \lambda_j \scalar{x, e_j}{H} \, e_j$, 
	for every $x\in H$. 
	Using this definition, the norm identities 
	in~\eqref{eq:diagonal-pi-norms} and~\eqref{eq:diagonal-eps-norms} 
	can be reformulated in terms of 
	the trace-class (or nuclear) norm, 
	$\norm{T_{U_n}}{\cL_1(H)} := \operatorname{tr} ( |T_{U_n}| )$, 
	and the operator norm, 
	$\norm{T_{U_n}}{\cL(H)} := \sup_{x\in B_H } \norm{ T_{U_n} x}{H}$, 
	of $T_{U_n}$ as follows: 
	\begin{gather*} 
		\textstyle 
		\norm{U_n}{\pi}
		= 
		\norm{U_n}{\pi_s} 
		= 
		\sum\limits_{j=1}^n |\lambda_j| 
		= 
		\norm{T_{U_n}}{\cL_1(H)} , 
		\\ 
		\norm{U_n}{\eps} 
		= 
		\norm{U_n}{\eps_s}
		= 
		\max_{1\leq j\leq n} |\lambda_j| 
		= 
		\norm{T_{U_n}}{\cL(H)} , 
	\end{gather*} 
	see e.g.\ \cite[Theorem~14.15.(1) and Theorem~8.11]{vanNeerven-FA} 
	for the operator norm identities. 
	
	More generally, 
	to every element $U \in \otimes^{2,s} H$, 
	we can associate a self-adjoint linear operator $T_U\from H \to H$, 
	whose action on $x\in H$ is defined by 
	\[
		\scalar{T_U x, y}{H} 
		= 
		\scalar{U, x\otimes y}{w_2} 
		\quad 
		\forall y \in H. 
	\]
	Here, the Riesz representation theorem 
	ensures that the linear operator 
	$T_U$ is well-defined. 
	The implied linear mapping 
	${\cI: U \mapsto T_U}$
	extends continuously 
	to an isometric isomorphism 
	between the 
	symmetric projective tensor product space $\otimes^{2,s}_{\pi_s} H$ 
	and the space of self-adjoint trace-class linear operators on $H$ 
	(respectively, between the 
	symmetric injective tensor product space $\otimes^{2,s}_{\eps_s} H$ 
	and the space of self-adjoint compact linear operators on~$H$). 
\end{remark} 

\section{A consequence of Slepian's inequality}
\label{appendix:details-fernique} 

In this section we restate the version of Slepian's inequality 
for finite-dimensional Gaussian processes as 
formulated by Fernique~\cite{Fernique1975}. 
We subsequently use it to derive a comparison result for 
real-valued Gaussian processes indexed by the closed unit ball 
$B_{\dual{E}}$ of the dual of a real Banach space $E$, 
see Lemma~\ref{lem:slepian}. 
This result is 
needed in Subsection~\ref{subsec:mlmc} 
to prove convergence 
of multilevel Monte Carlo methods. 

The following theorem is taken from~\cite[Theorem~2.1.2]{Fernique1975}. 
We note that a more general version of Slepian's inequality, 
which includes Fernique's formulation 
as a special case, can be found 
in~\cite[Theorem~2.8]{HoffmannJorgensen2013}.  

\begin{theorem}\label{thm:slepian} 
	Let $N\in\bbN$ and 
	$X = (X_1,\ldots,X_N)^\top$\!,  
	$Y = (Y_1,\ldots,Y_N)^\top$ be 
	two centered Gaussian 
	random vectors 
	in $\bbR^N$\!, defined 
	on a complete probability space 
	$(\Omegat,\cAt,\bbPt)$ 
	with expectation $\bbEt$. 
	Assume further that 
	\[
		\forall i,j\in\{1,\ldots,N\}: 
		\quad 
		\bbEt\bigl[ |X_i - X_j|^2 \bigr] 
		\leq 
		\bbEt\bigl[ |Y_i - Y_j|^2 \bigr] , 
	\]
	and let $G \from [0,\infty)\to [0,\infty)$ 
	be convex and increasing. Then, 
	\[
		\bbEt G \Bigl( \max_{1\leq i,j \leq N} |X_i - X_j| \Bigr) 
		\leq 
		\bbEt G \Bigl( \max_{1\leq i,j \leq N} |Y_i - Y_j| \Bigr) . 
	\]
\end{theorem}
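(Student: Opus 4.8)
The statement is Fernique's form of Slepian's inequality; a self-contained proof would go through the classical \emph{Gaussian interpolation} argument, and alternatively one may simply quote \cite[Theorem~2.1.2]{Fernique1975} or the more general \cite[Theorem~2.8]{HoffmannJorgensen2013}. My plan for a direct proof is as follows. After passing to a product space one may assume $X$ and $Y$ independent. Since the index set of pairs is symmetric under transposition, $\max_{1\le i,j\le N}|X_i-X_j|=M(X)$ with
\[
M(x):=\max_{1\le i,j\le N}(x_i-x_j)=\max_{1\le i\le N}x_i-\min_{1\le j\le N}x_j ,
\]
and $M$ is convex, $1$-Lipschitz, and invariant under translations along $\mathbf 1=(1,\dots,1)^\top$. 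By monotone approximation (replace $G$ by smooth convex increasing $G_n\downarrow G$ and use dominated convergence at the very end) one may also assume $G\in C^2$; write $F:=G\circ M$, a convex Lipschitz function on $\bbR^N$.

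Next I would introduce $Z(t):=\sqrt{1-t}\,X+\sqrt{t}\,Y$ for $t\in[0,1]$ and $\phi(t):=\bbEt[F(Z(t))]$, so that $\phi(0)=\bbEt[G(M(X))]$ and $\phi(1)=\bbEt[G(M(Y))]$; it suffices to show $\phi'\ge 0$. Differentiating and invoking the Gaussian integration-by-parts (Stein) identity, together with $\bbEt[\dot Z_i(t)Z_k(t)]=\tfrac12(\bbEt[Y_iY_k]-\bbEt[X_iX_k])=:\tfrac12 a_{ik}$, yields
\[
\phi'(t)=\tfrac12\sum_{i,k}a_{ik}\,\bbEt[\partial_i\partial_k F(Z(t))],
\]
the second derivatives of the convex function $F$ being interpreted via its (positive matrix-valued) Hessian measure paired against the law of $Z(t)$. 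Because $F$ is translation invariant along $\mathbf 1$, $\sum_k\partial_i\partial_k F=0$ for each $i$, so $a_{ik}$ may be symmetrized to $a_{ik}-\tfrac12 a_{ii}-\tfrac12 a_{kk}=-\tfrac12 b_{ik}$ without changing the sum, where $b_{ik}:=\bbEt[(Y_i-Y_k)^2]-\bbEt[(X_i-X_k)^2]\ge 0$ by hypothesis and $b_{ii}=0$. Hence $\phi'(t)=-\tfrac14\sum_{i,k}b_{ik}\bbEt[\partial_i\partial_k F(Z(t))]$.

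The decisive structural observation is that $M=f+g$ with $f(x)=\max_i x_i$ and $g(x)=\max_j(-x_j)$, so by the chain rule for convex compositions
\[
D^2F=G''(M)\,\nabla M\otimes\nabla M+G'(M)\,\bigl(D^2f+D^2g\bigr),
\]
where $\nabla M=e_{i^\star}-e_{j^\star}$ almost everywhere (unique argmax $i^\star$, unique argmin $j^\star$), and $D^2f$, $D^2g$ are positive combinations of rank-one terms $(e_a-e_b)\otimes(e_a-e_b)$. Using $G',G''\ge 0$ and $M\ge 0$, the entire measure $D^2F$ is therefore a positive combination of such ``pure'' rank-one terms, and for any one of them $\sum_{i,k}b_{ik}(e_a-e_b)_i(e_a-e_b)_k=b_{aa}-2b_{ab}+b_{bb}=-2b_{ab}\le 0$. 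Thus $\sum_{i,k}b_{ik}(D^2F)_{ik}\le 0$ as a measure, so $\phi'(t)\ge 0$; integrating over $t$ and then removing the smoothing of $G$ finishes the proof.

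The step I expect to be the main obstacle is making the integration by parts and the Hessian-measure bookkeeping rigorous, since $F=G\circ M$ is merely convex and Lipschitz; naive mollifications of $M$ do \emph{not} suffice, because they produce gradients that are genuine convex combinations of the $e_a-e_b$, whose outer products carry cross terms $(e_a-e_b)\otimes(e_c-e_d)$ that the inequality $b_{ab}\ge 0$ does not control. I would handle this by first replacing $Z(t)$ with $Z(t)+\eta W$ for an independent standard Gaussian $W$ and $\eta>0$, so that the law has a smooth positive density and the argmax and argmin are a.s.\ unique, $M$ is a.s.\ locally affine along the trajectory, and the Stein identity applied to the $\mathrm{BV}$ functions $\partial_iF$ (paired against that density) is justified; then one lets $\eta\downarrow0$, the convergence of both sides being guaranteed by continuity of $F$ and the integrability of its at most linear growth against Gaussians. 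Alternatively, one simply quotes \cite[Theorem~2.1.2]{Fernique1975} verbatim, which is the route taken in the paper.
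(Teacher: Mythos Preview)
The paper does not prove this theorem at all: it is stated in Appendix~B with the remark that it is taken from \cite[Theorem~2.1.2]{Fernique1975} (with \cite[Theorem~2.8]{HoffmannJorgensen2013} mentioned as a more general source), and no argument is given. You correctly identify this and explicitly note that quoting Fernique is ``the route taken in the paper''; so on that level your proposal matches.

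Your additional sketch of a direct proof via Gaussian interpolation is essentially the classical argument and is sound in outline. The computation $\phi'(t)=-\tfrac14\sum_{i,k}b_{ik}\,\bbEt[\partial_i\partial_k F(Z(t))]$ via Stein's identity and the translation-invariance symmetrization is correct, and the key structural point---that the distributional Hessian of $G\circ M$ is a positive measure--valued combination of rank-one matrices $(e_a-e_b)\otimes(e_a-e_b)$, on which the quadratic form $\sum_{i,k}b_{ik}(\,\cdot\,)_i(\,\cdot\,)_k$ evaluates to $-2b_{ab}\le 0$---is the right observation. You also correctly flag the genuine technical hurdle: naive mollification of $M$ destroys this rank-one structure (the smoothed gradient becomes a convex combination of several $e_a$'s, producing uncontrolled cross terms), so one must instead regularize the \emph{law} (your $Z(t)+\eta W$ trick) to keep the argmax/argmin a.s.\ unique and work with the Hessian measure of the unsmoothed $F$. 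This is indeed how rigorous versions of the argument proceed; the remaining justification (Stein's identity against BV derivatives paired with a smooth density, then $\eta\downarrow 0$ via dominated convergence using the Lipschitz growth of $F$) is standard but does require care. Since the paper simply cites the result, your proposal goes well beyond what is needed here.
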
  
	
\begin{lemma}\label{lem:slepian} 
	Let $M\in\bbN$ and assume that 
	$(g_j)_{j=1}^M$ is 
	an orthogaussian family 
	on a complete probability space 
	$(\Omegat,\cAt,\bbPt)$ 
	(with expectation $\bbEt$), 
	and that $\Psi \from\bbR^2\to\bbR$ 
	is a continuous function 
	such that $\Psi(0,0)=0$. 
	Let $( x_j )_{j=1}^M, \, ( y_j)_{j=1}^M \subset E$, 
	and define the 
	centered Gaussian process 
	$\cG_1 \from B_{\dual{E}\!} \times \Omegat \to \bbR$ 
	on $(\Omegat,\cAt,\bbPt)$ 
	indexed by the closed unit ball $B_{\dual{E}\!}$ 
	in the dual space $\dual{E}$ by 
	\begin{equation}\label{eq:defG1}
		\cG_1(f) 
		:= 
		\sum_{j=1}^M 
		g_j  \Psi\bigl( f(x_j), f(y_j) \bigr) , 
		\quad 
		f\in B_{\dual{E}\!}. 
	\end{equation}
	Let $\cG_2 \from B_{\dual{E}\!} \times \Omegat \to \bbR$ 
	be a second centered Gaussian process on 
	$(\Omegat,\cAt,\bbPt)$ such that 
	\begin{equation}\label{eq:ass:slepian} 
		\forall f,h \in B_{\dual{E}\!}: 
		\quad 
		\bbEt\bigl[ |\cG_1(f) - \cG_1(h)|^2\bigr] 
		\leq 
		\bbEt\bigl[ |\cG_2(f) - \cG_2(h)|^2\bigr]. 
	\end{equation}
	Then, for all $q\in[1,\infty)$ we have that 
	\begin{equation}\label{eq:lem:slepian} 
		\bbEt\Bigl[ 
		\bigl(\sup\nolimits_{f \in B_{\dual{E}} \!} 
		|\cG_1(f) | \bigr)^q \Bigr] 
		\leq 
		2^q \, 
		\bbEt\Bigl[
		\bigl(\sup\nolimits_{f \in B_{\dual{E}} \!} 
		|\cG_2(f) | \bigr)^q \Bigr]. 
	\end{equation}
\end{lemma}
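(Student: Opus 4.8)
The plan is to reduce the claim to Fernique's finite-dimensional version of Slepian's inequality (Theorem~\ref{thm:slepian}), applied to finite families of functionals, and then pass to a limit. The factor $2^q$ in \eqref{eq:lem:slepian} will be produced in two elementary steps: on the $\cG_1$-side we use that $0\in B_{\dual E}$ together with $\Psi(0,0)=0$, so $\cG_1(0)=\sum_{j=1}^M g_j\,\Psi(0,0)=0$ and hence $\sup_{f\in B_{\dual E}}|\cG_1(f)| = \sup_{f\in B_{\dual E}}|\cG_1(f)-\cG_1(0)| \leq \sup_{f,h\in B_{\dual E}}|\cG_1(f)-\cG_1(h)|$ (no factor); on the $\cG_2$-side we use the pointwise triangle inequality $\sup_{f,h\in B_{\dual E}}|\cG_2(f)-\cG_2(h)| \leq 2\sup_{f\in B_{\dual E}}|\cG_2(f)|$ (factor $2$). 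What remains is therefore the comparison of the $q$th moments of $\sup_{f,h}|\cG_1(f)-\cG_1(h)|$ and $\sup_{f,h}|\cG_2(f)-\cG_2(h)|$.

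First I would record the structural observation that $\cG_1(f)$ depends on $f$ only through the $2M$-tuple $\Phi(f):=(f(x_1),\dots,f(x_M),f(y_1),\dots,f(y_M))\in\bbR^{2M}$. Since each $f\mapsto f(x_j)$ is weak-$*$ continuous and $B_{\dual E}$ is weak-$*$ compact (Banach--Alaoglu), the set $K:=\Phi(B_{\dual E})$ is a compact, hence separable and metrizable, subset of $\bbR^{2M}$, and $(t,s)\in K\mapsto \sum_{j=1}^M g_j\,\Psi(t_j,s_j)$ is continuous for every fixed sample point. Fixing a countable dense set $D\subset K$ written as an increasing union of finite sets $D_n$, density and continuity give, pathwise, that $\sup_{f,h\in B_{\dual E}}|\cG_1(f)-\cG_1(h)|$ equals the countable supremum over $D\times D$ (so it is measurable, and its $q$th moment is finite because $\Psi$ is bounded on the compact set $K$ and the $g_j$ have all moments), and that $\max_{d,d'\in D_n}$ of the corresponding increments increases to it as $n\to\infty$. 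For each finite family $F=\{f_1,\dots,f_N\}\subset B_{\dual E}$ I would then apply Theorem~\ref{thm:slepian} to the centered Gaussian vectors $X=(\cG_1(f_i))_{i=1}^N$ and $Y=(\cG_2(f_i))_{i=1}^N$: hypothesis \eqref{eq:ass:slepian} is exactly $\bbEt[|X_i-X_j|^2]\leq\bbEt[|Y_i-Y_j|^2]$, and $G(t):=t^q$ is convex and increasing on $[0,\infty)$ for $q\geq1$, so $\bbEt[(\max_{i,j}|\cG_1(f_i)-\cG_1(f_j)|)^q]\leq\bbEt[(\max_{i,j}|\cG_2(f_i)-\cG_2(f_j)|)^q]\leq 2^q\,\bbEt[(\sup_{f\in B_{\dual E}}|\cG_2(f)|)^q]$, the last step by the pointwise bound $\max_{i,j}|\cG_2(f_i)-\cG_2(f_j)|\leq 2\sup_{f\in B_{\dual E}}|\cG_2(f)|$. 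Choosing $F=D_n$ and letting $n\to\infty$, monotone convergence on the left yields $\bbEt[(\sup_{f,h\in B_{\dual E}}|\cG_1(f)-\cG_1(h)|)^q]\leq 2^q\,\bbEt[(\sup_{f\in B_{\dual E}}|\cG_2(f)|)^q]$, and combining this with the base-point estimate of the first paragraph gives \eqref{eq:lem:slepian}.

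The main obstacle is precisely the reduction from the supremum over the (in general non-metrizable) ball $B_{\dual E}$ to a countable supremum, which is what makes both Fernique's finite-dimensional statement and the monotone convergence theorem applicable; this is resolved by the factorization of $\cG_1$ through the finite-dimensional continuous image $K=\Phi(B_{\dual E})$. A secondary point of care is the measurability of $\sup_{f\in B_{\dual E}}|\cG_2(f)|$: since $\cG_2$ enters only on the larger side of every inequality, it suffices to read the right-hand side of \eqref{eq:lem:slepian} as the upper integral, and in fact in all uses (Lemma~\ref{lem:MLMC-k}) the process $\cG_2$ is itself weak-$*$ continuous, so the same factorization argument applies verbatim to it and the supremum is again attained and measurable.
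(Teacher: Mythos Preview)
Your proposal is correct and follows essentially the same route as the paper: reduce to finite index sets via weak-$*$ compactness of $B_{\dual E}$, apply Fernique's version of Slepian's inequality (Theorem~\ref{thm:slepian}) with $G(t)=t^q$, exploit $\cG_1(0)=0$ on one side and the triangle inequality on the $\cG_2$-side for the factor $2$, then pass to the limit by monotone convergence. The only presentational difference is that you make the factorization $\cG_1 = (\text{continuous map on }\bbR^{2M})\circ\Phi$ explicit and work with a countable dense subset of the compact image $K=\Phi(B_{\dual E})\subset\bbR^{2M}$, whereas the paper constructs its finite approximating sets directly as finite weak-$*$ subcovers of $B_{\dual E}$ by neighborhoods defined through the same evaluations $f\mapsto(f(x_j),f(y_j))$; these are two phrasings of the same compactness argument.
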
 

\begin{proof} 	
	Set $N_0:=0$ 
	and $f_0 := 0 \in\dual{E}$\!.  
	Given $M\in\bbN$ and 
	$(x_j)_{j=1}^M, \, (y_j)_{j=1}^M \subset E$, 
	for $f\in \dual{E}$ and $\delta\in(0,\infty)$,
	define the subset 
	\[
		U_\delta(f) 
		:= 
		\Bigl\{ h \in \dual{E} : 
		\max_{1\leq j\leq M} | f(x_j) - h(x_j) | < \delta, \ 
		\max_{1\leq j\leq M} | f(y_j) - h(y_j) | < \delta  
		\Bigr\} 
		\subseteq  
		\dual{E}\! . 
	\]
	Then, 
	for every $f\in\dual{E}$ and all $\delta\in(0,\infty)$, 
	the set $U_\delta(f) $ is open 
	(more precisely, an open neighborhood of $f$)
	in $\dual{E}$ 
	with respect to the 
	$\text{weak}^*$-topology on the dual space~$\dual{E}$\!. 
	By the Banach--Alaoglu theorem the closed unit ball 
	$B_{\dual{E}}$ is $\text{weak}^*$-com- pact. 
	Hence, for every $n\in\bbN$, the open cover 
	\[
		\bigcup_{f\in B_{\dual{E}}} 
		U_{\frac{1}{n}}( f ) 
		\supseteq  B_{\dual{E}}
	\]
	contains a finite subcover.
	Iteratively, for every $n\in\bbN$, 
	one can find 
	an integer $N_n\in\bbN$,  
	satisfying $N_n > N_{n-1}$, 
	and elements  
	$f_{N_{n-1}+1},\ldots,f_{N_n} \in B_{\dual{E}}$  
	such that 
	\[
		B_{\dual{E}} 
		\subseteq 
		\bigcup_{\nu=1}^{N_n} 
		U_{ \frac{1}{n}}( f_{\nu} ) . 
	\]
	Note, in particular, 
	that this definition of 
	$f_1,\ldots,f_{N_n}$, $n\in\bbN$, 
	implies nestedness, 
	$( f_1,\ldots,f_{N_n} ) \subseteq ( f_1,\ldots,f_{N_m} )$ 
	for $n<m$. 
	
	Next, 
	we define for every 
	non-negative integer $\nu\in\bbN_0$   
	the real-valued centered Gaussian random variables 
	$X_\nu := \cG_1( f_\nu )$ 
	and 
	$Y_\nu := \cG_2( f_\nu )$. 
	By assumption \eqref{eq:ass:slepian} we then have 
	for all $n\in\bbN$ and every $\nu,\nu'\in\{0,\ldots,N_n\}$, 
	\begin{align*} 
		\bbEt \bigl[ | X_\nu - X_{\nu'} |^2 \bigr] 
		=
		\bbEt \bigl[ | \cG_1(f_\nu) - \cG_1(f_{\nu'}) |^2 \bigr] 
		&\leq 
		\bbEt \bigl[ | \cG_2(f_\nu) - \cG_2(f_{\nu'}) |^2 \bigr] 
		\\
		&= 
		\bbEt \bigl[ | Y_\nu - Y_{\nu'} |^2 \bigr] . 
	\end{align*} 
	By Fernique's version of Slepian's inequality, 
	see Theorem~\ref{thm:slepian}, 
	applied for the convex increasing function 
	$G(t) := t^q$, $t\geq 0$, 
	and by using the fact that $f_0 = 0\in \dual{E}$ 
	implies that $\cG_1(f_0) = 0$ 
	holds $\bbPt$-a.s., 
	we find that, for all $n\in\bbN$, $q\in[1,\infty)$, 
	\begin{align*} 
		\bbEt \Bigl[ \bigl( &\max\nolimits_{0\leq\nu\leq N_n} | \cG_1(f_\nu) | \bigr)^q \Bigr] 
		\leq
		\bbEt \Bigl[ \bigl( \max\nolimits_{0\leq\nu,\nu'\leq N_n} | \cG_1(f_\nu) 
		- \cG_1(f_{\nu'}) | \bigr)^q \Bigr] 
		\\
		&\qquad =  
		\bbEt \Bigl[ \bigl( \max\nolimits_{0\leq\nu,\nu'\leq N_n} |X_\nu - X_{\nu'} | \bigr)^q \Bigr] 
		\leq 
		\bbEt \Bigl[ \bigl( \max\nolimits_{0\leq\nu,\nu'\leq N_n} |Y_\nu - Y_{\nu'} | \bigr)^q \Bigr] 
		\\
		&\qquad \leq  
		2^q \, \bbEt \Bigl[ \bigl( \max\nolimits_{0\leq\nu\leq N_n} |Y_\nu | \bigr)^q \Bigr] 
		\leq   
		2^q \, \bbEt \Bigl[ \bigl( \sup\nolimits_{f\in B_{\dual{E}\!}} | \cG_2(f) | \bigr)^q \Bigr]. 
	\end{align*} 
	To derive \eqref{eq:lem:slepian}, 
	it remains to prove that 
	$\lim_{n\to\infty}\norm{S_n}{L_q(\Omegat;\bbR)} = \norm{S_*}{L_q(\Omegat;\bbR)}$, 
	where 
	\[
		S_n 
		:= 
		\max\limits_{0\leq \nu\leq N_n} | X_\nu | 
		= 
		\max\limits_{0\leq \nu\leq N_n} | \cG_1(f_\nu) | 
		\qquad\text{and}\qquad 
		S_* := \sup\limits_{f\in B_{\dual{E}\!}} | \cG_1(f) | . 
	\]
	By the assumptions on the process $\cG_1$ in \eqref{eq:defG1},  
	there exists a set 
	$\Omegat_0\in\cAt$ with 
	$\bbPt(\Omegat_0)=0$ 
	such that 
	$\overline{g}(\omegat) := 1 + \max_{1\leq j \leq M} |g_j(\omegat)| < \infty$ 
	for all 
	$\omegat\in\Omegat\setminus\Omegat_0$. 
	Fix $\omegat\in\Omegat\setminus\Omegat_0$ 
	and $\epsilon\in(0,1)$. 
	Then, there exists 
	$f^\epsilon = f^\epsilon(\omegat) \in B_{\dual{E}}$ such that 
	\[
		\sup_{f\in B_{\dual{E}\!}} | \cG_1(f)(\omegat) | 
		\leq  
		|\cG_1(f^\epsilon)(\omegat) | 
		+
		\frac{\epsilon}{2}. 
	\]
	In addition, 
	there exists $\delta_\epsilon(\omegat)\in(0,\infty)$, 
	such that the implication 
	\begin{equation}\label{eq:weakstar-cont} 
		h \in U_{\delta_\epsilon(\omegat)} (f^\epsilon)  
		\quad 
		\Longrightarrow 
		\quad 
		| \cG_1(f^\epsilon)(\omegat) - \cG_1(h)(\omegat) | 
		< \frac{\epsilon}{2} 
	\end{equation}
	holds. 
	Indeed, by continuity of $\Psi\from\bbR^2 \to \bbR$ 
	we may choose $\delta_\epsilon(\omegat)\in(0,\infty)$  
	such that 
	\begin{align*} 
		\max_{1\leq j\leq M}  
		&\bigl\{ 
		\max\{ | f^\epsilon(x_j) - h(x_j) |, 
		| f^\epsilon (y_j) - h(y_j) | \} 
		\bigr\} 
		< \delta_\epsilon(\omegat)   
		\\ 
		&\Longrightarrow 
		\quad 
		\max_{1\leq j\leq M} 
		\bigl| \Psi\bigl( f^\epsilon(x_j), f^\epsilon(y_j) \bigr) - \Psi\bigl( h(x_j), h(y_j) \bigr) \bigr| 
		< 
		\frac{\epsilon}{2} \, M^{-1} \overline{g}(\omegat)^{-1}\!. 
	\end{align*} 
	Furthermore, by definition of 
	the sequences 
	$(N_n)_{n\in\bbN}\subset \bbN$ and 
	$(f_\nu)_{\nu\in\bbN}\subseteq B_{\dual{E}}$, 
	there exist integers  
	$n_\epsilon = n_\epsilon(\omegat)\in\bbN$ 
	and 
	$\nu_\star = \nu_\star(\omegat)\in\{1,\ldots,N_{n_\epsilon}\}$ 
	such that 
	\[
		f^\epsilon \in U_{\delta_\epsilon(\omegat)}( f_{\nu_\star} ). 
	\]
	By combining this observation 
	with~\eqref{eq:weakstar-cont} 
	we conclude 
	that 
	\[
		\bigl| | \cG_1(f^\epsilon)(\omegat) | - | \cG_1(f_{\nu_\star})(\omegat) | \bigr| 
		\leq 
		| \cG_1(f^\epsilon)(\omegat) - \cG_1(f_{\nu_\star})(\omegat) | 
		<\frac{\epsilon}{2}, 
	\]
	and   
	\begin{align*} 
		S_{*}(\omegat) 
		&- S_{n_\epsilon} (\omegat) 
		= 
		\sup\limits_{f\in B_{\dual{E}\!}} | \cG_1(f)(\omegat) | 
		- 
		\max\limits_{0\leq \nu\leq N_{n_\epsilon}} | \cG_1(f_\nu)(\omegat) | 
		\\
		&\leq 
		\sup\limits_{f\in B_{\dual{E}\!}} | \cG_1(f)(\omegat) | 
		- 
		| \cG_1(f_{\nu_\star})(\omegat) | 
		\leq 
		| \cG_1(f^\epsilon)(\omegat) | 
		- 
		| \cG_1(f_{\nu_\star})(\omegat) | 
		+
		\frac{\epsilon}{2}
		< 
		\epsilon
	\end{align*} 
	follows. 
	This shows that, for almost all $\omegat\in\Omegat$, 
	\[
		S_*(\omegat)
		= 
		\sup_{f\in B_{\dual{E}\!}} | \cG_1(f)(\omegat) | 
		= 
		\sup_{n\in\bbN}
		\max_{0\leq \nu\leq N_n} | \cG_1(f_\nu) (\omegat)|
		= 
		\lim_{n\to\infty} 
		S_n(\omegat). 
	\]
	Since the 
	non-negative random variables $(S_n)_{n\in\bbN}$ 
	are non-decreasing in $n\in\bbN$, 
	$\bbPt$-almost surely, 
	the $L_q(\Omegat;\bbR)$-convergence 
	$\lim\limits_{n\to\infty}\norm{S_n}{L_q(\Omegat;\bbR)} = \norm{S_*}{L_q(\Omegat;\bbR)}$ 
	follows from the monotone convergence theorem. 
\end{proof}

\section*{Acknowledgments} 

This work was possible in part due to the 
visit of both authors at the 
Erwin Schr\"odinger Institute (ESI) in Vienna,
Austria, during the ESI thematic program 
\emph{Computational Uncertainty Quantification} 
in May and June 2022. 
ChS acknowledges a visit to 
Delft Institute of Applied Mathematics in July 2022, 
and KK a visit to the 
Forschungsinstitut f\"ur Mathematik (FIM) 
at ETH Z\"urich in October 2022. 

KK acknowledges 
helpful comments on 
Slepian's inequality  
and Lemma~\ref{lem:slepian} 
by Jan van Neerven 
and Mark Veraar; 
and 
fruitful discussions 
on the counterexample 
mentioned in Example~\ref{example:projective-bad} 
as well as on the
application considered 
in Subsection~\ref{subsec:appl-hoelder} 
with Sonja Cox.  
The authors furthermore thank 
an anonymous reviewer for 
valuable comments.

\section*{Funding} 

KK acknowledges support 
of the research project 
\emph{Efficient spatiotemporal statistical modelling 
	with stochastic PDEs} 
(with project number 
VI.Veni.212.021) 
by the talent programme \emph{Veni}  
which is financed by 
the Dutch Research Council (NWO). 

\bibliographystyle{siam}
\bibliography{MCBanach-bib}

\end{document}